\newcommand{\R}{\mathbb{R}}
\newcommand{\C}{\mathbb{C}}
\newcommand{\Z}{\mathbb{Z}}
\newcommand{\bfS}{\mathbf{S}}
\newcommand{\kap}{K}
\newcommand{\neline}{{\scriptstyle\diagup}}
\newcommand{\seline}{{\scriptstyle\diagdown\,}}
\newcommand{\smallneline}{{\,\scriptscriptstyle\diagup}}
\newcommand{\smallseline}{{\scriptscriptstyle\diagdown\hspace{0.3pt}}}
\newcommand{\xdiamond}{\rotatebox[origin=c]{45}{$\boxplus\!\!$}}
\DeclareMathOperator{\Aut}{Aut}
\let\ol\overline
\let\vec\overrightarrow
\newcommand{\caymthree}{M^0_3}
\newcommand{\caymfour}{M^0_4}
\newcommand{\caymk}{M^0_k}
\newcommand{\caymnplusone}{M^0_{n+1}}
\newcommand{\scmthree}{M^\kap_3}
\newcommand{\scmfour}{M^\kap_4}
\newcommand{\scmm}{M^\kap_m}
\newcommand{\skap}{S^\kap}
\newcommand{\skaponetwothree}{S^\kap_{123}}
\newcommand{\skaponethreefour}{S^\kap_{134}}
\newcommand{\skapijk}{S^\kap_{ijk}}
\newcommand{\skapikl}{S^\kap_{ikl}}
\newcommand{\skapijl}{S^\kap_{ijl}}
\newcommand{\skapjkl}{S^\kap_{jkl}}
\newcommand{\hkap}{H^\kap}
\newcommand{\zpi}{\mathbf{z}_\pi}
\newcommand{\zscm}{\widehat{\mathbf{z}}}
\newcommand{\zfrieze}{\mathbf{z}}
\newcommand{\xS}{\mathbf{xS}}
\newcommand{\xx}{\mathbf{x}}
\numberwithin{equation}{section}
\theoremstyle{definition}
\newtheorem{definition}{Definition}[section]
\newtheorem{remark}[definition]{Remark}
\newtheorem{example}[definition]{Example}
\theoremstyle{plain}
\newtheorem{theorem}[definition]{Theorem}
\newtheorem{corollary}[definition]{Corollary}
\newtheorem{lemma}[definition]{Lemma}
\newtheorem{proposition}[definition]{Proposition}
\newtheorem*{proposition-nonum}{Proposition}
\newtheorem*{lemma-nonum}{Lemma}
\newtheorem*{theorem-nonum}{Theorem}
\newtheorem*{example-nonum}{Example}
\newtheorem*{definition-nonum}{Definition}
\newcommand{\lr}[1]{\langle #1 \rangle}
\renewcommand*\env@matrix[1][\arraystretch]{%
  \edef\arraystretch{#1}%
  \hskip -\arraycolsep
  \let\@ifnextchar\new@ifnextchar
  \array{*\c@MaxMatrixCols c}}
\newcommand{\dietmatrix}[1]{
\setlength\arraycolsep{2 pt}
#1
}
\newcommand{\circleat}[2]{\filldraw[](#1,#2) circle (2pt)}
\newcommand{\mytfrac}[2]{\textstyle\frac{#1}{#2}}
\newcommand{\checknextarg}{\@ifnextchar\bgroup{\gobblenextarg}{\right]}}
\newcommand{\gobblenextarg}[1]{\enskip #1\@ifnextchar\bgroup{\gobblenextarg}{\right]}}
\newcommand{\checknextargd}{\@ifnextchar\bgroup{\gobblenextargd}{\right]}}
\newcommand{\gobblenextargd}[1]{\, #1\@ifnextchar\bgroup{\gobblenextargd}{\right]}}
\begin{document}

\title{Spherical Friezes}

\author{Katie Waddle}
\address{Department of Mathematics, University of Michigan, Ann Arbor, MI 48109, USA}
\email{waddle@umich.edu}

\subjclass
{
Primary 51K99,  
Secondary 52C25, 
51M25, 
51N25, 
97G60, 
05E99. 
}

\thanks{Partially supported by NSF grants DMS-1840234, DMS-2054231 and DMS-2348501.}

\keywords{Distance geometry, spherical geometry, frieze, Laurent phenomenon.}

\date{\today}

\begin{abstract}
A fundamental problem in spherical distance geometry aims to recover an~$n$-tuple of points on a 2-sphere in~$\mathbb{R}^3$, viewed up to oriented isometry, from~$O(n)$ input measurements. 
We solve this problem using algorithms that employ only the four arithmetic operations.
Each algorithm recursively produces output data that we arrange into a new type of frieze pattern.
These frieze patterns exhibit glide symmetry and a version of the Laurent phenomenon.

\end{abstract}

\maketitle

\section{Introduction and main results}

\subsection*{Introduction}
Distance geometry studies point configurations via measurements of  pairwise distances between the points.
More generally, one can consider any
\linebreak[3]
``intrinsic''
measurements that do not depend on a choice of coordinate system.
In~a~Euclidean space, such measurements can include distances, angles, (oriented) volumes,~etc.
A~typical problem asks for a small set of inputs, or initial data,
that uniquely determine a configuration up to orientation-preserving isometry.
One can further ask for explicit formulas that express, in terms of the initial data, arbitrary measurements of a particular kind---say,
all pairwise distances in a configuration.

In this paper, we study the above problems in the setting of spherical geometry.
Let~$\bfS$ be a sphere of radius~$R$ in 
3-dimensional Euclidean space.
Viewed up to oriented isometry, the space of~$n$-point configurations on~$\bfS$ has dimension~$2n-3$.
We seek to identify a subset of~$O(n)$ initial measurements  
from which all pairwise distances between~$n$ given points can be recovered.
We require the corresponding formulas to be \emph{rational},  
in particular to avoid the use of trigonometric functions or their inverses.
In service of this goal, we measure pairwise distances between points on~$\bfS$
in the ambient Euclidean space, rather than via geodesics on the sphere.
(The two types of distances can be readily converted into each other.)

We present two solutions to the  problem above.
In the first solution, the initial input consists of~$2n-3$ squared Euclidean distances between certain pairs of points in an~$n$-point configuration on the sphere,
together with~$n-2$ additional binary bits of information
specifying the orientations of certain spherical triangles.
The second solution utilizes~$3n-4$ squared distances.
In each case, the initial data determine the~$n$-point configuration uniquely up to an isometry of the sphere.

\pagebreak[3]

To~capture the measurement data involved in our algorithms and the algebraic relations among them,
we introduce two different but closely related record-keeping devices that we call \emph{spherical friezes}.
These friezes extend from Euclidean to spherical settings  
the Heronian friezes of Fomin and Setiabrata~\cite{fomin_heronian_2021},
which were in turn inspired by the classical Coxeter-Conway friezes~\cite{coxeter_frieze_1971,conway_triangulated_1973}.
The spherical friezes display
how the initial measurements propagate to determine the others. They can also be viewed as purely algebraic instruments: numerical arrays
whose neighboring entries satisfy certain relations.
We prove that, like other types of friezes, spherical friezes  exhibit \emph{glide symmetry}
and a form of the \emph{Laurent phenomenon}.


\subsection*{Main results}
Fix a sphere~$\bfS$ of radius~$R$ in 3-dimensional Euclidean space.
A \emph{spherical polygon}~$A_1\cdots A_n$
is an ordered~$n$-tuple of points~$A_1,\dots,A_n\in\bfS$.
The group~$\Aut(\bfS)$ of orientation-preserving isometries of~$\bfS$ acts on the space~$\bfS^n$ of such~$n$-tuples.    
Spherical distance geometry does not distinguish between spherical polygons that lie in the same~$\Aut(\bfS)$-orbit.
Accordingly, we will only be interested in  numerical quantities (\emph{measurements})
associated with a spherical polygon that are \emph{intrinsic}, i.e.~$\Aut(\bfS)$-invariant.
Such a measurement is a function~$\bfS^n/\Aut(\bfS)\to\mathbb{R}$.
The basic example is a squared Euclidean distance
\begin{equation}
\label{eq: xij-intro}
x_{ij}=x(A_i,A_j)
\end{equation}between two points~$A_i,A_j$ lying on the sphere~$\bfS$.

It is natural to seek a coordinatization of the configuration space~$\bfS^n/\Aut(\bfS)$
that enables calculations performed entirely in terms of  
intrinsic measurements.
Since~$\dim\bfS^n/\Aut(\bfS)=2n-3$, we would like this coordinatization to involve at most~$O(n)$~measurements whose values
uniquely determine a spherical polygon, up to the~$\Aut(\bfS)$ action.
We~additionally ask that the formulas that express all basic measurements
in terms of the chosen coordinates be rational, i.e., involve only the four arithmetic operations.
This rules out the standard formulas found in the spherical geometry literature, as they involve
square roots, trigonometric functions, and/or their inverses, see for example~\cite[Chapter 8]{todhunter_spherical_1886}\cite{fillmore_formulas_2020}\cite{bajgonakova_bretschneiders_2012}.

We begin by interpreting a spherical polygon as a purely combinatorial object,
mapping its vertices~$A_1,\dots,A_n$ to the vertices of a convex~$n$-gon on a real plane.
Consider a triangulation of this~$n$-gon obtained by drawing~$n-3$ diagonals. The~$2n-3$ sides and diagonals of this triangulation correspond to~$2n-3$ measurements~$x_{ij}$ in the spherical polygon.  However, these~$2n-3$ measurements are not sufficient to determine the spherical polygon up to the~$\Aut(\bfS)$ action, because each of the~$n-2$ spherical triangles~$A_iA_jA_k$ is only determined by the measurements~$x_{ij}, x_{ik}, x_{jk}$
up to a (potentially orientation-altering) isometry.
Since each of these triangles can be placed on the sphere in two different ways,
we get, up to the~$\Aut(\bfS)$ action,~$2^{n-2}$ distinct spherical~$n$-gons
(or~$2^{n-3}$ distinct~$n$-gons, up to isometry).
This ambiguity can be resolved by supplementing the initial data with additional measurements that ``rigidify'' the geometry of the configuration.

We implement two versions of this strategy. In the first version we make use of the following quantity that is invariant under the action of~$\Aut(\bfS)$.
\begin{definition}[cf.~Definition~\ref{def: skappa def}]
Let~$A_1,A_2,A_3\in\bfS$ be three points on a sphere~$\bfS$ with center~$O$, radius~$R$, and Gaussian curvature~$K=\frac1{R^2}$.
We denote
\begin{equation}
\label{eq: skap-intro}
\skap(A_1,A_2,A_3)=\frac{12V(OA_1A_2A_3)}{R}\, ,
\end{equation}
where~$V(OA_1A_2A_3)$
is the signed volume of the tetrahedron~$O A_1 A_2 A_3$.  
\end{definition}
\begin{remark}
    Informally,~$\skap(A_1,A_2,A_3)$ is a stand-in for the (normalized) oriented area of the triangle~$A_1A_2A_3$.  Rigid motions that change the orientation of the triangle also change the sign of this measurement, so it can be used to distinguish between the two triangles with given side lengths.
\end{remark}

The following proposition illustrates the relationship between the measurements \eqref{eq: xij-intro} and~\eqref{eq: skap-intro}.

\begin{proposition}[cf.~Proposition~\ref{prop: sph heron}]\label{prop: heron intro}
For~$A_1,A_2,A_3\in\bfS$,
we have
    \begin{equation}
        \left(\skap(A_1,A_2,A_3)\right)^2=\det\begin{bmatrix}
            \frac{\kap}{2} & 1 & 1 &1 \\
            1 & 0 & x_{12} & x_{13} \\
            1 & x_{21} & 0 & x_{23} \\
            1 & x_{31} & x_{32} & 0
        \end{bmatrix}
        =H^\kap(x_{12},x_{13},x_{23})\, ,
        \label{eq: her eq}
        \end{equation}
        where we use the notation
        \begin{equation}\label{eq: h kap}
           H^\kap(x_{12},x_{13},x_{23}) =-\!x_{12}^2\!-\!x_{13}^2\!-\!x_{23}^2\!+\!2x_{12}x_{13}\!+\!2x_{12}x_{23}\!+\!2x_{13}x_{23}\!-\!\kap x_{12}x_{13}x_{23}.
    \end{equation}
\end{proposition}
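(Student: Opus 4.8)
The plan is to establish the two equalities in~\eqref{eq: her eq} in turn: first that~$\skap(A_1,A_2,A_3)^2$ equals the bordered~$4\times 4$ determinant, and then that this determinant equals~$H^\kap$. For the first equality, I would work in coordinates in which the center~$O$ of~$\bfS$ is the origin of~$\R^3$. Then~$A_1,A_2,A_3$ are vectors of norm~$R$, the signed volume of the tetrahedron~$OA_1A_2A_3$ is~$V(OA_1A_2A_3)=\tfrac16\det[A_1\mid A_2\mid A_3]$ (the determinant of the~$3\times 3$ matrix with columns~$A_1,A_2,A_3$), and hence~$\skap(A_1,A_2,A_3)=\tfrac2R\det[A_1\mid A_2\mid A_3]$. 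Squaring and using~$\det(B)^2=\det(B^{\mathsf T}B)$ turns the right-hand side into a Gram determinant:
\[
\skap(A_1,A_2,A_3)^2=\frac{4}{R^2}\,\det\bigl((A_i\cdot A_j)_{i,j=1}^{3}\bigr).
\]

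Next I would rewrite the Gram entries in terms of the~$x_{ij}$. Since~$\lvert A_i\rvert=R$ we have~$A_i\cdot A_i=R^2$, and expanding~$x_{ij}=\lvert A_i-A_j\rvert^2$ gives~$A_i\cdot A_j=R^2-\tfrac12 x_{ij}$; using~$R^2=1/\kap$, the Gram matrix equals~$\tfrac1\kap J-\tfrac12 X$, where~$J$ is the all-ones matrix and~$X=(x_{ij})$ with the convention~$x_{ii}=0$. It then remains to identify~$\tfrac{4}{R^2}$ times this~$3\times 3$ determinant with the~$4\times 4$ determinant in~\eqref{eq: her eq}. This is a bordering (Schur-complement) identity, which I would verify directly: in the~$4\times 4$ matrix, apply the column operations~$C_j\mapsto C_j-2R^2\,C_1$ for~$j=2,3,4$. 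Because the corner entry is~$\tfrac\kap2=\tfrac1{2R^2}$, these clear the remaining entries in the first row, while turning the lower-right~$3\times 3$ block into~$-2(A_i\cdot A_j)$; expanding the resulting determinant along its first row (whose only nonzero entry is the corner) then produces exactly the claimed constant times the Gram determinant.

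The second equality is a plain determinant evaluation: expand the~$4\times 4$ matrix along its first row. The~$(1,1)$-minor is~$\det\begin{bmatrix}0&x_{12}&x_{13}\\x_{12}&0&x_{23}\\x_{13}&x_{23}&0\end{bmatrix}=2\,x_{12}x_{13}x_{23}$, which is responsible for the~$\kap$-dependent (cubic) term of~$H^\kap$; each of the other three~$3\times 3$ minors has a row of~$1$'s and, after one further expansion, collapses to a product of two of the~$x_{ij}$'s times a linear form in the third, and adding up the four contributions reproduces~$H^\kap(x_{12},x_{13},x_{23})$. Equivalently---and perhaps more transparently---one can skip the~$4\times 4$ determinant and expand the~$3\times 3$ Gram determinant directly: with~$t=R^2$, $p=R^2-\tfrac12 x_{12}$, $q=R^2-\tfrac12 x_{13}$, $s=R^2-\tfrac12 x_{23}$ it has the symmetric form~$t^3-t(p^2+q^2+s^2)+2pqs$, and after substituting and multiplying by~$\tfrac4{R^2}$ all powers of~$R$ cancel, leaving precisely~$H^\kap$.

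I do not expect a genuine obstacle here; both equalities are short determinant computations. The only real content is the geometric reduction at the start---legitimately placing~$O$ at the origin so that~$V(OA_1A_2A_3)$ becomes a~$3\times3$ determinant and~$A_i\cdot A_i=R^2$---and thereafter the work is careful bookkeeping of the normalization constants (the~$12$, the~$6$, the~$R$'s, the factors of~$2$) and of signs in the cofactor expansions. As a sanity check I would verify the identity in the flat limit~$\kap\to0$, where it must recover the classical Heron / Cayley--Menger formula for a planar triangle, and on an explicit symmetric configuration such as an equilateral spherical triangle.
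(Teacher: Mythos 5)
Your route is genuinely different from the paper's: Proposition~\ref{prop: sph heron} is proved there by applying the Euclidean simplex-volume formula (Proposition~\ref{prop: gencaymeng}) to the tetrahedron $OA_1A_2A_3$ and expanding the resulting $5\times 5$ Euclidean Cayley--Menger determinant, whereas you square the volume form, pass to the $3\times 3$ Gram determinant via $\det(B)^2=\det(B^{\mathsf T}B)$, and substitute $A_i\cdot A_j=R^2-\tfrac12 x_{ij}$. For the outer equality $(\skap(A_1,A_2,A_3))^2=\hkap(x_{12},x_{13},x_{23})$ this works: your closed form $t^3-t(p^2+q^2+s^2)+2pqs$ for the Gram determinant is correct, and multiplying by $4/R^2$ does yield $\hkap$.

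The genuine problem is the sign bookkeeping at the bordered determinant, which is exactly the step you wave through. After $C_j\mapsto C_j-2R^2C_1$ the lower-right block becomes $-2(A_i\cdot A_j)$, so expanding on the corner gives $\tfrac1{2R^2}\,(-2)^3\det(\mathrm{Gram})=-\tfrac4{R^2}\det(\mathrm{Gram})=-(\skap)^2$, i.e.\ the constant comes out with a minus sign, not ``exactly the claimed constant.'' The same sign appears in your cofactor expansion: the corner contributes $\tfrac{\kap}{2}\cdot 2x_{12}x_{13}x_{23}=+\kap x_{12}x_{13}x_{23}$, whereas $\hkap$ contains $-\kap x_{12}x_{13}x_{23}$; in fact the displayed $4\times4$ determinant equals $-\hkap$, not $\hkap$. (Test $R=1$ with $A_1,A_2,A_3$ the standard basis vectors: the determinant is $-4$ while $(\skap)^2=\hkap=4$. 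This matches Remark~\ref{rem: dim three cay meng}, which records the identity as $-\scmthree=\hkap=(\skap)^2$, and the flat case~\eqref{eq: cm area heron}, where the Cayley--Menger determinant likewise enters with a minus sign; so the middle member of~\eqref{eq: her eq} holds only up to sign as printed.) Thus your proposal asserts two identities---``produces exactly the claimed constant times the Gram determinant'' and ``adding up the four contributions reproduces $H^\kap$''---that are false as stated; the correct conclusion of your own computation is $(\skap)^2=\hkap=-\det[\cdots]$. The fix is a one-character sign correction, but the sanity checks you mention at the end (flat limit against the classical Cayley--Menger identity, or an equilateral example) were precisely what was needed here and were not actually carried out.
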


\begin{remark}
In the \emph{flat limit}~$R\to\infty$ (i.e., for~$K=0$),  
Proposition~\ref{prop: heron intro} recovers a well-known formula for the squared area
of a triangle~$A_1A_2A_3$ on the Euclidean plane in terms of the squared lengths~$x_{ij}$ of its sides~$A_iA_j$, an application of the \emph{Cayley-Menger determinant}.

The case~$\kap=0$ of Proposition~\ref{prop: heron intro}  can also be seen as a restatement of the classical \emph{Heron's formula} for the area of a Euclidean triangle.
We therefore call equation~\eqref{eq: her eq} the \emph{(spherical) Heron equation}.

The determinant in~\eqref{eq: her eq} is known as the spherical Cayley-Menger determinant, which we learned about in T. Tao's blog post~\cite{tao_spherical_2019}, cf.~\cite{audet_determinants_2011, blumenthal_distribution_1943}. 
\end{remark}

The squared side distances~$x_{ij},x_{ik},x_{jk}$, together with one of the two possible values of~$\skap(A_i,A_j,A_k)$
allowed by~\eqref{eq: her eq},
determine a  spherical triangle~$A_iA_jA_k$ 
up to oriented isometry.  
This leads to our first coordinatization of~$\bfS^n/\Aut(\bfS)$ that  
involves the~$2n-3$ squared distances~$x_{ij}$ associated to the sides and diagonals in a triangulation~$T$ of an~$n$-gon,
plus the~$n-2$ measurements~$\skap(A_i,A_j,A_k)$ associated to the triangles of~$T$.  

\medskip

We next write formulas expressing all basic measurements for a spherical polygon
in terms of this initial data.  We begin with the case of a spherical quadrilateral~$A_1A_2A_3A_4$, see Figure~\ref{fig: intro quad}.

\begin{figure}[htbp!]
    \centering
\vspace{-4pt}
    \begin{tikzpicture}

\coordinate [label=left:$A_1$] (A_1) at (-2,2);
\circleat{-2}{2};

\coordinate [label=below:$A_4$](A_4) at (-0.5,1);
\circleat{-0.5}{1};

\coordinate [label=right:$A_3$] (A_3) at (2,2);
\circleat{2}{2};

\coordinate[label=above:$A_2$] (A_2) at (1,4);
\circleat{1}{4};

\coordinate [label=left:$q$] (q) at (-2.5,1);
\coordinate (q_1) at (-1,1.75);
\draw (q_1) to [out=180,in=0] (q) ;

\coordinate [label=left:$p$] (p) at (-2,3);
\coordinate (p_1) at (-1.25,2.25);
\draw (p_1) to [out=135,in=0] (p) ;

\coordinate [label=left:$r$] (r) at (-1,4);
\coordinate (r_1) at (0,3);
\draw (r_1) to [out=90,in=0] (r);

\coordinate [label=right:$s$] (s) at (2,4);
\coordinate (s_1) at (1,3);
\draw (s_1) to [out=90,in=180] (s);

\draw (A_1) to node[above left] {$b$} (A_2) ;
\draw (A_2) to node[above right] {$c$} (A_3) ;
\draw (A_3) to node[below] {$d$} (A_4) ;
\draw (A_4) to node[below left] {$a$} (A_1) ;
\draw (A_1) to node[above, near end ] {$e$} (A_3) ;
\draw[dotted] (A_2) to node[left, near start] {$f$} (A_4);

\end{tikzpicture}\hspace{1 cm}
\begin{tikzpicture}[every node/.style={circle,fill=white}]

        \begin{scope}[on background layer]
       
        \draw[dashed] (-2,4) node {$d$}
        --(2,0);
        \draw[dashed] (-2,0) node {$b$}
        --(2,4);
    \end{scope}
        \draw[very thick]
        (0,0) node {$c$}
        -- (-1,1) node {$p$}
        -- (-2,2) node {$e$}
        -- (-1,3) node {$q$}
        -- (0,4) node {$a$}
        -- (1,3) node {$r$}
        -- (2,2) node {$f$}
        -- (1,1) node {$s$}
        -- (0,0);
\end{tikzpicture}    
\vspace{-8pt}
\caption{Left:
Measurements in a spherical quadrilateral~$A_1A_2A_3A_4$  include  
(i)~six squared distances:~$a=x_{14}, b=x_{12}, c=x_{23}, d=x_{34}, e=x_{13}, f=x_{24}$;  
(ii)~four~measurements~$S^K(A_i,A_j,A_k)$ associated with the triangles~$A_iA_jA_k$: \\
$p\!=\!\skap(A_1,A_2,A_3),q\!=\!\skap(A_1,A_3,A_4),r\!=\!\skap(A_1,A_2,A_4),s\!=\!\skap(A_2,A_3,A_4)$.
Right: the same measurement data, arranged in a spherical Heronian diamond.  
}
\vspace{-4pt}
    \label{fig: intro quad}
\end{figure}
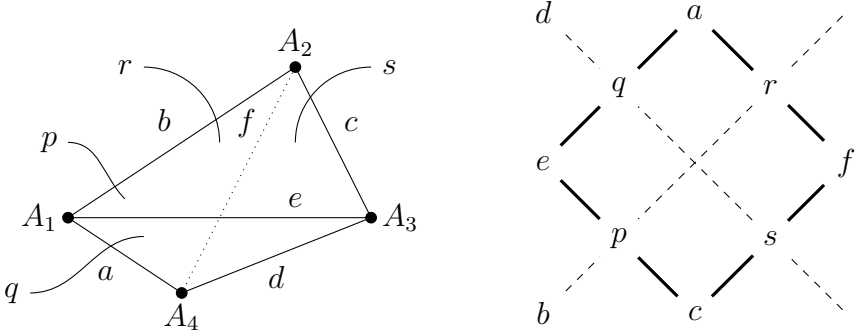

\pagebreak[3]

The ten measurements associated with the quadrilateral~$A_1A_2A_3A_4$ (the~${\binom{4}{2}=6}$ squared distances~$x_{ij}$  and the  
${\binom{4}{3}=4}$ measurements~$S^K(A_i,A_j,A_k)$)
satisfy four instances of the Heron equation~\eqref{eq: her eq}:
\begin{align}\label{eq: four instances}
\begin{split}
        p^2 &= \hkap(b,c,e)\, , \\ 
    q^2 &= \hkap(a,d,e)\, , \\
    r^2 &= \hkap(a,b,f) \, , \\ 
    s^2 &= \hkap(c,d,f) \, .
    \end{split}
\end{align}
They also satisfy three additional relations stated in Proposition~\ref{prop: 2.14 analogue intro} below.

\begin{proposition}[cf.~Proposition~\ref{prop: analogue of 2.14}]\label{prop: 2.14 analogue intro}
    Let~$(a,b,c,d,e,p,q)$ be a~$7$-tuple of complex numbers arising from a spherical quadrilateral~$A_1A_2A_3A_4$ as shown in Figure~\ref{fig: intro quad}. 
    Assuming points~$A_1,A_3$ are not coincident or antipodal (i.e.~$e\not\in\left\{0,\frac{4}{\kap}\right\}$), we can compute~$f,r,s$ using the following formulas:
    \begin{align}\label{eq: intro f}
    \begin{split}
        f&=\frac{(p+q)^2+(a-b+c-d)^2-\kap e(a-b)(c-d)}{4e\left(1-\frac{\kap e}{4}\right)} \, ,\\
        r&=\frac{p\left(e+a-d-\frac{\kap ae}{2}\right)+q\left(e-c+b-\frac{\kap be}{2}\right)}{2e\left(1-\frac{\kap e}{4}\right)}\, ,\\
        s&=\frac{p\left(e-a+d-\frac{\kap de}{2}\right)+q\left(e+c-b-\frac{\kap ce}{2}\right)}{2e\left(1-\frac{\kap e}{4}\right)}\, .
        \end{split}
    \end{align}
\end{proposition}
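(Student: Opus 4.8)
The plan is to pass to the ambient space. Place the center~$O$ of~$\bfS$ at the origin of~$\R^3$ (or~$\C^3$), so that each vertex~$A_i$ is a vector with~$A_i\cdot A_i=R^2=\tfrac1\kap$ and~$A_i\cdot A_j=R^2-\tfrac12 x_{ij}$ for~$i\ne j$; and since~$V(OA_iA_jA_k)=\tfrac16\det[A_i\,A_j\,A_k]$ when~$O$ is the origin (writing~$[\,u\,v\,w\,]$ for the matrix with those columns), formula~\eqref{eq: skap-intro} becomes~$\skap(A_i,A_j,A_k)=\tfrac2R\det[A_i\,A_j\,A_k]$. I then form the~$4\times3$ matrix~$M$ whose~$i$-th row is~$A_i$, so that its Gram matrix~$G=MM^{\mathrm T}$ has entries~$G_{ij}=A_i\cdot A_j$. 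The whole argument rests on the single fact that~$M$, and hence~$G$, has rank at most~$3$.

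From this I extract two identities, both valid for arbitrary~$A_1,\dots,A_4$. First, since~$M$ has only three columns, the submatrix of~$G$ on rows~$\{1,2,3\}$ and columns~$\{1,3,4\}$ is the product of the~$3\times3$ matrix of rows~$A_1,A_2,A_3$ of~$M$ with the transpose of the~$3\times3$ matrix of rows~$A_1,A_3,A_4$; hence its determinant is~$\det[A_1\,A_2\,A_3]\cdot\det[A_1\,A_3\,A_4]=\tfrac{R^2}{4}\,pq$. Its only entry involving~$f$ is~$A_2\cdot A_4=R^2-\tfrac f2$, so this determinant is affine-linear in~$f$. Second, writing~$M_{ijk}$ for the determinant of rows~$i,j,k$ of~$M$, the alternating vector~$v=(M_{234},-M_{134},M_{124},-M_{123})^{\mathrm T}$ of maximal minors lies in the left kernel of~$M$, hence of~$G$; since~$M_{ijk}=\tfrac R2\skap(A_i,A_j,A_k)$, the relation~$v^{\mathrm T}G=0$ reads, column by column,
\begin{equation*}
	s\,(A_1\cdot A_j)-q\,(A_2\cdot A_j)+r\,(A_3\cdot A_j)-p\,(A_4\cdot A_j)=0\qquad(j=1,2,3,4).
\end{equation*}

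It then remains to solve. Because~$A_2\cdot A_4=R^2-\tfrac f2$ is the only inner product involving~$f$, the cases~$j=1$ and~$j=3$ of the last relation form a linear system in~$(r,s)$ free of~$f$, with coefficient determinant~$(R^2-\tfrac e2)^2-R^4=-e(R^2-\tfrac e4)=-\tfrac e\kap(1-\tfrac{\kap e}4)$; this is nonzero precisely because~$e\notin\{0,\tfrac4\kap\}$, and solving the system and simplifying with~$R^2=\tfrac1\kap$ should yield the stated formulas for~$r$ and~$s$. For~$f$, I expand the determinant of the previous paragraph as~$\tfrac{e}{2\kap}(1-\tfrac{\kap e}4)\,f+\mu_0$, where~$\mu_0=\mu_0(a,b,c,d,e,\kap)$ is its value at~$f=0$, set it equal to~$\tfrac{R^2}{4}pq$, substitute~$2pq=(p+q)^2-p^2-q^2$ together with~$p^2=\hkap(b,c,e)$ and~$q^2=\hkap(a,d,e)$ from Proposition~\ref{prop: heron intro}, and clear denominators; this reduces the claimed formula for~$f$ to the polynomial identity
\begin{equation*}
	-8\kap\,\mu_0=\hkap(b,c,e)+\hkap(a,d,e)+(a-b+c-d)^2-\kap e(a-b)(c-d),
\end{equation*}
which I then verify by direct expansion.

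All the ideas lie in~$\operatorname{rank}(G)\le3$ together with the Heron equation; the rest is bookkeeping---keeping signs straight in the two rank-$3$ identities, carrying out the~$2\times2$ solve, and checking the polynomial identity, which I expect to be the step most prone to sign errors. One point I will address carefully: the derivation of~$r$ and~$s$ uses that~$M$ genuinely has rank~$3$, i.e.\ that~$A_1,\dots,A_4$ do not all lie on one great circle; when they do, all of~$p,q,r,s$ vanish and the formulas hold trivially, and the general case then follows by continuity, both sides being rational in~$(a,b,c,d,e,p,q)$ with denominators that do not vanish under the hypothesis~$e\notin\{0,\tfrac4\kap\}$. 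No such caveat is needed for~$f$, whose defining identity above holds and is linear in~$f$ regardless.
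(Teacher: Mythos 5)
Your proposal is correct: the $2\times2$ Cramer solve of the $j=1,3$ scalar-product relations does reproduce the stated formulas for~$r$ and~$s$ (I checked the algebra), and the reduction of the~$f$ formula to the polynomial identity for~$\mu_0$ is exactly equivalent to the paper's computation behind Theorem~\ref{thm: sph Bret}, so the identity does hold. Your route is built on the same two linear-algebra facts the paper uses --- the signed-maximal-minor relation (the paper's Lemma~\ref{lem: j3 relation}, whose scalar products give Proposition~\ref{prop: p+q identities}) and the Gram/Cauchy--Binet identity (Lemma~\ref{lem: vector mult}, which underlies the spherical Bretschneider formula) --- but you deploy them more directly: you solve two of the four scalar-product relations for~$(r,s)$ in one pass and expand the $3\times3$ Gram determinant cofactor-wise for~$f$, whereas the paper first packages these facts into intermediate identities (equations~\eqref{eq: ers}, \eqref{eq:def1sph p+qanalogue}, and the Bretschneider analogue) and, in the proof of Proposition~\ref{prop: analogue of 2.14}, verifies the existence half by a Gr\"obner-basis ideal-membership computation. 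What each buys: the paper's formulation is purely algebraic (any $7$-tuple satisfying the two Heron equations, which is what the frieze propagation needs), while yours covers exactly the geometric hypothesis of the intro statement and gives a self-contained, hand-checkable derivation with no computer algebra. One small correction: your worry that the derivation of~$r,s$ needs $\operatorname{rank}M=3$ is unfounded --- the relation $v^{\mathrm T}M=0$ for the vector of signed maximal minors holds for every $4\times3$ matrix (it is precisely Lemma~\ref{lem: j3 relation}), so the degenerate-case/continuity patch is unnecessary; the only hypothesis genuinely used is $e\notin\{0,\frac{4}{\kap}\}$, which makes the $2\times2$ coefficient determinant nonzero.
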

Proposition~\ref{prop: 2.14 analogue intro} shows that all ten basic measurements for a spherical quadrilateral can be explicitly written as rational expressions in terms of a subset of seven measurements (five~$x_{ij}$’s and two~$S^\kap$’s) associated with a particular triangulation of the quadrilateral. 
\begin{remark}
    In the special case~$\kap=0$, we recover the analogous result obtained in~\cite{fomin_heronian_2021}
    for tuples arising from quadrilaterals in the Euclidean plane. 
\end{remark}

\begin{definition}[cf.~\cite{fomin_heronian_2021} and Remark~\ref{rem: def equiv}]\label{def: intro def}
    Fix~$\kap\in\R$. A \emph{spherical Heronian diamond} is an ordered 10-tuple of numbers~$(a,b,c,d,e,f,p,q,r,s)$ arranged in the diamond pattern shown in Figure~\ref{fig: intro quad} and satisfying equations~\eqref{eq: four instances}--\eqref{eq: intro f}. Explicitly,
    \begin{itemize}
        \item $a, c, e, f$ are placed at the corners of the diamond;
        \item $p, q, r, s$ are placed at the midpoints of the sides of the diamond;
        \item the dashed line heading Southeast (resp. Southwest) is labeled~$d$ (resp.~$b$).
    \end{itemize}
(The entries are arranged so that the four ``measurements'' associated with each side of the diamond (e.g.,~$a, d, e, q$) satisfy the spherical Heron equation~\eqref{eq: her eq}.) 
\end{definition}


We finally set our sights on spherical polygons. Let~$P=A_1A_2\cdots A_n$ be a spherical polygon.  Each subset of four vertices~$A_i,A_j,A_k,A_l$ forms a spherical quadrilateral, which in turn gives rise to a spherical Heronian diamond consisting of ten measurements. We assemble some of these diamonds into a numerical array that we call a \emph{spherical Heronian frieze}, by analogy with~\cite{fomin_heronian_2021}. To~get a sense of what a spherical Heronian frieze looks like, see Figure~\ref{fig: sph heron example intro}. 

\begin{figure}[htbp!]
        \centering
        \begin{tikzpicture}
        [scale=3, greynode/.style={rectangle,fill=black!5,inner sep=2pt}, whitenode/.style={rectangle,fill=white, inner sep=2pt}, bordergreynode/.style={rectangle,draw=magenta,very thick,fill=white!5,inner sep=3pt}, borderwhitenode/.style={rectangle,draw=blue,fill=white,very thick,inner sep=3pt}]        

        \begin{scope}[on background layer] 
\filldraw[black!5] (-0.75,2) -- (-0.75,1.75) -- (1,0) -- (3,2) -- (-0.75,2);
        \draw[very thick] (-0.5,0.5) -- (0,0);

\draw[very thick] (-0.5,1.5) -- (1,0);

\draw[very thick] (0,2) -- (2,0);

\draw[very thick] (1,2) -- (3,0);

\draw[very thick] (2,2) -- (4,0);

\draw[very thick] (3,2) -- (4,1);

\draw[very thick](4,2) -- (4, 2);

\begin{scope}

\draw[dashed] (-0.75,.75) 
        --(.75,2.25) node[whitenode] {$18$};

        \draw[dashed] (-0.5,0) 
        --(1.75,2.25) node[whitenode] {$30$};

\draw[dashed] (0.25,-0.25)  node[whitenode] {$2$}
        -- (2.75 ,2.25);

\draw[dashed] (1.25,-0.25)  node[whitenode] {$6$}
        --(3.75,2.25);
        
\draw[dashed] (2.25,-0.25)  node[whitenode] {$18$}
        --(4.25,1.75);  

\draw[dashed] (3.25,-0.25) node[whitenode] {$30$} --(4.25,0.75);

\draw[dashed] (-0.75,1.25)  --(0.75,-0.25) node[whitenode] {$2$};

\draw[dashed] (-0.5,2)  --(1.75,-0.25) node[whitenode] {$6$};

\draw[dashed] (0.25,2.25)   --(2.75,-0.25) node[whitenode] {$18$};

\draw[dashed] (1.25,2.25) node[whitenode] {$30$} --(3.75,-0.25);

\draw[dashed] (2.25,2.25) node[whitenode] {$2$} --(4.25,0.25);

\draw[dashed] (3.25,2.25) node[whitenode] {$6$} --(4.25,1.25);

\end{scope}
    \end{scope}

\draw[very thick](-0.5,1.5) node[whitenode] {$6$} -- (0,2) node[whitenode] {$0$} ;

\draw[very thick](-0.5,0.5) node[whitenode] {$30$} -- (0,1) node[whitenode] {$24$} -- (0.5,1.5) node[greynode] {$18$} -- (1,2) node[bordergreynode] {$0$};

        \draw[very thick](0,0) node[whitenode] {$0$} -- (0.5,0.5) node[bordergreynode] {$2$} -- (1,1) node[bordergreynode] {$12$} -- (1.5,1.5) node[bordergreynode] {$30$} -- (2,2) node[whitenode] {$0$};

        \draw[very thick] (1,0) node[bordergreynode] {$0$} -- (1.5,0.5) node[whitenode] {$6$} -- (2,1) node[whitenode] {$24$} -- (2.5,1.5) node[borderwhitenode] {$2$} -- (3,2) node[borderwhitenode] {$0$};
        
        \draw[very thick](2,0) node[whitenode] {$0$} -- (2.5,0.5) node[whitenode] {$18$} -- (3,1) node[borderwhitenode] {$12$} -- (3.5,1.5) node[whitenode] {$6$} -- (4,2) node[whitenode] {$0$};

        \draw[very thick](3,0) node[borderwhitenode] {$0$} -- (3.5,0.5) node[borderwhitenode] {$30$} -- (4,1) node[whitenode] {$24$};

        \draw[very thick](4,0) node[whitenode] {$0$};

        \filldraw [black] (-0.25,0.25) circle (0.5pt) node[whitenode] {\tiny$0$};

         \filldraw [black] (-0.25,0.75) circle (0.5pt) node[whitenode] {\tiny$-4$};

        \filldraw [black] (-0.25,1.25) circle (0.5pt) node[whitenode] {\tiny$-12$};

        \filldraw [black] (-0.25,1.75) circle (0.5pt) node[greynode] {\tiny$0$};

        \filldraw [black] (0.25,0.25) circle (0.5pt) node[whitenode] {\tiny \tiny$0$};

        \filldraw [black] (0.25,0.75) circle (0.5pt) node[whitenode] {\tiny$-4$};

        \filldraw [black] (0.25,1.25) circle (0.5pt) node[greynode] {\tiny$-12$};

        \filldraw [black] (0.25,1.75) circle (0.5pt) node[greynode] {\tiny$0$};

        \filldraw [black] (0.75,0.25) circle (0.5pt) node[bordergreynode] {\tiny \tiny$0$};

        \filldraw [black] (0.75,0.75) circle (0.5pt) node[bordergreynode] {\tiny$4$};

        \filldraw [black] (0.75,1.25) circle (0.5pt) node[greynode] {\tiny$-12$};

        \filldraw [black] (0.75,1.75) circle (0.5pt) node[greynode] {\tiny$0$};

        \filldraw [black] (1.25,0.25) circle (0.5pt) node[whitenode] {\tiny \tiny$0$};

        \filldraw [black] (1.25,0.75) circle (0.5pt)  node[greynode] {\tiny$4$};

        \filldraw [black] (1.25,1.25) circle (0.5pt)  node[bordergreynode] {\tiny$-12$};

        \filldraw [black] (1.25,1.75) circle (0.5pt) node[bordergreynode] {\tiny$0$};

        \filldraw [black] (1.75,0.25) circle (0.5pt)  node[whitenode] {\tiny$0$};

        \filldraw [black] (1.75,0.75) circle (0.5pt)  node[whitenode] {\tiny$-12$};

        \filldraw [black] (1.75,1.25) circle (0.5pt) node[greynode] {\tiny$-4$};

        \filldraw [black] (1.75,1.75) circle (0.5pt) node[greynode] {\tiny$0$};

        \filldraw [black] (2.25,0.25) circle (0.5pt)  node[whitenode] {\tiny$0$};

        \filldraw [black] (2.25,0.75) circle (0.5pt) node[whitenode] {\tiny$-12$};

        \filldraw [black] (2.25,1.25) circle (0.5pt) node[whitenode] {\tiny$-4$};

        \filldraw [black] (2.25,1.75) circle (0.5pt) node[greynode] {\tiny$0$};

 \filldraw [black] (2.75,0.25) circle (0.5pt) node[whitenode] {\tiny$0$};

        \filldraw [black] (2.75,0.75) circle (0.5pt) node[whitenode] {\tiny$-12$};

        \filldraw [black] (2.75,1.25) circle (0.5pt) node[borderwhitenode] {\tiny$4$};

        \filldraw [black] (2.75,1.75) circle (0.5pt) node[borderwhitenode] {\tiny$0$};

        \filldraw [black] (3.25,0.25) circle (0.5pt) node[borderwhitenode] {\tiny$0$};

        \filldraw [black] (3.25,0.75) circle (0.5pt) node[borderwhitenode] {\tiny$-12$};

        \filldraw [black] (3.25,1.25) circle (0.5pt) node[whitenode] {\tiny$4$};

        \filldraw [black] (3.25,1.75) circle (0.5pt) node[whitenode] {\tiny$0$};

        \filldraw [black] (3.75,0.25) circle (0.5pt) node[whitenode] {\tiny$0$};

        \filldraw [black] (3.75,.75) circle (0.5pt) node[whitenode] {\tiny$-4$};

        \filldraw [black] (3.75,1.25) circle (0.5pt) node[whitenode] {\tiny$-12$};

        \filldraw [black] (3.75,1.75) circle (0.5pt) node[whitenode] {\tiny$0$};

        \end{tikzpicture}
        \caption{An example of a spherical Heronian frieze. To~see the glide symmetry, flip the gray shaded region upside down and paste it to the right of its original location. The magenta and blue entries form two traversing paths.}
        \vspace{-15pt}
        \label{fig: sph heron example intro}
    \end{figure}

We index the entries in a spherical Heronian frieze of order~$n$ by the set
\begin{equation*}
\left\{(i,j)\in\left(\Z\times\Z\right)\cup\left(\textstyle\frac{1}{2}\Z\times\Z\right)\cup\left(\Z\times\textstyle\frac{1}{2}\Z\right) \mid i\le j\le i+n\right\}\subseteq\left(\textstyle\frac{1}{2}\Z\right)^2
\end{equation*}
which is rotated by~$\pi/4$ so that the points~$(i,i)$ lie along the bottom of the strip and the points~$(i,i+n)$ run along the top, cf.~Figure~\ref{fig: sph heron index set}. We also associate an additional entry of a spherical Heronian frieze with each half-integer line
\begin{equation}\label{eq: line index set}
 \left\{\left(i+\textstyle\frac{1}{2},y\right)\mid y\in\R\right\} \text{ or }\left\{\left(x,j+\textstyle\frac{1}{2}\right)\mid x\in\R\right\} 
\end{equation}
(see the dashed lines in Figure~\ref{fig: sph heron example intro}). We denote these lines~$(i,\neline)$ and~$(\seline,j)$, respectively.

A spherical polygon~$P=A_1A_2\cdots A_n$ gives rise to a spherical Heronian frieze as follows.  Working modulo~$n$, we place each squared distance~$x_{ij}$ at the integer point~$(i,j)$ in the frieze. We place triangle measurements~$S^\kap(A_i,A_{i+1},A_j)$ and~$S^\kap(A_i,A_j,A_{j+1})$ at~$(i+\frac{1}{2},j)$ and~$(i,j+\frac{1}{2})$, respectively. Note that we only capture the~$S^\kap$ measurement for triangles that share a side with~$P$.  We associate the squared distances~$x_{i,i+1}$ and~$x_{j,j+1}$ with the lines~$(i,\neline)$ and~$(\seline,j)$ (see~\eqref{eq: line index set}). This arrangement of data mirrors the classical frieze patterns originally defined by Coxeter~\cite{coxeter_frieze_1971} in connection with the geometric phenomenon of the \emph{pentagramma mirificum}.

Spherical Heronian friezes provide our first solution to the original problem. We start with a triangulation~$T$ where each triangle shares a side with the polygon.  The corresponding initial data consists of~$2n-3$ squared distances~$x_{ij}$ associated with the sides and diagonals in~$T$ and~$n-2$ measurements~$S^\kap$ associated with the triangles in~$T$. Beginning the construction of a spherical Heronian frieze, we place this data at the appropriate locations as described above. (These locations form a \emph{traversing path} in the frieze going from top to bottom, see Figure~\ref{fig: sph heron example intro}.) We then recursively apply Proposition~\ref{prop: 2.14 analogue intro} to propagate outwards and recover the rest of the entries in the frieze, hence the remaining measurements of the spherical polygon.  The outwards propagation from a traversing path follows the pattern of the Conway-Coxeter  correspondence between triangulations of polygons and integral frieze patterns~\cite{conway_triangulated_1973}.

The rational formulas resulting from the propagation process are structurally simpler than one might expect. Like the classical Coxeter-Conway friezes, the entries in a spherical Heronian frieze  satisfy a version of the \emph{Laurent phenomenon}:
\begin{proposition}[cf.~Proposition~\ref{prop: analogue of cor 4.10}]\label{prop: laurent}
    Let~$\zpi$ be the initial data associated with a traversing path~$\pi$ in a spherical Heronian frieze~$\zfrieze$. Then each entry of~$\zfrieze$ can be written as a rational function of~$\zpi$ whose denominator is a product of (powers of)
    factors of the form~$x_{ij}$ and~$\ol{x}_{ij}=1-\frac{\kap}{4} x_{ij}$, for~$x_{ij}\in\zpi$.
\end{proposition}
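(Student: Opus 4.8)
The plan is to sidestep the recursive propagation and instead reconstruct, directly from $\zpi$, the entire ``Gram data'' of a configuration realizing it. Place the center $O$ of $\bfS$ at the origin of $\R^3$ (assume $\kap\neq 0$; the case $\kap=0$ is the Heronian frieze statement of~\cite{fomin_heronian_2021} and also follows by letting $\kap\to 0$), and record the inner products $g_{ij}=A_i\cdot A_j$ together with the $3\times 3$ determinants $D_{ijk}=\det[A_i\mid A_j\mid A_k]$. Every entry of a spherical Heronian frieze is an affine function of one of these: $x_{ij}=2R^2-2g_{ij}$ and $S^\kap(A_i,A_j,A_k)=\tfrac2R D_{ijk}$. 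So it suffices to prove that each $g_{ij}$ and each $D_{ijk}$ is a rational function of $\zpi$ whose denominator is a product of powers of the factors $x_{k\ell}$ and $\ol x_{k\ell}$ with $x_{k\ell}\in\zpi$; one then checks that the array so produced satisfies the defining relations of a spherical Heronian frieze (these relations hold identically for the Gram data of points in $\R^3$) and restricts to $\zpi$ along $\pi$, hence coincides with $\zfrieze$ by uniqueness of the outward propagation from a traversing path.

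The traversing path $\pi$ corresponds to a triangulation $T$ every triangle of which shares a side with $P$, so its triangles can be ordered $\Delta_1,\dots,\Delta_{n-2}$ with each $\Delta_{m+1}$ meeting $\Delta_1\cup\cdots\cup\Delta_m$ along a single diagonal of $T$. I would build the Gram data by induction on $m$. For $\Delta_1=A_aA_bA_c$ all of $g_{ab},g_{ac},g_{bc}$ (and $g_{ii}=R^2$) and $D_{abc}$ are entries of $\zpi$. When passing to $\Delta_{m+1}=A_bA_cA_d$ with shared diagonal $A_bA_c$ and new vertex $A_d$, write $A_d=\alpha A_b+\beta A_c+\gamma\,(A_b\times A_c)$. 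Solving the linear system $g_{bd}=\alpha R^2+\beta g_{bc}$, $g_{cd}=\alpha g_{bc}+\beta R^2$ expresses $\alpha,\beta$ as rational functions with denominator $R^4-g_{bc}^2=R^2\,x_{bc}\ol x_{bc}$; and the constraint $g_{dd}=R^2$, which reads $R^2=\alpha g_{bd}+\beta g_{cd}+\gamma D_{bcd}$ since $A_d\cdot(A_b\times A_c)=D_{bcd}$, gives $\gamma=\bigl(\det[g]_{\{b,c,d\}}\bigr)\big/\bigl((R^4-g_{bc}^2)\,D_{bcd}\bigr)$. Here the elementary identity $\det(\mathrm{Gram})=(\det)^2$ — here $\det[g]_{\{b,c,d\}}=D_{bcd}^2$, a form of the spherical Heron equation (Proposition~\ref{prop: heron intro}) — collapses this to $\gamma=D_{bcd}/(R^4-g_{bc}^2)$, so that $\gamma$, too, has denominator only $x_{bc}\ol x_{bc}$. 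Finally, for earlier-placed vertices $A_e,A_f$ one gets $g_{de}=\alpha g_{be}+\beta g_{ce}+\gamma D_{bce}$ and, using $(A_b\times A_c)\cdot(A_e\times A_f)=g_{be}g_{cf}-g_{bf}g_{ce}$, $D_{def}=\alpha D_{bef}+\beta D_{cef}+\gamma(g_{be}g_{cf}-g_{bf}g_{ce})$; both are rational in the Gram data produced at earlier stages, the only new denominator factor being $x_{bc}\ol x_{bc}$.

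Iterating, every $g_{ij}$ and $D_{ijk}$ receives a denominator that is a product of powers of the factors $x_{bc}\ol x_{bc}$ over the diagonals $A_bA_c$ of $T$; since each side and diagonal of $T$ contributes an element $x_{bc}\in\zpi$, this is exactly the asserted denominator shape, and $x_{ij}=2R^2-2g_{ij}$, $S^\kap=\tfrac2R D_{ijk}$ transfer it to the entries of $\zfrieze$. The step I expect to demand the most care is precisely the collapse of the $D_{bcd}$ factor in $\gamma$: a naive reconstruction puts the measurements $S^\kap$ themselves into the denominators, which the proposition forbids, and it is only the special form of the spherical Cayley--Menger/Heron determinant that removes them. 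A second point needing attention is the identification of the reconstructed array with $\zfrieze$ rather than with merely ``some frieze extending $\zpi$''; this should follow from uniqueness of the propagation together with the fact, used above, that the Gram-data recursion does satisfy the relations of Proposition~\ref{prop: 2.14 analogue intro}.
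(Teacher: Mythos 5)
Your proposal is correct in substance, but it takes a genuinely different route from the paper. The paper proves Proposition~\ref{prop: analogue of cor 4.10} combinatorially: Lemma~\ref{lem: analogue of lemma 4.7} transfers denominator control across a diagonal flip followed by a trimming, Propositions~\ref{prop: analogue of props 4.8} and~\ref{prop: analogue of prop 4.9} then treat $x_{ij}$ and $\skapijk$ by induction on $n$ (with the spherical Bretschneider formula~\eqref{eq: sphericalbret} as the base case and an ad hoc hexagon identity for the triangle $(2,4,6)$), and the identification with $\zfrieze$ comes from uniqueness of propagation. You instead realize $\zpi$ geometrically and rebuild the full Gram data $g_{ij}=\langle \vec{OA_i},\vec{OA_j}\rangle$ and $D_{ijk}=\det[\,\vec{OA_i}\;\vec{OA_j}\;\vec{OA_k}\,]$ by marching through the zigzag triangulation attached to $\pi$, expanding each new vertex in the frame $\{A_b,A_c,A_b\times A_c\}$ of the shared diagonal; the only denominator created at each step is $R^4-g_{bc}^2=R^2x_{bc}\ol{x}_{bc}$, and the spherical Heron equation, in the form $\det\mathrm{Gram}(A_b,A_c,A_d)=D_{bcd}^2$, is exactly what stops $D_{bcd}$ (an $\skap$ entry) from surviving in the denominator of $\gamma$. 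This yields all entries simultaneously, with denominators supported only on the diagonals of the triangulation (which matches the sharper form $D(\pi)$ used in Section~\ref{sec: denominators}), and it avoids the flip/trim induction entirely; the price is that you re-derive the content of Lemma~\ref{lem: 2.4analogue} and still must invoke the same uniqueness input (Proposition~\ref{prop: analogue of 4.10 first part}, resting on Corollary~\ref{cor: 3.10 analogue}) that the paper's proof uses, with its attendant nonvanishing hypotheses.

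Two points need tightening. First, the parenthetical claim that the diamond relations ``hold identically for the Gram data of points in $\R^3$'' is not right as stated: equations~\eqref{eq:def1sphp}--\eqref{eq: def1sph erminuss} hold for points lying on $\bfS$, which is why the constraint $\langle \vec{OA_d},\vec{OA_d}\rangle=R^2$ must actually be verified for the vector you construct. Your ``linear system'' really imposes four conditions (two scalar products, the triple product, and the norm) on three coefficients $\alpha,\beta,\gamma$, and their compatibility is precisely the Heron relation satisfied by the new triangle's data in $\zpi$; this is the computation carried out in the proof of Lemma~\ref{lem: 2.4analogue}, which you should either cite or reproduce rather than treat as a formal linear solve. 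Second, since $D_{ijk}=\frac{R}{2}\skap_{ijk}$ carries a factor of $R=\kap^{-1/2}$, one should note the parity bookkeeping (the $g$'s involve only even powers of $R$, the $D$'s exactly one odd power, and the recursions preserve this), so that the resulting expressions for the $x$- and $\skap$-entries are honest rational functions of $\zpi$ with coefficients in $\Q(\kap)$, as the statement requires; likewise the genericity you need is exactly the nonvanishing of $x_{bc}\ol{x}_{bc}$ on the diagonals of $\pi$, both to form the frames and to place the first triangle via Lemma~\ref{lem: 2.3 analogue}.
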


\begin{remark}
     Proposition~\ref{prop: laurent} implies that the formulas for the entries in a spherical Heronian frieze arising from a spherical polygon~$A_1\cdots A_n$ are computationally stable as long as each pair of points~$(A_i,A_j)$,~$(i,j)\in\pi$, is neither coincident or antipodal. 
\end{remark}

Like classical frieze patterns, spherical Heronian friezes possess \emph{glide symmetry}.
\begin{theorem}[cf.~Theorem~\ref{thm: glide symm}]
Let~$\zpi$ be a sufficiently generic collection of numbers associated to a traversing path~$\pi$. Assume that these numbers satisfy the appropriate spherical Heron equations. Propagate outwards using equations~\eqref{eq: intro f} to obtain a spherical Heronian frieze~$\zfrieze$ of order~$n$.  Then~$\zfrieze$ is~$n$-periodic: the entries at locations~$(k,\ell)$ and~$(k+n,\ell+n)$ are equal.  In fact,~$\zfrieze$ possesses glide symmetry, cf.~Figure~\ref{fig: sph heron example intro}.
\end{theorem}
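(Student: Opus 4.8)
The plan is to establish the glide symmetry in two stages: first prove $n$-periodicity, and then upgrade periodicity to the full glide symmetry by identifying the glide reflection with a well-understood symmetry of the underlying combinatorial frieze structure. The starting point is the observation that the propagation recurrence \eqref{eq: intro f} is an instance of the Conway–Coxeter outward-propagation rule attached to triangulations of an $n$-gon: each traversing path $\pi$ encodes a triangulation $T$ of the cyclically-labeled $n$-gon, and moving off the path corresponds to performing a sequence of diagonal flips that eventually exhausts all diagonals and returns to a cyclic relabeling of $T$. So the first thing I would do is make this dictionary precise—show that the entries $x_{ij}$, $S^\kap(A_i,A_{i+1},A_j)$, $S^\kap(A_i,A_j,A_{j+1})$, and the line labels $x_{i,i+1}$ indexed by $(i,j)$, $(i+\tfrac12,j)$, $(i,j+\tfrac12)$, $(i,\neline)$ in positions $i\le j\le i+n$ are exactly determined, via the Heron equations \eqref{eq: four instances} and \eqref{eq: intro f}, by the data on any traversing path. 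This is where Proposition~\ref{prop: 2.14 analogue intro} does the heavy lifting: it guarantees that the ``flip'' sending $(a,b,c,d,e,p,q)\mapsto(f,r,s)$ across a spherical Heronian diamond is a well-defined rational move (under the genericity hypothesis $e\notin\{0,\tfrac4\kap\}$), and that the diamond relations are symmetric enough that the same move runs in all four directions around the diamond.

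Given that, $n$-periodicity follows from a \emph{consistency} argument: starting from the path $\pi$ and propagating all the way across, after $n$ columns one arrives at a traversing path $\pi'$ whose combinatorial type is the cyclic shift of $\pi$ by $n$, i.e.\ literally the same triangulation with labels $A_i\mapsto A_{i+n}$. Because every entry is a rational function of the path data and the path data at $\pi'$ is obtained from that at $\pi$ by the substitution $(i,j)\mapsto(i+n,j+n)$ (using $x_{ij}=x_{i+n,j+n}$ etc., which holds by definition since indices are taken mod $n$), the entry at $(k+n,\ell+n)$ equals the entry at $(k,\ell)$. I would phrase this as: the map ``propagate from a path'' is equivariant under cyclic relabeling, and cyclic relabeling by $n$ is the identity on the indexing data. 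The one subtlety here is to check that no intermediate flip in the propagation hits a degenerate diamond; this is handled by the genericity assumption on $\zpi$ together with Proposition~\ref{prop: laurent}, whose denominators $x_{ij}$ and $\ol x_{ij}$ are exactly the quantities that must be nonzero for every flip to be legal, so ``sufficiently generic'' can be taken to mean these finitely many Laurent denominators are nonvanishing.

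For the glide symmetry itself—the statement that reflecting the fundamental domain and translating by $n/2$ reproduces the frieze—I would exhibit the glide as the composition of the half-period shift with the reflection of the $n$-gon that reverses the cyclic order, $A_i\mapsto A_{-i}$ (or $A_i\mapsto A_{m-i}$ for an appropriate $m$). The key point is that all the defining data are symmetric under orientation-reversal in the right way: a squared distance $x_{ij}=x_{ji}$ is literally symmetric, the line labels are symmetric, and under the relabeling $A_i\mapsto A_{-i}$ the triangle measurement $S^\kap(A_i,A_{i+1},A_j)$ is carried to $S^\kap(A_{-i},A_{-i-1},A_{-j})$, which equals $S^\kap(A_{-j},A_{-i-1},A_{-i})$ up to the sign change forced by the orientation reversal—and that sign change is precisely the ``flip the shaded region upside down'' operation, since the two entries of a Heronian diamond related by reflection are negatives of each other whenever the triangle orientation is reversed. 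Combining this reflection-equivariance of the propagation with the $n$-periodicity already established yields the glide. The main obstacle, as usual in these frieze arguments, is purely bookkeeping: pinning down the exact affine map on the half-integer index lattice $\{(i,j): i\le j\le i+n\}$ that implements ``reflect and shift by $n/2$,'' and verifying that the involution acts correctly on the three types of entries (integer points, the two kinds of half-integer points, and the diagonal lines) including getting every sign right on the $S^\kap$ entries. I expect no genuinely new geometric input beyond Propositions~\ref{prop: heron intro}, \ref{prop: 2.14 analogue intro}, and \ref{prop: laurent}; the proof is a transport-of-structure argument reducing glide symmetry of the spherical Heronian frieze to the known glide symmetry of the Conway–Coxeter combinatorics, with the Heron equations supplying the compatibility at each diamond.
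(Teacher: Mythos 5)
There are two genuine gaps in your plan. First, your periodicity argument is circular in the purely algebraic setting of the theorem. The frieze is generated by propagation from arbitrary generic numbers on a path $\pi$; its indices are \emph{not} taken modulo $n$, and there is no polygon $A_1\cdots A_n$ given, so the step ``the path data at $\pi'$ is obtained from that at $\pi$ by the substitution $(i,j)\mapsto(i+n,j+n)$, which holds by definition since indices are taken mod $n$'' assumes exactly what is to be proved. Equivariance of the propagation rule under the index shift by $(n,n)$ gives nothing unless you already know the entries along the shifted path coincide with the original data. The missing idea is the geometric realization: from generic path data satisfying the Heron equations one constructs an actual spherical polygon $P$ whose measurements reproduce the data (Lemma~\ref{lem: 2.4analogue} applied inductively, i.e.\ Proposition~\ref{prop: cor 2.9 analogue}), and then uniqueness of propagation (Corollary~\ref{cor: 3.10 analogue}, leading to Corollary~\ref{cor: 3.12 analogue}) forces $\zfrieze=\zfrieze(P)$. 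Only after this identification do ``indices mod $n$'' and hence $n$-periodicity become definitional, as in Remark~\ref{rem: glide symmetry}; this realization step is the heart of the paper's proof and is absent from your outline.

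Second, your description of the glide is incorrect. The glide sends the node $(i,j)$ to $(j,i+n)$ (and half-integer points accordingly), and the entries at glide-related positions are \emph{equal}, not equal up to sign: for a frieze coming from a polygon, $z_{(i,j)}=x_{\lr{i}\lr{j}}=x_{\lr{j}\lr{i}}=z_{(j,i+n)}$ and $z_{(i+\frac12,j)}=\skap_{\lr{i}\lr{i+1}\lr{j}}=\skap_{\lr{j}\lr{i}\lr{i+1}}=z_{(j,i+n+\frac12)}$, the latter using only the \emph{cyclic} (even) invariance of $\skap$. No orientation-reversing relabeling $A_i\mapsto A_{-i}$ and no sign change of the $\skap$ entries enters; the glide is induced by swapping the roles of the two indices (the two ``ants''), not by reflecting the polygon. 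Your assertion that the two entries of a Heronian diamond related by reflection are negatives of each other is also not what happens (compare Proposition~\ref{prop: analogue of 2.15}, where flips of a diamond permute entries without sign changes). If you carried out your sign bookkeeping you would be trying to verify a relation that fails; the correct statement, and the one the paper proves via Remark~\ref{rem: glide symmetry} after the geometric realization, is strict equality of glide-related entries.
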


We now pivot to a wholly different approach to the original spherical distance geometry question. We supplement the initial~$2n-3$ measurements coming from a triangulation with~$n-1$ additional diagonal measurements, which can be thought of as ``bracing'' edges, cf.~\cite{jordan_global_2019}. Unlike the original~$2n-3$ squared distances, the expanded collection is not algebraically independent: they satisfy algebraic relations which we will state next.

\begin{lemma}[$5\times 5$ spherical Cayley-Menger determinant, \cite{tao_spherical_2019}, cf.~Theorem~\ref{thm: scm}]
    Let~$A_1,A_2,A_3,A_4$ be four points on the sphere~$\bfS$.
Then 
\begin{equation}\label{eq: sph caym eq}
\begin{bmatrix}
    \frac{\kap}{2} & 1 & 1 & 1 & 1\\
    1 & 0 & x_{12}  & x_{13} & x_{14} \\
    1 & x_{12} & 0 & x_{23} & x_{24}\\
    1 & x_{13} & x_{23} & 0 & x_{34} \\
    1 & x_{14} & x_{24} & x_{34} & 0
    \end{bmatrix}=0\, .
\end{equation}
\end{lemma}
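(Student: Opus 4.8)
The plan is to identify the left-hand side of~\eqref{eq: sph caym eq}, up to a nonzero scalar, with the Gram determinant of the position vectors of $A_1,\dots,A_4$ relative to the center $O$ of $\bfS$; that Gram determinant vanishes automatically, since four vectors in $\R^3$ must be linearly dependent.

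Concretely, I would first choose Euclidean coordinates so that $O$ is the origin, so that each $A_i$ becomes a vector with $\langle A_i,A_i\rangle=R^2$. Expanding $x_{ij}=|A_i-A_j|^2$ then yields
\begin{equation*}
x_{ij}=2R^2-2\langle A_i,A_j\rangle,\qquad 1\le i,j\le 4,
\end{equation*}
which holds for $i=j$ as well, since $x_{ii}=0$. Let $G=(\langle A_i,A_j\rangle)_{i,j=1}^4$ be the $4\times4$ Gram matrix; then $G=B^\top B$, where $B$ is the $3\times4$ matrix with columns $A_1,\dots,A_4$, so $\operatorname{rank}G=\operatorname{rank}B\le 3$ and hence $\det G=0$.

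It remains to express the $5\times 5$ determinant in~\eqref{eq: sph caym eq} in terms of $\det G$. Index its rows and columns by $0,1,2,3,4$ and use $\kap=\tfrac1{R^2}$. Subtracting $2R^2$ times row $0$ from row $i$, for each $1\le i\le 4$, turns the $(i,0)$ entry into $1-2R^2\cdot\tfrac{\kap}{2}=0$ and the $(i,j)$ entry into $x_{ij}-2R^2=-2\langle A_i,A_j\rangle$ for $j\ge 1$. Expanding the resulting matrix along its $0$th column, only the $(0,0)$ entry $\tfrac{\kap}{2}$ contributes, with cofactor $\det(-2G)=(-2)^4\det G=16\det G$; therefore the determinant in~\eqref{eq: sph caym eq} equals $\tfrac{\kap}{2}\cdot16\det G=8\kap\det G=0$.

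I do not anticipate a real obstacle here — the argument is elementary linear algebra — so the only thing to be careful about is the bookkeeping in the row reduction, and in particular the fact that the choice $\tfrac{\kap}{2}=\tfrac1{2R^2}$ of the $(0,0)$ entry is exactly what makes the border collapse after the operation; for any other value the determinant does not vanish in general. Equivalently, one may apply the matrix determinant lemma to $X-2R^2\mathbf 1\mathbf 1^\top$, where $X=(x_{ij})_{i,j=1}^4$ and $\mathbf 1=(1,1,1,1)^\top$, to reach the same conclusion.
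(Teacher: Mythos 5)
Your proof is correct and follows essentially the same route as the paper's proof of Theorem~\ref{thm: scm} (of which this lemma is the $m=4$ case): both exploit that the Gram determinant of the four position vectors vanishes because they lie in a $3$-dimensional space, and both convert between the bordered determinant and the Gram determinant via $x_{ij}=2R^2-2\langle \overrightarrow{OA_i},\overrightarrow{OA_j}\rangle$. The only difference is bookkeeping direction: you row-reduce the Cayley--Menger matrix directly to the block-triangular form with determinant $\frac{\kap}{2}\det(-2G)=8\kap\det G$, whereas the paper starts from $\det G=0$ and borders/rescales to reach the same proportionality.
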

We define a \emph{spherical Cayley-Menger diamond} (cf.~\cite{fomin_heronian_2021}) to be a 6-tuple of numbers~$x_{12},\dots,x_{34}$ satisfying equation~\eqref{eq: sph caym eq} and arranged as shown below:
\begin{center}
\begin{tikzpicture}
            [scale=0.6,every node/.style={rectangle,fill=white,inner sep=2pt}]

        \begin{scope}[on background layer]
        
        \draw[dashed] (-2,4) node {$x_{34}$}
        --(2,0);
        \draw[dashed] (-2,0) node {$x_{12}$}
        --(2,4);
    \end{scope}
        \draw[very thick]
        (0,0) node {$x_{23}$}
        -- (-2,2) node {$x_{13}$}
        -- (0,4) node {$x_{14}$}
        -- (2,2) node {$x_{24}$}
        -- (0,0);
        \end{tikzpicture}
\end{center}  
We tile these diamonds to form a spherical Cayley-Menger frieze, see Figure~\ref{fig: cayley-menger frieze example intro}.
\begin{figure}[htbp!]
        \centering
        \begin{tikzpicture}
        [scale=2,greynode/.style={rectangle,fill=black!5,inner sep=2pt}, whitenode/.style={rectangle,fill=white, inner sep=2pt}, bordergreynode/.style={rectangle,draw=magenta,very thick,fill=white!5,inner sep=3pt}, borderwhitenode/.style={rectangle,draw=blue,fill=white,very thick,inner sep=3pt}]

        \begin{scope}[on background layer] 
\filldraw[black!5] (-0.75,2) -- (-0.75,1.75) -- (1,0) -- (4,3) -- (-0.75,3) -- (-0.75,2);
        \draw[very thick](-0.5,0.5) -- 
(0,0);

\draw[very thick](-0.5,1.5) -- (1,0);

\draw[very thick](-0.5,2.5) -- (2,0);

\draw[very thick](0,3) -- (3,0);

\draw[very thick](1,3) -- (4,0);

\draw[very thick](2,3) -- (5,0);

\draw[very thick](3,3) -- (5.5,0.5);

\draw[very thick](4,3)-- (5.5,1.5);

\draw[very thick](5,3) -- (5.5,2.5);

\begin{scope}
\draw[dashed] (-0.75,1.75) 
        --(0.75,3.25) node[whitenode] {\tiny$98$}; 

\draw[dashed] (-0.75,.75) 
        --(1.75,3.25) node[whitenode] {\tiny$74$};

        \draw[dashed] (-0.6,-0.1) node[whitenode] {\tiny$26$}
        --(2.75,3.25) node[whitenode] {\tiny$26$};

\draw[dashed] (0.25,-0.25)  node[whitenode] {\tiny$140$}
        -- (3.75 ,3.25) node[whitenode] {\tiny$140$};

\draw[dashed] (1.25,-0.25)  node[whitenode] {\tiny$70$}
        --(4.75,3.25) node[whitenode] {\tiny$70$};
        
\draw[dashed] (2.25,-0.25)  node[whitenode] {\tiny$74$}
        --(5.6,3.1) node[whitenode] {\tiny$74$};  

\draw[dashed] (3.25,-0.25) node[whitenode] {\tiny$98$} --(5.75,2.25);

\draw[dashed] (4.25,-0.25) node[whitenode] {\tiny$74$} --(5.75,1.25);

\draw[dashed] (-0.75,1.25)  --(0.75,-0.25) node[whitenode] {\tiny$140$};

\draw[dashed] (-0.75,2.25)  --(1.75,-0.25) node[whitenode] {\tiny$70$};

\draw[dashed] (-0.6,3.1)  node[whitenode] {\tiny$74$} --(2.75,-0.25) node[whitenode] {\tiny$74$};

\draw[dashed] (0.25,3.25) node[whitenode] {\tiny$98$} --(3.75,-0.25) node[whitenode] {\tiny$98$};

\draw[dashed] (1.25,3.25) node[whitenode] {\tiny$74$} --(4.75,-0.25) node[whitenode] {\tiny$74$};

\draw[dashed] (2.25,3.25) node[whitenode] {\tiny$26$} --(5.6,-0.1) node[whitenode] {\tiny$26$};

\draw[dashed] (3.25,3.25) node[whitenode] {\tiny$140$} -- (5.75,0.75);

\draw[dashed] (4.25,3.25) node[whitenode] {\tiny$70$} -- (5.75,1.75);

\end{scope}
    \end{scope}
\draw[very thick](-0.5,2.5) node[greynode] {$74$} -- (0,3) node[whitenode] {$0$};
    
\draw[very thick](-0.5,1.5) node[whitenode] {$14$} -- (0,2) node[bordergreynode] {$50$} -- (0.5,2.5) node[bordergreynode] {$98$} -- (1,3) node[whitenode] {$0$};

\draw[very thick](-0.5,0.5) node[whitenode] {$26$} -- (0,1) node[whitenode] {$126$} -- (0.5,1.5) node[bordergreynode] {$52$} -- (1,2) node[greynode] {$170$} -- (1.5,2.5) node[greynode] {$74$} -- (2,3) node[whitenode] {$0$};

        \draw[very thick](0,0) node[whitenode] {$0$} -- (0.5,0.5) node[bordergreynode] {$140$} -- (1,1) node[bordergreynode] {$26$} -- (1.5,1.5) node[greynode] {$116$} -- (2,2) node[greynode] {$106$} -- (2.5,2.5) node[greynode] {$26$} -- (3,3) node[whitenode] {$0$};

        \draw[very thick](1,0) node[whitenode] {$0$} -- (1.5,0.5) node[whitenode] {$70$} -- (2,1) node[whitenode] {$56$} -- (2.5,1.5) node[whitenode] {$14$} -- (3,2) node[whitenode] {$126$} -- (3.5,2.5) node[borderwhitenode] {$140$} -- (4,3) node[whitenode] {$0$};
        
        \draw[very thick](2,0) node[whitenode] {$0$} -- (2.5,0.5) node[whitenode] {$74$} -- (3,1) node[borderwhitenode] {$50$} -- (3.5,1.5) node[borderwhitenode] {$52$} -- (4,2) node[borderwhitenode] {$26$} -- (4.5,2.5) node[whitenode] {$70$} -- (5,3) node[whitenode] {$0$};

        \draw[very thick](3,0) node[whitenode] {$0$} -- (3.5,0.5) node[borderwhitenode] {$98$} -- (4,1) node[whitenode] {$170$} -- (4.5,1.5) node[whitenode] {$116$} -- (5,2) node[whitenode] {$56$} -- (5.5,2.5) node[whitenode] {$74$};

        \draw[very thick](4,0) node[whitenode] {$0$} -- (4.5,0.5) node[whitenode] {$74$} -- (5,1) node[whitenode] {$106$} -- (5.5,1.5) node[whitenode] {$14$};

        \draw[very thick](5,0) node[whitenode] {$0$} -- (5.5,0.5) node[whitenode] {$26$};

        \end{tikzpicture}
        \caption{A spherical Cayley-Menger frieze of order 6.}
        \label{fig: cayley-menger frieze example intro}
    \end{figure}

    The spherical Cayley-Menger determinant is quadratic in the squared distances, so it cannot be used  to propagate unambiguously from an initial data set associated with a traversing path. We solve this problem by deriving an additional \emph{coherence relation} satisfied by the entries of four interlocking Cayley-Menger diamonds in a Cayley-Menger frieze arising from a polygon; we call a Cayley-Menger frieze \emph{coherent} if this relation is satisfied throughout.  Crucially, the coherence equation is linear with respect to the rightmost entry in a configuration of four interlocking diamonds. This allows outwards propagation.

    Our second solution then proceeds similarly to the first.
    As propagation via the coherence equation involves four Cayley-Menger diamonds, we have to  begin with initial data along a \emph{thickening} of a traversing path, consisting of the entries on the path together with another path next to it. This initial data set contains the squared distances corresponding to a triangulation of an~$n$-gon, together with the squared lengths of~$n-3$ extra diagonals. We then obtain the following theorem.
    \begin{theorem}[cf.~Theorem~\ref{thm: 5.18 analogue}]
    A sufficiently generic coherent spherical Cayley-Menger frieze is uniquely determined by the entries lying on the thickening of a traversing path.
    \end{theorem}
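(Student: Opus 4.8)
The plan is to show that, in a coherent frieze, every entry not lying on the thickening~$\widehat{\pi}$ of the traversing path~$\pi$ is forced, via the coherence relation, by entries lying closer to~$\widehat{\pi}$. Since the coherence relation is linear in the outermost entry of a configuration of four interlocking spherical Cayley--Menger diamonds, this yields a step-by-step reconstruction, and hence uniqueness. Genericity will be needed only to guarantee that, at each step, the coefficient of the entry being solved for is nonzero.

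I would begin with the combinatorial scaffolding. Fix a traversing path~$\pi$ crossing the strip from bottom to top, and let~$\widehat{\pi}$ consist of~$\pi$ together with a parallel traversing path immediately next to it, so that~$\widehat{\pi}$ is a band of two adjacent traversing paths. The positions of the frieze lying strictly to the right of~$\widehat{\pi}$ can be ordered by their combinatorial distance to~$\widehat{\pi}$, in exactly the way the Conway--Coxeter rule propagates a frieze outward from a zig-zag of initial entries: reading away from~$\widehat{\pi}$, each newly reached position is simultaneously the missing fourth vertex of a Cayley--Menger diamond three of whose vertices have already been reached, and the outermost (``rightmost'') entry of a configuration of four interlocking Cayley--Menger diamonds all of whose remaining entries lie closer to~$\widehat{\pi}$. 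The reason the thickening is taken to be \emph{two} adjacent paths, rather than one, is precisely that this guarantees the very first four-diamond configuration encountered already has all but its outermost entry available, so the propagation can start; a single path would not carry enough data for the coherence relation.

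Next I would run the induction. Assume every entry at combinatorial distance less than~$d$ from~$\widehat{\pi}$ (on the right side, say) is already expressed as a rational function of the data on~$\widehat{\pi}$, and let~$p$ be a position at distance~$d$. By the previous paragraph,~$p$ is the outermost vertex of a quadruple of interlocking coherent diamonds, every other entry of which is at distance less than~$d$, hence already known. The coherence relation for this quadruple has, by its construction, the form~$\alpha\, x_p=\beta$ with~$\alpha$ and~$\beta$ polynomial in the already-known entries; if~$\alpha\neq 0$, this determines~$x_p$. Iterating over all~$p$ in order of increasing~$d$, and then repeating the argument symmetrically on the left of~$\widehat{\pi}$, shows that the entire frieze is a rational function of the data on~$\widehat{\pi}$. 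In particular, any two coherent friezes of the given order agreeing on~$\widehat{\pi}$ coincide, which is the assertion. (For uniqueness it suffices to use the coherence relation; the quadratic Cayley--Menger equations of individual diamonds enter only through the definition of the frieze and the derivation of the coherence relation.)

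The main obstacle is controlling genericity: one must show that the leading coefficient~$\alpha$ arising at each step is a nonzero polynomial in the data on~$\widehat{\pi}$, so that ``sufficiently generic'' can be defined as the complement of the finitely many zero loci of these coefficients. I expect this to go as in the Euclidean case treated in~\cite{fomin_heronian_2021}: either identify each~$\alpha$ explicitly---it should be a product of factors of the form~$x_{ij}$ and~$\ol{x}_{ij}=1-\frac{\kap}{4}x_{ij}$, in the spirit of Proposition~\ref{prop: laurent}---and observe that it does not vanish identically, or invoke the fact that friezes coming from genuine spherical polygons are Zariski-dense among coherent friezes of the given combinatorial type and check~$\alpha\neq 0$ on such a frieze. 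A secondary point requiring care is purely combinatorial: verifying that the four-diamond configurations really do tile the region outside~$\widehat{\pi}$ in the claimed order and wrap around the strip consistently, so that the Conway--Coxeter-style propagation is well defined all the way across. This is a finite check on the local structure of the index set.
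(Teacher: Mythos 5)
Your proposal follows essentially the same route as the paper: propagate outward from the thickening of a traversing path, using that the coherence relation for four interlocking Cayley--Menger diamonds is linear in the outermost entry (the paper packages this as Proposition~\ref{prop: 5.15 analogue}, together with Lemma~\ref{lem: is a sph cay meng diamond} showing the solved-for tuple is automatically a Cayley--Menger diamond), with genericity ensuring the leading coefficient is nonzero at every step. The only small discrepancy is your guess for that coefficient: it is $\kap\, x_{25}\bigl(x_{25}-\frac{4}{\kap}\bigr)\,\partial_\rightarrow \scmfour(\xx_4)$, whose square is a product of factors $x_{ij}\bigl(x_{ij}-\frac{4}{\kap}\bigr)$ and Heron polynomials $\hkap$, so the paper's genericity hypotheses are nonvanishing of interior entries and of certain $\hkap$ values rather than a monomial in $x_{ij}\ol{x}_{ij}$ alone.
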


    Spherical Heronian friezes and spherical Cayley-Menger friezes turn out to be closely related.
    \begin{theorem}[cf.~Theorem~\ref{thm: big final thm}]
        
    \begin{enumerate}[leftmargin=*]
    \item[]
        
        \item Given a sufficiently generic spherical Heronian frieze, restricting to the integer-indexed entries (equivalent to forgetting the triangle measurements~$\skap$) yields a spherical Cayley-Menger frieze.

        \item Given a sufficiently generic coherent spherical Cayley-Menger frieze~$\zscm$, there are exactly two spherical Heronian friezes that extend~$\zscm$.  These two friezes differ from each other by a global change in sign.
    \end{enumerate}
    
    \end{theorem}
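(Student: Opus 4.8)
The plan is to route both statements through geometry: a sufficiently generic frieze of either type is realized by an honest (complex) spherical polygon, and the required identities are inherited from that configuration, with a Zariski-density argument mopping up the remaining generic cases. For part~(1), along a traversing path~$\pi$ the entries of a spherical Heronian frieze~$\zfrieze$ are the~$2n-3$ squared distances and the~$n-2$ measurements~$\skap$ attached to a triangulation~$T$, and the~$\skap$'s satisfy the Heron equations~\eqref{eq: her eq}; hence each triangle of~$T$ can be placed on the sphere~$\bfS$, and gluing these triangles along the diagonals of~$T$ produces a spherical polygon~$P=A_1\cdots A_n$. By Proposition~\ref{prop: 2.14 analogue intro} and the glide-symmetry Theorem~\ref{thm: glide symm}, the frieze built from~$P$ is exactly~$\zfrieze$. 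Every integer entry~$x_{ij}$ of~$\zfrieze$ is then a genuine squared Euclidean distance between points of~$\bfS$, so each four-element subset of the vertices makes the~$5\times5$ spherical Cayley--Menger determinant~\eqref{eq: sph caym eq} vanish, and the interlocking-diamond coherence relation holds because it was derived precisely for friezes arising from polygons; thus the integer-indexed restriction of~$\zfrieze$ is a coherent spherical Cayley--Menger frieze. Since these are polynomial conditions and the polygon-realizable Heronian friezes are Zariski dense among the generic ones, the conclusion holds for every sufficiently generic~$\zfrieze$. (Alternatively, part~(1) reduces to the single polynomial identity that substituting~$f$ from~\eqref{eq: intro f} into~\eqref{eq: sph caym eq} and reducing modulo~$p^2=\hkap(b,c,e)$ and~$q^2=\hkap(a,d,e)$ gives~$0$.)

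For the existence half of part~(2): by Theorem~\ref{thm: 5.18 analogue} a sufficiently generic coherent spherical Cayley--Menger frieze~$\zscm$ is determined by the squared distances on a thickened traversing path, namely a triangulation together with~$n-3$ extra diagonals constrained only by the Cayley--Menger relations that coherence renders consistent; such data is realized by a spherical polygon, which supplies the triangle measurements~$\skap$ and hence a spherical Heronian frieze extending~$\zscm$. Formally, ``admits a Heronian extension'' is a constructible condition that holds on a dense set, so it holds on a dense open subset of coherent spherical Cayley--Menger friezes, which is what ``sufficiently generic'' means here.

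For uniqueness up to global sign, work one Cayley--Menger diamond~$(a,b,c,d,e,f)$ at a time. The Heron equations~\eqref{eq: four instances} give~$p^2=\hkap(b,c,e)$ and~$q^2=\hkap(a,d,e)$, while the first formula of~\eqref{eq: intro f} prescribes~$(p+q)^2$ and hence the product~$pq$; for a generic frieze~$p\neq0$, so fixing the sign of~$p$ forces the sign of~$q$, and then the last two formulas of~\eqref{eq: intro f} determine~$r$ and~$s$. Thus each diamond has exactly two completions to a spherical Heronian diamond, interchanged by~$(p,q,r,s)\mapsto(-p,-q,-r,-s)$. Now the frieze is built from a traversing path by repeated such propagation; the~$n-2$ triangles of the path's triangulation form a path under the ``share a diagonal'' relation, each adjacent pair making a Cayley--Menger diamond, so once the sign of a single~$\skap$-entry is chosen every other~$\skap$-entry of the frieze is forced. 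Hence at most two spherical Heronian friezes extend~$\zscm$, and combined with existence exactly two. Finally, the involution negating every~$\skap$-entry preserves~\eqref{eq: four instances} and~\eqref{eq: intro f} (the latter because~$f$ depends only on~$(p+q)^2$ and~$r,s$ are linear in~$p,q$) while fixing every integer entry, so it carries one extension to the other; thus the two friezes differ precisely by a global change of sign.

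The main obstacle is the global consistency of this sign propagation: one must rule out any monodromy as the chosen sign is carried around the periodic frieze and across its width. This is exactly where coherence and genericity of~$\zscm$ are indispensable — coherence supplies the interlocking-diamond identities that make propagation via~\eqref{eq: intro f} well defined, and genericity keeps every~$\skap$-entry nonzero so each local two-to-one step is unambiguous. The cleanest way to discharge it is to run the propagation along the same traversing path used to realize~$\zscm$ by a polygon, so that the realizing polygon itself certifies consistency, with Zariski density then extending the statement to all sufficiently generic~$\zscm$.
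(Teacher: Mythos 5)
Your part~(1) and the uniqueness half of part~(2) are essentially sound and close in spirit to the paper: part~(1) goes through geometric realization (Corollary~\ref{cor: 3.12 analogue}) plus the vanishing of the spherical Cayley--Menger determinant and Theorem~\ref{thm: 5.12 analogue} for coherence, which is the paper's route (the paper additionally has a purely algebraic restriction statement, Proposition~\ref{prop: 6.2 analogue}); and your sign-forcing along overlapping diamonds of a traversing path, followed by Corollary~\ref{cor: 3.10 analogue}, gives ``at most two extensions, differing by a global sign,'' which matches the paper's use of Proposition~\ref{prop: 6.4 analogue}.

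The genuine gap is in the \emph{existence} half of part~(2). You assert that the thickened-path data of a generic coherent Cayley--Menger frieze ``is realized by a spherical polygon, which supplies the triangle measurements $\skap$.'' That realizability is precisely Theorem~\ref{thm: 5.20 analogue}, which in the paper is a \emph{consequence} of the theorem you are proving; your final paragraph even proposes to certify the sign consistency ``by the realizing polygon,'' which presupposes the extension you are trying to construct, so the argument is circular. Realizing only the triangulation part of the thickened path is easy (choose any square roots of the Heron polynomials), but then the $n-3$ bracing distances of the thickening need not match: the whole difficulty is showing that \emph{consistent} sign choices exist that reproduce all the given braced distances, and the coherence hypothesis is exactly what makes this work. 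The paper's mechanism is algebraic and avoids any a priori polygon: lift each diamond along the thickened path by Proposition~\ref{prop: 6.4 analogue} (two lifts, locked together by overlaps), then propagate outward and observe that the restricted Cayley--Menger diamonds of the propagated Heronian entries are automatically coherent (Proposition~\ref{prop: 6.3 analogue}); since coherent propagation is \emph{unique} (Proposition~\ref{prop: 5.15 analogue}, Theorem~\ref{thm: 5.18 analogue}), the new entries must coincide with those of $\zscm$, so the Heronian frieze really extends $\zscm$. Your Zariski-density patch does not repair the circularity: you never establish that extendability holds on a dense set (that is the realization question again), ``admits an extension'' is a constructible but not closed condition, so density would not automatically propagate it to all sufficiently generic friezes, and in any case the theorem's genericity is the explicit non-vanishing conditions~\eqref{eq: condition interior not zero}--\eqref{eq: condition triangles not zero}, not an unspecified dense open set.
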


\subsection*{Structure of the paper}

Sections~\ref{sec: triangles}-\ref{sec: denominators} develop spherical Heronian friezes and their properties.  We begin in Section~\ref{sec: triangles} with geometric preliminaries including the spherical triangle measurement~$\skap$, Heron's formula, and the Cayley-Menger determinant. 
We expand in Section~\ref{sec: quadrilaterals} from spherical triangles to spherical quadrilaterals, establishing a set of algebraic identities satisfied by the measurements associated to a spherical quadrilateral.  
In Section~\ref{sec: sph her friezes} we define spherical Heronian diamonds and friezes, provide examples, and give our first solution to the main problem.
The discussion of spherical Heronian friezes continues in Section~\ref{sec: denominators} where we establish a version of the Laurent phenomenon.

In Section~\ref{sec: cayley menger} we develop our second solution. We begin with a discussion of the spherical analogue of the Cayley-Menger determinant. We define spherical Cayley-Menger friezes, provide examples, and establish the coherence equation in the geometric setting. In Section~\ref{sec: frieze equiv}, we establish the relationship between the two kinds of spherical friezes.

\section*{Acknowledgments}

I would like to thank Sergey Fomin for many productive discussions that were integral to the development of this work, and for editorial comments that greatly improved the final product. I performed all computations in \texttt{Sage}, and explored examples using \texttt{Geogebra}.

The main results of this paper have been reported at several seminars and at 
the conference ``Frieze patterns in algebra, combinatorics and geometry'' held at the Centre International de Rencontres Mathematiques, Luminy, in May 2025.
Thank you to the organizers of these events for the invitations. 
I thank Sergei Tabachnikov, Anna Felikson, and Linus Setiabrata for interesting discussions and feedback.

\newpage

\section{Measurements associated to spherical triangles}\label{sec: triangles}

Let~$V_\R\cong \R^3,$ a three-dimensional real Euclidean space, with a scalar product 
\begin{align*}
    \langle \, ,  \rangle: V_\R\times V_\R&\to \R\\
    (x,y)&\mapsto \langle x, y\rangle
\end{align*}
that is symmetric, bilinear, and positive definite.  Define a trilinear skew-symmetric volume form
\begin{align*}
    [ \, ] : V_\R\times V_\R\times V_\R&\to \R\\
    (x,y,z)& \mapsto
    \begin{dietmatrix}{\begin{bmatrix}
    x & y & z
    \end{bmatrix}}
    \end{dietmatrix}.
\end{align*}
In Cartesian coordinates, if~$x=\left[\begin{smallmatrix} x_1\\x_2\\x_3\end{smallmatrix}\right]$, $y=\left[\begin{smallmatrix}
    y_1 \\ y_2 \\ y_3
\end{smallmatrix}\right]$,  and~$z=\left[\begin{smallmatrix}
    z_1 \\ z_2\\ z_3
\end{smallmatrix}\right]$, then 
\begin{equation*}
    \langle x,y\rangle = x_1y_1 + x_2y_2 + x_3y_3\, ,
    \quad \text{ and } \quad 
    \begin{dietmatrix}\begin{bmatrix}
        x & y & z
    \end{bmatrix}
    \end{dietmatrix}=\det\left[\begin{smallmatrix}
        x_1 & y_1 & z_1 \\
        x_2 & y_2 & z_2\\
        x_3 & y_3 & z_3
    \end{smallmatrix}\right].
\end{equation*}
This construction can be complexified, producing a three-dimensional complex vector space~$V=V_\C$ endowed with both a scalar product and a volume form given by the same formulas.  While the reader can consider the results of this paper in the context of~$V_\R\cong\R^3$ to aid understanding, we will generally work in~$V= V_\C$.

\begin{definition}\label{def: squared distance}
We will implicitly identify~$V$ with an affine space that has a distinguished point~$O$, the origin. For two points~$A,B\in V$, we denote by~$\overrightarrow{AB}$ a vector from~$A$ to~$B$ and by~$x(A,B)$ the scalar product 
\begin{equation*}\label{eq: euc dist}
    x(A,B)=\langle \vec{AB},\vec{AB}\rangle.
\end{equation*}
Note that~$x(A,B)=x(B,A).$ In~$V_\R=\R^3$, the quantity~$x(A,B)$ is the square of the Euclidean distance from~$A$ to~$B$. For points~$A_1,A_2,\dots$, we denote~${x_{ij}=x(A_i,A_j)}$.
\end{definition}

Let~$\bfS\subseteq V$ be a sphere of radius~$R$ centered at~$O$. We view~$\bfS$ as a space of constant Gaussian curvature~$\kap=\mytfrac{1}{R^2}$.  All points of interest to us will lie on~$\bfS$ with the exception of~$O$.  Let~$\Aut(\bfS)$ denote the group of orientation-preserving isometries of~$\bfS$. 

We will work with triangulations of polygons on~$\bfS$ (cf.~Definitions~\ref{def: sph poly}-\ref{def: tri polygon}). As shown below, up to the action of~$\Aut(\bfS)$, there are two triangles on~$\bfS$ with the squared side distances~$a,b,c$. We will need an additional measurement that distinguishes such triangles from each other.
\begin{figure}[htbp!]
    \centering
    \begin{tikzpicture}
\coordinate  (A) at (-3,0);
    \node at (A) [left] {$A$};
\circleat{-3}{0};
\coordinate  (B) at (-1,0);
    \node at (B) [right] {$B$};
\circleat{-1}{0};
\coordinate  (C) at (-1.5,.5);
    \node at (C) [above, inner sep=5pt] {$C$};
    \circleat{-1.5}{.5};

\draw (A) to node[below] {$c$} (B) ;
\draw (B) to node[above right] {$a$} (C) ;
\draw (A) to node[above] {$b$} (C) ;

\coordinate  (A) at (3,0);
    \node at (A) [right] {$A$};
    \circleat{3}{0};

\coordinate  (B) at (1,0);
    \node at (B) [left] {$B$};
\circleat{1}{0};
\coordinate  (C) at (1.5,.5);
    \node at (C) [above, inner sep=5pt] {$C$};
    \circleat{1.5}{.5};
\draw (A) to node[below] {$c$} (B) ;
\draw (B) to node[above left] {$a$} (C) ;
\draw (A) to node[above] {$b$} (C) ;

    \end{tikzpicture}
    
    \label{fig: three distances}
\end{figure}

In two-dimensional real Euclidean space, a natural way to distinguish triangles is by their signed oriented area. Unfortunately, there is no consistent way to extend this notion to the three-dimensional space~$V$ or to a two dimensional sphere~$\bfS\subseteq V$. 
 Instead, we will use the following skew-symmetric measurement~$\skap$  that is invariant under cyclic permutations of the vertices of a triangle.
 
 \begin{definition}\label{def: skappa def}
For~$A_1,A_2,A_3\in\bfS$, consider the Euclidean tetrahedron~$OA_1A_2A_3$ with vertices~$O,A_1,A_2,A_3$, as shown below.
\begin{figure}[htbp!]
    \centering
    \begin{tikzpicture}[scale=0.8]
        \coordinate [label=left:$O$] (O) at (-1,0);
        \circleat{-1}{0};
        \coordinate [label=below right:$A_3$] (A3) at (2,1);
        \circleat{2}{1};
        \coordinate [label=above left:$A_1$] (A1) at (0,2);
        \circleat{0}{2};
        \coordinate [label=above:$A_2$] (A2) at (1,2);
        \circleat{1}{2};

\draw (A1) to  (A2) to (A3) to (A1) ;
\draw (A1) to (O) to (A2) ;
\draw (O) to (A3);
        \coordinate  (left) at (-1,3);
        \coordinate (middle) at (2.5,2);
        \coordinate  (right) at (3.5,0);

    \end{tikzpicture}
   
    \label{fig: tetrahedron 1}
\end{figure}

\noindent The signed Euclidean volume~$V$ of~$OA_1A_2A_3$ is given by
\begin{equation}\label{eq:voltet1}
    V(OA_1A_2A_3)=\mytfrac{1}{6} \dietmatrix{\begin{bmatrix}
\overrightarrow{OA_1} & \overrightarrow{OA_2} & \overrightarrow{OA_3}
    \end{bmatrix}}\, .
\end{equation}
    Define the measurement
\begin{equation}
        \skap(A_1,A_2,A_3)=\mytfrac{12}{R}V(OA_1A_2A_3)\, .
    \end{equation}
 When convenient, for~$A_i,A_j,A_k\in\bfS$ we will abbreviate~$\skapijk=\skap(A_i,A_j,A_k)$. The measurement~$\skap$ is sensitive to the specified ordering of points: when the points~$A_i,A_j,A_k$ are permuted,~$\skap$ changes its sign depending on whether the permutation is odd or even.
Informally,~$\skap$ is a spherical analogue for the rescaled signed area of a triangle~$A_1A_2A_3$ (see Remarks~\ref{rem: area motivation},~\ref{rem: four}). 
\end{definition}

\begin{remark}\label{rem: area motivation}
    Let~$OA_1A_2A_3$ be a tetrahedron in~$V_\R$. Let~$h$ be the unsigned distance from~$O$ to the plane~$(A_1A_2A_3)$. The unsigned volume of~$OA_1A_2A_3$ is given by
    \begin{equation}\label{eq: unsigned}
    \begin{array}{c}
        \text{\small unsigned volume of~$OA_1A_2A_3$} 
    \end{array}={\mytfrac{1}{3}}\cdot (\text{\small unsigned area of triangle }A_1A_2A_3 )\cdot h\, .
    \end{equation}
    When points~$A_1,A_2,A_3$ are close together on~$\bfS$, the radius~$R$ is a good approximation of height~$h$. 
   Equation~\eqref{eq: unsigned} motivates our use of~$\mytfrac{3}{R} V(OA_1A_2A_3)$
    in lieu of the signed area of the triangle~$A_1A_2A_3$.
\end{remark}

One can measure volumes in an~$n$-dimensional Euclidean space using the~$n\times n$ Cayley-Menger determinant, which we define next.

\begin{definition}[{\cite[Section 9.7.3]{berger_geometry_1977}}]\label{def: 3x3 euclidean cayley-meng}
    Let~$A_1,A_2,A_3\in V$. Denote~$x_{ij}=x(A_i,A_j)$.
    The associated \emph{Cayley-Menger determinant} is given by 
    \begin{equation*}\label{eq: CM_3}
    \caymthree(x_{12},x_{13},x_{23})=\det\left[\begin{smallmatrix}
        0 & 1 & 1 & 1\\
        1 & 0 & x_{12} & x_{13}\\
        1 & x_{12} & 0 & x_{23} \\
        1 & x_{13} & x_{23} & 0  \\
    \end{smallmatrix}\right]\, .
    \end{equation*}
    The Cayley-Menger determinant~$\caymk$ associated to a collection of~$k$ points in a Euclidean space is  defined analogously.  (The superscript~0 in our notation is explained in Remark~\ref{rem: explaining zero}.)
\end{definition}

\begin{proposition}[{\cite[Section~10.6.5]{berger_geometry_1977}}]\label{prop: gencaymeng}
    Consider a simplex~$A_1\cdots A_{n+1}$ in~$\R^n$, with volume~$V$.  Then 
    \begin{equation}\label{eq: vol of simplex}
        V^2=\frac{(-1)^{n+1}}{(n!)^2\cdot 2^n}\caymnplusone(x_{12},x_{13},\dots,x_{23},x_{24},\dots,x_{n,n+1})\, .
    \end{equation}
\end{proposition}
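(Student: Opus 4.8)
The plan is to reduce the $(n{+}2)\times(n{+}2)$ Cayley--Menger determinant to an $n\times n$ Gram determinant by elementary operations, and then to recognize that Gram determinant as $(n!)^2V^2$.

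First I would introduce Cartesian coordinates on $\R^n$ and set $v_i=\vec{A_1A_{i+1}}$ for $i=1,\dots,n$; since $A_1\cdots A_{n+1}$ is a simplex, the $v_i$ form a basis of $\R^n$. Writing $M=[\,v_1\mid\cdots\mid v_n\,]$, the parallelepiped volume formula gives $V=\tfrac1{n!}\lvert\det M\rvert$, hence
\[
V^2=\tfrac1{(n!)^2}(\det M)^2=\tfrac1{(n!)^2}\det(M^{\mathsf T}M)=\tfrac1{(n!)^2}\det G,
\]
where $G=(\langle v_i,v_j\rangle)_{i,j=1}^n$ is the Gram matrix of the $v_i$. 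So it suffices to prove $\caymnplusone=(-1)^{n+1}2^n\det G$.

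Next I would write $p_i=\vec{OA_i}$ and use $x_{ij}=\langle p_i-p_j,\,p_i-p_j\rangle=\langle p_i,p_i\rangle-2\langle p_i,p_j\rangle+\langle p_j,p_j\rangle$. Setting $P=[\,p_1\mid\cdots\mid p_{n+1}\,]$, $u=(\langle p_i,p_i\rangle)_{i=1}^{n+1}$, and letting $\mathbf 1$ be the all-ones vector of length $n+1$, the matrix of squared distances equals $u\mathbf 1^{\mathsf T}+\mathbf 1 u^{\mathsf T}-2P^{\mathsf T}P$, so the Cayley--Menger matrix is $\left[\begin{smallmatrix}0&\mathbf 1^{\mathsf T}\\ \mathbf 1& u\mathbf 1^{\mathsf T}+\mathbf 1 u^{\mathsf T}-2P^{\mathsf T}P\end{smallmatrix}\right]$. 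Applying the congruence $CM\mapsto E^{\mathsf T}CM\,E$ with the unipotent matrix $E=\left[\begin{smallmatrix}1&-u^{\mathsf T}\\0&I\end{smallmatrix}\right]$ (so $\det E=1$) clears that summand and leaves $\caymnplusone=\det\left[\begin{smallmatrix}0&\mathbf 1^{\mathsf T}\\ \mathbf 1&-2P^{\mathsf T}P\end{smallmatrix}\right]$. Then I would subtract the $A_1$-row from each other $A_i$-row and likewise for the columns: the block entry $-2\langle p_i,p_j\rangle$ becomes $-2\langle p_i-p_1,\,p_j-p_1\rangle=-2G_{i-1,j-1}$ for $i,j\ge 2$, while the border row and the border column each collapse to a single entry $1$ sitting in the $A_1$-slot. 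Two cofactor expansions --- first along the border row (sign $-1$), then along the remaining border column (sign $+1$) --- strip off the border and leave $\det(-2G)=(-2)^n\det G$. Tracking the signs, $\caymnplusone=-(-2)^n\det G=(-1)^{n+1}2^n\det G$, and substituting $\det G=(n!)^2V^2$ yields $V^2=\dfrac{(-1)^{n+1}}{(n!)^2\,2^n}\caymnplusone$, as claimed.

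The only place that needs care is the sign bookkeeping in the two cofactor expansions, together with the parity of $(-2)^n=(-1)^n2^n$; all remaining steps are routine linear algebra. One could instead simply invoke \cite[Section~10.6.5]{berger_geometry_1977}, but the short self-contained derivation above is convenient here, and the key identity $x_{ij}=\langle p_i,p_i\rangle-2\langle p_i,p_j\rangle+\langle p_j,p_j\rangle$ together with the clearing congruence are precisely the moves that reappear when analyzing the spherical Cayley--Menger determinant later in the paper.
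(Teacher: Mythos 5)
Your argument is correct, including the delicate sign bookkeeping: the congruence by the unipotent matrix $E$ does reduce the bordered matrix to $\left[\begin{smallmatrix}0&\mathbf 1^{\mathsf T}\\ \mathbf 1&-2P^{\mathsf T}P\end{smallmatrix}\right]$, the row/column subtractions produce the block with $-2G$, and the two cofactor expansions give $\caymnplusone=-(-2)^n\det G=(-1)^{n+1}2^n\det G$, which together with $\det G=(n!)^2V^2$ yields~\eqref{eq: vol of simplex} (and correctly specializes, for $n=2$, to $(4\cdot\text{area})^2=-\caymthree$ as in the proof of Theorem~\ref{thm: heron}). The comparison point is that the paper does not prove Proposition~\ref{prop: gencaymeng} at all: it is quoted from \cite[Section~10.6.5]{berger_geometry_1977} and used as a black box, so you are supplying a proof where the paper supplies a citation. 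Your route is essentially the classical one, and it is worth noting that it runs parallel to the computation the paper does carry out in the spherical setting: the proof of Theorem~\ref{thm: scm} likewise starts from the Gram matrix $M^{\mathsf T}M$, converts scalar products to squared distances, and manipulates a bordered determinant, so your derivation makes the Euclidean case self-contained in a way that matches the paper's own later technique rather than diverging from it.
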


\begin{remark}
Cayley obtained this formula for~$n\le 4$ and used it to derive an algebraic relation amongst the  distances between~$n+1$ points in an~$(n-1)$-dimensional Euclidean space. Menger extended Cayley's result to compute the volume of simplices in any dimension~\cite{liberti_six_2015}. 

The~$n=3$ case of~\eqref{eq: vol of simplex} is a classical result (e.g.~{\cite[Section~161]{todhunter_spherical_1886}})
 that {\cite[Section~68]{dorrie_100_1965}} attributes to Euler. The~$n=2$ case can be viewed as a restatement of the well-known \emph{Heron's formula}, see below. As discussed in~\cite{heath_history_1921}, while Heron published it in his work \emph{Metrica} sometime before~250~CE, this was a collection of both new and previously known results and Archimedes knew Heron's formula two centuries prior to Heron.
\end{remark}
\begin{theorem}[Heron's Formula]\label{thm: heron}
Let~$A_1,A_2,A_3\in\R^2$. Denote $x_{ij}=x(A_i,A_j)$. For each pair $1\le i<j\le 3$, select a sign $\pm$ and set
\begin{equation*}
    y=\pm\sqrt{x_{12}}, \quad
    z=\pm\sqrt{x_{13}}, \quad
    w=\pm\sqrt{x_{23}}, \quad
    s=\mytfrac{y+z+w}{2}.
\end{equation*}
Regardless of the choices made, we then have
\[(\textnormal{area of } A_1A_2A_3)^2=s(s-y)(s-z)(s-w).\]
\end{theorem}
It is instructive to deduce this classical result from Proposition~\ref{prop: gencaymeng}:
\begin{proof}
    By Proposition~\ref{prop: gencaymeng}, 
    \begin{align}
        (4\cdot \text{area of } A_1A_2A_3)^2
        &=-\caymthree(x_{12},x_{13},x_{23})\label{eq: cm area heron}\\
        &=-x_{12}^2-x_{13}^2-x_{23}^2+2x_{12}x_{13}+2x_{12}x_{23}+2x_{13}x_{23}\nonumber\\
        &=-y^4-z^4-w^4+2y^2z^2+2y^2w^2+2z^2w^2\nonumber\\
        &=(y+z+w)(-y+z+w)(y-z+w)(y+z-w)\nonumber\qedhere
    \end{align}
\end{proof}
\begin{remark}\label{rem: four}
The factor of~$4$ in equation~\eqref{eq: cm area heron} motivates the appearance of the same factor in the definition of~$\skap$.
\end{remark}

We will show that our measurement~$\skap$ satisfies a spherical version of Heron's formula.  In order to state this formula, we introduce the following notation.

\begin{definition}
    For~$a,b,c\in\C$, define
\begin{equation*}\label{eq: h kappa def}
    \hkap (a,b,c) =-
    a^2-b^2-c^2+2ab+2ac+2bc-\kap abc\, .
    \end{equation*}
\end{definition}

\begin{proposition}[Spherical Heron's Formula, cf.~Remark~\ref{rem: dim three cay meng}]\label{prop: sph heron} 
Let~$A_1,A_2,A_3\in\bfS$. Denote~${x_{ij}=x(A_i,A_j)}$.  Then
\begin{equation*}\label{eq:kappaheron}
    \hkap (x_{12},x_{13},x_{23}) =  \left(\skap(A_1,A_2,A_3)\right)^2\, .
\end{equation*}
   \end{proposition}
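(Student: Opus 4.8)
The plan is to convert the definition of~$\skap$ into a determinantal identity and then perform a short computation with the Gram matrix of the vectors~$\overrightarrow{OA_1},\overrightarrow{OA_2},\overrightarrow{OA_3}$.

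First I would rewrite~$\skap$ as a determinant. Let~$B$ be the~$3\times 3$ matrix whose columns are the coordinate vectors of~$\overrightarrow{OA_1},\overrightarrow{OA_2},\overrightarrow{OA_3}$. Then~\eqref{eq:voltet1} reads~$V(OA_1A_2A_3)=\tfrac16\det B$, so by Definition~\ref{def: skappa def} we have~$\skap(A_1,A_2,A_3)=\tfrac{2}{R}\det B$, and hence
\[
\bigl(\skap(A_1,A_2,A_3)\bigr)^2=\frac{4}{R^2}(\det B)^2=\frac{4}{R^2}\det\!\bigl(B^{\mathsf{T}}B\bigr).
\]
The matrix~$G:=B^{\mathsf{T}}B$ is the Gram matrix, with entries~$G_{ij}=\langle\overrightarrow{OA_i},\overrightarrow{OA_j}\rangle$; the identity~$(\det B)^2=\det(B^{\mathsf{T}}B)$ is polynomial in the entries of~$B$, so it remains valid over~$\C$.

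Second, I would express~$G$ through the squared distances. Expanding~$x_{ij}=\langle\overrightarrow{OA_j}-\overrightarrow{OA_i},\overrightarrow{OA_j}-\overrightarrow{OA_i}\rangle$ and using that each~$A_i$ lies on~$\bfS$, so that~$\langle\overrightarrow{OA_i},\overrightarrow{OA_i}\rangle=R^2$, gives~$\langle\overrightarrow{OA_i},\overrightarrow{OA_j}\rangle=R^2-\tfrac12 x_{ij}$ (which also holds for~$i=j$, since~$x_{ii}=0$). Writing~$\mathbf 1=(1,1,1)^{\mathsf{T}}$ and letting~$X$ be the symmetric~$3\times3$ matrix with entries~$x_{ij}$ and zero diagonal, this says~$G=R^2\,\mathbf 1\mathbf 1^{\mathsf{T}}-\tfrac12 X$.

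Finally, I would evaluate~$\det G$. Since~$R^2\mathbf 1\mathbf 1^{\mathsf{T}}$ has rank one, the matrix determinant lemma (or a direct~$3\times3$ expansion) yields
\[
\det G=\tfrac14\Bigl(R^2\bigl(-x_{12}^2-x_{13}^2-x_{23}^2+2x_{12}x_{13}+2x_{12}x_{23}+2x_{13}x_{23}\bigr)-x_{12}x_{13}x_{23}\Bigr),
\]
and then~$\bigl(\skap(A_1,A_2,A_3)\bigr)^2=\tfrac{4}{R^2}\det G$ collapses, after substituting~$\kap=\tfrac1{R^2}$, to exactly~$\hkap(x_{12},x_{13},x_{23})$ as written in~\eqref{eq: h kap}. (Alternatively, one could border~$G$ into a~$4\times 4$ matrix and recognize the outcome, up to sign, as the three-point spherical Cayley–Menger determinant appearing in~\eqref{eq: her eq} and~\eqref{eq: sph caym eq}, whose expansion along the first row produces the same polynomial.) I expect the only point requiring care to be this last determinant computation — in particular, making sure the cubic curvature term~$-\kap\,x_{12}x_{13}x_{23}$ comes out with the correct coefficient; everything else is routine bookkeeping.
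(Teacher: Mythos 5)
Your proposal is correct: I checked the determinant evaluation, and indeed $\det G=\tfrac14\bigl(R^2(-x_{12}^2-x_{13}^2-x_{23}^2+2x_{12}x_{13}+2x_{12}x_{23}+2x_{13}x_{23})-x_{12}x_{13}x_{23}\bigr)$, so multiplying by $\tfrac{4}{R^2}$ and setting $\kap=\tfrac1{R^2}$ gives exactly $\hkap(x_{12},x_{13},x_{23})$, and the factorization $(\det B)^2=\det(B^{\mathsf T}B)$ with the bilinear (not Hermitian) form is the right statement over~$\C$. Your route, however, differs from the paper's: the paper applies the classical Cayley--Menger volume formula (Proposition~\ref{prop: gencaymeng}) to the tetrahedron $OA_1A_2A_3$, taking $A_4=O$ and using $OA_i=R$, and then expands the resulting bordered $5\times5$ determinant $\caymfour(x_{12},x_{13},R^2,x_{23},R^2,R^2)$; you instead bypass Cayley--Menger altogether and work with the $3\times3$ Gram matrix of $\overrightarrow{OA_1},\overrightarrow{OA_2},\overrightarrow{OA_3}$, using $\langle\overrightarrow{OA_i},\overrightarrow{OA_j}\rangle=R^2-\tfrac12x_{ij}$ and the rank-one structure $G=R^2\,\mathbf 1\mathbf 1^{\mathsf T}-\tfrac12X$. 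The paper's version is shorter on the page because it quotes a prepackaged classical result that also fits the Cayley--Menger theme running through the paper; yours is more self-contained and elementary (only Lemma~\ref{lem: vector mult} plus a $3\times3$ determinant), and it is in fact the same Gram-matrix technique the paper deploys later for Lemma~\ref{lem: cosine} and Theorem~\ref{thm: scm}, so it previews the spherical Cayley--Menger machinery rather than relying on the Euclidean one. One small caution: the matrix determinant lemma requires $-\tfrac12X$ invertible, so either stick with the direct expansion or note that the identity is polynomial and extends by density; this does not affect correctness.
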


\begin{proof}
Note that~$x(O,A_i)=R^2$ for~$i=1,2,3$. Using Proposition~\ref{prop: gencaymeng} with~$A_4=O$, we get:
\begin{align*}
(\skap(A_1,A_2,A_3))^2&=\left(\mytfrac{12V(OA_1A_2A_3)}{R}\right)^2=\mytfrac{1}{2R^2}\caymfour(x_{12},x_{13},R^2,x_{23},R^2,R^2)\, .
\end{align*}
Therefore (recall that~$\kap=\frac{1}{R^2}$)
\begin{align*}
        (\skap(A_1,A_2,A_3))^2&=\mytfrac{1}{2R^2}\det\left[\begin{smallmatrix}
    0 & 1 & 1 & 1 & 1 \\
    1 & 0 & x_{12} & x_{13} & R^2 \\
    1 & x_{12} & 0 & x_{23} & R^2 \\
    1 & x_{13} & x_{23} & 0 & R^2 \\
    1 & R^2 & R^2 & R^2 & 0
\end{smallmatrix}\right]\\
&=-x_{12}^2-x_{13}^2-x_{23}^2+2x_{12}x_{13}+2x_{12}x_{23}+2x_{13}x_{23}-\kap x_{12}x_{13}x_{23}\\
&=\hkap(x_{12},x_{13},x_{23}).\qedhere
\end{align*}
\end{proof}

A spherical triangle $ABC$ is uniquely determined, up to the action of~$\Aut(\bfS)$, by its squared side distances~$a,b,c$ together with the additional data of~$\skap(A,B,C)$:
\begin{lemma}\label{lem: 2.4analogue} 
    Let~$A,C\in\bfS$ with~$x(A,C)=b\not\in\left\{ 0,\frac{4}{\kap}\right\}$. 
 Let~$a,c,p\in\C$ be such that 
    \begin{equation}\label{eq: pfact}
        p^2=\hkap (a,b,c)\, .
    \end{equation}  Then there exists a unique point~$B\in\bfS$ such that~$x(B,C)=a$,~$x(A,B)=c$, and~$\skap(A,B,C)=p$.

\end{lemma}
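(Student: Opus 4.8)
The plan is to work with coordinates in $V=V_\C$, placing the two given points $A,C$ in a convenient position and then solving a system of linear-and-quadratic equations for the unknown point $B$. Concretely, I would first choose coordinates so that $O$ is the origin and $A,C$ both lie on the sphere $\bfS$ of radius $R$; the condition $b=x(A,C)\notin\{0,\tfrac4\kap\}$ says precisely that $A,C$ are neither coincident nor antipodal, so the vectors $\vec{OA}$ and $\vec{OC}$ are linearly independent. Hence $\{\vec{OA},\vec{OC},\vec{OA}\times\vec{OC}\}$ (using the volume form to define the third vector, or just any vector completing a basis) is a basis of $V$, and I can write the unknown vector $\vec{OB}=\alpha\,\vec{OA}+\beta\,\vec{OC}+\gamma\,\vec{n}$ where $\vec n$ is the "normal" direction with $\langle\vec n,\vec{OA}\rangle=\langle\vec n,\vec{OC}\rangle=0$.

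The three conditions to impose are then: (i) $x(B,C)=a$, (ii) $x(A,B)=c$, and (iii) $\skap(A,B,C)=p$. Expanding $x(B,C)=\langle\vec{OB}-\vec{OC},\vec{OB}-\vec{OC}\rangle$ and using $\langle\vec{OB},\vec{OB}\rangle=R^2$ (which I will need to impose as a fourth equation, $B\in\bfS$), conditions (i) and (ii) become \emph{linear} in the coefficients $\alpha,\beta$ once one substitutes the known Gram matrix entries $\langle\vec{OA},\vec{OA}\rangle=\langle\vec{OC},\vec{OC}\rangle=R^2$ and $\langle\vec{OA},\vec{OC}\rangle=R^2-\tfrac b2$. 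Condition (iii), since $V(OA_1A_2A_3)=\tfrac16[\vec{OA}\ \vec{OB}\ \vec{OC}]$ and the triple product kills the $\alpha,\beta$ components, is \emph{linear} in $\gamma$ alone: it reads $p=\tfrac{12}{R}\cdot\tfrac16\,\gamma\,[\vec{OA}\ \vec n\ \vec{OC}]$, so $\gamma$ is uniquely determined (the bracket $[\vec{OA}\ \vec n\ \vec{OC}]$ is nonzero because the three vectors are independent, again by $b\notin\{0,\tfrac4\kap\}$). So the plan is: solve the $2\times2$ linear system for $(\alpha,\beta)$, solve the single linear equation for $\gamma$, obtaining a \emph{unique} candidate $B$; then verify that this $B$ automatically satisfies $\langle\vec{OB},\vec{OB}\rangle=R^2$, i.e. actually lies on $\bfS$.

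The main obstacle is this last verification: I have used four scalar equations (two distances, one volume, and membership in $\bfS$) to pin down three unknowns, so consistency is not automatic and must be where the hypothesis $p^2=\hkap(a,b,c)$ gets used. The idea is that $\langle\vec{OB},\vec{OB}\rangle$, computed from the candidate $(\alpha,\beta,\gamma)$, is a rational expression in $a,b,c,p,R$, and the equation $\langle\vec{OB},\vec{OB}\rangle=R^2$ is quadratic in $p$; Proposition~\ref{prop: sph heron} (the spherical Heron formula) says exactly that this quadratic is $p^2=\hkap(a,b,c)$, which holds by hypothesis~\eqref{eq: pfact}. Thus the fourth equation is not an extra constraint but a consequence of the first three together with~\eqref{eq: pfact}. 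I would organize the write-up so that the Gram-matrix bookkeeping is done once and cleanly: set $g_{AC}=R^2-\tfrac b2$, express $\alpha,\beta$ via Cramer's rule with denominator $\det\!\begin{bmatrix}R^2&g_{AC}\\ g_{AC}&R^2\end{bmatrix}=R^2\big(R^2-(R^2-\tfrac b2)^2/R^2\big)$, note this denominator is a nonzero multiple of $b(4-\kap b)$ hence nonzero by hypothesis, and then the squared-norm computation collapses to $\hkap$ after clearing denominators. Uniqueness is immediate from the argument since at every step the solution was forced; existence is the content of the verification step.
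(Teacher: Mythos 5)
Your proposal is correct and follows essentially the same route as the paper's proof: reduce the two distance conditions (using $B\in\bfS$) to scalar-product equations, add the volume condition to get three linear equations in the unknown point, observe the system is nondegenerate precisely because $b\notin\{0,\frac{4}{\kap}\}$, solve uniquely, and then verify $\langle\vec{OB},\vec{OB}\rangle=R^2$ by a computation that collapses exactly when $p^2=\hkap(a,b,c)$. The only difference is cosmetic: you work in the adapted basis $\{\vec{OA},\vec{OC},\vec n\}$, whereas the paper solves the $3\times3$ system directly in Cartesian coordinates.
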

\begin{proof}
    Let~$A=\left[\begin{smallmatrix} u'\\u''\\u'''\end{smallmatrix}\right]$,~$C=\left[\begin{smallmatrix}
        v'\\v''\\v'''
    \end{smallmatrix}\right]$. Since~$A,C\in \bfS$, we have
    \begin{equation}\label{eq: veq}
        (u')^2+(u'')^2+(u''')^2=R^2  \qquad \text{ and } \qquad
        (v')^2+(v'')^2+(v''')^2=R^2\, .
    \end{equation}
    The condition~$x(A,C)=b$ gives
    \begin{equation*}
        (u'-v')^2+(u''-v'')^2+(u'''-v''')^2=b\, .
    \end{equation*}
    Combining these facts, we see that 
\begin{equation}\label{eq: bfact}
u'v'+u''v''+u'''v'''=R^2-\mytfrac{b}{2}\, .
    \end{equation}
    We seek a  point~$B=\left[\begin{smallmatrix}
        w'\\w''\\w'''
\end{smallmatrix}\right]\in\bfS$ with~$w',w'',w'''\in\C$ that satisfies the conditions
    \begin{align}
        (w')^2+(w'')^2+(w''')^2&=R^2\, , \nonumber\\
        (u'-w')^2+(u''-w'')^2+(u'''-w''')^2&=a\, ,\label{eq: adist}\\
        (v'-w')^2+(v''-w'')^2+(v'''-w''')^2&=c \label{eq: cdist}\, .
    \end{align}
    Combining these conditions and simplifying, equations~\eqref{eq: adist}-\eqref{eq: cdist} translate into
    \begin{align}  u'w'+u''w''+u'''w'''&=R^2-\mytfrac{a}{2}\, ,\label{eq: system 1}\\
v'w'+v''w''+v'''w'''&=R^2-\mytfrac{c}{2} \label{eq: system 2}\, .
    \end{align}
    To~achieve~$\skap(A,B,C)=p$ we need
\begin{equation}\label{eq: detvolp}
        p=\mytfrac{2}{R}\det\dietmatrix{\left[\begin{smallmatrix}
            u' & v' & w'\\
            u'' & v'' & w''\\
            u''' & v''' & w'''
\end{smallmatrix}\right]}\, .
    \end{equation}
    
    The system of three linear equations~\eqref{eq: system 1}-\eqref{eq: detvolp} in~$w',w'',w'''$ can be represented with the matrix
    \begin{equation}   \label{eq: matrix}
    \left[\begin{array}{ccc|c}
        u' & u'' & u''' & R^2-\frac{a}{2}\\
        v' & v'' & v''' & R^2-\frac{c}{2}\\
        \frac{2}{R}(u''v'''-v''u''') & \frac{2}{R}(u'''v'-u'v''') & \frac{2}{R}(u'v''-v'u'') & p
    \end{array}\right]\, ;
        \end{equation}
    the system will have a single solution if the determinant of the left~$3\times 3$ submatrix
    is non-zero.
    The Laplace expansion along the bottom row of the submatrix  is
\begin{equation*}
    \Delta:=\mytfrac{2}{R}(u'v''-v'u'')^2+\mytfrac{2}{R}(u'''v'-u'v''')^2+\mytfrac{2}{R}(u''v'''-v''u''')^2\, .
\end{equation*}
Let us verify
$\Delta\neq 0$
under the conditions given. From equations~\eqref{eq: veq} we  deduce:
\begin{align*}
    R^4&=
    (u'v')^2+(u''v'')^2+(u'''v''')^2+2(u'u''v'v''+u'u'''v'v'''+u''u'''v''v''')+\Delta\, .
\end{align*}
Recalling equation~\eqref{eq: bfact}, this implies
\begin{equation}\label{eq: deltafact}
    R^4=\left(R^2-\mytfrac{b}{2}\right)^2+\Delta \, .
    \end{equation}
Since~$b\not\in \left\{ 0, \frac{4}{\kap}\right\}$, we conclude that the system of equations represented by~\eqref{eq: matrix} has a unique solution, namely
\begin{align*}
    w'&=\mytfrac{1}{\Delta}\Bigl(\left(R^2-\mytfrac{a}{2}\right)(v''(u'v''-u''v')+v'''(u'v'''-u'''v'))\\
    &\quad\quad\quad -\left(R^2-\mytfrac{c}{2}\right)(u''(u'v''-u''v')+u'''(u'v'''-u'''v'))
    +\mytfrac{Rp}{2}(u''v'''-u'''v'')\Bigr)\, ,\\
    w''&=\mytfrac{1}{\Delta}\Bigl(\left(R^2-\mytfrac{a}{2}\right)(v'(u''v'-u'v'')+v'''(u''v'''-u'''v''))\\
    &\quad\quad\quad -
    \left(R^2-\mytfrac{c}{2}\right)(u'(u''v'-u'v'')+u'''(u''v'''-u'''v''))
    +\mytfrac{Rp}{2}(u'''v'-u'v''')\Bigr) \, ,\\
    w'''&=\mytfrac{1}{\Delta}\Bigl(\left(R^2-\mytfrac{a}{2}\right)(v'(u'''v'-u'v''')+v''(u'''v''-u''v'''))\\
    &\quad\quad\quad -
    \left(R^2-\mytfrac{c}{2}\right)(u'(u'''v'-u'v''')+u''(u'''v''-u''v'''))
    +\mytfrac{Rp}{2}(u'v''-u''v')\Bigr)\, .
\end{align*}

It remains to verify that~
$(w')^2+(w'')^2+(w''')^2=R^2.$
Expanding, making use of equations~\eqref{eq: veq}-\eqref{eq: bfact}, and simplifying, we obtain
\begin{equation*}
    (w')^2+(w'')^2+(w''')^2
     =\mytfrac{R^2}{\Delta}\left(R^2b-\mytfrac{ab}{2} 
     - \mytfrac{bc}{2} + \mytfrac{c^2}{4} +\mytfrac{a^2}{4}+ \mytfrac{p^2}{4}- \mytfrac{ac}{2}+\mytfrac{acb}{4R^2}\right)\, .
\end{equation*}
We are now able to  substitute for~$p^2$ using the given condition~\eqref{eq: pfact}.  We simplify the resulting expression using equation~\eqref{eq: deltafact} to complete the proof:
\begin{equation*}
    (w')^2+(w'')^2+(w''')^2=\mytfrac{R^2}{\Delta}\left(R^2b-\mytfrac{b^2}{4}\right)=R^2\, .\qedhere
\end{equation*}
\end{proof}

\begin{remark}\label{rem: why conditions}
Condition~$b\ne 0$ in Lemma~\ref{lem: 2.4analogue}  prevents points~$A$ and~$C$ from being coincident.  In the real setting, the condition~$b\ne \frac{4}{\kap}=(2R)^2$ prohibits~$A$ and~$C$ from being antipodal on the sphere. This prohibition is necessary: for antipodal~$A$ and~$C$, there are infinitely many points~$B$ satisfying the conditions in Lemma~\ref{lem: 2.4analogue}.
\end{remark}

\begin{lemma}\label{lem: 2.3 analogue}  Let~$a,b,c\in\C$ with at least one of~$a,b,c$ nonzero and not equal to~$\frac{4}{\kap}$. 
 If~$p^2=\hkap (a,b,c)$,
 then there exist points~$A,B,C\in\bfS$ such that
 \begin{equation*}
     x(B,C)=a, \quad 
     x(A,C)=b, \quad 
     x(A,B)=c, \quad \text{ and } \quad
     \skap(A,B,C)=p.
 \end{equation*}  
 Moreover, such a triple~$(A,B,C)$ is unique up to the action of~$\Aut(\bfS)$.
    
\end{lemma}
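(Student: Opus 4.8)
The plan is to bootstrap from Lemma~\ref{lem: 2.4analogue}, which already determines the vertex~$B$ uniquely once a pair~$(A,C)$ with~$x(A,C)=b\notin\{0,\tfrac4\kap\}$ has been fixed. So three things remain: reducing to the case where the distance playing the role of~$b$ avoids~$\{0,\tfrac4\kap\}$; producing one admissible pair on~$\bfS$; and showing that any two admissible triples are related by an element of~$\Aut(\bfS)$, for which it suffices that~$\Aut(\bfS)$ act transitively on pairs of points of~$\bfS$ at a prescribed squared distance.

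For the reduction, I would use that~$\hkap$ is symmetric in its three arguments while~$\skap$ is alternating under permutations of the vertices: for any permutation~$\sigma$ of~$\{1,2,3\}$, permuting~$(a,b,c)$ accordingly and replacing~$p$ by~$\sgn(\sigma)\,p$ again gives a tuple with~$p^2=\hkap(\cdot)$ satisfying the nondegeneracy hypothesis, a triple realizes the original data iff the~$\sigma$-permuted triple realizes the~$\sigma$-permuted data, and membership in a single~$\Aut(\bfS)$-orbit is unaffected by such relabelling. Picking~$\sigma$ so that the squared distance between the first and third vertices is the one assumed to lie in~$\C\setminus\{0,\tfrac4\kap\}$, we may assume~$b\notin\{0,\tfrac4\kap\}$.

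For existence I would exhibit the pair in coordinates: with~$A=(R,0,0)\in\bfS$, a point~$C=(c_1,c_2,c_3)$ lies on~$\bfS$ with~$x(A,C)=b$ exactly when~$c_1=R\bigl(1-\tfrac{\kap b}{2}\bigr)$ and~$c_2^2+c_3^2=b\bigl(1-\tfrac\kap4 b\bigr)$ (using~$\kap=\tfrac1{R^2}$), and the second equation is solvable over~$\C$. Lemma~\ref{lem: 2.4analogue}, applied to this pair with the given~$a$,~$c$ and~$p$, then yields the desired~$B$. For uniqueness, observe that every~$g\in\Aut(\bfS)$ preserves squared distances and, being the restriction of a linear orthogonal map that fixes~$O$ and has determinant~$1$, preserves~$\skap$ as well; hence if~$g$ carries~$(A_2,C_2)$ to~$(A_1,C_1)$, then~$gB_2$ and~$B_1$ satisfy the same three conditions relative to~$(A_1,C_1)$ and coincide by the uniqueness in Lemma~\ref{lem: 2.4analogue}, so~$g$ carries one triple onto the other. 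It therefore remains to prove that~$\Aut(\bfS)\cong SO(V)$ is transitive on~$\{(A,C)\in\bfS^2:x(A,C)=b\}$ for~$b\notin\{0,\tfrac4\kap\}$, which I would do in two steps: first,~$SO(V)$ acts transitively on~$\bfS$, a nonzero level set of a nondegenerate ternary quadratic form over~$\C$ (Witt's extension theorem, correcting the determinant by an improper isometry of~$A^\perp$); second, the stabilizer of~$A$ is~$SO(A^\perp)\cong SO(2,\C)$, and for admissible~$C$ the component of~$C$ orthogonal to~$A$ has squared length~$b\bigl(1-\tfrac\kap4 b\bigr)\ne0$, so these components form a single~$SO(2,\C)$-orbit.

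The point needing care---and the reason the hypothesis is sharp---is that these transitivity statements must hold over~$\C$, not merely over~$\R$: the conic~$\{y^2+z^2=d\}\subset\C^2$ is a single~$SO(2,\C)$-orbit precisely when~$d\ne0$ (it is then rational, isomorphic to~$\C^\times$, on which~$SO(2,\C)\cong\C^\times$ acts by translation), and~$d=b\bigl(1-\tfrac\kap4 b\bigr)$ vanishes exactly when~$b\in\{0,\tfrac4\kap\}$, matching Remark~\ref{rem: why conditions}.
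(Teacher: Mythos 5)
Your proof is correct and follows essentially the same route as the paper: reduce by the symmetry of~$\hkap$ and the alternating behavior of~$\skap$ to the case~$b\notin\{0,\frac{4}{\kap}\}$, fix a pair~$(A,C)\in\bfS^2$ with~$x(A,C)=b$, and invoke Lemma~\ref{lem: 2.4analogue}. The paper's argument is exactly this reduction stated in one line; the extra material you supply (explicit existence of such a pair over~$\C$, invariance of~$\skap$ under~$\Aut(\bfS)$, and transitivity of~$\Aut(\bfS)\cong SO(V)$ on pairs at fixed squared distance~$b\notin\{0,\frac{4}{\kap}\}$ via Witt's theorem and the~$SO(2,\C)$-orbit analysis) simply makes explicit the uniqueness-up-to-$\Aut(\bfS)$ step that the paper leaves implicit.
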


\begin{proof}
    Without loss of generality, we can assume that~$b\not \in\left\{ 0,\frac{4}{\kap}\right\}$.  Let~${A,C\in\bfS}$ such that~$x(A,C)=b$.  Then by Lemma~\ref{lem: 2.4analogue} there exists a unique point~$B\in \bfS$ such that~$x(B,C)=a$,~$x(A,B)=c$, and~$\skap(A,B,C)=p$.
\end{proof}

\newpage

\section{Spherical quadrilaterals}\label{sec: quadrilaterals}

Moving towards spherical polygons, in this section we will establish key algebraic relations satisfied by the squared side distances and~$\skap$ measurements formed by four points on the sphere~$\bfS$.
\begin{example}\label{ex: main tet example}
A spherical quadrilateral~$A_1A_2A_3A_4$ can be subdivided into two spherical triangles in two different ways.  
Each triangle (e.g.,~$A_1A_2A_3$) gives rise to a tetrahedron (say,~$OA_1A_2A_3$) as shown in Figure~\ref{fig:tetdecomp} below. The associated measurements are: 
\begin{align}
    & a=x_{14}, \quad b = x_{12}, \quad c = x_{23}, \quad d=x_{34}, \quad e= x_{13}, \quad f=x_{24}, \nonumber\\
    & g=x(O,A_1), \quad h=x(O,A_2), \quad i=x(O,A_3), \quad j=x(O,A_4), \nonumber\\
    &p=\skap(A_1,A_2,A_3), \quad q=\skap(A_1,A_3,A_4)\label{eq: h equations 1}\\ &r=\skap(A_1,A_2,A_4), \quad s=\skap(A_2,A_3,A_4). \label{eq: h equations 2}
\end{align}

\begin{figure}[htbp!] 
    \begin{center}
\begin{tikzpicture}[scale=0.9]

\coordinate [label=below:$O$] (O) at (0,0);
\circleat{0}{0};

\coordinate [label=left:$A_1$] (A_1) at (-2,4);
\circleat{-2}{4};

\coordinate [label=below left:$A_4$](A_4) at (-0.5,3);
\circleat{-0.5}{3};

\coordinate [label=right:$A_3$] (A_3) at (2,4);
\circleat{2}{4};

\coordinate[label=above:$A_2$] (A_2) at (1,6);
\circleat{1}{6};

\coordinate [label=right:$q$] (q) at (2,2);
\coordinate (q_1) at (1,3.75);
\draw (q_1) to [out=0,in=135] (q) ;

\coordinate [label=right:$p$] (p) at (2,6);
\coordinate (p_1) at (1.25,5);
\draw (p_1) to [out=45,in=180] (p) ;

\draw (O) to node[left] {} (A_1) ;
\draw[dotted] (O) to node[] {} (A_2) ;
\draw (O) to node[right] {} (A_3) ;
\draw (O) to node[right] {} (A_4) ;

\draw (A_1) to node[above left] {$b$} (A_2) ;
\draw (A_2) to node[above right] {$c$} (A_3) ;
\draw (A_3) to node[below, near end] {$d$} (A_4) ;
\draw (A_4) to node[below] {$a$} (A_1) ;
\draw (A_1) to node[above ] {$e$} (A_3) ;

\fill[black!20,opacity=0.3] (A_1) -- (A_2) -- (A_3) -- (A_1);
\fill[black!20,opacity=0.3] (A_1) -- (A_3) -- (A_4) -- (A_1);

\coordinate [label=left:$g$] (g) at (-.5,1);
\coordinate (h1) at (.3,1);
\coordinate[label=right:$h$] (h) at (0.3,2.2);

\coordinate[label=right:$i$] (i) at (.5,.75);

\coordinate[label=right:$j$] (j) at (-.3,1.6);

\end{tikzpicture}\hspace{1cm}
\begin{tikzpicture}[scale=0.9]

\coordinate [label=below:$O$] (O) at (0,0);
\circleat{0}{0};

\coordinate [label=left:$A_1$] (A_1) at (-2,4);
\circleat{-2}{4};

\coordinate [label=below 
 left:$A_4$](A_4) at (-0.5,3);
 \circleat{-0.5}{3};

\coordinate [label=right:$A_3$] (A_3) at (2,4);
\circleat{2}{4};

\coordinate[label=above:$A_2$] (A_2) at (1,6);
\circleat{1}{6};

\coordinate [label=left:$r$] (r) at (-2,5);
\coordinate (r_1) at (-1,4);
\draw (r_1) to [out=100,in=300] (r) ;

\coordinate [label=right:$s$] (s) at (2.5,5);
\coordinate (s_1) at (1,4.5);
\draw (s_1) to [out=45,in=180] (s) ;

\draw (O) to node[left] {} (A_1) ;
\draw (O)[dotted] to node[right] {} (A_2) ;
\draw (O) to node[right] {} (A_3) ;
\draw (O) to node[right] {} (A_4) ;

\draw (A_1) to node[above left] {$b$} (A_2) ;
\draw (A_2) to node[above right] {$c$} (A_3) ;
\draw (A_3) to node[below, near end] {$d$} (A_4) ;
\draw (A_4) to node[below] {$a$} (A_1) ;
\draw (A_2) to node[left ] {$f$} (A_4) ;

\fill[black!20,opacity=0.3] (A_1) -- (A_2) -- (A_4) -- (A_1);
\fill[black!20,opacity=0.3] (A_2) -- (A_3) -- (A_4) -- (A_2);

\coordinate [label=left:$g$] (g) at (-.5,1);
\coordinate (h1) at (.3,1);
\coordinate[label=right:$h$] (h) at (0.3,2.2);

\coordinate[label=right:$i$] (i) at (.5,.75);

\coordinate[label=right:$j$] (j) at (-.3,1.6);

\end{tikzpicture}
\end{center}
    \caption{Measurements associated to  triangulations of a spherical quadrilateral.}
    \label{fig:tetdecomp}
\end{figure}
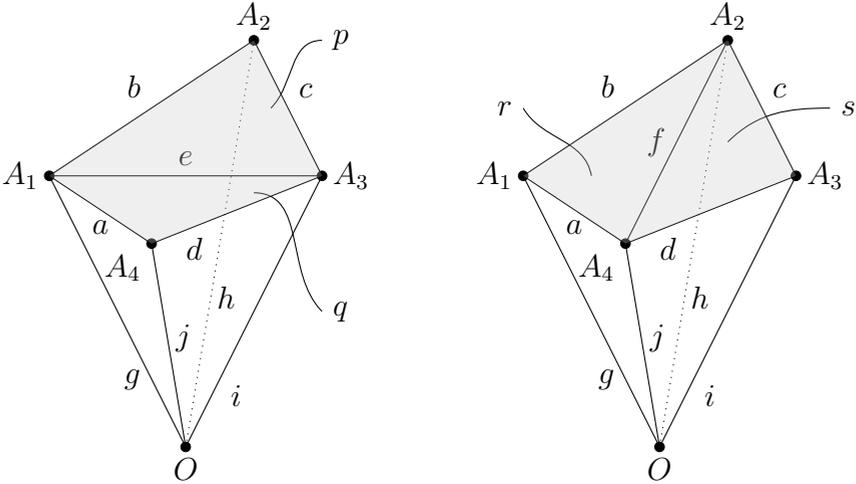
\vspace{-.3in}

\end{example}

We will derive our first type of relation amongst the measurement data for a spherical quadrilateral from the following elementary lemma.
    \begin{lemma}[cf.~{\cite[Chapter III, Section 17, the $J_3$ relation]{weyl_classical_1946}}]\label{lem: j3 relation}
        Let~$\overrightarrow{v_1},\dots,\overrightarrow{v_{d+1}}$ be vectors in a~$d$-dimensional space~$W$ equipped with a volume form
        \begin{align*}
            [ \, ]: W^d&\to \R\\
            (\overrightarrow{w_1},\dots,\overrightarrow{w_d})&\mapsto
            \dietmatrix{\begin{bmatrix}
                \overrightarrow{w_1} &  \cdots & \overrightarrow{w_d}
            \end{bmatrix}}.
        \end{align*}
        Then
        \begin{equation}\label{eq: lemdot}
            \sum_{i=1}^{d+1} (-1)^{i+1}
            \dietmatrix{
            \begin{bmatrix}
                \overrightarrow{v_1} & \cdots & \widehat{\overrightarrow{v_i}} &  \cdots & \overrightarrow{v_{d+1}}
            \end{bmatrix}
            }
            \overrightarrow{v_i}  = 0\, ,
        \end{equation} where~$\widehat{\overrightarrow{v_i}}$ indicates that we omit~$\overrightarrow{v_i}$ when computing the volume form.
       \end{lemma}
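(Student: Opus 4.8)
The identity in \eqref{eq: lemdot} is a coordinate-free avatar of the cofactor expansion of a determinant with a repeated row, so the plan is to reduce it to that computation. First I would fix a basis of $W$ and use it to identify the volume form $[\,\cdot\,]$ with $\lambda\cdot\det$ for some nonzero scalar $\lambda$. Every summand of \eqref{eq: lemdot} is homogeneous of degree one in $[\,\cdot\,]$, so it suffices to treat the case $[\,\cdot\,]=\det$; equivalently, we may assume $\vec{v_1},\dots,\vec{v_{d+1}}$ are column vectors in $\R^d$ (or $\C^d$), with $[\,\cdot\,]$ the usual determinant. (When $\vec{v_1},\dots,\vec{v_{d+1}}$ span a subspace of dimension $<d$, all the volume forms appearing in \eqref{eq: lemdot} vanish and the identity is trivial, so this reduction loses nothing.)

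Next, write $V$ for the $d\times(d+1)$ matrix with columns $\vec{v_1},\dots,\vec{v_{d+1}}$, and for each $i$ let $V_{\widehat{\imath}}$ be the $d\times d$ matrix obtained by deleting column $i$, so that $\bigl[\vec{v_1}\ \cdots\ \widehat{\vec{v_i}}\ \cdots\ \vec{v_{d+1}}\bigr]=\det V_{\widehat{\imath}}$. Then \eqref{eq: lemdot} asserts that the vector $\sum_{i=1}^{d+1}(-1)^{i+1}(\det V_{\widehat{\imath}})\,\vec{v_i}\in\R^d$ is zero, i.e.\ that each of its $d$ coordinates vanishes. Fix a coordinate index $k\in\{1,\dots,d\}$ and form the $(d+1)\times(d+1)$ matrix $\wt V$ obtained from $V$ by adjoining a copy of the $k$-th row of $V$ as a new first row. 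Then $\wt V$ has two equal rows, so $\det\wt V=0$. On the other hand, Laplace expansion of $\det\wt V$ along its first row gives exactly $\sum_{i=1}^{d+1}(-1)^{i+1}(\vec{v_i})_k\,\det V_{\widehat{\imath}}$ (deleting row $1$ and column $i$ from $\wt V$ leaves precisely $V_{\widehat{\imath}}$, and the cofactor sign at position $(1,i)$ is $(-1)^{1+i}$), which is the $k$-th coordinate of the vector in question. Since $k$ was arbitrary, \eqref{eq: lemdot} follows.

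I do not expect a genuine obstacle here, as this is a classical identity (Weyl's $J_3$ relation); the only points needing care are matching the cofactor signs to the prescribed $(-1)^{i+1}$ — handled by taking the repeated row to be the \emph{first} row — and the homogeneity reduction to $[\,\cdot\,]=\det$. For the record, there is an even shorter route: the left-hand side of \eqref{eq: lemdot} is an alternating multilinear map $W^{d+1}\to W$, and since $\dim W=d$ we have $\Lambda^{d+1}W=0$, so every such map vanishes identically; I would likely relegate this slicker observation to a remark and give the explicit determinant argument in the main text.
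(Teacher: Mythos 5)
Your proposal is correct and follows essentially the same route as the paper: adjoin a repeated coordinate row to the matrix of columns $\vec{v_1},\dots,\vec{v_{d+1}}$, note the resulting $(d+1)\times(d+1)$ determinant vanishes, and read off each coordinate of \eqref{eq: lemdot} via Laplace expansion along that row. The extra reduction to $[\,\cdot\,]=\lambda\det$ and the $\Lambda^{d+1}W=0$ remark are fine but not needed beyond what the paper already does.
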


       \begin{proof}
        Let~$v_{1,i},v_{2,i},\dots,v_{d+1,i}$ be the~$i$th coordinates of vectors~$\overrightarrow{v_1},\dots,\overrightarrow{v_{d+1}}$ respectively. Consider the  matrix of column vectors~$\overrightarrow{v_1}, \overrightarrow{v_2}, \dots,\overrightarrow{v_{d+1}}.$
        Select a row and add it to the top, the resulting matrix is singular. Laplace expansion then gives
        \begin{equation}\label{eq: repeat row det}
        \dietmatrix{
            \begin{bmatrix}
            v_{1,j} & v_{2,j} & \dots & v_{d+1,j}\\
                \overrightarrow{v_1} 
                & \overrightarrow{v_2} & \dots & \overrightarrow{v_{d+1}}
            \end{bmatrix}}= \sum_{i=1}^{d+1} (-1)^{i+1}\dietmatrix{\begin{bmatrix}
                \overrightarrow{v_1} & \cdots & \widehat{\overrightarrow{v_i}} & \cdots & \overrightarrow{v_{d+1}}
            \end{bmatrix}}v_{i,j}  = 0\, .
        \end{equation}
        As this works for any row~$j$ we get the claim.
    \end{proof}

    \begin{remark}\label{rem: lemdot}
         Taking scalar products with an arbitrary vector in equation~\eqref{eq: lemdot} yields an algebraic relation.  E.g., if~$\overrightarrow{x},\overrightarrow{y},\overrightarrow{z},\overrightarrow{w}$ are vectors in a~2-dimensional Euclidean space, then we have
        \begin{equation*}
            \dietmatrix{\begin{bmatrix}
                \overrightarrow{x} & \overrightarrow{y}
            \end{bmatrix}}\langle\overrightarrow{z},\overrightarrow{w}\rangle - \dietmatrix{\begin{bmatrix}
                \overrightarrow{x} & \overrightarrow{z}
            \end{bmatrix}}\langle\overrightarrow{y},\overrightarrow{w}\rangle + \dietmatrix{\begin{bmatrix}
                \overrightarrow{y} & \overrightarrow{z}
            \end{bmatrix}}\langle
            \overrightarrow{x},\overrightarrow{w}\rangle = 0\, .
            \end{equation*}
    \end{remark}

\begin{proposition}\label{prop: p+q identities}
     In the setting of Example~\ref{ex: main tet example}, the following identities hold:
    \begin{align}
        p+q&=r+s+{\mytfrac{\kap}{2}}(ap+bq-er) \label{eq: pq1}\, ,\\
        p+q&=r+s+{\mytfrac{\kap}{2}}(fp-cr-bs)\label{eq: pq2}\, ,\\
        p+q&=r+s+{\mytfrac{\kap}{2}}(dp+cq-es)\label{eq: pq3}\, ,\\
        p+q&=r+s+{\mytfrac{\kap}{2}}(fq-dr-as)\label{eq: pq4}.
    \end{align}
\end{proposition}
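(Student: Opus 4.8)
The plan is to obtain all four identities simultaneously by taking scalar products of a single vector relation among the position vectors $\overrightarrow{OA_1},\dots,\overrightarrow{OA_4}$.

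First recall from Definition~\ref{def: skappa def} (compare~\eqref{eq: detvolp}) that $\skap(A_i,A_j,A_k)=\tfrac{2}{R}\bigl[\overrightarrow{OA_i}\ \overrightarrow{OA_j}\ \overrightarrow{OA_k}\bigr]$, equivalently $\bigl[\overrightarrow{OA_i}\ \overrightarrow{OA_j}\ \overrightarrow{OA_k}\bigr]=\tfrac{R}{2}\skap(A_i,A_j,A_k)$. Apply Lemma~\ref{lem: j3 relation} with $d=3$, $W=V$, and $\overrightarrow{v_\ell}=\overrightarrow{OA_\ell}$ for $\ell=1,2,3,4$. Rewriting the four $3\times 3$ volume forms in~\eqref{eq: lemdot} via the formula above and cancelling the common factor $\tfrac{R}{2}$ produces the vector identity
\begin{equation*}
	s\,\overrightarrow{OA_1}-q\,\overrightarrow{OA_2}+r\,\overrightarrow{OA_3}-p\,\overrightarrow{OA_4}=0 ,
\end{equation*}
which we will call $(\star)$; here the alternating signs and the pairing of $p,q,r,s$ with the respective position vectors are inherited directly from~\eqref{eq: lemdot}.

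Next I would record the scalar products needed. Since each $A_\ell$ lies on $\bfS$ we have $\langle\overrightarrow{OA_\ell},\overrightarrow{OA_\ell}\rangle=R^2$, and expanding $x_{ij}=\langle\overrightarrow{OA_j}-\overrightarrow{OA_i},\,\overrightarrow{OA_j}-\overrightarrow{OA_i}\rangle$ gives $\langle\overrightarrow{OA_i},\overrightarrow{OA_j}\rangle=R^2-\tfrac{x_{ij}}{2}$ (exactly the computation already made in~\eqref{eq: bfact}). Now pair $(\star)$ with $\overrightarrow{OA_1}$, then with $\overrightarrow{OA_2}$, $\overrightarrow{OA_3}$, $\overrightarrow{OA_4}$. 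In each pairing the $R^2$-terms collect into $R^2(s-q+r-p)$, while the remaining terms give $\tfrac12$ times a combination of the relevant squared distances weighted by $p,q,r,s$; dividing by $R^2$ and substituting $\kap=\tfrac1{R^2}$ yields~\eqref{eq: pq1},~\eqref{eq: pq2},~\eqref{eq: pq3},~\eqref{eq: pq4} respectively. For instance, pairing with $\overrightarrow{OA_1}$ gives $sR^2-q(R^2-\tfrac{b}{2})+r(R^2-\tfrac{e}{2})-p(R^2-\tfrac{a}{2})=0$, i.e.\ $s-q+r-p=-\tfrac{\kap}{2}(ap+bq-er)$, which rearranges to~\eqref{eq: pq1}; pairing with $\overrightarrow{OA_2}$ brings in $x_{12}=b$, $x_{23}=c$, $x_{24}=f$ and produces~\eqref{eq: pq2}, and similarly for the remaining two.

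The only thing requiring care is the sign and index bookkeeping: one must verify that the coefficient of $\overrightarrow{OA_\ell}$ in $(\star)$ is $\pm\skap$ of the triple omitting $A_\ell$ with the correct alternating sign, and that in each scalar product the dot products $\langle\overrightarrow{OA_i},\overrightarrow{OA_j}\rangle$ are matched to the right letters among $a,b,c,d,e,f$. I expect no conceptual obstacle beyond this; note in particular that the auxiliary measurements $g,h,i,j$, all equal to $R^2$, only feed into the term $R^2(s-q+r-p)$ and therefore never appear in the final identities.
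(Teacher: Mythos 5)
Your proposal is correct and follows essentially the same route as the paper: apply the $J_3$ relation (Lemma~\ref{lem: j3 relation}) to $\overrightarrow{OA_1},\dots,\overrightarrow{OA_4}$, rewrite the volume forms as $\tfrac{R}{2}\skap$ to get the vector identity, then pair with each position vector using $\langle\overrightarrow{OA_i},\overrightarrow{OA_j}\rangle=R^2-\tfrac{x_{ij}}{2}$ and divide by $R^2$. Your sign bookkeeping in $(\star)$ and in the four resulting scalar equations checks out and matches the relations the paper actually uses before rearranging.
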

    
\begin{proof}

Take, e.g.,~$\overrightarrow{g}$ as shorthand for the vector~$\overrightarrow{OA_1}$, and~$g_i$ as its~$i$th coordinate.  Let~$V_{ijk}$ be the volume of the tetrahedron~$OA_iA_jA_k$.

Making use of Remark~\ref{rem: lemdot} and the fact that the determinant of a matrix of three column vectors is six times the volume of the tetrahedron they define (equation~\eqref{eq:voltet1}), we get
    \begin{equation*}
        6V_{123}j_1-6V_{124}i_1+6V_{134}h_1+6V_{234}g_1=0\, .
    \end{equation*} 
    Multiply by~$2/R$ to get
    \begin{equation*}
    pj_1-ri_1+qh_1+sg_1=0 \, .   
    \end{equation*}
   Repeat with the 2nd and 3rd coordinates to get corresponding relations. The relation therefore holds for the vectors, giving
    \begin{equation}\label{eq:scalid}
    p\overrightarrow{j}-r\overrightarrow{i}+q\overrightarrow{h}+s\overrightarrow{g}=0\, .
    \end{equation}

    Take scalar products of equation~\eqref{eq:scalid} with~$\overrightarrow{g},\overrightarrow{h},\overrightarrow{i},\overrightarrow{j}$ respectively. Then use the identities of the form~$\langle \overrightarrow{i},\overrightarrow{j}\rangle=R^2-\frac{d}{2}$ to obtain
\begin{align*}
        p(2-\kap a)-r(2-\kap e)+q(2-\kap b)-2s &=0\, ,
\\
        p(2-\kap f)-r(2-\kap c)+2q-s (2-\kap b)&=0\, ,
    \\
        p(2-\kap d)-2r +q(2-\kap c)-s(2-\kap e)&=0\, ,
\\
        2p  - r(2-\kap d)+q(2-\kap f)-s(2-\kap a)&=0.
\end{align*} 
Rearranging yields the claim.
   \end{proof}

\begin{remark}

Taking~$R\to\infty$ (equivalently,~$\kap\to 0$), each of equations~\eqref{eq: pq1}-\eqref{eq: pq4} becomes
\[
p+q=r+s.
\]
For configurations in~$V_\R$ with~$K=0$, this statement reflects the fact that in a triangulation of a quadrilateral, the areas of the two triangles add up to the area of the quadrilateral.
\end{remark}

\begin{corollary}\label{prop: nokapid}
    In the setting of Example~\ref{ex: main tet example}, the following relations hold:
    \begin{align}
        a(p+s)&=q(f-b)+r(e-d)\, ,\nonumber\\
        b(s+q)&=p(f-a)+r(e-c)\, ,\nonumber\\
        c(q+r)&=p(f-d)+s(e-b)\, ,\nonumber\\
        d(r+p)&=q(f-c)+s(e-a)\, ,\nonumber\\
        e(r-s)&=p(a-d)+q(b-c) \, , \label{eq: ers}\\
        f(p-q)&=r(c-d)+s(b-a)\, .\nonumber
    \end{align}
\end{corollary}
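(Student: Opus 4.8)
The plan is to derive all six relations by pairwise subtraction of the four identities of Proposition~\ref{prop: p+q identities}. Each of \eqref{eq: pq1}--\eqref{eq: pq4} has the shape $p+q-r-s=\tfrac{\kap}{2}L$, where $L$ is a linear form in $p,q,r,s$ whose coefficients lie among $a,b,c,d,e,f$ (for example, \eqref{eq: pq1} has $L=ap+bq-er$). Subtracting any two of these identities cancels the common left-hand side $p+q-r-s$ and leaves $\tfrac{\kap}{2}(L_i-L_j)=0$. Since $\kap=\tfrac{1}{R^2}$ is nonzero for a genuine sphere, we may divide by $\tfrac{\kap}{2}$ and conclude $L_i=L_j$.

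There are exactly $\binom{4}{2}=6$ such pairs, and each yields one of the six asserted relations after regrouping. Concretely, \eqref{eq: pq1} minus \eqref{eq: pq3} gives $ap+bq-er=dp+cq-es$; collecting the coefficients of $p$, of $q$, and of $r$ and $s$ rewrites this as $p(a-d)+q(b-c)=e(r-s)$, which is \eqref{eq: ers}. The other five pairs, namely \eqref{eq: pq1} minus \eqref{eq: pq4}, \eqref{eq: pq1} minus \eqref{eq: pq2}, \eqref{eq: pq2} minus \eqref{eq: pq3}, \eqref{eq: pq3} minus \eqref{eq: pq4}, and \eqref{eq: pq2} minus \eqref{eq: pq4}, are treated identically and produce, respectively, the identities for $a(p+s)$, $b(s+q)$, $c(q+r)$, $d(r+p)$, and $f(p-q)$.

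I do not expect any real obstacle here: the mathematical content is entirely carried by Proposition~\ref{prop: p+q identities}, and the corollary is just the observation that the curvature-dependent terms cancel in every difference. The only thing to watch is the bookkeeping of signs and of which of $a,\dots,f$ multiplies which combination of $p,q,r,s$ when terms are collected in each of the six subtractions; carrying out one case carefully (as above) and noting the symmetry of the pattern suffices. The degenerate flat limit $\kap\to 0$, in which all four identities collapse to $p+q=r+s$ and the subtraction argument becomes vacuous, is not needed since $\bfS$ has positive curvature; the relations nonetheless persist there by continuity, recovering the Euclidean statements of \cite{fomin_heronian_2021}.
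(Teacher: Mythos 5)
Your proposal is correct and matches the paper's proof, which likewise obtains each relation by subtracting one equation of Proposition~\ref{prop: p+q identities} from another and rearranging; your pairing of the six differences with the six identities checks out. The only addition beyond the paper is your explicit note that one divides by $\tfrac{\kap}{2}\neq 0$, which is harmless since $\kap=1/R^2$ in the setting of Example~\ref{ex: main tet example}.
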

\begin{proof}
    Each of these relations can be obtained by subtracting one equation in Proposition~\ref{prop: p+q identities} from another and rearranging.
\end{proof}
\begin{remark}
Any three equations in Corollary~\ref{prop: nokapid} imply the rest.  Note that equation~\eqref{eq: ers} recovers~\cite[(2.19)]{fomin_heronian_2021}.
\end{remark}

The final identities that we will establish for spherical quadrilaterals are analogues of the following classical result.

\begin{theorem}[Bretschneider's Formula]\label{thm: classical bret} 
    Let~$V$ be 2-dimensional Euclidean space. For~$A_1,A_2,A_3,A_4\in V$, define~$a,b,c,d,e,f,p,q,r,s$ as in Example~\ref{ex: main tet example}.
    Then 
    \begin{equation*}
        4ef=(p+q)^2+(a-b+c-d)^2 =(r+s)^2+(a-b+c-d)^2.
    \end{equation*}
\end{theorem}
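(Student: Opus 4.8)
The plan is to reduce the identity to elementary two-dimensional vector geometry, exploiting the fact that only squares of $p,q,r,s$ enter the formula, so every orientation choice washes out.

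\emph{Setup.} Taking $\kap=0$ in Proposition~\ref{prop: sph heron} gives $p^2=H^0(b,c,e)$, and $H^0(b,c,e)=H^0(x_{12},x_{13},x_{23})=-\caymthree(x_{12},x_{13},x_{23})$; by the computation in the proof of Theorem~\ref{thm: heron} this equals $(4\cdot\text{area of }A_1A_2A_3)^2$, and likewise for $q,r,s$. Fixing an orientation of $V$, we may take $p,q,r,s$ to be four times the \emph{signed} areas of $A_1A_2A_3,\ A_1A_3A_4,\ A_1A_2A_4,\ A_2A_3A_4$. Triangulating the quadrilateral $A_1A_2A_3A_4$ along either diagonal and adding signed areas yields
\[
p+q\;=\;4\cdot(\text{signed area of }A_1A_2A_3A_4)\;=\;r+s,
\]
so the two right-hand sides in the statement coincide, and it suffices to prove $4ef=(p+q)^2+(a-b+c-d)^2$.

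\emph{Rewriting $p+q$ via the diagonals.} Expanding the signed areas of $A_1A_2A_3$ and $A_1A_3A_4$ in coordinates, the cross terms telescope and one obtains $p+q=2\left[\,\vec{A_1A_3}\ \ \vec{A_2A_4}\,\right]$, twice the $2\times2$ volume form of the two diagonal vectors (equivalently, the classical fact that the signed area of a quadrilateral is half the cross product of its diagonals). Squaring and invoking the two-dimensional Lagrange identity $\left[\,\vec u\ \ \vec v\,\right]^2=\langle\vec u,\vec u\rangle\langle\vec v,\vec v\rangle-\langle\vec u,\vec v\rangle^2$ with $\vec u=\vec{A_1A_3}$, $\vec v=\vec{A_2A_4}$, and using $\langle\vec{A_1A_3},\vec{A_1A_3}\rangle=x_{13}=e$, $\langle\vec{A_2A_4},\vec{A_2A_4}\rangle=x_{24}=f$, gives
\[
(p+q)^2\;=\;4\bigl(ef-\langle\vec{A_1A_3},\vec{A_2A_4}\rangle^2\bigr).
\]

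\emph{The remaining inner product.} The polarization identity $2\langle\vec{XY},\vec{ZW}\rangle=x(X,W)+x(Y,Z)-x(X,Z)-x(Y,W)$ (immediate on expanding squared distances, and the same device used in the proof of Lemma~\ref{lem: 2.4analogue}), applied with $X=A_1,\ Y=A_3,\ Z=A_2,\ W=A_4$, gives
\[
2\langle\vec{A_1A_3},\vec{A_2A_4}\rangle\;=\;x_{14}+x_{23}-x_{12}-x_{34}\;=\;a-b+c-d.
\]
Substituting $4\langle\vec{A_1A_3},\vec{A_2A_4}\rangle^2=(a-b+c-d)^2$ into the previous display yields $(p+q)^2=4ef-(a-b+c-d)^2$, which is the desired identity; the $(r+s)$-form follows from $p+q=r+s$.

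I do not anticipate a real obstacle: the argument is a chain of one-line identities. The only point requiring care is the sign bookkeeping in the first two steps — checking that, for a fixed orientation of $V$, the quantities $p,q,r,s$ are exactly four times the signed areas (so their sum telescopes to the signed area of the quadrilateral), and that $p+q=2\left[\,\vec{A_1A_3}\ \ \vec{A_2A_4}\,\right]$ holds for an arbitrary, not necessarily convex, quadruple. Since only $(p+q)^2$ appears in the final formula the orientation is immaterial to the statement; one could alternatively give a purely algebraic derivation from the $\kap=0$ instances of~\eqref{eq: four instances} together with a formula for the product $pq$, or simply cite~\cite{bajgonakova_bretschneiders_2012}, but the vector argument above is the cleanest and also foreshadows the spherical generalization.
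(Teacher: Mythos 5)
Your argument is correct, but it takes a genuinely different route from the paper's. The paper proves Theorem~\ref{thm: classical bret} by applying Lemma~\ref{lem: vector mult} to the $2\times 2$ Gram matrix built from $\vec{A_1A_4},\vec{A_1A_2},\vec{A_1A_3}$, thereby obtaining a closed formula for the \emph{product} $pq$ (equation~\eqref{eq: pq in euclidean case}), and then expands $(p+q)^2=p^2+q^2+2pq$ using the Heron expressions for $p^2$ and $q^2$; you instead compute the \emph{sum} $p+q$ in closed form as $2\bigl[\vec{A_1A_3}\ \vec{A_2A_4}\bigr]$ via signed-area telescoping, and finish with Lagrange's identity plus polarization. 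Your sign bookkeeping is sound: the telescoping identities $p+q=r+s=2\bigl[\vec{A_1A_3}\ \vec{A_2A_4}\bigr]$ are coordinate identities valid for arbitrary (not necessarily convex) quadruples, and the only convention needed is that $p,q,r,s$ are signed areas with respect to one fixed orientation, which is exactly the paper's implicit identification $p=2\bigl[\vec{b}\ \vec{e}\bigr]$, $q=2\bigl[\vec{a}\ \vec{e}\bigr]$. What each approach buys: yours is shorter, avoids the Heron expansions, and delivers $p+q=r+s$, hence the second equality of the statement, explicitly (the paper's proof treats only the $(p+q)$ form and leaves the $(r+s)$ form to symmetry); the paper's route is chosen because it is the one that transfers to the sphere --- the proof of Theorem~\ref{thm: sph Bret} repeats verbatim the $pq$-via-Gram-determinant computation with the $3\times 3$ Gram matrix of the radius vectors $\vec{g},\vec{h},\vec{i},\vec{j}$ --- whereas your signed-area telescoping and the diagonal cross-product identity have no direct analogue on $\bfS$, where (as Section 2 notes) there is no consistent notion of signed area; so, contrary to your closing remark, it is the paper's method rather than yours that foreshadows the spherical generalization.
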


\begin{remark}
    In 1842, Bretschneider~\cite{bretschneider_untersuchung_1842} and Strehlke~\cite{strehlke_zwei_1842} each published proofs that in the Euclidean planar 
 version ($V=V_\R$ and~$\kap=0$) of Example~\ref{ex: main tet example}  above, we have
    \[
    (p+q)^2=(s-a)(s-b)(s-c)(s-d)-abcd\cos^2\left({\mytfrac{\alpha+\gamma}{2}}\right)\, ,
    \]
where~$s=\frac{a+b+c+d}{2}$ and~$\alpha$ and~$\gamma$ are the angles between sides labeled by~$a$ and~$b$ and~$c$ and~$d$ respectively.
    The version appearing in Theorem~\ref{thm: classical bret} above and used in~\cite{fomin_heronian_2021} seems to be due to Coolidge~\cite{coolidge_historically_1939}. Coolidge's formulation generalizes a similar result known as Brahmagupta's formula  ($6$th century CE) for the area of a cyclic quadrilateral; see~\cite{ayoub_generalizations_2007} for an exposition of Brahmagupta and Bretschneider's formulas and related results. Of note, Heron's formula (Theorem~\ref{thm: heron}) can be regarded as a degenerate case of Brahmagupta's formula (see~\cite{jose_garcia_two_2020}), and hence a special case of Bretschneider's formula.
\end{remark}

Bretschneider's formula has been proven in many different ways.  One common method is to represent the sides and diagonals of a quadrilateral by vectors and to relate the area of the quadrilateral to the cross product of the vectors representing the diagonals.  We will provide a proof based on the following classical fact. The method of this proof will then be used to establish a spherical version.

\begin{lemma}[{\cite[Sections 67-68]{muir_treatise_1882}}]\label{lem: vector mult}
Let~$\overrightarrow{u_1},\dots,\overrightarrow{u_n},\overrightarrow{v_1},\dots,\overrightarrow{v_n}\in V$.  Then
    \begin{equation}\label{eq: vectormult}
        \det\dietmatrix{\begin{bmatrix}
\langle\overrightarrow{u_1},\overrightarrow{v_1}\rangle & \dots & \langle \overrightarrow{u_1},\overrightarrow{v_n}\rangle\\
          \vdots & \ddots & \vdots\\
          \langle \overrightarrow{u_n},\overrightarrow{v_1}\rangle & \dots & \langle \overrightarrow{u_n},\overrightarrow{v_n}\rangle\end{bmatrix}}
          =
          \dietmatrix{\begin{bmatrix}
            \overrightarrow{u_1} & \cdots & \overrightarrow{u_n}
    \end{bmatrix}}\dietmatrix{\begin{bmatrix}
            \overrightarrow{v_1} & \cdots & \overrightarrow{v_n}
        \end{bmatrix}}\, .
    \end{equation}
\end{lemma}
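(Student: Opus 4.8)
The statement to prove is Lemma~\ref{lem: vector mult}: the Cauchy--Binet style identity
\[
\det\dietmatrix{\begin{bmatrix}
\langle\overrightarrow{u_1},\overrightarrow{v_1}\rangle & \cdots & \langle \overrightarrow{u_1},\overrightarrow{v_n}\rangle\\
\vdots & \ddots & \vdots\\
\langle \overrightarrow{u_n},\overrightarrow{v_1}\rangle & \cdots & \langle \overrightarrow{u_n},\overrightarrow{v_n}\rangle\end{bmatrix}}
=
\dietmatrix{\begin{bmatrix}\overrightarrow{u_1} & \cdots & \overrightarrow{u_n}\end{bmatrix}}\dietmatrix{\begin{bmatrix}\overrightarrow{v_1} & \cdots & \overrightarrow{v_n}\end{bmatrix}}.
\]
The plan is to realize the Gram-type matrix on the left as an honest matrix product and then invoke multiplicativity of the determinant. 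Concretely, let $U$ be the $n\times n$ matrix whose $k$th column is the coordinate vector of $\overrightarrow{u_k}$ (in the fixed orthonormal basis of $V$ used to define $\langle\,,\,\rangle$ and $[\,]$), and likewise let $W$ have $k$th column $\overrightarrow{v_k}$. Since the scalar product is the standard one in these coordinates, the $(i,j)$ entry of $U^{\mathsf T}W$ is exactly $\sum_\ell (u_i)_\ell (v_j)_\ell=\langle\overrightarrow{u_i},\overrightarrow{v_j}\rangle$, so the left-hand matrix equals $U^{\mathsf T}W$.

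The main steps, in order, are: (1) fix the coordinate identification $V\cong\C^n$ (here $n=3$, but the argument is dimension-agnostic) and record that $\langle x,y\rangle=x^{\mathsf T}y$ and $[\,x_1\ \cdots\ x_n\,]=\det[x_1\ \cdots\ x_n]$ in these coordinates; (2) observe that the left-hand Gram matrix is $U^{\mathsf T}W$ with $U,W$ as above; (3) apply $\det(U^{\mathsf T}W)=\det(U^{\mathsf T})\det(W)=\det(U)\det(W)$; (4) translate $\det U=[\,\overrightarrow{u_1}\ \cdots\ \overrightarrow{u_n}\,]$ and $\det W=[\,\overrightarrow{v_1}\ \cdots\ \overrightarrow{v_n}\,]$ back into the volume-form notation. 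Steps (2)--(4) are a one-line manipulation once (1) is in place; the whole proof is essentially the statement ``the Gram determinant factors because the Gram matrix factors.''

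There is no serious obstacle here: the only thing to be mildly careful about is that we are working over $V=V_\C$ rather than $V_\R$, so the bilinear form $\langle\,,\,\rangle$ is symmetric bilinear (not Hermitian) --- but that is precisely why $\langle x,y\rangle=x^{\mathsf T}y$ with a genuine transpose and no conjugate, which is exactly what makes the factorization $U^{\mathsf T}W$ work. One should also note that both sides are polynomial in the coordinates of the $2n$ vectors, so if one prefers, the identity over $\C$ follows from the (classical, real) identity by a Zariski-density / polynomial-identity argument, but the direct matrix-product computation above needs no such detour. I would present the direct argument: set up $U$ and $W$, state $U^{\mathsf T}W$ equals the Gram matrix, and conclude by multiplicativity of the determinant.
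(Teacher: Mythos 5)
Your proof is correct: writing the Gram-type matrix as $U^{\mathsf T}W$ and using multiplicativity of the determinant is the standard argument, and your observation that the form on $V=V_\C$ is symmetric bilinear (not Hermitian), so that $\langle x,y\rangle=x^{\mathsf T}y$ with a genuine transpose, is exactly the point that makes the factorization work over $\C$. Note that the paper itself gives no proof of this lemma --- it is quoted as a classical identity with a citation to Muir (cf.\ the surrounding remark calling it a well-known relation) --- so your write-up simply supplies the routine verification that the paper omits, and it does so correctly.
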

\begin{remark}\label{rem: gram det}
    The determinant on the left-hand side of equation~\eqref{eq: vectormult} is commonly called a Gram determinant (see, e.g.,~\cite[Section 8.11.5]{berger_geometry_1977}). The identity~\eqref{eq: vectormult} appears in~\cite[Chapter IV, Section 6]{turnbull_theory_1960}, is the first relation listed for~$SL_n$ in~\cite[p. 255]{popov_invariant_1994}, and is deemed a ``well-known relation'' by Weyl~\cite[p. 53]{weyl_classical_1946}.
\end{remark}

\begin{proof}[Proof of Theorem~\ref{thm: classical bret}]
 Use the shorthand~$\overrightarrow{a}$ (resp.,~$\overrightarrow{b}$,~$\overrightarrow{e}$) for the vector~$\overrightarrow{A_1A_4}$ (resp.,~$\overrightarrow{A_1A_2}$,~$\overrightarrow{A_1A_3}$). By Lemma~\ref{lem: vector mult},
    \begin{align*}
       \det \dietmatrix{\begin{bmatrix}
        2\langle \overrightarrow{a},\overrightarrow{b}\rangle & 2\langle \overrightarrow{a},\overrightarrow{e}\rangle\\
        2\langle \overrightarrow{e},\overrightarrow{b}\rangle & 2\langle \overrightarrow{e},\overrightarrow{e}\rangle
        \end{bmatrix}}
        &=2\dietmatrix{\begin{bmatrix}
            \overrightarrow{a} &\overrightarrow{e}
        \end{bmatrix}}\cdot
        2\dietmatrix{\begin{bmatrix}
            \overrightarrow{b} &\overrightarrow{e}
        \end{bmatrix}}\, .
    \end{align*}
    Using the identities analogous to 
    \begin{equation*}
        2\langle \overrightarrow{a},\overrightarrow{b}\rangle = a+b-f
    \end{equation*}
     and recalling that~$2\dietmatrix{\begin{bmatrix}
        \overrightarrow{a} & \overrightarrow{e}\end{bmatrix}}=q$, and $2\dietmatrix{\begin{bmatrix}
        \overrightarrow{b} &\overrightarrow{e}
    \end{bmatrix}}=p$, 
    we get
    \begin{equation*}
        \det\dietmatrix{\begin{bmatrix}
            a+b-f& a+e-d\\
            b+e-c& 2e
        \end{bmatrix}}=-qp\, .
    \end{equation*}
    Therefore
    \begin{align}
       2ef-be-ce+e^2-ae-de+ab-ac-bd+dc&=qp\label{eq: pq in euclidean case}\, ,
       \end{align}
       which implies, by virtue of Heron's formula:

\begin{align}\label{eq: virtue of}
        (p+q)^2&=p^2+q^2+2pq\nonumber\\
&=(-b^2-c^2-e^2+2bc+2be+2ce)\nonumber\\
&\quad+(-a^2-d^2-e^2+2ad+2ae+2de)\\
&\quad+2(2ef-be-ce+e^2-ae-de+ab-ac-bd+dc)\nonumber\\
&=4ef-(a-b+c-d)^2 \, .\nonumber\qedhere
\end{align}
\end{proof}

We next obtain the following spherical analogues of Bretschneider's formula.  In the case~$\kap= 0$, both equations recover Bretschneider's formula.

\begin{theorem} [Spherical Analogues of Bretschneider's Formula]\label{thm: sph Bret}
    In the setting of Example~\ref{ex: main tet example}, the following two equations hold:
\begin{align}
        4ef\left(1-{\textstyle\frac{\kap e}{4}}\right)&=(p+q)^2+(a-b+c-d)^2-\kap e (a-b)(c-d)\, , \label{eq: sphericalbret}\\
        4ef\left(1-{\textstyle\frac{\kap f}{4}}\right)&=(r+s)^2+(a-b+c-d)^2-\kap f(a-d)(c-b)\, .\label{eq: sphericalbret2}
    \end{align}

\end{theorem}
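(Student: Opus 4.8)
The plan is to mirror the proof of Theorem~\ref{thm: classical bret}, but to replace the $2\times 2$ Gram determinant used there by a $3\times 3$ one whose extra row and column are built from a radius vector $\vec{OA_i}$; this extra data is exactly what produces the curvature corrections on both sides of \eqref{eq: sphericalbret}--\eqref{eq: sphericalbret2}. (An incidental gain over the Euclidean argument is that we will obtain $(p+q)^2$ directly as the square of one volume form, so Proposition~\ref{prop: sph heron} is not needed.) The first step is to collapse $p+q$ into a single volume form. By Definition~\ref{def: skappa def} and \eqref{eq:voltet1}, $p=\skap(A_1,A_2,A_3)=\tfrac{2}{R}[\vec{OA_1},\vec{OA_2},\vec{OA_3}]$ and $q=\skap(A_1,A_3,A_4)=\tfrac{2}{R}[\vec{OA_1},\vec{OA_3},\vec{OA_4}]$, writing $[\,\cdot\,]$ for the volume form. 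Writing $\vec{OA_2}=\vec{OA_4}+\vec{A_4A_2}$ and using multilinearity and antisymmetry of $[\,\cdot\,]$, the two terms telescope to $[\vec{OA_1},\vec{A_4A_2},\vec{OA_3}]$; then substituting $\vec{OA_1}=\vec{OA_3}+\vec{A_3A_1}$ (the $\vec{OA_3}$ part drops out) and replacing $\vec{A_3A_1}=-\vec{A_1A_3}$, $\vec{A_4A_2}=-\vec{A_2A_4}$ yields
\[
p+q=\tfrac{2}{R}\,[\vec{A_1A_3},\vec{A_2A_4},\vec{OA_3}].
\]

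Next I would apply Lemma~\ref{lem: vector mult} with $\vec{u_i}=\vec{v_i}$ ranging over $\vec{A_1A_3},\vec{A_2A_4},\vec{OA_3}$, giving $(p+q)^2=\tfrac{4}{R^2}\det G$, where $G$ is the Gram matrix of these three vectors. Using $\langle\vec{OA_i},\vec{OA_j}\rangle=R^2-\tfrac12 x_{ij}$ (cf.~\eqref{eq: bfact}) and expanding each inner product, the entries of $G$ become elementary linear combinations of $a,b,c,d,e,f$ and $R^2$: $\langle\vec{A_1A_3},\vec{A_2A_4}\rangle=\tfrac12(a-b+c-d)$, $\langle\vec{A_1A_3},\vec{OA_3}\rangle=\tfrac{e}{2}$, $\langle\vec{A_2A_4},\vec{OA_3}\rangle=\tfrac12(c-d)$, and the diagonal entries are $e$, $f$, $R^2$. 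Expanding the $3\times 3$ determinant, multiplying by $\tfrac{4}{R^2}$, and substituting $\kap=\tfrac{1}{R^2}$, the terms regroup as
\[
(p+q)^2=4ef\bigl(1-\tfrac{\kap e}{4}\bigr)-(a-b+c-d)^2+\kap e(a-b)(c-d),
\]
which rearranges to \eqref{eq: sphericalbret}.

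Finally, \eqref{eq: sphericalbret2} follows from \eqref{eq: sphericalbret} by the cyclic relabeling $A_i\mapsto A_{i+1}$ of the quadrilateral: this interchanges the two diagonals (so $e\leftrightarrow f$ and $p+q\leftrightarrow r+s$), sends $(a,b,c,d)\mapsto(b,c,d,a)$, fixes $(a-b+c-d)^2$, and turns $\kap e(a-b)(c-d)$ into $\kap f(a-d)(c-b)$; alternatively one repeats the argument verbatim with $\vec{OA_4}$ in place of $\vec{OA_3}$ and the triangulation through $A_2A_4$. I expect the only real obstacle to be bookkeeping: pinning down the signs in the telescoping step that reduces $p+q$ to a single volume form, and organizing the $3\times 3$ determinant expansion so the $R^2$-dependent and $R^2$-independent parts recombine into the stated curvature-corrected shape. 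Conceptually nothing beyond Lemma~\ref{lem: vector mult} and the identities \eqref{eq: bfact} is needed.
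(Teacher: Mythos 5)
Your proposal is correct, and it takes a genuinely different route from the paper. The paper mirrors its proof of the Euclidean case by computing the cross term: it applies Lemma~\ref{lem: vector mult} to the mixed Gram determinant of the radius vectors $\{\vec{OA_1},\vec{OA_2},\vec{OA_3}\}$ and $\{\vec{OA_1},\vec{OA_3},\vec{OA_4}\}$ to get a polynomial identity for $pq$ (equation~\eqref{eq: eq for pq bret}), and then expands $(p+q)^2=p^2+q^2+2pq$ using the spherical Heron expressions \eqref{eq: h equations 1} for $p^2$ and $q^2$. You instead telescope $p+q$ into the single volume form $\tfrac{2}{R}\,[\vec{A_1A_3},\vec{A_2A_4},\vec{OA_3}]$ and square it via the symmetric Gram determinant of the two diagonals and one radius vector; I checked the telescoping, the Gram entries $\bigl(e,f,R^2,\tfrac{a-b+c-d}{2},\tfrac{e}{2},\tfrac{c-d}{2}\bigr)$, and the determinant expansion, and they do regroup to \eqref{eq: sphericalbret}, with any overall sign ambiguity in $p+q$ irrelevant after squaring. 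Your route is closer to the classical ``diagonals as vectors'' proof of Bretschneider that the paper alludes to, needs only Lemma~\ref{lem: vector mult} and $\langle\vec{OA_i},\vec{OA_j}\rangle=R^2-\tfrac12 x_{ij}$, and avoids invoking Proposition~\ref{prop: sph heron}; what the paper's longer route buys is the standalone product identity~\eqref{eq: eq for pq bret}, which is reused later (it is exactly the relation $\partial_\rightarrow\scmfour(\xx)=-2\skap_{123}\skap_{134}$ in Proposition~\ref{prop: deriv geom meaning}). Your derivation of \eqref{eq: sphericalbret2} by the cyclic relabeling $A_i\mapsto A_{i+1}$ (which swaps $e\leftrightarrow f$, sends $p+q$ to $r+s$, and transforms the correction term as you state) is also valid, and is a cleaner disposal of the second identity than the paper's ``derived in a similar way.''
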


\begin{proof}
We follow the general plan of the above proof of Theorem~\ref{thm: classical bret}.    By Lemma~\ref{lem: vector mult},
    \begin{equation*}
        \det\begin{bmatrix}
            \langle \overrightarrow{g},\overrightarrow{g}\rangle & \langle \overrightarrow{g},\overrightarrow{i}\rangle & \langle \overrightarrow{g},\overrightarrow{j}\rangle\\
            \langle \overrightarrow{h},\overrightarrow{g}\rangle & \langle \overrightarrow{h},\overrightarrow{i}\rangle & \langle \overrightarrow{h},\overrightarrow{j}\rangle\\
            \langle \overrightarrow{i},\overrightarrow{g}\rangle & \langle \overrightarrow{i},\overrightarrow{i}\rangle & \langle \overrightarrow{i},\overrightarrow{j}\rangle
        \end{bmatrix}=\dietmatrix{\begin{bmatrix}
            \overrightarrow{g} & \overrightarrow{h} & \overrightarrow{i}
        \end{bmatrix}}\cdot\dietmatrix{\begin{bmatrix}
            \overrightarrow{g} & \overrightarrow{i} & \overrightarrow{j}
        \end{bmatrix}}\, .
    \end{equation*}
    Manipulating scalar products and recalling definitions, this becomes
    \begin{equation*}
        \det\begin{bmatrix}
            g & \frac{g+i-e}{2} & \frac{g+j-a}{2} \\
            \frac{h+g-b}{2} & \frac{h+i-c}{2} & \frac{h+j-f}{2} \\
            \frac{i+g-e}{2} &
            i & \frac{i+j-d}{2}
        \end{bmatrix}=6V_{123}\cdot 6V_{134} = \mytfrac{R}{2}\skaponetwothree\cdot \mytfrac{R}{2}\skaponethreefour\, .
    \end{equation*}
    Noting that~$g,i,h,j=R^2$, we deduce
    \begin{equation*}
       4\kap \cdot \det\begin{bmatrix}
            \frac{1}{\kap} & \frac{1}{\kap}-\frac{e}{2} & \frac{1}{\kap}-\frac{a}{2} \\
            \frac{1}{\kap}-\frac{b}{2} & \frac{1}{\kap}-\frac{c}{2} & \frac{1}{\kap}-\frac{f}{2} \\
            \frac{1}{\kap}-\frac{e}{2} &
            \frac{1}{\kap} & \frac{1}{\kap}-\frac{d}{2}
        \end{bmatrix} = pq\, .
    \end{equation*}
    Computing the determinant and simplifying, we get
    \begin{equation}\label{eq: eq for pq bret}
        ab - ac - bd + cd - ae - be - ce - de + e^2 + 2ef + {\textstyle\frac{\kap}{2}}(ace+bde-e^2f)=pq\, .
    \end{equation}
Using this in a calculation analogous to~\eqref{eq: virtue of} (but with the formulas~\eqref{eq: h equations 1}-\eqref{eq: h equations 2} for~$p^2$ and~$q^2$) produces equation~\eqref{eq: sphericalbret}.
    Equation~\eqref{eq: sphericalbret2} can be derived in a similar way.
\end{proof}

The following collection of identities will prove useful later.
\begin{proposition} \label{prop:prop210analoge}

Let~$A_1,A_2,A_3,A_4\in\bfS$ and consider the spherical quadrilateral~$A_1A_2A_3A_4$ with measurements~$a,b,c,d,e,f,p,q,r,s$ as in Example~\ref{ex: main tet example}.  Then
\begin{align}
    p^2 &= \hkap(b,c,e) = -b^2-c^2-e^2+2bc+2be+2ce- \kap bce \label{eq:def1sphp}\, ,\\
    q^2 &= \hkap(a,d,e) = -a^2-d^2-e^2+2ad+2ae+2de- \kap ade \label{eq:def1sphq}\, ,\\
    r^2 &= \hkap(a,b,f) = -a^2-b^2-f^2+2ab+2af+2bf- \kap abf \label{eq:def1sphr}\, ,\\
    s^2 &= \hkap(c,d,f) = -c^2-d^2-f^2+2cd+2cf+2df-\kap cdf \label{eq:def1sphs}\, ,\\
    p+q&=r+s+{\textstyle\frac{\kap}{2}}(ap+bq-er)\label{eq:def1sph p+qanalogue}\, ,\\
    4ef\left(1-{\textstyle\frac{\kap e}{4}}\right)&=(p+q)^2+(a-b+c-d)^2-\kap e(a-b)(c-d)\label{eq: def1sph bretanalogue}\, ,\\
    e(r-s)&=p(a-d)+q(b-c)\label{eq: def1sph erminuss}\, .
\end{align}
    
\end{proposition}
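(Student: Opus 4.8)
The plan is to recognize Proposition~\ref{prop:prop210analoge} as a compilation of facts already established in the preceding sections, specialized to the labelling of the spherical quadrilateral $A_1A_2A_3A_4$ fixed in Example~\ref{ex: main tet example}; consequently no genuinely new computation is required. First I would record the trivial observation that $\hkap(x,y,z)$ is a symmetric function of its three arguments, immediate from its defining formula. The four quadratic identities \eqref{eq:def1sphp}--\eqref{eq:def1sphs} then follow by applying the Spherical Heron's Formula (Proposition~\ref{prop: sph heron}) to each of the four triangles cut out by the two diagonals of the quadrilateral. For the triangle $A_1A_2A_3$ the three squared side lengths are $x_{12}=b$, $x_{13}=e$, $x_{23}=c$, so Proposition~\ref{prop: sph heron} yields $p^2=(\skap(A_1,A_2,A_3))^2=\hkap(b,e,c)=\hkap(b,c,e)$, and expanding $\hkap$ gives the displayed polynomial. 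The identical argument applied to $A_1A_3A_4$, $A_1A_2A_4$, and $A_2A_3A_4$ produces \eqref{eq:def1sphq}, \eqref{eq:def1sphr}, and \eqref{eq:def1sphs}, using the symmetry of $\hkap$ to arrange the arguments in the stated order.

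The remaining three identities are literal restatements of results proved earlier in this section: equation~\eqref{eq:def1sph p+qanalogue} is exactly~\eqref{eq: pq1} of Proposition~\ref{prop: p+q identities}; equation~\eqref{eq: def1sph bretanalogue} is exactly~\eqref{eq: sphericalbret} of Theorem~\ref{thm: sph Bret}; and equation~\eqref{eq: def1sph erminuss} is exactly~\eqref{eq: ers} of Corollary~\ref{prop: nokapid}. So once the dictionary between the symbols $a,\dots,s$ and the geometric data of Example~\ref{ex: main tet example} is in place, the proof consists of seven citations.

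Since every ingredient is already available, there is no real obstacle here; the proposition is essentially bookkeeping collecting the identities that will be needed later (e.g.\ in defining spherical Heronian diamonds and friezes). The only point requiring a little care is the matching of indices and signs: one must check that the cyclic ordering of vertices in each invocation of Proposition~\ref{prop: sph heron} is consistent with the definitions of $p,q,r,s$ recorded in~\eqref{eq: h equations 1}--\eqref{eq: h equations 2}. The sign of $\skap$ is immaterial for \eqref{eq:def1sphp}--\eqref{eq:def1sphs} since those entries are squared, but it does matter for \eqref{eq:def1sph p+qanalogue} and \eqref{eq: def1sph erminuss}; there, however, one simply inherits the sign conventions already fixed in Proposition~\ref{prop: p+q identities} and Corollary~\ref{prop: nokapid}. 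I would close by noting, as remarked after Corollary~\ref{prop: nokapid}, that any three of the six relations there imply the rest, so \eqref{eq: def1sph erminuss} could alternatively be obtained by combining \eqref{eq:def1sph p+qanalogue} with one further relation of the form $p+q=r+s+\tfrac{\kap}{2}(\cdots)$ from Proposition~\ref{prop: p+q identities}.
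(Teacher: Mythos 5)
Your proposal is correct and matches the paper's own proof, which likewise derives \eqref{eq:def1sphp}--\eqref{eq:def1sphs} as instances of the spherical Heron formula (Proposition~\ref{prop: sph heron}) and identifies \eqref{eq:def1sph p+qanalogue}, \eqref{eq: def1sph bretanalogue}, \eqref{eq: def1sph erminuss} with equations~\eqref{eq: pq1}, \eqref{eq: sphericalbret}, and~\eqref{eq: ers} respectively. Your added remarks on the symmetry of $\hkap$ and on sign conventions are harmless elaborations of the same citation-based argument.
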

\begin{proof}
    Equations~\eqref{eq:def1sphp}, \eqref{eq:def1sphq}, \eqref{eq:def1sphr}, and~\eqref{eq:def1sphs} are all instances of equation~\eqref{eq:kappaheron}.  Equation~\eqref{eq:def1sph p+qanalogue} is equation~\eqref{eq: pq1}.  Equation~\eqref{eq: def1sph bretanalogue} is equation~\eqref{eq: sphericalbret}.  Equation~\eqref{eq: def1sph erminuss} is equation~\eqref{eq: ers}.
\end{proof}

\begin{remark}
    Specializing~$\kap=0$ recovers~\cite[(2.13)-(2.19)]{fomin_heronian_2021}.
\end{remark}

\newpage

\section{Spherical Heronian friezes}\label{sec: sph her friezes}

We established that the data of three squared distances~$x_{ij}$ together with an associated~$\skap$ measurement is sufficient to determine a spherical triangle (cf.~Lemma~\ref{lem: 2.3 analogue}).  In this section, we consider a more general question: Which measurement data are sufficient to determine a spherical polygon?  Some definitions and results in this section are spherical counterparts of those in \cite[Sections 2, 3]{fomin_heronian_2021}.

\begin{definition} \label{def: sph poly}
    For~$n\ge 3$, a labeled \emph{spherical polygon} (or \emph{$n$-gon}) in~$\bfS$ is an ordered~$n$-tuple of points 
    \[
    P=(A_1,\dots,A_n)\in\bfS^n.
    \] 
  For such a polygon, we collect the measurements~$x_{ij}=x_{ij}(P)$ and~$\skap_{ijk}=\skap_{ijk}(P)=\skap (A_i,A_j,A_k)$ for all distinct~$i,j,k\in\{1,n\}$ and form the tuple
    \begin{equation*}
        \xS(P)=(x_{ij})\sqcup (\skap_{ijk}).
    \end{equation*}
    Note that by Proposition~\ref{prop: sph heron} these measurements satisfy the equations
    \begin{equation}\label{eq: heroneq}
        (\skap_{ijk})^2=\hkap(x_{ij},x_{jk},x_{ik}).
    \end{equation}

\end{definition}

\begin{definition}\label{def: triangulated}
Let~$G$ be an undirected graph on~$n$ vertices~$v_1,\dots,v_n$ with~$2n-3$ edges, of which 
 a distinguished size~$n$ set, without loss of generality 
 \[
 (v_1,v_2),(v_2,v_3),\dots,(v_{n-1},v_n),(v_n,v_1),
 \]
 form a cycle. The~$n-3$ edges of~$G$ that are not in the distinguished cycle are called \emph{diagonals}. We call~$G$ a \emph{triangulated cycle}, or refer to~$G$ as a \emph{triangulation}, if there exists an embedding of~$G$ into the real plane such that the distinguished cycle is represented by a convex~$n$-gon (with~$v_1,\dots,v_n$ appearing in clockwise order) and the diagonals are represented by pairwise noncrossing diagonals.
\end{definition}

\begin{definition}\label{def: tri polygon} 
    A \emph{triangulated spherical polygon}~$T=(P,G)$ is a spherical polygon~$P$ together with a choice of triangulation~$G$ of the underlying cycle.  For such an object, we consider the subcollection of measurements: 
    \[
    \xS_G(P)\subset \xS(P),
    \]
    which contains all the measurements~$x_{ij}$ and~$\skap_{ijk}$ that correspond to the sides, diagonals, and triangles that appear in the triangulation~$G$.
\end{definition}

\begin{proposition}\label{prop: cor 2.9 analogue} 
    Let~$G$ be a triangulated cycle with vertices~$v_1,\dots,v_n$ as in Definition~\ref{def: triangulated}. Let~$\xS_G=(x_{ij})\sqcup (\skapijk)$ be a tuple of complex numbers such that
    \begin{itemize}
        \item the numbers~$x_{ij}\in\C-\{0,\frac{4}{K}\}$, for~$i<j$, are labeled by the edges~$\{v_i,v_j\}$ present in~$G$, and
        \item the numbers~$\skap_{ijk}$,~$i<j<k$, are labeled by the triangles~$(v_i, v_j, v_k)$ in~$G$.  
    \end{itemize}
    Assume that equation~\eqref{eq: heroneq} 
    holds for each~$i<j<k$:
    \[
(\skapijk)^2=\hkap(x_{ij},x_{ik},x_{jk}).
    \]
  Then there exists a spherical~$n$-gon~$P$ on~$\bfS$ such that~$\xS_G(P)=\xS_G.$ Moreover, the polygon~$P$ is unique up to the action of~$\Aut(\bfS)$. In particular, all measurements in~$\xS(P)$ are uniquely determined by~$\xS_G(P)$.
    
\end{proposition}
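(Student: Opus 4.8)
The plan is to argue by induction on $n$, the number of vertices, using Lemma~\ref{lem: 2.3 analogue} as the base case $n=3$ and a ``triangle-peeling'' step for the inductive move. The key observation is that any triangulated cycle $G$ on $n\ge 4$ vertices contains an \emph{ear}: a triangle $(v_i,v_j,v_k)$ of $G$ two of whose sides, say $\{v_i,v_j\}$ and $\{v_j,v_k\}$, lie on the distinguished cycle, while the third side $\{v_i,v_k\}$ is a diagonal (or, in the case $n=3$, the remaining side of the polygon). Deleting $v_j$ from $G$ and replacing the two cycle edges through $v_j$ by the single edge $\{v_i,v_k\}$ yields a triangulated cycle $G'$ on $n-1$ vertices, and the restriction $\xS_{G'}$ of $\xS_G$ to the edges and triangles of $G'$ still satisfies all the relevant instances of the Heron equation~\eqref{eq: heroneq}, since these are a subset of those assumed for $\xS_G$.

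First I would apply the inductive hypothesis to $(G',\xS_{G'})$ to obtain a spherical $(n-1)$-gon $P'=(A_1,\dots,\widehat{A_j},\dots,A_n)$ on $\bfS$, unique up to $\Aut(\bfS)$, realizing those measurements; in particular $x(A_i,A_k)=x_{ik}$ is realized and, by hypothesis on $G$, $x_{ik}\notin\{0,\frac4K\}$. Next I would invoke Lemma~\ref{lem: 2.4analogue} (or directly Lemma~\ref{lem: 2.3 analogue}) with the already-placed points $A_i,A_k$ and the target data $x(A_j,A_k)=x_{jk}$, $x(A_i,A_j)=x_{ij}$, $\skap(A_i,A_j,A_k)=\skap_{ijk}$ — which satisfy the required Heron relation $\skap_{ijk}^2=\hkap(x_{ij},x_{ik},x_{jk})$ by assumption — to produce a \emph{unique} point $A_j\in\bfS$ realizing them. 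Reinserting $A_j$ gives a spherical $n$-gon $P=(A_1,\dots,A_n)$ with $\xS_G(P)=\xS_G$. Uniqueness up to $\Aut(\bfS)$ follows by the same two-step structure run backwards: any realization of $\xS_G$ restricts to a realization of $\xS_{G'}$, which by induction is unique up to $\Aut(\bfS)$, and then the position of $A_j$ is pinned down by the uniqueness clause of Lemma~\ref{lem: 2.4analogue}.

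The final sentence, that \emph{all} measurements in $\xS(P)$ — not just those in $\xS_G(P)$ — are determined, is then immediate: once $P$ is fixed up to $\Aut(\bfS)$, every $x_{ij}(P)$ and every $\skap_{ijk}(P)$ is an $\Aut(\bfS)$-invariant function of $P$ and hence has a well-defined value.

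The main obstacle I anticipate is purely combinatorial rather than geometric: one must be careful that the Heron relations assumed for $\xS_G$ really do suffice at each inductive stage, i.e.\ that after peeling an ear the triangles surviving in $G'$ are exactly those of $G$ other than the peeled one, with the same labelling — this is true but deserves an explicit check — and that an ear always exists, which is the standard fact that every triangulation of a convex polygon with $n\ge 4$ vertices has at least two ears. A secondary point requiring care is the hypothesis $x_{ij}\in\C\setminus\{0,\frac4K\}$: it is needed for the diagonal $\{v_i,v_k\}$ at the moment Lemma~\ref{lem: 2.4analogue} is applied, and one should confirm that every edge ever used as the ``base'' $\{A,C\}$ of an application of that lemma is indeed one of the edges of $G$ (a side or a diagonal), so that the blanket assumption on $\xS_G$ covers it. Neither difficulty is serious, but both are where a careless induction could go wrong.
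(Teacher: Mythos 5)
Your proposal is correct and follows essentially the same route as the paper: induction on $n$ with base case Lemma~\ref{lem: 2.3 analogue}, peeling off an ear triangle (one with two sides on the distinguished cycle) and placing its apex uniquely via Lemma~\ref{lem: 2.4analogue}, with the non-degeneracy of the base diagonal supplied by the hypothesis on the $x_{ij}$. The paper's proof builds the ear first and the remainder by induction, while you build the remainder first and then reattach the ear, but this is an inessential reordering of the same argument.
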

\begin{proof}
We proceed by induction on the number of vertices in~$G$. 
 If~$G$ has one triangle, we can apply Lemma~\ref{lem: 2.3 analogue}.  Assume the claim holds for~$k<n$ and consider~$G$ with~$n$ vertices.  Then~$G$ has~$n-2$ triangles, so one triangle must include two edges from the distinguished cycle.  Apply Lemmas~\ref{lem: 2.4analogue}~and~\ref{lem: 2.3 analogue} to this triangle.  The rest of~$G$ has less than~$n$ vertices in its distinguished cycle, and only shares one edge with the triangle we have constructed.  By induction we get the claim.
\end{proof}

We will next write formulas expressing the measurements for a spherical polygon~$P$ in terms of the initial data associated to a triangulation of the vertices of~$P$.  We  begin with the case of a quadrilateral, then proceed to the more general case.  We will use the following object to organize the measurement data of a quadrilateral.

\begin{definition}  \label{def:original definition}
    Let~$K\in\C$.  A \emph{spherical Heronian diamond} is an ordered {10-tuple} of numbers~$(a,b,c,d,e,f,p,q,r,s)$ satisfying equations~\eqref{eq:def1sphp}-\eqref{eq: def1sph erminuss}.  We display the entries in a diamond shape as shown on the right in Figure~\ref{fig: diamond}. Explicitly,
    \begin{itemize}
        \item $a,c,e,f$ are placed at the corners of the diamond;
        \item $p,q,r,s$ are placed at the midpoints of the sides of the diamond;
        \item the dashed line going Southeast (resp. Southwest) is labeled~$d$ (resp.~$b$).
    \end{itemize}
    The entries are arranged so that the four ``measurements'' associated with each side of the diamond (e.g.,~$a,d,e,q$) satisfy the corresponding spherical Heron equation (e.g., equation~\eqref{eq:def1sphq}).

\end{definition}

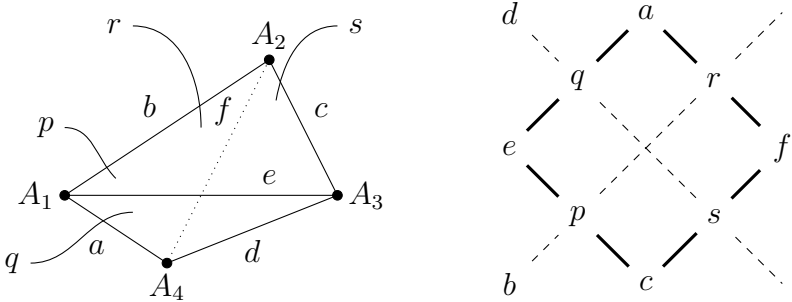
\begin{figure}[htbp!]
    \centering
    \begin{tikzpicture}[scale=0.9]

\coordinate [label=left:$A_1$] (A_1) at (-2,2);
\circleat{-2}{2};

\coordinate [label=below:$A_4$](A_4) at (-0.5,1);
\circleat{-0.5}{1};

\coordinate [label=right:$A_3$] (A_3) at (2,2);
\circleat{2}{2};

\coordinate[label=above:$A_2$] (A_2) at (1,4);
\circleat{1}{4};

\coordinate [label=left:$q$] (q) at (-2.5,1);
\coordinate (q_1) at (-1,1.75);
\draw (q_1) to [out=180,in=0] (q) ;

\coordinate [label=left:$p$] (p) at (-2,3);
\coordinate (p_1) at (-1.25,2.25);
\draw (p_1) to [out=135,in=0] (p) ;

\coordinate (r) at (0,3);
\coordinate[label=left:$r$] (r1) at (-1,4.5);
\draw (r) to [out=90,in=0] (r1);

\coordinate (s) at (1.1,3.4);
\coordinate[label=right:$s$] (s1) at (2,4.5);
\draw (s) to [out=80,in=180] (s1);

\draw (A_1) to node[above left] {$b$} (A_2) ;
\draw (A_2) to node[above right] {$c$} (A_3) ;
\draw (A_3) to node[below] {$d$} (A_4) ;
\draw (A_4) to node[below left] {$a$} (A_1) ;
\draw (A_1) to node[above, near end ] {$e$} (A_3) ;
\draw[dotted] (A_2) to node[left, near start] {$f$} (A_4);

\end{tikzpicture}\hspace{1 cm}
\begin{tikzpicture}[scale=0.9, every node/.style={circle,fill=white}]

        \begin{scope}[on background layer]
        
        \draw[dashed] (-2,4) node {$d$}
        --(2,0);
        \draw[dashed] (-2,0) node {$b$}
        --(2,4);
    \end{scope}
        \draw[very thick]
        (0,0) node {$c$}
        -- (-1,1) node {$p$}
        -- (-2,2) node {$e$}
        -- (-1,3) node {$q$}
        -- (0,4) node {$a$}
        -- (1,3) node {$r$}
        -- (2,2) node {$f$}
        -- (1,1) node {$s$}
        -- (0,0);
\end{tikzpicture}    
\caption{Left: A spherical quadrilateral~$A_1A_2A_3A_4$ as discussed in Example~\ref{ex: main tet example}. Right: A spherical Heronian diamond.}
    \label{fig: diamond}
\end{figure}

\begin{remark}\label{rem: sph heronian diamond visual}
    A spherical quadrilateral gives rise to a spherical Heronian diamond.
\end{remark}

A priori, a spherical Heronian diamond is a purely algebraic object, an ordered~$10$-tuple of numbers that satisfy certain relations.  We will next derive formulas to express three of these numbers ($f,r,s$) in terms of the others.  When applied in the geometric context, these formulas will allow us to express the ten measurements~$\xS(P)$ of a spherical quadrilateral in terms of the seven measurements~$\xS_G(P)$ associated to its triangulation.

\begin{proposition}\label{prop: analogue of 2.14}
    Let~$(a,b,c,d,e,p,q)$ be a 7-tuple of complex numbers satisfying equations~\eqref{eq:def1sphp}-\eqref{eq:def1sphq}, with~$e\not\in \left\{0,\frac{4}{\kap}\right\}$.  Then there exist unique~$f,r,s\in\C$ such that~$(a,b,c,d,e,f,p,q,r,s)$ is a spherical Heronian diamond.  Specifically
    \begin{align}
        f&=\frac{(p+q)^2+(a-b+c-d)^2-\kap e(a-b)(c-d)}{4e\left(1-\frac{\kap e}{4}\right)} \label{eq: eq for f}\, ,\\
        r&=\frac{p\left(e+a-d-\frac{\kap ae}{2}\right)+q\left(e-c+b-\frac{\kap be}{2}\right)}{2e\left(1-\frac{\kap e}{4}\right)}\label{eq: eq for r}\, ,\\
        s&=\frac{p\left(e-a+d-\frac{\kap de}{2}\right)+q\left(e+c-b-\frac{\kap ce}{2}\right)}{2e\left(1-\frac{\kap e}{4}\right)}\, . \label{eq: eq for s}
    \end{align}
    (The condition~$e\not\in\{0,\frac{4}{\kap}\}$ ensures that the denominators of equations~\eqref{eq: eq for f}-\eqref{eq: eq for s} do not vanish.)
\end{proposition}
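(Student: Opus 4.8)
The plan is to exploit the structure of the seven equations \eqref{eq:def1sphp}--\eqref{eq: def1sph erminuss} defining a spherical Heronian diamond: the first two, \eqref{eq:def1sphp} and \eqref{eq:def1sphq}, hold by hypothesis; of the remaining five, three are linear in the unknowns $f,r,s$ and two are quadratic. I would first solve the linear ones---this both pins down $f,r,s$ uniquely and yields the stated formulas---and then check that the quadratic ones are automatically satisfied.

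Concretely, equation \eqref{eq: def1sph bretanalogue} is linear in $f$, with leading coefficient $4e(1-\tfrac{\kap e}{4})$, which is nonzero exactly because $e\notin\{0,\tfrac{4}{\kap}\}$; solving for $f$ produces \eqref{eq: eq for f}. Equations \eqref{eq:def1sph p+qanalogue} and \eqref{eq: def1sph erminuss} form a linear system in $(r,s)$ whose coefficient determinant works out to $-2e(1-\tfrac{\kap e}{4})\ne 0$, so Cramer's rule yields \eqref{eq: eq for r} and \eqref{eq: eq for s}. Since any completion of the tuple to a spherical Heronian diamond must in particular satisfy \eqref{eq: def1sph bretanalogue}, \eqref{eq:def1sph p+qanalogue}, \eqref{eq: def1sph erminuss}, this already establishes uniqueness and identifies the only candidate for $(f,r,s)$.

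The remaining task is to verify \eqref{eq:def1sphr} and \eqref{eq:def1sphs} for this candidate, and here I would argue geometrically rather than by brute-force expansion. Since $e\notin\{0,\tfrac{4}{\kap}\}$, one can choose $A_1,A_3\in\bfS$ with $x(A_1,A_3)=e$ (over $V_\C$ such a pair exists). Using $p^2=\hkap(b,c,e)$ together with the symmetry of $\hkap$, Lemma~\ref{lem: 2.4analogue} applied to the pair $A_1,A_3$ gives a point $A_2\in\bfS$ with $x(A_1,A_2)=b$, $x(A_2,A_3)=c$, $\skap(A_1,A_2,A_3)=p$; similarly, using $q^2=\hkap(a,d,e)$ and the cyclic/odd-permutation behavior of $\skap$, Lemma~\ref{lem: 2.4analogue} gives $A_4\in\bfS$ with $x(A_1,A_4)=a$, $x(A_3,A_4)=d$, $\skap(A_1,A_3,A_4)=q$. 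Then $P=A_1A_2A_3A_4$ realizes the given $7$-tuple, so by Proposition~\ref{prop:prop210analoge} its ten measurements---with $x_{24}$, $\skap(A_1,A_2,A_4)$, $\skap(A_2,A_3,A_4)$ playing the roles of $f,r,s$---form a spherical Heronian diamond. In particular they satisfy \eqref{eq: def1sph bretanalogue}, \eqref{eq:def1sph p+qanalogue}, \eqref{eq: def1sph erminuss}, so by the uniqueness from the previous step they equal the candidate $f,r,s$; and since they also satisfy \eqref{eq:def1sphr} and \eqref{eq:def1sphs}, so does the candidate. This completes the proof.

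The part I expect to require the most care is the geometric realization: matching up the argument order in $\hkap$ (which is symmetric) and the orientation conventions for $\skap$ (cyclic-invariant, sign-reversing under odd permutations) when invoking Lemma~\ref{lem: 2.4analogue} to place $A_2$ and then $A_4$, and confirming that a pair of points on $\bfS\subseteq V_\C$ at squared distance $e$ exists. If a self-contained algebraic route is preferred instead, one substitutes \eqref{eq: eq for f}--\eqref{eq: eq for s} into \eqref{eq:def1sphr} and \eqref{eq:def1sphs} and eliminates $p^2,q^2$ via \eqref{eq:def1sphp}--\eqref{eq:def1sphq}; what remains has the form $P_0(a,b,c,d,e)+pq\,P_1(a,b,c,d,e)$, and since the identity must hold for both sign choices of $p$, both $P_0$ and $P_1$ must vanish identically---two polynomial identities whose (routine but lengthy) verification is then the crux.
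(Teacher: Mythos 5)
Your proof is correct, and its existence half takes a genuinely different route from the paper. You and the paper agree on uniqueness: the three relations \eqref{eq:def1sph p+qanalogue}, \eqref{eq: def1sph bretanalogue}, \eqref{eq: def1sph erminuss} are linear in $(f,r,s)$ (with $f$-coefficient $4e(1-\tfrac{\kap e}{4})$ and $(r,s)$-determinant $-2e(1-\tfrac{\kap e}{4})$, both nonzero), and solving them forces the stated formulas; this is essentially how the paper deduces \eqref{eq: eq for f}--\eqref{eq: eq for s} from the diamond equations. Where you diverge is the verification that the candidate $(f,r,s)$ also satisfies the quadratic Heron relations \eqref{eq:def1sphr}--\eqref{eq:def1sphs}: the paper does this purely algebraically, by forming the ideal generated by \eqref{eq:def1sphp}, \eqref{eq:def1sphq} and the three defining formulas and checking (with computer algebra, via a Gr\"obner basis) that $e^2(\kap e-4)^2\bigl(r^2-\hkap(a,b,f)\bigr)$ etc.\ lie in it, while you instead realize the $7$-tuple geometrically: pick $A_1,A_3\in\bfS$ at squared distance $e$ (possible over $V_\C$ since $e(1-\tfrac{\kap e}{4})\neq 0$ has a square root), place $A_2$ and $A_4$ by Lemma~\ref{lem: 2.4analogue} using the symmetry of $\hkap$ and cyclicity of $\skap$, and invoke Proposition~\ref{prop:prop210analoge} for the resulting quadrilateral, after which uniqueness identifies its measurements with your candidate. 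This is legitimate and non-circular (Lemma~\ref{lem: 2.4analogue} and Proposition~\ref{prop:prop210analoge} are proved earlier and over $V_\C$), and it buys a computation-free argument at the cost of leaning on the geometric realization machinery; the paper's route stays self-contained within polynomial algebra but is computer-assisted. One small caveat on your fallback algebraic route: the phrase ``since the identity must hold for both sign choices of $p$'' cannot be used to conclude $P_0=P_1=0$ (that would assume what is to be proved); as you in fact note, the proof there consists precisely of verifying the two polynomial identities $P_0\equiv 0$ and $P_1\equiv 0$, which is the same kind of computation the paper delegates to software.
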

\begin{proof} Define~$f,r,s$ as in equations~\eqref{eq: eq for f}, \eqref{eq: eq for r}, and~\eqref{eq: eq for s}.  To~verify that the resulting tuple~$(a,b,c,d,e,f,p,q,r,s)$ is a spherical Heronian diamond we need to verify equations~\eqref{eq:def1sphr}-\eqref{eq: def1sph erminuss}.  (Equations~\eqref{eq:def1sphp}-\eqref{eq:def1sphq} hold by assumption.)

Equations~\eqref{eq:def1sphr} and~\eqref{eq:def1sphs} can be checked with standard software, as follows.  For each of the equations~\eqref{eq:def1sphp}, \eqref{eq:def1sphq}, \eqref{eq: eq for f}, \eqref{eq: eq for r}, and~\eqref{eq: eq for s}, identify a polynomial that vanishes when the equation holds.  For example,
\[
p^2 = -b^2-c^2-e^2+2bc+2be+2ce-\kap bce \quad \text{(equation }\eqref{eq:def1sphp})
\]
corresponds to the polynomial 
\begin{equation}\label{eq: make it a poly}
    p^2-( -b^2-c^2-e^2+2bc+2be+2ce-\kap bce)\, .
\end{equation}
Collect these five polynomials and use them to generate an ideal~$I$. Then, for example, equation~\eqref{eq:def1sphr} is obtained by checking that
\begin{equation}\label{eq: poly for r}
    e^2(\kap e-4)^2(r^2-(-a^2-b^2-f^2+2ab+2af+2bf-\kap abf))\in I.
\end{equation}
We can check for ideal membership by reducing the polynomial~\eqref{eq: poly for r} with respect to a Gr\"{o}bner basis~$B$ (for example with lexicographic monomial order). Note that the polynomial~\eqref{eq: make it a poly} does not lie in I: the factors~$e^2$ and~$(\kap e - 4)^2$ in~\eqref{eq: poly for r} are required for the argument to work.

Equation~\eqref{eq: eq for f} can be slightly rearranged to get equation~\eqref{eq: def1sph bretanalogue}. Equations~\eqref{eq:def1sph p+qanalogue}-\eqref{eq: def1sph erminuss} can be verified manually, by plugging in the definitions of~$r$ and~$s$.

To~show the uniqueness of~$f,r,s$, it suffices to deduce~\eqref{eq: eq for f}-\eqref{eq: eq for s} from the equations of  a spherical Heronian diamond.  Again, equation~\eqref{eq: def1sph bretanalogue} can be slightly rearranged to obtain equation~\eqref{eq: eq for f}.
    To~get~$r$, we use equation~\eqref{eq: ers}. Replacing~$s$ with the expression obtained from~\eqref{eq:def1sph p+qanalogue}, we get
    \begin{equation*}
        er=p(a-d)+q(b-c)+e\left(p+q-\kap\left(\mytfrac{ap+bq-er}{2}\right)\right)\, .
    \end{equation*} Simplifying, we obtain~\eqref{eq: eq for r}.  Formula~\eqref{eq: eq for s} is established analogously.\qedhere
\end{proof}

\begin{remark}\label{rem: def equiv}
By Proposition~\ref{prop: analogue of 2.14}, Definitions~\ref{def:original definition} and~\ref{def: intro def} are equivalent to each other.
\end{remark}

Let~$P$ be a spherical quadrilateral with squared side distances~$a,b,c,d$.  Proposition~\ref{prop: analogue of 2.14} gives formulas to express the squared distance~$f$ and the~$\skap$ measurements~$r,s$ in terms of~$a,b,c,d$, the squared distance~$e$, and the~$\skap$ measurements~$p,q$.

The following example shows an application of Theorem~\ref{prop: analogue of 2.14}.
\begin{figure}[htbp!]
    \centering
    \begin{tikzpicture}[scale=0.8]

\coordinate [label=left: San Francisco] (A_1) at (-2,1);
\circleat{-2}{1};

\coordinate [label=below: Miami](A_4) at (-0.5,0);
\circleat{-0.5}{0};

\coordinate [label=right: Boston] (A_3) at (4,1);
\circleat{4}{1};

\coordinate[label=above: Minneapolis] (A_2) at (3,3);
\circleat{3}{3};

\draw (A_1) to node[above] {4352} (A_3) ;

\draw (A_1) to node[above left] {2557} (A_2) ;
\draw (A_2) to node[above right] {1809} (A_3) ;
\draw (A_3) to node[below right] {2025} (A_4) ;
\draw (A_4) to node[below left] {4160} (A_1) ;

\end{tikzpicture}
    \caption{Spherical distances (in~$\text{km}$) between four American cities~\cite{swartz_great_nodate}.}
    \label{fig: fourcities}
\end{figure}
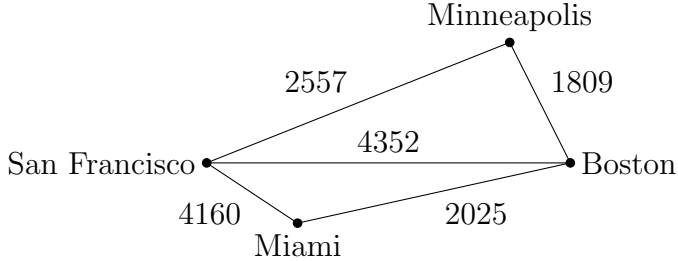
\begin{example}
         Given the five spherical distances shown in Figure~\ref{fig: fourcities}, let us compute the unknown spherical distance from Minneapolis to Miami.   For this calculation we will assume that the surface of the Earth is a sphere whose great circles have length~$40,000 \, \text{km}$. Then 
   ~$R=\frac{4\cdot 10^4}{2\cdot \pi} \, \text{km}$ and therefore~$\kap=\frac{\pi^2}{4\cdot 10^8} \, \text{km}^{-2}$ .

We begin by converting  spherical distances into squared Euclidean distances. If~$|AB|$ is the spherical distance between points~$A,B\in\bfS$, then the squared Euclidean distance between~$A$ and~$B$ is given by
\[
x(A,B)=2R^2\left(1-\cos\left(\mytfrac{|AB|}{R}\right)\right)
\]
(cf.~\eqref{eq: euc dist}).
Applying this formula to the values in Figure~\ref{fig: fourcities}, we get the values of~$a,b,c,d,e$ shown in Figure~\ref{fig: fourcitiesEuclidean}.   Let~$f$ denote the squared Euclidean distance between Minneapolis and Miami.
Set
\begin{align*}
p&=\skap(\text{\small San Francisco, Minneapolis, Boston})\, , \\  q&=\skap(\text{\small San Francisco, Boston, Miami}).
\end{align*}
Then
\[
p^2=\hkap(b,c,e) \ \text{ and } \ q^2=\hkap(a,d,e)
\]
with~$\hkap$ given by equation~\eqref{eq: h kappa def}.
Taking positive square roots, we obtain the quantities~$p,q$ shown in Figure~\ref{fig: fourcitiesEuclidean}.

We can now use equation~\eqref{eq: eq for f} to compute the spherical distance~$F$ from Minneapolis to Miami:
\begin{gather*}
    f=\frac{(p+q)^2+(a-b+c-d)^2-\kap e(a-b)(c-d)}{4e\left(1-\frac{\kap e}{4}\right)}=
57600370 \ \text{km}^2,\\
F=R\arccos\left(1-\mytfrac{57600370 \ \text{km}^2}{2R^2}\right) = 2414 \text{ km}.
\end{gather*}
We note that in spite of making several simplifications and approximations, our calculation achieved a remarkably accurate result: the actual spherical distance from Miami to Minneapolis is 2415 km, see~\cite{swartz_great_nodate}.
    \end{example}
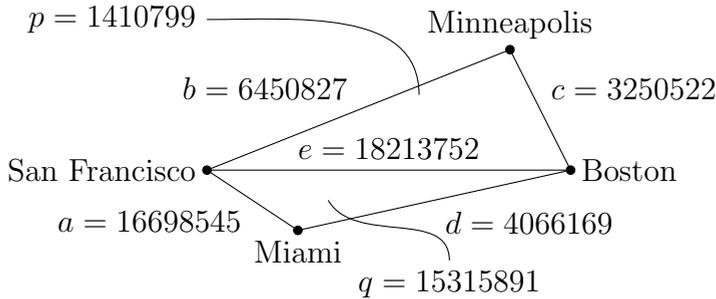
\begin{figure}[htbp!]
    \centering
    \begin{tikzpicture}[scale=0.8]

\coordinate [label=left: San Francisco] (A_1) at (-2,1);
\circleat{-2}{1};

\coordinate [label=below: Miami](A_4) at (-0.5,0);
\circleat{-0.5}{0};

\coordinate [label=right: Boston] (A_3) at (4,1);
\circleat{4}{1};

\coordinate[label=above: Minneapolis] (A_2) at (3,3);
\circleat{3}{3};

\draw (A_1) to node[above] {$e=18213752$} (A_3) ;

\draw (A_1) to node[above left] {$b=6450827$} (A_2) ;
\draw (A_2) to node[above right] {$c=3250522$} (A_3) ;
\draw (A_3) to node[below right] {$d=4066169$} (A_4) ;
\draw (A_4) to node[below left] {$a=16698545$} (A_1) ;

\coordinate  (p) at (-2,3.5);
\node at (p) [left] {$p=1410799$};
\coordinate (p_1) at (1.5,2.25);
\draw (p_1) to [out=90,in=0] (p) ;

\coordinate (q) at (2,-0.5);
\node at (q) [below] {$q=15315891$};
\coordinate (q_1) at (0,0.5);
\draw (q_1) to [out=300,in=90] (q) ;

\end{tikzpicture}
    \caption{Squared Euclidean distances between the four cities (in~$\text{km}^2$).}
    \label{fig: fourcitiesEuclidean}
\end{figure}
    
We next discuss the symmetries of the notion of a spherical Heronian diamond.
\begin{proposition} \label{prop: analogue of 2.15} 
    Let~$(a,b,c,d,e,f,p,q,r,s)$ be a spherical Heronian diamond.  Then
    \begin{enumerate}
        \item~$(c,d,a,b,e,f,q,p,s,r)$ is a spherical Heronian diamond;
        \item if~$e\not \in \left\{0,\frac{4}{\kap}\right\}$ then~$(a,d,c,b,f,e,s,r,q,p)$ is a spherical Heronian diamond.
    \end{enumerate}
\end{proposition}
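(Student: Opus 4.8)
The plan is to read both parts as the assertion that a specific permutation of the ten entries carries the defining system \eqref{eq:def1sphp}--\eqref{eq: def1sph erminuss} of a spherical Heronian diamond into an equivalent system. I would verify this equation by equation, sorting the seven relations of the image tuple into three kinds: those that become one of \eqref{eq:def1sphp}--\eqref{eq: def1sph erminuss} once the full symmetry of $\hkap$ in its three arguments is invoked; those that are manifestly invariant under the permutation; and the few that genuinely require an argument. The real work is isolating and dispatching this last group.

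\emph{Part (1).} Let $\phi$ denote the substitution $(a,b,c,d,e,f,p,q,r,s)\mapsto(c,d,a,b,e,f,q,p,s,r)$. Since $\hkap$ is symmetric, $\phi$ merely permutes the four Heron relations \eqref{eq:def1sphp}--\eqref{eq:def1sphs} among themselves (swapping \eqref{eq:def1sphp}$\leftrightarrow$\eqref{eq:def1sphq} and \eqref{eq:def1sphr}$\leftrightarrow$\eqref{eq:def1sphs}). The spherical Bretschneider relation \eqref{eq: def1sph bretanalogue} is literally fixed by $\phi$, because $4ef$, $\tfrac{\kap e}{4}$, $p+q$, $a-b+c-d$ and $(a-b)(c-d)$ are all $\phi$-invariant, and \eqref{eq: def1sph erminuss} is carried to its own negative, hence to an equivalent equation. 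The only new image relation is $\phi(\eqref{eq:def1sph p+qanalogue})$, namely $p+q=r+s+\tfrac{\kap}{2}(dp+cq-es)$; subtracting \eqref{eq:def1sph p+qanalogue} from it reduces the claim to
\[
p(d-a)+q(c-b)+e(r-s)=0,
\]
which is exactly \eqref{eq: def1sph erminuss}. No non-degeneracy hypothesis is used, in agreement with the statement.

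\emph{Part (2).} Let $\psi$ denote $(a,b,c,d,e,f,p,q,r,s)\mapsto(a,d,c,b,f,e,s,r,q,p)$, which interchanges the diagonal coordinates $e$ and $f$. Again symmetry of $\hkap$ shows $\psi$ permutes \eqref{eq:def1sphp}--\eqref{eq:def1sphs} among themselves. A direct substitution identifies the three remaining image relations with \eqref{eq: pq4} (a further $p+q$ identity), the companion spherical Bretschneider relation \eqref{eq: sphericalbret2}, and the relation $f(p-q)=r(c-d)+s(b-a)$ of Corollary \ref{prop: nokapid} --- none of which belongs to the defining list. Establishing these three for an \emph{arbitrary} spherical Heronian diamond is the crux, and this is precisely where $e\notin\{0,\tfrac{4}{\kap}\}$ is used: by the uniqueness half of Proposition \ref{prop: analogue of 2.14}, this hypothesis forces $f,r,s$ to equal the explicit rational expressions \eqref{eq: eq for f}--\eqref{eq: eq for s}, and substituting those (together with \eqref{eq:def1sphp}--\eqref{eq:def1sphq}) into \eqref{eq: pq4}, \eqref{eq: sphericalbret2}, and $f(p-q)=r(c-d)+s(b-a)$ and simplifying verifies them, exactly in the style of the proof of Proposition \ref{prop: analogue of 2.14} (by hand, or by a Gr\"{o}bner-basis reduction). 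A more conceptual alternative: use Proposition \ref{prop: analogue of 2.14} and Lemma \ref{lem: 2.4analogue} to realize the diamond as the diamond of a spherical quadrilateral $A_1A_2A_3A_4$, note by inspecting the measurements that $(a,d,c,b,f,e,s,r,q,p)$ is the diamond of the reversed quadrilateral $A_4A_3A_2A_1$ with the signs of all four $\skap$ entries flipped, and observe that reversal preserves being a spherical quadrilateral (Remark \ref{rem: sph heronian diamond visual}) while the global sign change on $(p,q,r,s)$ preserves the diamond property --- the Heron relations are quadratic in these variables, \eqref{eq:def1sph p+qanalogue} and \eqref{eq: def1sph erminuss} are homogeneous linear in them, and \eqref{eq: def1sph bretanalogue} sees them only through $(p+q)^2$.

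I expect the main obstacle to be exactly those three ``extra'' relations in part (2): all the rest is bookkeeping about which defining equation maps to which, whereas \eqref{eq: pq4}, \eqref{eq: sphericalbret2}, and $f(p-q)=r(c-d)+s(b-a)$ must be re-proved for abstract diamonds. The direct route turns this into a finite but non-trivial polynomial identity check, while the geometric route makes it hinge on the realizability of a diamond with $e\notin\{0,\tfrac{4}{\kap}\}$ by an honest spherical quadrilateral.
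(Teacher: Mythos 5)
Your proposal is correct. For part (1) you argue exactly as the paper does: the Heron relations are permuted, \eqref{eq: def1sph bretanalogue} is fixed, \eqref{eq: def1sph erminuss} is negated, and the image of \eqref{eq:def1sph p+qanalogue} differs from the original precisely by $\tfrac{\kap}{2}$ times the left-minus-right of \eqref{eq: def1sph erminuss}; the paper phrases this as substituting for $dp$, you as subtracting, which is the same argument. For part (2), your direct route is essentially the paper's: the paper forms the ideal generated by the seven defining polynomials and checks by Gr\"obner basis that each of the three image relations, multiplied by $e(\kap e-4)$, lies in it, while you first invoke the uniqueness half of Proposition~\ref{prop: analogue of 2.14} to replace $f,r,s$ by \eqref{eq: eq for f}--\eqref{eq: eq for s} and then verify the same identities modulo \eqref{eq:def1sphp}--\eqref{eq:def1sphq}; these are equivalent computations, and your identification of the three image relations with \eqref{eq: pq4}, \eqref{eq: sphericalbret2}, and the last identity of Corollary~\ref{prop: nokapid} is accurate. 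Your geometric alternative, however, is a genuinely different route that the paper does not take: realize $(a,b,c,d,e,p,q)$ by a quadrilateral via two applications of Lemma~\ref{lem: 2.4analogue} across the diagonal of squared length $e$ (correctly cited --- Proposition~\ref{prop: cor 2.9 analogue} would not do, since only $e$ is assumed to avoid $\{0,\tfrac{4}{\kap}\}$), use uniqueness in Proposition~\ref{prop: analogue of 2.14} to see that the quadrilateral's remaining measurements are the given $f,r,s$, and then observe that the reversed quadrilateral $A_4A_3A_2A_1$ yields the diamond $(a,d,c,b,f,e,-s,-r,-q,-p)$, which becomes the target tuple after a global sign flip of the four $\skap$-entries --- a flip that visibly preserves all seven defining equations. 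That route buys a conceptual explanation of why the Gr\"obner check succeeds (part (2) is just reversal of the realizing quadrilateral), at the cost of invoking the complex realization machinery; the paper's verification stays purely algebraic and software-assisted.
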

Proposition~\ref{prop: analogue of 2.15} implies that flipping a spherical Heronian diamond horizontally or vertically produces another spherical Heronian diamond.
\begin{proof}
    To~prove the first claim, we interchange~$a\leftrightarrow c$,~$b\leftrightarrow d$,~$p\leftrightarrow q$, and~$r\leftrightarrow s$. These switches correspond to a vertical reflection of the spherical Heronian diamond across the center horizontal axis.  We need to show equations~\eqref{eq:def1sphp}-\eqref{eq: def1sph erminuss} hold for~$(c,d,a,b,e,f,q,p,s,r)$. The swaps cause equations~\eqref{eq:def1sphp} and~\eqref{eq:def1sphq} and  equations~\eqref{eq:def1sphr} and~\eqref{eq:def1sphs}  to be exchanged, and equations~\eqref{eq: def1sph bretanalogue}-\eqref{eq: def1sph erminuss} are unaffected.  Equation~\eqref{eq:def1sph p+qanalogue} with these values is
    \begin{equation*}
p+q=r+s+\mytfrac{\kap}{2}\left(cq+dp-es\right)\, ,
    \end{equation*}
    but since equation~\eqref{eq: def1sph erminuss} holds we can substitute~$e(s-r)-q(c-b)+ap$ for~$dp$ 
and simplify to achieve    \begin{equation*}
        p+q=r+s+\mytfrac{\kap}{2}\left(ap+bq-er\right)\, ,
    \end{equation*} which holds by assumption.
So~$(c,d,a,b,e,f,q,p,s,r)$ again satisfies the conditions of Proposition~\ref{prop:prop210analoge} and therefore by definition is a spherical Heronian diamond.

    Now interchange~$b\leftrightarrow d$,~$e\leftrightarrow f$,~$p\leftrightarrow s$ and~$q\leftrightarrow r$. This corresponds to a horizontal reflection of the spherical Heronian diamond~$(a,b,c,d,e,f,p,q,r,s)$. Again equations~\eqref{eq:def1sphp}-\eqref{eq:def1sphs} are interchanged.

    Equations~\eqref{eq:def1sph p+qanalogue}, \eqref{eq: def1sph bretanalogue} and~\eqref{eq: def1sph erminuss} become 
    \begin{align}
s+r&=q+p+\mytfrac{\kap}{2}\left(as+dr-fq\right)\label{eq: fpq analogue}\, ,\\
        4ef\left(1-\mytfrac{\kap}{4}f\right)&=(s+r)^2+(a-d+c-b)^2-\kap f(a-d)(c-b)\label{eq: fbret analogue}\, ,\\
        f(q-p)&=s(a-b)+r(d-c) \label{eq: ferminuss analogue}\, ,
    \end{align}
    and we need to show that as long as~$e\not\in\{ 0,\frac{4}{\kap}\}$ and~$(a,b,c,d,e,f,p,q,r,s)$ is a spherical Heronian diamond, these equations hold. We can write each of the equations~\eqref{eq:def1sphp}-\eqref{eq: def1sph erminuss} in the form~$F(a,b,c,d,e,f,p,q,r,s)=0$ where~$F$ is a polynomial.
    We then collect these seven polynomials and form the ideal~$I$ that they generate. We similarly write each of the equations~\eqref{eq: fpq analogue}-\eqref{eq: ferminuss analogue} in the form~$G=0$.   It then suffices to show that the polynomials
    \begin{gather*}
        \left(s+r-q-p-\kap\left(\mytfrac{as+dr-fq}{2}\right)\right)\cdot e(\kap e - 4)\, ,\\
        \left(4ef\left(1-\kap\mytfrac{f}{4}\right)-(s+r)^2-(a-d+c-b)^2+\kap f(a-d)(c-b)\right)\cdot e(\kap e-4)\, ,\\
        \left(f(q-p)-s(a-b)-r(d-c)\right)\cdot e(\kap e-4)
    \end{gather*}
    lie in the ideal~$I$. (Here we rely on the condition~$e\not\in\{0,\frac{4}{\kap}\}$.) This can be confirmed with standard software.
\end{proof}

\begin{remark}\label{rem: prop 215 reverse}
    By the symmetry of swapping, if a tuple $(c,d,a,b,e,f,q,p,s,r)$ is a spherical Heronian diamond,  then by {bullet 1} of Proposition~\ref{prop: analogue of 2.15}, the tuple~$(a,b,c,d,e,f,p,q,r,s)$ is also a spherical Heronian diamond.
\end{remark}

\begin{corollary}\label{cor: 2.16 analogue}
    In a spherical Heronian diamond~$(a,b,c,d,e,f,p,q,r,s)$, once the components~$a,b,c,d$ are fixed, the values~$e,p,q$ determine~$f,r,s$ uniquely (as long as~$e\not\in\{ 0,\frac{4}{\kap}\}$), and vice versa (as long as~$f\not\in\{ 0,\frac{4}{\kap}\}$).
\end{corollary}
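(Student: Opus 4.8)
The plan is to derive both assertions from Proposition~\ref{prop: analogue of 2.14}, using the horizontal‑reflection symmetry of Proposition~\ref{prop: analogue of 2.15}(2) to handle the ``vice versa'' direction; no computation beyond those already performed should be required.

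For the first assertion, fix $a,b,c,d$ and suppose we are given $e,p,q$ with $e\notin\{0,\tfrac{4}{\kap}\}$ coming from a spherical Heronian diamond. Two of the defining relations, namely \eqref{eq:def1sphp} and \eqref{eq:def1sphq}, assert precisely that the $7$‑tuple $(a,b,c,d,e,p,q)$ meets the hypotheses of Proposition~\ref{prop: analogue of 2.14}, which then produces \emph{unique} $f,r,s$ completing the tuple to a spherical Heronian diamond (the denominators in \eqref{eq: eq for f}--\eqref{eq: eq for s} being nonzero since $e\notin\{0,\tfrac{4}{\kap}\}$). Hence any two spherical Heronian diamonds sharing $a,b,c,d,e,p,q$ also share $f,r,s$, which is the claim.

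For the converse I would pass to the reflected diamond. If $(a,b,c,d,e,f,p,q,r,s)$ is a spherical Heronian diamond then, by Proposition~\ref{prop: analogue of 2.15}(2), so is $(a,d,c,b,f,e,s,r,q,p)$; in this reflected tuple the now‑fixed entries $a,d,c,b$ and $f,s,r$ occupy the slots of ``$a,b,c,d$'' and ``$e,p,q$'' from the first assertion, the entries $e,q,p$ to be recovered occupy the slots of ``$f,r,s$'', and the hypothesis $f\notin\{0,\tfrac{4}{\kap}\}$ is exactly the nonvanishing condition needed to apply the first assertion to the reflected diamond. Equivalently and more directly: the $7$‑tuple $(a,d,c,b,f,s,r)$ satisfies \eqref{eq:def1sphs} and \eqref{eq:def1sphr}, which (since $\hkap$ is symmetric in its arguments) are the two Heron equations in the hypothesis of Proposition~\ref{prop: analogue of 2.14}; as $f\notin\{0,\tfrac{4}{\kap}\}$, that proposition supplies a unique completion to a spherical Heronian diamond, and by uniqueness this completion must be $(e,q,p)$. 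Thus $a,b,c,d,f,r,s$ determine $e,p,q$.

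The one step that needs care is the appeal to Proposition~\ref{prop: analogue of 2.15}(2): as stated there the horizontal flip is shown to preserve the spherical‑Heronian‑diamond property under the hypothesis $e\notin\{0,\tfrac{4}{\kap}\}$, whereas here we only know $f\notin\{0,\tfrac{4}{\kap}\}$. I expect this to be the main obstacle, albeit a routine one. The reflected tuple automatically satisfies \eqref{eq:def1sphp}--\eqref{eq:def1sphs}; its remaining three relations \eqref{eq:def1sph p+qanalogue}, \eqref{eq: def1sph bretanalogue}, \eqref{eq: def1sph erminuss} amount, respectively, to the first identity of Corollary~\ref{prop: nokapid}, to equation~\eqref{eq: sphericalbret2}, and to the $f$‑analogue $f(p-q)=r(c-d)+s(b-a)$ of \eqref{eq: ers}, and each of these follows from the defining relations of the original diamond after multiplication by $f(\kap f-4)$, mirroring the ideal‑membership computation in the proof of Proposition~\ref{prop: analogue of 2.15} with the roles of $e$ and $f$ (and of $b\leftrightarrow d$, $p\leftrightarrow s$, $q\leftrightarrow r$) interchanged. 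Alternatively one can avoid the reflection altogether and repeat the proof of Proposition~\ref{prop: analogue of 2.14} verbatim with $e$ and $f$ swapped, writing explicit rational formulas for $e,p,q$ in terms of $a,b,c,d,f,r,s$ obtained from \eqref{eq: eq for f}--\eqref{eq: eq for s} by the same substitution, and checking directly that they give a spherical Heronian diamond and that no other values do.
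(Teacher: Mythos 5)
Your argument is essentially the paper's own proof: the forward direction is exactly Proposition~\ref{prop: analogue of 2.14} applied to the $7$-tuple $(a,b,c,d,e,p,q)$, and the reverse direction passes to the reflected diamond $(a,d,c,b,f,e,s,r,q,p)$ via the second part of Proposition~\ref{prop: analogue of 2.15} and applies Proposition~\ref{prop: analogue of 2.14} again. The hypothesis mismatch you flag (that part of Proposition~\ref{prop: analogue of 2.15} is stated under $e\notin\{0,\frac{4}{\kap}\}$, while the ``vice versa'' only assumes $f\notin\{0,\frac{4}{\kap}\}$) is present in the paper's proof as well, which simply invokes that proposition without comment, so your proposed repair---rerunning the ideal-membership verification with the factor $f(\kap f-4)$, i.e.\ with the roles of $e$ and $f$ (and $b\leftrightarrow d$, $p\leftrightarrow s$, $q\leftrightarrow r$) interchanged---is a reasonable way to make that step fully rigorous, though it does require an actual computation rather than a formal symmetry argument.
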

\begin{proof}
    Fix~$a,b,c,d$. The fact that~$e,p,q$ with~$e\not\in\{ 0,\frac{4}{\kap}\}$ determines~$f,r,s$ is the content of Proposition~\ref{prop: analogue of 2.14} (as applied to the 7-tuple~$(a,b,c,d,e,p,q)$).  
    
    In reverse, by the second bullet of Proposition~\ref{prop: analogue of 2.15}, if~$(a,b,c,d,e,f,p,q,r,s)$ is a spherical Heronian diamond then~$(a,d,c,b,f,e,s,r,q,p)$ is a spherical Heronian diamond.  Assuming that~$f\not\in\{ 0,\frac{4}{\kap}\}$ and applying Proposition~\ref{prop: analogue of 2.14} we see that~$e,p,q$ are determined uniquely by the 7-tuple~$(a,d,c,b,f,s,r)$.  Since~$a,b,c,d$ are fixed, as long as~$f\not\in\{ 0,\frac{4}{\kap}\}$, the measurements~$f,r,s$ determine~$e,p,q$ uniquely.
\end{proof}

The following lemmas and corollary deal with cases corresponding in the geometric setting to various degenerate quadrilaterals.

\begin{lemma}\label{lem: degenerate 1} 
     Let~$b,c,d,e,f,p,s\in\C$ and~$a=q=r=0$. 
 The corresponding tuple~$(0,b,c,d,e,f,p,0,0,s)$ 
    is a spherical Heronian diamond if and only if
\begin{equation}\label{eq: bdry}
p^2=\hkap(b,c,e)\, , \quad
        d=e \, , \quad
        f=b \, ,\quad \text{and} \quad 
        s=p \,.
    \end{equation}
\end{lemma}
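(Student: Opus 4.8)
The plan is to argue directly from the definition of a spherical Heronian diamond: by Definition~\ref{def:original definition} (equivalently, Proposition~\ref{prop:prop210analoge}) the tuple is a diamond exactly when the seven equations~\eqref{eq:def1sphp}--\eqref{eq: def1sph erminuss} hold. I substitute $a=q=r=0$ into each of these and see what survives, using throughout the elementary identity $\hkap(0,u,v)=-u^2-v^2+2uv=-(u-v)^2$.

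For the ``only if'' direction I extract the conditions~\eqref{eq: bdry} one at a time from the diamond equations. Equation~\eqref{eq:def1sphp} is literally $p^2=\hkap(b,c,e)$. With $a=q=0$, equation~\eqref{eq:def1sphq} becomes $0=\hkap(0,d,e)=-(d-e)^2$, so $d=e$; with $a=r=0$, equation~\eqref{eq:def1sphr} becomes $0=\hkap(0,b,f)=-(b-f)^2$, so $f=b$. Finally, with $a=q=r=0$ equation~\eqref{eq:def1sph p+qanalogue} collapses to $p=s$. Together these are precisely~\eqref{eq: bdry}.

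For the ``if'' direction I assume~\eqref{eq: bdry} and verify all seven diamond equations with $a=q=r=0$. Equation~\eqref{eq:def1sphp} holds by hypothesis; \eqref{eq:def1sphq} and~\eqref{eq:def1sphr} reduce to $\hkap(0,d,d)=0$ and $\hkap(0,b,b)=0$, immediate from $d=e$ and $f=b$; \eqref{eq:def1sphs} becomes $s^2=\hkap(c,e,b)$, which follows from $s=p$ together with $p^2=\hkap(b,c,e)$ and the symmetry of $\hkap$ in its three arguments; \eqref{eq:def1sph p+qanalogue} reduces to $p=s$; and \eqref{eq: def1sph erminuss} reduces, after using $d=e$, to $-es=-pe$, which holds since $p=s$. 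The one relation that is not purely formal is the Bretschneider analogue~\eqref{eq: def1sph bretanalogue}: substituting $a=0$, $q=0$, $f=b$, $d=e$ turns its two sides into $4eb\bigl(1-\tfrac{\kap e}{4}\bigr)$ and $p^2+(c-b-e)^2+\kap eb(c-e)$; I would then expand the second expression, replace $p^2$ by $-b^2-c^2-e^2+2bc+2be+2ce-\kap bce$, and watch everything cancel down to $4be-\kap e^2 b$, which is the left-hand side.

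The whole argument is routine substitution; the only place demanding a moment's care is the Bretschneider-analogue check, where one must confirm that setting $f=b$ and $d=e$ leaves no residual term once the Heron relation for $p^2$ is invoked. Conceptually, $a=x_{14}=0$ degenerates the quadrilateral $A_1A_2A_3A_4$ to the triangle $A_1A_2A_3$ traversed with $A_4=A_1$, whence $d=x_{34}=x_{31}=e$, $f=x_{24}=x_{21}=b$, the triangles $A_1A_3A_4$ and $A_1A_2A_4$ collapse so $q=r=0$, and $s=\skap(A_2,A_3,A_4)=\skap(A_1,A_2,A_3)=p$ by cyclic invariance of $\skap$; but a diamond is a priori only an algebraic object, so the proof proceeds via the substitutions above.
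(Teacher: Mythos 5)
Your proposal is correct and follows essentially the same route as the paper: both directions are handled by direct substitution into the seven defining relations~\eqref{eq:def1sphp}--\eqref{eq: def1sph erminuss}, using $\hkap(0,u,v)=-(u-v)^2$ to extract $d=e$ and $f=b$, the collapsed relation~\eqref{eq:def1sph p+qanalogue} to get $p=s$, and an explicit expansion to confirm the Bretschneider analogue~\eqref{eq: def1sph bretanalogue}. Your cancellation to $4be-\kap e^2b$ checks out, so nothing is missing.
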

\begin{proof}
    Assume the tuple~$(0,b,c,d,e,f,p,0,0,s)$ is a spherical Heronian diamond. Then equations~\eqref{eq:def1sphp}-\eqref{eq:def1sphq} and~\eqref{eq:def1sph p+qanalogue} give us
    \begin{align*}
        p^2&=\hkap(b,c,e)\, ,\\
        0&=\hkap(0,d,e)=-(d-e)^2\, ,\\
        0&=\hkap(0,f,b)=-(f-b)^2\, ,\\
        p&=s\, ,
    \end{align*}
   implying~\eqref{eq: bdry}. Conversely, assume that~\eqref{eq: bdry} hold.  Then equations~\eqref{eq:def1sphp}-\eqref{eq:def1sphr} are immediate. To~ show~$s^2=\hkap(c,f,d)$ (equation~\eqref{eq:def1sphs}), we note that 
    \[
    s^2=p^2=\hkap(b,c,e)=\hkap(f,c,d)\, .
    \]
    Since~$f=b$, we have 
    \begin{equation*}
        p^2=\hkap(b,c,e)=-b^2-c^2-e^2+2bc+2be+2ce+\kap ebc+4e(f-b)+\kap e^2(b-f)
    \end{equation*}
    which rearranges to equation~\eqref{eq: def1sph bretanalogue}.
    Finally, utilizing~$p=s$ and~$e=d$, we conclude~$-es=-pd$ (equations~\eqref{eq:def1sph p+qanalogue}, \eqref{eq: def1sph erminuss}), proving the claim.
\end{proof}

\begin{lemma}\label{lem: degenerate 2}   Let~$a,b,d,e,f,q,r\in\C$ and~$c=p=s=0$. The corresponding tuple~$(a,b,0,d,e,f,0,q,r,0)$ is a spherical Heronian diamond if and only if
\begin{align}
    q^2&=\hkap(a,d,e)\, ,\\
    b&=e\label{eq: bdry 4}\, ,\\
    f&=d \label{eq: bdry 5}\, ,\\
    r&=q \label{eq: bdry 6}\, .
\end{align}
    
\end{lemma}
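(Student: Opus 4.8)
The plan is to derive this lemma from Lemma~\ref{lem: degenerate 1} by invoking the vertical-flip symmetry of spherical Heronian diamonds. By the first bullet of Proposition~\ref{prop: analogue of 2.15} together with Remark~\ref{rem: prop 215 reverse}, a $10$-tuple $(a,b,c,d,e,f,p,q,r,s)$ is a spherical Heronian diamond if and only if $(c,d,a,b,e,f,q,p,s,r)$ is. Applying this equivalence to the tuple of interest, for which $c=p=s=0$, shows that $(a,b,0,d,e,f,0,q,r,0)$ is a spherical Heronian diamond if and only if $(0,d,a,b,e,f,q,0,0,r)$ is. The latter has exactly the shape handled by Lemma~\ref{lem: degenerate 1} (it is the tuple $(0,b',c',d',e',f',p',0,0,s')$ with $b'=d$, $c'=a$, $d'=b$, $e'=e$, $f'=f$, $p'=q$, $s'=r$); that lemma therefore says it is a spherical Heronian diamond precisely when ${p'}^2=\hkap(b',c',e')$, $d'=e'$, $f'=b'$, and $s'=p'$. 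Since $\hkap$ is symmetric in its arguments, translating these four conditions back gives exactly $q^2=\hkap(a,d,e)$, $b=e$, $f=d$, and $r=q$, as claimed.

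If one prefers a self-contained argument, the equivalence can be checked directly along the lines of the proof of Lemma~\ref{lem: degenerate 1}. Setting $c=p=s=0$ in the defining equations of a spherical Heronian diamond, equation~\eqref{eq:def1sphp} becomes $0=\hkap(b,0,e)=-(b-e)^2$, hence $b=e$; equation~\eqref{eq:def1sphs} becomes $0=\hkap(0,d,f)=-(d-f)^2$, hence $f=d$; equation~\eqref{eq:def1sphq} is literally $q^2=\hkap(a,d,e)$; and $r=q$ is obtained by feeding \eqref{eq: def1sph erminuss}, which now reads $er=bq$, into \eqref{eq:def1sph p+qanalogue}, which now reads $q=r+\tfrac{\kap}{2}(bq-er)$, so that the $\kap$-correction vanishes. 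Conversely, assuming $q^2=\hkap(a,d,e)$, $b=e$, $f=d$, $r=q$, equations~\eqref{eq:def1sphp}--\eqref{eq:def1sphs} hold (using $\hkap(a,d,e)=\hkap(a,e,d)$ for \eqref{eq:def1sphr}), equations~\eqref{eq:def1sph p+qanalogue} and~\eqref{eq: def1sph erminuss} collapse to identities, and the Bretschneider-type equation~\eqref{eq: def1sph bretanalogue} follows from a short expansion: its right-hand side becomes $q^2+(a-e-d)^2+\kap ed(a-e)$, which after substituting $q^2=\hkap(a,d,e)$ simplifies to $4ed-\kap e^2 d=4ef\bigl(1-\tfrac{\kap e}{4}\bigr)$.

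There is no real obstacle in either route. The symmetry reduction is purely a relabeling bookkeeping exercise, and the direct verification's only non-automatic point is the identity~\eqref{eq: def1sph bretanalogue}, whose check is a one-line polynomial computation (or a routine Gr\"obner-basis membership test of the kind used in Proposition~\ref{prop: analogue of 2.14}). I would present the reduction to Lemma~\ref{lem: degenerate 1} as the main proof.
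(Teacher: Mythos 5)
Your proposal is correct and follows essentially the same route as the paper: the paper's proof likewise observes, via Remark~\ref{rem: prop 215 reverse}, that $(a,b,0,d,e,f,0,q,r,0)$ is a spherical Heronian diamond if and only if $(0,d,a,b,e,f,q,0,0,r)$ is, and then applies Lemma~\ref{lem: degenerate 1}; your alternative direct verification matches the paper's other suggested option of arguing as in Lemma~\ref{lem: degenerate 1}. Both of your routes check out, including the expansion confirming equation~\eqref{eq: def1sph bretanalogue}.
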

\begin{proof}
    Argue similarly to Lemma~\ref{lem: degenerate 1} above, or note~$(a,b,0,d,e,f,0,q,r,0)$ is a spherical Heronian diamond if and only if~$(0,d,a,b,e,f,q,0,0,r)$ is a spherical Heronian diamond (see Remark~\ref{rem: prop 215 reverse}); then apply Lemma~\ref{lem: degenerate 1}. 
\end{proof}

The lemmas above and Corollary~\ref{cor: 2.16 analogue} imply the following corollary.

\begin{corollary} \label{cor: 2.19 analogue}
    Let~$a,b,c,d,e,f,p,q,r,s\in\C$.  If the tuple~$(0,b,c,d,e,f,p,0,0,s)$ is a spherical Heronian diamond, then the values~$e,b,p$ determine~$f,d,s$ uniquely, and vice versa.  If~$(a,b,0,d,e,f,0,q,r,0)$ is a spherical Heronian diamond, then the values~$d,e,q$ determine~$b,f,r$ uniquely, and vice versa.
\end{corollary}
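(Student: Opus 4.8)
The plan is to read both claims straight off the two degenerate lemmas, Lemmas~\ref{lem: degenerate 1} and~\ref{lem: degenerate 2}, which already pin down every relevant entry by an explicit equality; Corollary~\ref{cor: 2.16 analogue} enters only as the generic statement of which the present corollary is the boundary specialization, and is not strictly needed in the argument.

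For the first assertion I would argue as follows. Suppose $(0,b,c,d,e,f,p,0,0,s)$ is a spherical Heronian diamond. By Lemma~\ref{lem: degenerate 1} this is equivalent to the four conditions $p^2=\hkap(b,c,e)$, $d=e$, $f=b$, and $s=p$. In particular $f=b$, $d=e$, $s=p$, so the triple $(e,b,p)$ literally determines $(f,d,s)$, and conversely $(f,d,s)$ determines $(e,b,p)$ via $e=d$, $b=f$, $p=s$. (It is worth noting that $c$ is not part of the recovered data: for fixed $b,e,p$ the Heron equation $p^2=\hkap(b,c,e)$ is quadratic in $c$, so $c$ need not be recoverable, and the statement correctly does not claim that it is.)

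The second assertion is symmetric, and I would handle it either by quoting Lemma~\ref{lem: degenerate 2} directly --- which gives $q^2=\hkap(a,d,e)$, $b=e$, $f=d$, $r=q$, whence $(d,e,q)$ determines $(b,f,r)=(e,d,q)$ and vice versa --- or by using Remark~\ref{rem: prop 215 reverse} to convert $(a,b,0,d,e,f,0,q,r,0)$ into the Lemma~\ref{lem: degenerate 1}-type diamond $(0,d,a,b,e,f,q,0,0,r)$ and then invoking the first assertion.

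I do not expect any genuine obstacle here: all of the content has already been absorbed into the proofs of Lemmas~\ref{lem: degenerate 1} and~\ref{lem: degenerate 2}. The only point worth stressing is the contrast with Corollary~\ref{cor: 2.16 analogue}: in the generic regime the passage $(e,p,q)\mapsto(f,r,s)$, with $a,b,c,d$ held fixed, is governed by the nontrivial rational formulas~\eqref{eq: eq for f}--\eqref{eq: eq for s}, whereas in these degenerate boundary cases it collapses to the trivial matchings above.
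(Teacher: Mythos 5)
Your proposal is correct and matches the paper's intent: the paper gives no written argument beyond asserting that Lemmas~\ref{lem: degenerate 1} and~\ref{lem: degenerate 2} (together with Corollary~\ref{cor: 2.16 analogue}) imply the statement, and your reading of the degenerate lemmas ($d=e$, $f=b$, $s=p$, resp.\ $b=e$, $f=d$, $r=q$) is exactly the content needed. Your observation that Corollary~\ref{cor: 2.16 analogue} is not actually required here, and that $c$ (resp.\ $a$) is not claimed to be recoverable, is a fair and accurate refinement of the paper's citation.
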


We will now  apply what we know about spherical quadrilaterals to the case of a spherical polygon.

\begin{proposition}\label{prop: 3.1 analogue} 
    Let~${P=A_1\dots A_n}$ be a spherical polygon on~$\bfS$. For any four vertices~$A_i,A_j,A_k,A_l$, we can arrange the corresponding ten measurements~$\xS(P)=(x_{ij})\sqcup (\skap(A_i,A_j,A_k))$ to form a Heronian diamond as pictured on the right in Figure~\ref{fig: sph heronian polygon}.

    In particular, the measurements~$\xS(P)=(x_{ij})\sqcup (\skap(A_i,A_j,A_k))$ satisfy the following identities, where we use the shorthand~$\skapijk=\skap(A_i,A_j,A_k)$:
    \begin{align}
(\skapijk)^2&=\hkap(x_{ij},x_{jk},x_{ik})\label{eq: heron relation}\, ,\\
\skapijk+\skapikl&=\skapijl+\skapjkl+\mytfrac{\kap}{2}\left(x_{il}\skapijk+x_{ij}\skapikl-x_{ik}\skapjkl\right)\nonumber\, ,\\
        4x_{ik}x_{jl}\left(1-\mytfrac{\kap x_{ik}}{4}\right)&=(\skapijk+\skapikl)^2+(x_{il}-x_{ij}+x_{jk}-x_{kl})^2\nonumber\\
        &\quad\quad\quad\quad\quad\quad\quad\quad\quad-\kap x_{ik}(x_{il}-x_{ij})(x_{jk}-x_{kl})\nonumber\, ,\\
        x_{ik}(\skapijl-\skapjkl)&=\skapijl(x_{il}-x_{kl})+\skapijk(x_{ij}-x_{jk})\, .\nonumber
    \end{align}

\end{proposition}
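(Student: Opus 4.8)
The plan is to derive this as an immediate consequence of the quadrilateral identities of Section~\ref{sec: quadrilaterals}. Given four vertices $A_i,A_j,A_k,A_l$ of $P$, set $B_1=A_i$, $B_2=A_j$, $B_3=A_k$, $B_4=A_l$, and view $B_1B_2B_3B_4$ as a spherical quadrilateral in the sense of Example~\ref{ex: main tet example}. Reading off the dictionary of that example, the ten measurements associated with $B_1B_2B_3B_4$ are
\[
a=x_{il},\ \ b=x_{ij},\ \ c=x_{jk},\ \ d=x_{kl},\ \ e=x_{ik},\ \ f=x_{jl},
\]
\[
p=\skapijk,\ \ q=\skapikl,\ \ r=\skapijl,\ \ s=\skapjkl,
\]
where each $\skap$ is taken with its vertices in the order displayed. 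Thus the measurements of $\xS(P)$ supported on $\{A_i,A_j,A_k,A_l\}$ are exactly the ten measurements of the spherical quadrilateral $B_1B_2B_3B_4$.

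First I would invoke Proposition~\ref{prop:prop210analoge}, which asserts that the $10$-tuple $(a,b,c,d,e,f,p,q,r,s)$ satisfies equations \eqref{eq:def1sphp}--\eqref{eq: def1sph erminuss}. By Definition~\ref{def:original definition} this is precisely the statement that $(a,b,c,d,e,f,p,q,r,s)$ is a spherical Heronian diamond, so arranging it in the diamond pattern of that definition produces the configuration depicted on the right in Figure~\ref{fig: sph heronian polygon}. Finally I would translate the relevant diamond relations back into the original labels: equation \eqref{eq:def1sphp} translates into the Heron relation \eqref{eq: heron relation}, equation \eqref{eq:def1sph p+qanalogue} into the second displayed identity, equation \eqref{eq: def1sph bretanalogue} into the spherical Bretschneider identity, and equation \eqref{eq: def1sph erminuss} into the last displayed identity.

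The only genuine work is bookkeeping, so the step to watch is the correspondence of labels---in particular, checking that the cyclic order used to define $p,q,r,s$ for $B_1B_2B_3B_4$ matches the order of indices appearing in the claimed identities, so that the skew-symmetry of $\skap$ contributes no stray sign. One should also note that Proposition~\ref{prop:prop210analoge} (and hence the present proposition) imposes no hypothesis on the four points beyond their lying on $\bfS$, so the conclusion holds for every choice of four vertices of $P$ taken in any fixed order; there is no genericity or convexity assumption. With the dictionary above in hand, each of the four displayed identities is an instance of the corresponding spherical Heronian diamond equation, which completes the argument.
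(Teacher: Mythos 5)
Your proof is correct and is exactly the paper's (implicit) argument: Proposition~\ref{prop: 3.1 analogue} is stated without a separate proof precisely because it is Proposition~\ref{prop:prop210analoge} applied to the spherical quadrilateral $A_iA_jA_kA_l$ with the dictionary you give, which matches the labeling in Figure~\ref{fig: sph heronian polygon}. One bookkeeping point of the kind you yourself flagged: under that dictionary the translation of \eqref{eq:def1sph p+qanalogue} ends in $-x_{ik}\skapijl$ (that is, $-er$), not $-x_{ik}\skapjkl$, and the translation of \eqref{eq: def1sph erminuss} reads $x_{ik}(\skapijl-\skapjkl)=\skapijk(x_{il}-x_{kl})+\skapikl(x_{ij}-x_{jk})$; the second and fourth identities as printed in the proposition differ from these by index slips (the printed versions are not consequences of \eqref{eq: pq1}--\eqref{eq: pq4}), so your argument establishes the corrected forms rather than the literal displayed ones.
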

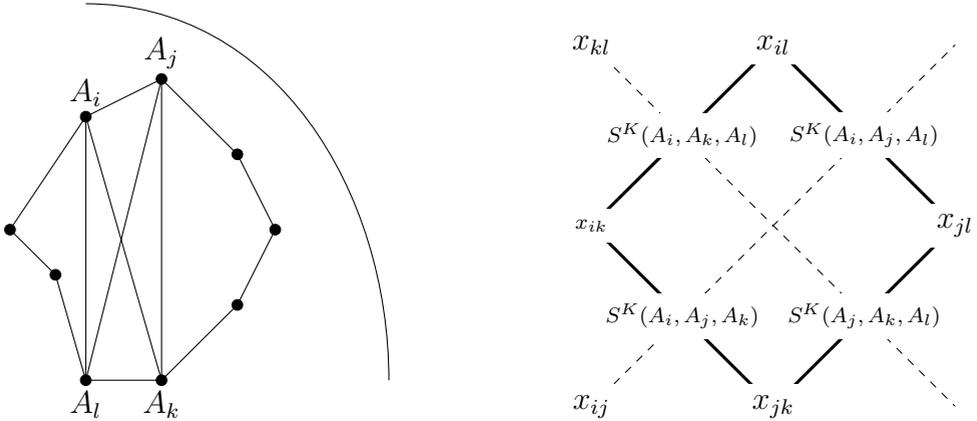
\begin{figure}[htbp!]
        \centering

        \begin{tikzpicture}[scale=1]

        \coordinate  (left) at (-1,5);
        \coordinate  (right) at (3,0);
\draw (left) to [out=0,in=90] (right) ;

\coordinate (v_1) at (1,1);
\filldraw [black] (v_1) circle (2pt) ;
\coordinate (v_2) at (1.5,2);
\filldraw [black] (v_2) circle (2pt) ;
\coordinate (v_3) at (1,3);
\filldraw [black] (v_3) circle (2pt) ;

\coordinate (v_5) at (-2,2);
\filldraw [black] (v_5) circle (2pt) ;
\coordinate (v_6) at (-1.4,1.4);
\filldraw [black] (v_6) circle (2pt) ;

\coordinate [label=above:$A_i$] (i) at (-1,3.5);
\filldraw [black] (i) circle (2pt) ;

\coordinate [label=above:$A_j$](j) at (0,4);
\filldraw [black] (j) circle (2pt);

\coordinate [label=below:$A_l$] (l) at (-1,0);
\filldraw [black] (l) circle (2pt);

\coordinate[label=below:$A_k$] (k) at (0,0);
\filldraw [black] (k) circle (2pt);

\draw (k) to (v_1) to (v_2) to (v_3) to (j) to (i) to (v_5) to (v_6) to (l) to (k);
\draw (i) to (k) to (j) to (l) to (i);
            
        \end{tikzpicture}
        \hspace{2 cm}
                \begin{tikzpicture}
            [scale=0.6,every node/.style={rectangle,fill=white,inner sep=2pt}]

        \begin{scope}[on background layer]
        
        \draw[dashed] (-4,8) node {$x_{kl}$}
        --(4,0);
        \draw[dashed] (-4,0) node {$x_{ij}$}
        --(4,8);
    \end{scope}
        \draw[very thick]
        (0,0) node {$x_{jk}$}
        -- (-2,2) node[xshift=-10pt] {$\skap(A_i,A_j,A_k)$}
        -- (-4,4) node {$x_{ik}$}
        -- (-2,6) node[xshift=-10pt] {$\skap(A_i,A_k,A_l)$}
        -- (0,8) node {$x_{il}$}
        -- (2,6) node[xshift=10pt] {$\skap(A_i,A_j,A_l)$}
        -- (4,4) node {$x_{jl}$}
        -- (2,2) node[xshift=10pt] {$\skap(A_j,A_k,A_l)$}
        -- (0,0);
        \end{tikzpicture}

        \caption{Left: In a spherical polygon, any four vertices~$A_i,A_j,A_k,A_l$ form a spherical quadrilateral. Right: The corresponding spherical Heronian diamond.}
        \label{fig: sph heronian polygon}
    \end{figure}
We next define a spherical Heronian frieze, a tessellation of a planar region by spherical Heronian diamonds that collects and organizes the measurement data for a spherical polygon.  Spherical Heronian friezes generalize to arbitrary values of~$\kap$ the (Euclidean) Heronian friezes introduced by Fomin and Setiabrata~\cite{fomin_heronian_2021}. 
 
 We begin by defining an indexing set for a spherical Heronian frieze.
\begin{definition} \label{def: indexing set}
    For~$n\ge 4$, let
\begin{equation*}
I_n=\left\{\left(i+\mytfrac{1}{2},j\right), \left(i,j+\mytfrac{1}{2}\right), (i,j)\right\}_{\stackrel{0\le j-i\le n}{ i,j\in\Z}}\cup\left\{\left(i+\mytfrac{1}{2},\neline\right)\right\}_{i\in\Z}\cup\left\{\left(\seline,j+\mytfrac{1}{2}\right)\right\}_{j\in\Z}.
\end{equation*}
    Our notation uses the symbols~$\neline$ and~$\seline$, which represent lines headed Northeast and Southeast respectively.  
    
    To~visually represent~$I_n$, rotate the usual coordinate axes in~$\R^2$ by~$\pi/4$ clockwise, so that the points~$(i,j)\in\Z^2$ satisfying the inequalities~$0\le j-i\le n$ would lie inside a horizontal strip.  See a depiction of~$I_6$ in Figure~\ref{fig: sph heron index set}. We call elements of~$I_n$ with two numerical coordinates \emph{nodes}. We classify a node~$(i,j)$ as \emph{interior} if~$1\le j-i\le n-1$, so that~$(i,j)$ lies in the interior of the horizontal strip.
       
\end{definition}

\pagebreak[3]

 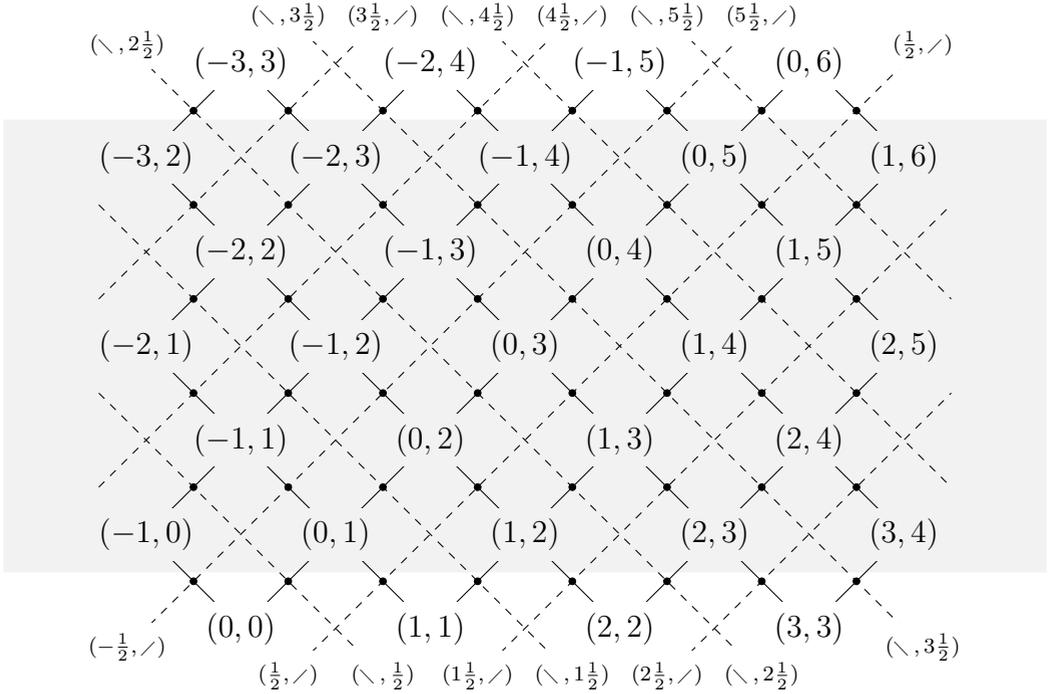
\begin{figure}[htbp!]
        \centering
        \begin{tikzpicture}
        [scale=2.5,intnode/.style={rectangle,fill=black!5,inner sep=1pt}, bdrynode/.style={rectangle,fill=white,inner sep=2pt}]

        \begin{scope}[on background layer] 
\filldraw [black!5] (-1.25,0.35) rectangle (4.25,2.65);
        \draw (-0.5,0.5) -- 
(0,0);

\draw (-0.5,1.5) -- (1,0);

\draw (-0.5,2.5) -- (2,0);

\draw (0,3) -- (3,0);

\draw (1,3) -- (3.5,0.5);

\draw (2,3) -- (3.5,1.5);

\draw (3,3) -- (3.5, 2.5);

\begin{scope}
\draw[dashed] (-0.75,1.75)  
        --(0.75,3.25) node[bdrynode] {\tiny$(3\frac{1}{2},\neline)$}; 

\draw[dashed] (-0.75,.75) 
        --(1.75,3.25) node[bdrynode] {\tiny$(4\frac{1}{2},\neline)$};

        \draw[dashed] (-0.6,-0.1) node[bdrynode] {\tiny$(-\frac{1}{2},\neline)$}
        --(2.75,3.25) node[bdrynode] {\tiny$(5\frac{1}{2},\neline)$};

\draw[dashed] (0.25,-0.25) node[bdrynode] {\tiny$(\frac{1}{2},\neline)$}
        -- (3.6,3.1) node[bdrynode] {\tiny$(\frac{1}{2},\neline)$};

\draw[dashed] (1.25,-0.25) node[bdrynode] {\tiny$(1\frac{1}{2},\neline)$}
        --(3.75,2.25) ;
        
\draw[dashed] (2.25,-0.25) node[bdrynode] {\tiny$(2\frac{1}{2},\neline)$}
        --(3.75,1.25) ;  

\draw[dashed] (-0.75,1.25)  --(0.75,-0.25) node[bdrynode] {\tiny$(\seline,\frac{1}{2})$};

\draw[dashed] (-0.75,2.25)  --(1.75,-0.25) node[bdrynode] {\tiny$(\seline,1\frac{1}{2})$};

\draw[dashed] (-0.6,3.1) node[bdrynode] {\tiny$(\seline,2\frac{1}{2})$} --(2.75,-0.25) node[bdrynode] {\tiny$(\seline,2\frac{1}{2})$};

\draw[dashed] (0.25,3.25) node[bdrynode] {\tiny$(\seline,3\frac{1}{2})$} --(3.6,-0.1) node[bdrynode] {\tiny$(\seline,3\frac{1}{2})$};

\draw[dashed] (1.25,3.25) node[bdrynode] {\tiny$(\seline,4\frac{1}{2})$} --(3.75,0.75);

\draw[dashed] (2.25,3.25) node[bdrynode] {\tiny$(\seline,5\frac{1}{2})$} --(3.75,1.75);

\end{scope}
    \end{scope}
\draw (-0.5,2.5) node[intnode] {$(-3,2)$} -- (0,3) node[bdrynode] {$(-3,3)$};
    
\draw (-0.5,1.5) node[intnode] {$(-2,1)$} -- (0,2) node[intnode] {$(-2,2)$} -- (0.5,2.5) node[intnode] {$(-2,3)$} -- (1,3) node[bdrynode] {$(-2,4)$};

\draw (-0.5,0.5) node[intnode] {$(-1,0)$} -- (0,1) node[intnode] {$(-1,1)$} -- (0.5,1.5) node[intnode] {$(-1,2)$} -- (1,2) node[intnode] {$(-1,3)$} -- (1.5,2.5) node[intnode] {$(-1,4)$} -- (2,3) node[bdrynode] {$(-1,5)$};

        \draw (0,0) node[bdrynode] {$(0,0)$} -- (0.5,0.5) node[intnode] {$(0,1)$} -- (1,1) node[intnode] {$(0,2)$} -- (1.5,1.5) node[intnode] {$(0,3)$} -- (2,2) node[intnode] {$(0,4)$} -- (2.5,2.5) node[intnode] {$(0,5)$} -- (3,3) node[bdrynode] {$(0,6)$};

        \draw (1,0) node[bdrynode] {$(1,1)$} -- (1.5,0.5) node[intnode] {$(1,2)$} -- (2,1) node[intnode] {$(1,3)$} -- (2.5,1.5) node[intnode] {$(1,4)$} -- (3,2) node[intnode] {$(1,5)$} -- (3.5,2.5) node[intnode] {$(1,6)$};
        
        \draw (2,0) node[bdrynode] {$(2,2)$} -- (2.5,0.5) node[intnode] {$(2,3)$} -- (3,1) node[intnode] {$(2,4)$} -- (3.5,1.5) node[intnode] {$(2,5)$};

        \draw (3,0) node[bdrynode] {$(3,3)$} -- (3.5,0.5) node[intnode] {$(3,4)$};

        \filldraw [black] (-0.25,0.25) circle (0.5pt);

        \filldraw [black] (-0.25,0.75) circle (0.5pt);

        \filldraw [black] (-0.25,1.25) circle (0.5pt);

        \filldraw [black] (-0.25,1.75) circle (0.5pt);

        \filldraw [black] (-0.25,2.25) circle (0.5pt);

        \filldraw [black] (-0.25,2.75) circle (0.5pt);

        \filldraw [black] (0.25,0.25) circle (0.5pt);

        \filldraw [black] (0.25,0.75) circle (0.5pt);

        \filldraw [black] (0.25,1.25) circle (0.5pt);

        \filldraw [black] (0.25,1.75) circle (0.5pt);

        \filldraw [black] (0.25,2.25) circle (0.5pt);

        \filldraw [black] (0.25,2.75) circle (0.5pt);

        \filldraw [black] (0.75,0.25) circle (0.5pt);

        \filldraw [black] (0.75,0.75) circle (0.5pt);

        \filldraw [black] (0.75,1.25) circle (0.5pt);

        \filldraw [black] (0.75,1.75) circle (0.5pt);

        \filldraw [black] (0.75,2.25) circle (0.5pt);

        \filldraw [black] (0.75,2.75) circle (0.5pt);
        
        \filldraw [black] (1.25,0.25) circle (0.5pt);

        \filldraw [black] (1.25,0.75) circle (0.5pt);

        \filldraw [black] (1.25,1.25) circle (0.5pt);

        \filldraw [black] (1.25,1.75) circle (0.5pt);

        \filldraw [black] (1.25,2.25) circle (0.5pt);

        \filldraw [black] (1.25,2.75) circle (0.5pt);

        \filldraw [black] (1.75,0.25) circle (0.5pt);

        \filldraw [black] (1.75,0.75) circle (0.5pt);

        \filldraw [black] (1.75,1.25) circle (0.5pt);

        \filldraw [black] (1.75,1.75) circle (0.5pt);

        \filldraw [black] (1.75,2.25) circle (0.5pt);

        \filldraw [black] (1.75,2.75) circle (0.5pt);
        
        \filldraw [black] (2.25,0.25) circle (0.5pt);

        \filldraw [black] (2.25,0.75) circle (0.5pt);

        \filldraw [black] (2.25,1.25) circle (0.5pt);

        \filldraw [black] (2.25,1.75) circle (0.5pt);

        \filldraw [black] (2.25,2.25) circle (0.5pt);

        \filldraw [black] (2.25,2.75) circle (0.5pt);
        
        \filldraw [black] (2.75,0.25) circle (0.5pt);

        \filldraw [black] (2.75,0.75) circle (0.5pt);

        \filldraw [black] (2.75,1.25) circle (0.5pt);

        \filldraw [black] (2.75,1.75) circle (0.5pt);

        \filldraw [black] (2.75,2.25) circle (0.5pt);

        \filldraw [black] (2.75,2.75) circle (0.5pt);
        
        \filldraw [black] (3.25,0.25) circle (0.5pt);

        \filldraw [black] (3.25,0.75) circle (0.5pt);

        \filldraw [black] (3.25,1.25) circle (0.5pt);

        \filldraw [black] (3.25,1.75) circle (0.5pt);

        \filldraw [black] (3.25,2.25) circle (0.5pt);

        \filldraw [black] (3.25,2.75) circle (0.5pt);

        \end{tikzpicture}
        \caption{A  portion of the index set~$I_6$, it extends infinitely to the right and left.  For the points of the form~$\left(i+\frac{1}{2},j\right)$ and~$\left(i,j+\frac{1}{2}\right)$ (indicated with black filled-in circles), the labels are not shown.  Interior nodes lie in the gray shaded region. The dashed lines have labels of the form~$(i+\frac{1}{2},\neline)$ or~$(\seline,j+\frac{1}{2})$.}
        \label{fig: sph heron index set}
    \end{figure}

\begin{definition}
    For~$\kap\in\C$, and~$n\ge 4$, a \emph{spherical Heronian frieze of order~$n$} is an array~${\zfrieze=(z_\alpha)_{\alpha\in I_n}}$ of complex numbers indexed by~$I_n$ (Definition~\ref{def: indexing set}) such that each full diamond of numbers is a spherical Heronian diamond.  Specifically, for an interior node~$(i,j)\in\Z^2$, the tuple
    \[
    (z_{(i,j+1)},z_{(i+\frac{1}{2},\neline)},z_{(i+1,j)},z_{(\seline,j+\frac{1}{2})},z_{(i,j)},z_{(i+1,j+1)},z_{(i+\frac{1}{2},j)},z_{(i,j+\frac{1}{2})},z_{(i+\frac{1}{2},j+1)},z_{(i+1,j+\frac{1}{2})})
    \]
    forms a Heronian diamond of the form shown in Figure~\ref{fig: diamond}.   Additionally, at the boundary of the frieze, we impose the following conditions:
\begin{equation}\label{eq: boundary condition}
    z_{(i,i)}=z_{(i,i+n)}=z_{(i,i+\frac{1}{2})}=z_{(i,i+n-\frac{1}{2})}=0 \quad  (i\in\Z).    
\end{equation}
Figure~\ref{fig: sph heron frieze example} shows an example of a spherical Heronian frieze.

    \begin{remark}
        Let~$\zfrieze$ be a spherical Heronian frieze. Consider the points with first coordinate~0:~$(0,0),  (0,\mytfrac{1}{2}), (0,1),  (0,1+\mytfrac{1}{2}),  (0,2),\dots$. In our visualization they form a straight line of alternating integer and half-integer points traveling in the Northeast direction. Crossing this line perpendicularly at each half integer point are the lines indexed by~${(\seline,j+\frac{1}{2})}$.  The entry~$z_{(0,j+\frac{1}{2})}$ at each half integer point satisfies a Heron relation (equation~\eqref{eq: heron relation}) together with the entries located immediately Northeast and immediately Southwest of it, and the entry associated with the line~$(\seline,j+\frac{1}{2})$.  
    \end{remark}

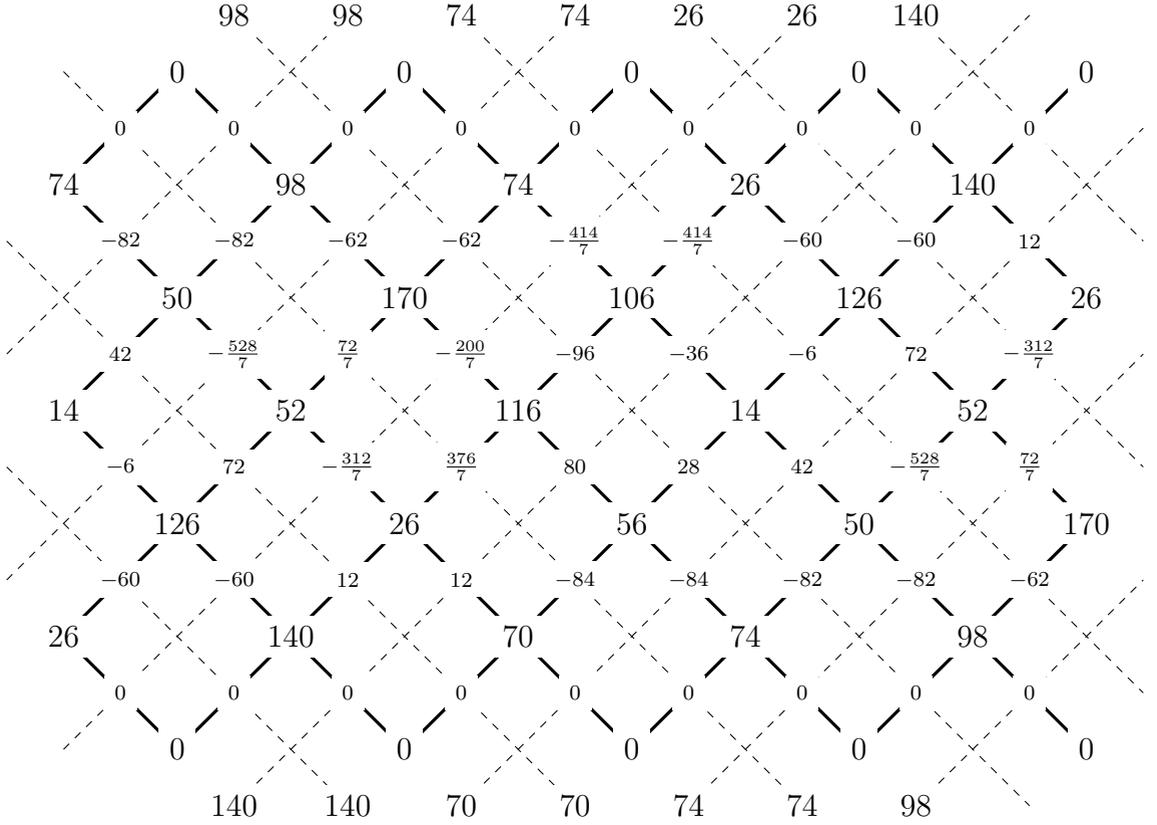
\begin{figure}[htbp!]
        \centering
        \begin{tikzpicture}
        [scale=3,every node/.style={rectangle,fill=white,inner sep=3pt}]

        \begin{scope}[on background layer] 

        \draw[very thick](-0.5,0.5) -- 
(0,0);

\draw[very thick](-0.5,1.5) -- (1,0);

\draw[very thick](-0.5,2.5) -- (2,0);

\draw[very thick](0,3) -- (3,0);

\draw[very thick](1,3) -- (4,0);

\draw[very thick](2,3) -- (4,1);

\draw[very thick](3,3) -- (4, 2);

\begin{scope}
\draw[dashed] (-0.75,1.75) 
        --(0.75,3.25) node {$98$}; 

\draw[dashed] (-0.75,.75) 
        --(1.75,3.25) node {$74$};

        \draw[dashed] (-0.5,0) 
        --(2.75,3.25) node {$26$};

\draw[dashed] (0.25,-0.25)  node {$140$}
        -- (3.75 ,3.25);

\draw[dashed] (1.25,-0.25)  node {$70$}
        --(4.25,2.75);
        
\draw[dashed] (2.25,-0.25)  node {$74$}
        --(4.25,1.75);  

\draw[dashed] (3.25,-0.25) node {$98$} --(4.25,0.75);

\draw[dashed] (-0.75,1.25)  --(0.75,-0.25) node {$140$};

\draw[dashed] (-0.75,2.25)  --(1.75,-0.25) node {$70$};

\draw[dashed] (-0.5,3)   --(2.75,-0.25) node {$74$};

\draw[dashed] (0.25,3.25) node {$98$} --(3.75,-0.25);

\draw[dashed] (1.25,3.25) node {$74$} --(4.25,0.25);

\draw[dashed] (2.25,3.25) node {$26$} --(4.25,1.25);

\draw[dashed] (3.25,3.25) node {$140$} -- (4.25,2.25);

\end{scope}
    \end{scope}
\draw[very thick](-0.5,2.5) node {$74$} -- (0,3) node {$0$};
    
\draw[very thick](-0.5,1.5) node {$14$} -- (0,2) node {$50$} -- (0.5,2.5) node {$98$} -- (1,3) node {$0$};

\draw[very thick](-0.5,0.5) node {$26$} -- (0,1) node {$126$} -- (0.5,1.5) node {$52$} -- (1,2) node {$170$} -- (1.5,2.5) node {$74$} -- (2,3) node {$0$};

        \draw[very thick](0,0) node {$0$} -- (0.5,0.5) node {$140$} -- (1,1) node {$26$} -- (1.5,1.5) node {$116$} -- (2,2) node {$106$} -- (2.5,2.5) node {$26$} -- (3,3) node {$0$};

        \draw[very thick](1,0) node {$0$} -- (1.5,0.5) node {$70$} -- (2,1) node {$56$} -- (2.5,1.5) node {$14$} -- (3,2) node {$126$} -- (3.5,2.5) node {$140$} -- (4,3) node {$0$};
        
        \draw[very thick](2,0) node {$0$} -- (2.5,0.5) node {$74$} -- (3,1) node {$50$} -- (3.5,1.5) node {$52$} -- (4,2) node {$26$};

        \draw[very thick](3,0) node {$0$} -- (3.5,0.5) node {$98$} -- (4,1) node {$170$};

        \draw[very thick](4,0) node {$0$};

        \filldraw [black] (-0.25,0.25) circle (0.5pt) node {\tiny$0$};

        \filldraw [black] (-0.25,0.75) circle (0.5pt) node {\tiny$-60$};

        \filldraw [black] (-0.25,1.25) circle (0.5pt) node {\tiny$-6$};

        \filldraw [black] (-0.25,1.75) circle (0.5pt) node {\tiny$42$};

        \filldraw [black] (-0.25,2.25) circle (0.5pt) node {\tiny$-82$};

        \filldraw [black] (-0.25,2.75) circle (0.5pt) node {\tiny$0$};

        \filldraw [black] (0.25,0.25) circle (0.5pt) node {\tiny \tiny$0$};

        \filldraw [black] (0.25,0.75) circle (0.5pt) node {\tiny$-60$};

        \filldraw [black] (0.25,1.25) circle (0.5pt) node {\tiny$72$};

        \filldraw [black] (0.25,1.75) circle (0.5pt) node {\tiny$-\frac{528}{7}$};

        \filldraw [black] (0.25,2.25) circle (0.5pt) node {\tiny$-82$};

        \filldraw [black] (0.25,2.75) circle (0.5pt) node {\tiny$0$};

        \filldraw [black] (0.75,0.25) circle (0.5pt) node {\tiny \tiny$0$};

        \filldraw [black] (0.75,0.75) circle (0.5pt) node {\tiny$12$};

        \filldraw [black] (0.75,1.25) circle (0.5pt) node {\tiny$-\frac{312}{7}$};

        \filldraw [black] (0.75,1.75) circle (0.5pt) node {\tiny$\frac{72}{7}$};

        \filldraw [black] (0.75,2.25) circle (0.5pt)  node {\tiny$-62$};

        \filldraw [black] (0.75,2.75) circle (0.5pt)  node {\tiny$0$};
        
        \filldraw [black] (1.25,0.25) circle (0.5pt)  node {\tiny$0$};

        \filldraw [black] (1.25,0.75) circle (0.5pt)  node {\tiny$12$};

        \filldraw [black] (1.25,1.25) circle (0.5pt)  node {\tiny$\frac{376}{7}$};

        \filldraw [black] (1.25,1.75) circle (0.5pt) node {\tiny$-\frac{200}{7}$};

        \filldraw [black] (1.25,2.25) circle (0.5pt)  node {\tiny$-62$};

        \filldraw [black] (1.25,2.75) circle (0.5pt)  node {\tiny$0$};

        \filldraw [black] (1.75,0.25) circle (0.5pt)  node {\tiny$0$};

        \filldraw [black] (1.75,0.75) circle (0.5pt)  node {\tiny$-84$};

        \filldraw [black] (1.75,1.25) circle (0.5pt) node {\tiny$80$};

        \filldraw [black] (1.75,1.75) circle (0.5pt) node {\tiny$-96$};

        \filldraw [black] (1.75,2.25) circle (0.5pt)  node {\tiny$-\frac{414}{7}$};

        \filldraw [black] (1.75,2.75) circle (0.5pt)  node {\tiny$0$};
        
        \filldraw [black] (2.25,0.25) circle (0.5pt)  node {\tiny$0$};

        \filldraw [black] (2.25,0.75) circle (0.5pt) node {\tiny$-84$};

        \filldraw [black] (2.25,1.25) circle (0.5pt) node {\tiny$28$};

        \filldraw [black] (2.25,1.75) circle (0.5pt) node {\tiny$-36$};

        \filldraw [black] (2.25,2.25) circle (0.5pt) node {\tiny$-\frac{414}{7}$};

        \filldraw [black] (2.25,2.75) circle (0.5pt) node {\tiny$0$};
        
        \filldraw [black] (2.75,0.25) circle (0.5pt) node {\tiny$0$};

        \filldraw [black] (2.75,0.75) circle (0.5pt) node {\tiny$-82$};

        \filldraw [black] (2.75,1.25) circle (0.5pt) node {\tiny$42$};

        \filldraw [black] (2.75,1.75) circle (0.5pt) node {\tiny$-6$};

        \filldraw [black] (2.75,2.25) circle (0.5pt) node {\tiny$-60$};

        \filldraw [black] (2.75,2.75) circle (0.5pt) node {\tiny$0$};
        
        \filldraw [black] (3.25,0.25) circle (0.5pt) node {\tiny$0$};

        \filldraw [black] (3.25,0.75) circle (0.5pt) node {\tiny$-82$};

        \filldraw [black] (3.25,1.25) circle (0.5pt) node {\tiny$-\frac{528}{7}$};

        \filldraw [black] (3.25,1.75) circle (0.5pt) node {\tiny$72$};

        \filldraw [black] (3.25,2.25) circle (0.5pt) node {\tiny$-60$};

        \filldraw [black] (3.25,2.75) circle (0.5pt) node {\tiny$0$};

        \filldraw [black] (3.75,0.25) circle (0.5pt) node {\tiny$0$};

        \filldraw [black] (3.75,.75) circle (0.5pt) node {\tiny$-62$};

        \filldraw [black] (3.75,1.25) circle (0.5pt) node {\tiny$\frac{72}{7}$};

        \filldraw [black] (3.75,1.75) circle (0.5pt) node {\tiny$-\frac{312}{7}$};

        \filldraw [black] (3.75,2.25) circle (0.5pt) node {\tiny$12$};

        \filldraw [black] (3.75,2.75) circle (0.5pt) node {\tiny$0$};

        \end{tikzpicture}
        \caption{A spherical Heronian frieze of order~$6$.  This frieze corresponds to a hexagon on a sphere with radius~$7$.}
        \label{fig: sph heron frieze example}
    \end{figure}
  
\end{definition}

\begin{definition}\label{def: polygon to sph her frieze}
    Let~$P=A_1\dots A_n$ be a spherical polygon on~$\bfS$.  Define~$\zfrieze=\zfrieze(P)$, a spherical Heronian frieze of order~$n$, by setting
    \begin{align*}
        z_{(i,j)}&=x_{\lr{i}\lr{j}}\, ,\\
        z_{(i+\frac{1}{2},j)}&=\skap_{\lr{i}\lr{i+1}\lr{j}}\, ,\\
        z_{(i,j+\frac{1}{2})}&=\skap_{\lr{i}\lr{j}\lr{j+1}}\, ,\\
        z_{(i+\frac{1}{2},\seline)}&=x_{\lr{i}\lr{i+1}},\\
        z_{(\neline,j+\frac{1}{2})}&=x_{\lr{j}\lr{j+1}}\, ,
    \end{align*}
    where~$\lr{m}$ denotes the unique integer~$k$ in~$\{1,\dots,n\}$ satisfying~$m\equiv k \, (\text{mod } n)$.  Condition~\eqref{eq: boundary condition} on the boundary holds because for any~$i\in\Z$ we have \[x_{ii}=\skap_{\langle i\rangle\langle i\rangle\langle i+1\rangle}=\skap_{\langle i\rangle\langle i+1\rangle\langle i+1\rangle}=0.\]

    In words, we put the~$\skap$ measurements of spherical triangles at each half integer point, e.g.,~$(i+\frac{1}{2},j)$.  At the  integer points directly above and below, we surround a particular triangle measurement with the squared side lengths  of the triangle.  There is one perpendicular line passing through the half integer point, which we  label with the third side of the triangle.  As we move up and to the right along a line we alternate between integer and half-integer points, ergo side length and triangle measurements.  Each step along the line represents a small movement, from a side to an adjacent triangle, to another side of the triangle, to another adjacent triangle, slowly walking through the polygon.
    By design, the only triangle measurements that appear in a frieze come from triangles that share a side with the polygon.
\end{definition}

\begin{example}
    The frieze in Figure~\ref{fig: sph heron frieze example} corresponds to a hexagon on the sphere of radius~$7$ centered at the origin.  It is depicted in Figure~\ref{fig: poly example}.
\end{example}
\begin{figure}[htbp!]
    \centering
\begin{tikzpicture}
    \node (background) at (0,0) {\includegraphics[width=0.4\textwidth]{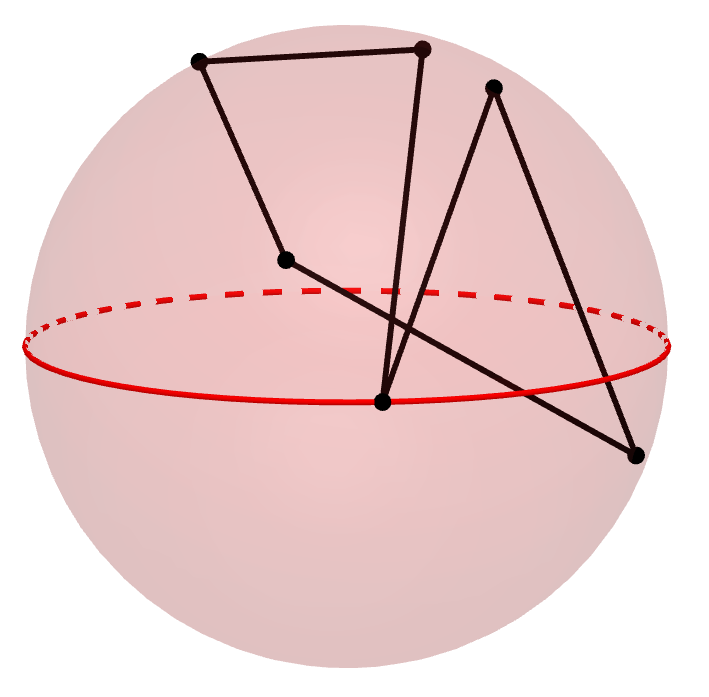}};

    \node[inner sep=1pt,label=below:{$A_1=(7,0,0)$}] (a1) at (0,-.5) {};
    \node[inner sep=1pt,label=above:{$A_2=(2,3,6)$}] (a2) at (2.5,2.2) {};
    \node[inner sep=1pt,label=below right:{$A_3=(3,6,-2)$}] (a3) at (2.5,-.8) {};
    \node[inner sep=1pt,label=left:{$A_4=(6,-2,3)$}] (a4) at (-1,.8) {};
    \node[inner sep=1pt,label=left:{$A_5=(-2,-3,6)$}] (a5) at (-1.5,2.8) {};
    \node[inner sep=1pt,label=above:{$A_6=(-3,2,6)$}] (a6) at (0,2.7) {};
\end{tikzpicture}
    \caption{The vertices of the spherical hexagon~$A_1A_2A_3A_4A_5A_6$ lie on the sphere of radius~$7$ centered at the origin in~$\R^3$.}
    \label{fig: poly example}
\end{figure}

For a spherical Heronian frieze arising from a spherical polygon, the interior nodes~$z_{(i,i+1)}, \, z_{(i,i+n-1)}$ along the top and bottom of the frieze correspond to the squared distances~$x_{\lr{i}\lr{i+1}},\, x_{\lr{i}\lr{i+n-1}}$ respectively---the squared side lengths of the polygon.  These quantities also appear as the dashed lines that run through multiple spherical Heronian diamonds.  We don't need this geometric interpretation, however, to get the following purely algebraic result.

\begin{proposition} Let~$\zfrieze=(z_\alpha)_{\alpha\in I_n}$ be a spherical Heronian frieze of order~$n$.  Then
\[\begin{array}{r@{\ }l@{\ }l@{\ }l}
z_{(i,i+1)} &= z_{(i+\frac{1}{2},\neline)} &= z_{(\seline,i+\frac{1}{2})} & \quad (i\in\Z)\, ,\\
z_{(i,i+n-1)} &= z_{(i-\frac{1}{2},\neline)} &= z_{(\seline,i+n-\frac{1}{2})} & \quad (i\in\Z)\, .
\end{array}
\]    
\end{proposition}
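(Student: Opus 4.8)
The plan is to read each three-way identity directly off the spherical Heronian diamonds that abut the top and bottom edges of the frieze, where the boundary conditions~\eqref{eq: boundary condition} force some entries to vanish and the diamonds degenerate into the configurations governed by Lemmas~\ref{lem: degenerate 1} and~\ref{lem: degenerate 2}. The two lines of identities are mirror images of each other (bottom edge vs.\ top edge), so it suffices to do one carefully and indicate how the other follows.

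I would start with the bottom edge, i.e.\ the first line; fix $i\in\Z$. Since $n\ge 4$, the node $(i-1,i)$ is interior, and the ten surrounding entries form a spherical Heronian diamond $(a,b,c,d,e,f,p,q,r,s)$ with $c=z_{(i,i)}$, $s=z_{(i,i+\frac12)}$, $d=z_{(\seline,i+\frac12)}$, and $f=z_{(i,i+1)}$. By~\eqref{eq: boundary condition}, $c=s=0$, so~\eqref{eq:def1sphs} reads $0=\hkap(0,d,f)=-(d-f)^2$, giving $d=f$, that is, $z_{(\seline,i+\frac12)}=z_{(i,i+1)}$. For the remaining equality I would use the diamond around the interior node $(i,i+1)$, whose entries $(a,b,c,d,e,f,p,q,r,s)$ satisfy $b=z_{(i+\frac12,\neline)}$, $c=z_{(i+1,i+1)}=0$, $e=z_{(i,i+1)}$, $p=z_{(i+\frac12,i+1)}$, and $s=z_{(i+1,i+\frac32)}=0$ (again by~\eqref{eq: boundary condition}). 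Now~\eqref{eq:def1sphp} gives $p^2=\hkap(b,0,e)=-(b-e)^2$, so it is enough to prove $p=0$: then $b=e$, i.e.\ $z_{(i+\frac12,\neline)}=z_{(i,i+1)}$ (equivalently, once $p=0$ one may quote Lemma~\ref{lem: degenerate 2} for the now fully degenerate diamond $(a,b,0,d,e,f,0,q,r,0)$). Combining the two equalities gives the first line.

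To prove $p=0$ I would eliminate $r$ and $q$ as follows. Still $c=s=0$, so~\eqref{eq:def1sphs} gives $d=f$ and~\eqref{eq: def1sph erminuss} gives $er=p(a-d)+qb$; substituting this into~\eqref{eq:def1sph p+qanalogue} collapses it to $r=p+q-\tfrac{\kap}{2}pd$, and plugging that back into $er=p(a-d)+qb$ yields $p\bigl(e+d-a-\tfrac{\kap ed}{2}\bigr)=q(b-e)$, i.e.\ (using $d=f$) $p\bigl(e+f-a-\tfrac{\kap ef}{2}\bigr)=q(b-e)$. If $p\ne0$ then $b\ne e$ by~\eqref{eq:def1sphp}, so $q^2=p^2\bigl(e+f-a-\tfrac{\kap ef}{2}\bigr)^2/(b-e)^2=-\bigl(e+f-a-\tfrac{\kap ef}{2}\bigr)^2$; comparing with~\eqref{eq:def1sphq} and simplifying reduces everything to $4ef\bigl(1-\tfrac{\kap e}{4}\bigr)\bigl(1-\tfrac{\kap f}{4}\bigr)=0$. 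As $e=z_{(i,i+1)}$ and $f=z_{(i+1,i+2)}$ lie outside $\{0,\tfrac{4}{\kap}\}$ in the generic situation of interest (no coincident or antipodal consecutive vertices), this forces $p=0$. The second line is treated the same way: at the top edge the relevant interior nodes are $(i,i+n-1)$ and $(i-1,i+n-2)$, where now $a=z_{(i,i+n)}$ and $q=z_{(i,i+n-\frac12)}$ vanish by~\eqref{eq: boundary condition}; the same elimination forces the $S^\kap$-type entry $r=z_{(i+\frac12,i+n)}$ just inside the edge to vanish, and Lemma~\ref{lem: degenerate 1} (or~\eqref{eq:def1sphp} together with~\eqref{eq:def1sphq}--\eqref{eq:def1sph p+qanalogue}) then supplies $d=e$ and $f=b$, which reassemble into $z_{(i,i+n-1)}=z_{(i-\frac12,\neline)}=z_{(\seline,i+n-\frac12)}$. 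Alternatively, one deduces the second line from the first by the top--bottom reflection of the index set $I_n$, under which spherical Heronian diamonds are preserved by Proposition~\ref{prop: analogue of 2.15}.

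The main obstacle is exactly the step $p=0$ (resp.\ $r=0$ at the top): the boundary conditions make only the two \emph{distance}-type entries of the boundary diamond vanish for free, and pinning down the intervening $S^\kap$-type entry requires genuinely combining the Heron relation~\eqref{eq:def1sphp}, the linearized-area relation~\eqref{eq:def1sph p+qanalogue}, and~\eqref{eq: def1sph erminuss}. This elimination, while short, is not purely formal, and it is the one place where a non-degeneracy hypothesis (consecutive vertices neither coincident nor antipodal) enters the argument.
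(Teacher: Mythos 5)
Your reduction to the boundary diamonds is the same as the paper's, and the unconditional half of your argument is correct and essentially identical to the paper's mechanism: in the diamond at $(i-1,i)$ the boundary condition~\eqref{eq: boundary condition} gives $c=s=0$, and~\eqref{eq:def1sphs} forces $d=f$, i.e.\ $z_{(\seline,i+\frac{1}{2})}=z_{(i,i+1)}$; your indexing at the top row is also right. The divergence --- and the gap --- is in how you get the remaining equality $z_{(i,i+1)}=z_{(i+\frac{1}{2},\neline)}$. The paper's proof takes \emph{all three} bottom entries of the diamond at $(i,i+1)$ to vanish, including the $S^\kap$-type entry $p=z_{(i+\frac{1}{2},i+1)}$: under the dictionary of Definition~\ref{def: polygon to sph her frieze} this entry is $\skap_{\lr{i}\lr{i+1}\lr{i+1}}$, a degenerate-triangle measurement, and its vanishing (together with that of $z_{(i+\frac{1}{2},i+n)}=\skap_{\lr{i}\lr{i+1}\lr{i}}$ at the top) is exactly what the paper's verification of the boundary conditions records and what its proof invokes before quoting Lemma~\ref{lem: degenerate 2} (resp.\ Lemma~\ref{lem: degenerate 1} at the top). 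With $c=p=s=0$ in hand, $b=e$ and $f=d$ are immediate and no genericity enters. You instead treat $p$ as unknown and try to force $p=0$ from~\eqref{eq:def1sphp}, \eqref{eq:def1sph p+qanalogue}, \eqref{eq: def1sph erminuss}, \eqref{eq:def1sphq}; your algebra there is correct (the obstruction is exactly $4ef(1-\frac{\kap e}{4})(1-\frac{\kap f}{4})=0$), but the conclusion then needs $z_{(i,i+1)},z_{(i+1,i+2)}\notin\{0,\frac{4}{\kap}\}$, a hypothesis that is not in the proposition and is not even among the paper's standing genericity assumptions (condition~\eqref{eq: non-vanishing condition} constrains only entries with $2\le j-i\le n-2$). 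So as written you prove a strictly weaker, conditional statement.

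Moreover, the non-degeneracy you invoke is genuinely unavoidable on your route: for any $\kap$, the tuple $(a,b,c,d,e,f,p,q,r,s)=(0,b,0,0,0,0,ib,0,ib,0)$ with $b\ne 0$ satisfies all of~\eqref{eq:def1sphp}--\eqref{eq: def1sph erminuss} and has $c=s=0$ yet $p\ne 0$ and $b\ne e$, so the two vanishings you extract from~\eqref{eq: boundary condition} can never by themselves yield $b=e$. The missing idea is therefore not a cleverer elimination but the correct input: the boundary data of a spherical Heronian frieze is meant to kill the degenerate-triangle entries $z_{(i+\frac{1}{2},i+1)}$ and $z_{(i+\frac{1}{2},i+n)}$ as well, and the paper's proof uses precisely this ($z_{(i+1,i+1)}=z_{(i+\frac{1}{2},i+1)}=z_{(i+1,i+\frac{3}{2})}=0$, then Lemma~\ref{lem: degenerate 2}). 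Once you grant that vanishing, your entire elimination paragraph becomes unnecessary and the proposition follows unconditionally; without it, your argument does not establish the statement as posed.
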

\begin{proof}
    Looking at the spherical Heronian diamond with~$z_{(i,i+1)}$ on the far left (in the position~$e$ in Figure~\ref{fig: diamond}), we see condition~\eqref{eq: boundary condition} ensures the bottom entries are all zero.  In other words,
    \begin{equation*}
        z_{(i+1,i+1)}=z_{(i+\frac{1}{2},i+1)}=z_{(i+1,i+\frac{3}{2})}=0\, .
    \end{equation*}
    This means that Lemma~\ref{lem: degenerate 2} applies and we get~$z_{(i,i+1)}=z_{(i+\frac{1}{2},\neline)}$ (by~\eqref{eq: bdry 4}).  The other results can be proved similarly.\end{proof}

\begin{remark}\label{rem: glide symmetry}
    
A spherical polygon~$P$ with~$n$ vertices has a finite number of sides, diagonals, and triangulations. As in the Euclidean case (see~\cite[(3.15)-(3.17)]{fomin_heronian_2021}), a spherical Heronian frieze~$\zfrieze(P)$ of order~$n$ arising from~$P$ is therefore periodic.  Specifically, the frieze~$\zfrieze(P)$ must satisfy
\begin{gather*}
    z_{(i,j)}=x_{\lr{i}\lr{j}}=z_{(i+n,j+n)} \, ,\\
    z_{(i+\frac{1}{2},\neline)}=x_{\lr{i}\lr{i+1}}=z_{(i+\frac{1}{2}+n,\neline)}\, ,\\
    z_{(\seline,j+\frac{1}{2})}=x_{\lr{j}\lr{j+1}}=z_{(\seline,j+\frac{1}{2}+n)}\, .
\end{gather*}
In particular, because of the symmetry of the measurements (see Definition~\ref{def: squared distance} and Definition~\ref{def: skappa def}), for~$i,j\in\Z$ we must have \emph{glide symmetry} (cf.~Figure~\ref{fig: sph heron glide symm}):
\begin{gather*}
z_{(i,j)}=x_{\lr{i}\lr{j}}=x_{\lr{j}\lr{i}}=z_{(j,i+n)}\\
z_{(i+\frac{1}{2},j)}=\skap_{\lr{i}\lr{i+1}\lr{j}}=\skap_{\lr{j}\lr{i}\lr{i+1}}=z_{(j,i+\frac{1}{2}+n)}\\
z_{(i,j+\frac{1}{2})}=\skap_{\lr{i}\lr{j}\lr{j+1}}=\skap_{\lr{j}\lr{j+1}\lr{i}}=z_{(j+\frac{1}{2},i+n)}.
\end{gather*}

\end{remark}
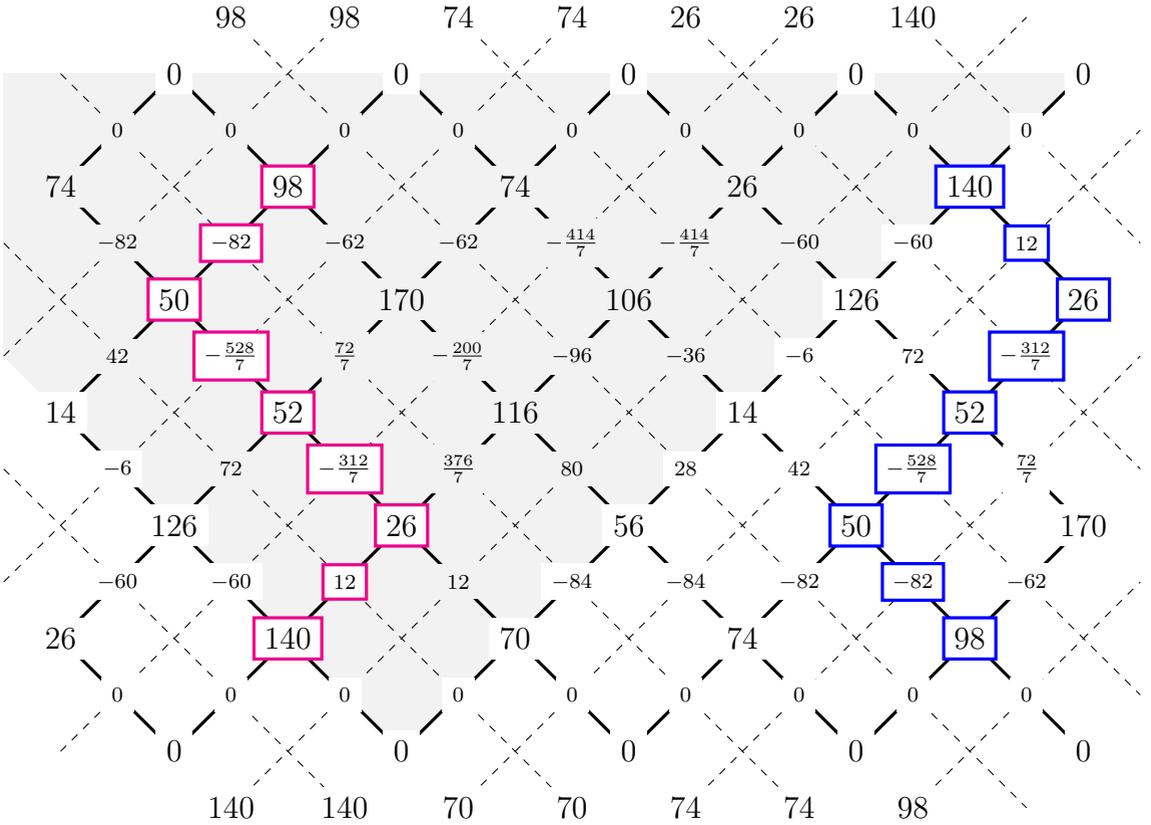
\begin{figure}[htbp!]
        \centering
        \begin{tikzpicture}
        [scale=3, greynode/.style={rectangle,fill=black!5,inner sep=2pt}, whitenode/.style={rectangle,fill=white, inner sep=2pt}, bordergreynode/.style={rectangle,draw=magenta,very thick,fill=white!5,inner sep=3pt}, borderwhitenode/.style={rectangle,draw=blue,fill=white,very thick,inner sep=3pt}] 

        \begin{scope}[on background layer] 
\filldraw[black!5] (-0.75,2) -- (-0.75,1.75) -- (1,0) -- (4,3) -- (-0.75,3) -- (-0.75,2);
        \draw[very thick](-0.5,0.5) -- 
(0,0);

\draw[very thick](-0.5,1.5) -- (1,0);

\draw[very thick](-0.5,2.5) -- (2,0);

\draw[very thick](0,3) -- (3,0);

\draw[very thick](1,3) -- (4,0);

\draw[very thick](2,3) -- (4,1);

\draw[very thick](3,3) -- (4, 2);

\begin{scope}
\draw[dashed] (-0.75,1.75) 
        --(0.75,3.25) node[whitenode] {$98$}; 

\draw[dashed] (-0.75,.75) 
        --(1.75,3.25) node[whitenode] {$74$};

        \draw[dashed] (-0.5,0) 
        --(2.75,3.25) node[whitenode] {$26$};

\draw[dashed] (0.25,-0.25)  node[whitenode] {$140$}
        -- (3.75 ,3.25);

\draw[dashed] (1.25,-0.25)  node[whitenode] {$70$}
        --(4.25,2.75);
        
\draw[dashed] (2.25,-0.25)  node[whitenode] {$74$}
        --(4.25,1.75);  

\draw[dashed] (3.25,-0.25) node[whitenode] {$98$} --(4.25,0.75);

\draw[dashed] (-0.75,1.25)  --(0.75,-0.25) node[whitenode] {$140$};

\draw[dashed] (-0.75,2.25)  --(1.75,-0.25) node[whitenode] {$70$};

\draw[dashed] (-0.5,3)   --(2.75,-0.25) node[whitenode] {$74$};

\draw[dashed] (0.25,3.25) node[whitenode] {$98$} --(3.75,-0.25);

\draw[dashed] (1.25,3.25) node[whitenode] {$74$} --(4.25,0.25);

\draw[dashed] (2.25,3.25) node[whitenode] {$26$} --(4.25,1.25);

\draw[dashed] (3.25,3.25) node[whitenode] {$140$} -- (4.25,2.25);

\end{scope}
    \end{scope}
\draw[very thick](-0.5,2.5) node[greynode] {$74$} -- (0,3) node[whitenode] {$0$};
    
\draw[very thick](-0.5,1.5) node[whitenode] {$14$} -- (0,2) node[bordergreynode] {$50$} -- (0.5,2.5) node[bordergreynode] {$98$} -- (1,3) node[whitenode] {$0$};

\draw[very thick](-0.5,0.5) node[whitenode] {$26$} -- (0,1) node[whitenode] {$126$} -- (0.5,1.5) node[bordergreynode] {$52$} -- (1,2) node[greynode] {$170$} -- (1.5,2.5) node[greynode] {$74$} -- (2,3) node[whitenode] {$0$};

        \draw[very thick](0,0) node[whitenode] {$0$} -- (0.5,0.5) node[bordergreynode] {$140$} -- (1,1) node[bordergreynode] {$26$} -- (1.5,1.5) node[greynode] {$116$} -- (2,2) node[greynode] {$106$} -- (2.5,2.5) node[greynode] {$26$} -- (3,3) node[whitenode] {$0$};

        \draw[very thick](1,0) node[whitenode] {$0$} -- (1.5,0.5) node[whitenode] {$70$} -- (2,1) node[whitenode] {$56$} -- (2.5,1.5) node[whitenode] {$14$} -- (3,2) node[whitenode] {$126$} -- (3.5,2.5) node[borderwhitenode] {$140$} -- (4,3) node[whitenode] {$0$};
        
        \draw[very thick](2,0) node[whitenode] {$0$} -- (2.5,0.5) node[whitenode] {$74$} -- (3,1) node[borderwhitenode] {$50$} -- (3.5,1.5) node[borderwhitenode] {$52$} -- (4,2) node[borderwhitenode] {$26$};

        \draw[very thick](3,0) node[whitenode] {$0$} -- (3.5,0.5) node[borderwhitenode] {$98$} -- (4,1) node[whitenode] {$170$};

        \draw[very thick](4,0) node[whitenode] {$0$};

        \filldraw [black] (-0.25,0.25) circle (0.5pt) node[whitenode] {\tiny$0$};

        \filldraw [black] (-0.25,0.75) circle (0.5pt) node[whitenode] {\tiny$-60$};

        \filldraw [black] (-0.25,1.25) circle (0.5pt) node[whitenode] {\tiny$-6$};

        \filldraw [black] (-0.25,1.75) circle (0.5pt) node[greynode] {\tiny$42$};

        \filldraw [black] (-0.25,2.25) circle (0.5pt) node[greynode] {\tiny$-82$};

        \filldraw [black] (-0.25,2.75) circle (0.5pt) node[greynode] {\tiny$0$};

        \filldraw [black] (0.25,0.25) circle (0.5pt) node[whitenode] {\tiny \tiny$0$};

        \filldraw [black] (0.25,0.75) circle (0.5pt) node[whitenode] {\tiny$-60$};

        \filldraw [black] (0.25,1.25) circle (0.5pt) node[greynode] {\tiny$72$};

        \filldraw [black] (0.25,1.75) circle (0.5pt) node[bordergreynode] {\tiny$-\frac{528}{7}$};

        \filldraw [black] (0.25,2.25) circle (0.5pt) node[bordergreynode] {\tiny$-82$};

        \filldraw [black] (0.25,2.75) circle (0.5pt) node[greynode] {\tiny$0$};

        \filldraw [black] (0.75,0.25) circle (0.5pt) node[whitenode] {\tiny \tiny$0$};

        \filldraw [black] (0.75,0.75) circle (0.5pt) node[bordergreynode] {\tiny$12$};

        \filldraw [black] (0.75,1.25) circle (0.5pt) node[bordergreynode] {\tiny$-\frac{312}{7}$};

        \filldraw [black] (0.75,1.75) circle (0.5pt) node[greynode] {\tiny$\frac{72}{7}$};

        \filldraw [black] (0.75,2.25) circle (0.5pt)  node[greynode] {\tiny$-62$};

        \filldraw [black] (0.75,2.75) circle (0.5pt)  node[greynode] {\tiny$0$};
        
        \filldraw [black] (1.25,0.25) circle (0.5pt)  node[whitenode] {\tiny$0$};

        \filldraw [black] (1.25,0.75) circle (0.5pt)  node[greynode] {\tiny$12$};

        \filldraw [black] (1.25,1.25) circle (0.5pt)  node[greynode] {\tiny$\frac{376}{7}$};

        \filldraw [black] (1.25,1.75) circle (0.5pt) node[greynode] {\tiny$-\frac{200}{7}$};

        \filldraw [black] (1.25,2.25) circle (0.5pt)  node[greynode] {\tiny$-62$};

        \filldraw [black] (1.25,2.75) circle (0.5pt)  node[greynode] {\tiny$0$};

        \filldraw [black] (1.75,0.25) circle (0.5pt)  node[whitenode] {\tiny$0$};

        \filldraw [black] (1.75,0.75) circle (0.5pt)  node[whitenode] {\tiny$-84$};

        \filldraw [black] (1.75,1.25) circle (0.5pt) node[greynode] {\tiny$80$};

        \filldraw [black] (1.75,1.75) circle (0.5pt) node[greynode] {\tiny$-96$};

        \filldraw [black] (1.75,2.25) circle (0.5pt)  node[greynode] {\tiny$-\frac{414}{7}$};

        \filldraw [black] (1.75,2.75) circle (0.5pt)  node[greynode] {\tiny$0$};
        
        \filldraw [black] (2.25,0.25) circle (0.5pt)  node[whitenode] {\tiny$0$};

        \filldraw [black] (2.25,0.75) circle (0.5pt) node[whitenode] {\tiny$-84$};

        \filldraw [black] (2.25,1.25) circle (0.5pt) node[whitenode] {\tiny$28$};

        \filldraw [black] (2.25,1.75) circle (0.5pt) node[greynode] {\tiny$-36$};

        \filldraw [black] (2.25,2.25) circle (0.5pt) node[greynode] {\tiny$-\frac{414}{7}$};

        \filldraw [black] (2.25,2.75) circle (0.5pt) node[greynode] {\tiny$0$};
        
        \filldraw [black] (2.75,0.25) circle (0.5pt) node[whitenode] {\tiny$0$};

        \filldraw [black] (2.75,0.75) circle (0.5pt) node[whitenode] {\tiny$-82$};

        \filldraw [black] (2.75,1.25) circle (0.5pt) node[whitenode] {\tiny$42$};

        \filldraw [black] (2.75,1.75) circle (0.5pt) node[whitenode] {\tiny$-6$};

        \filldraw [black] (2.75,2.25) circle (0.5pt) node[greynode] {\tiny$-60$};

        \filldraw [black] (2.75,2.75) circle (0.5pt) node[greynode] {\tiny$0$};
        
        \filldraw [black] (3.25,0.25) circle (0.5pt) node[whitenode] {\tiny$0$};

        \filldraw [black] (3.25,0.75) circle (0.5pt) node[borderwhitenode] {\tiny$-82$};

        \filldraw [black] (3.25,1.25) circle (0.5pt) node[borderwhitenode] {\tiny$-\frac{528}{7}$};

        \filldraw [black] (3.25,1.75) circle (0.5pt) node[whitenode] {\tiny$72$};

        \filldraw [black] (3.25,2.25) circle (0.5pt) node[whitenode] {\tiny$-60$};

        \filldraw [black] (3.25,2.75) circle (0.5pt) node[greynode] {\tiny$0$};

        \filldraw [black] (3.75,0.25) circle (0.5pt) node[whitenode] {\tiny$0$};

        \filldraw [black] (3.75,.75) circle (0.5pt) node[whitenode] {\tiny$-62$};

        \filldraw [black] (3.75,1.25) circle (0.5pt) node[whitenode] {\tiny$\frac{72}{7}$};

        \filldraw [black] (3.75,1.75) circle (0.5pt) node[borderwhitenode] {\tiny$-\frac{312}{7}$};

        \filldraw [black] (3.75,2.25) circle (0.5pt) node[borderwhitenode] {\tiny$12$};

        \filldraw [black] (3.75,2.75) circle (0.5pt) node[whitenode] {\tiny$0$};

        \end{tikzpicture}
        \caption{The same spherical Heronian frieze as in Figure~\ref{fig: sph heron frieze example}, this time illustrating the glide symmetry. To~see it, flip the gray shaded region upside down and paste it to the right of its original location.
        The blue and magenta highlighted entries form two separate traversing paths.  Thanks to the glide symmetry, their entries form identical sets.  Both paths correspond to the triangulation shown in Figure~\ref{fig: poly triangulated}.}
        \label{fig: sph heron glide symm}
    \end{figure}

We will next show that, like in the Euclidean case, a small amount of data determines the entire spherical Heronian frieze through a propagation recurrence. This propagation requires careful selection of initial data. One suitable choice is the set of entries along a \emph{traversing path}.  The following statements mirror those in~\cite{fomin_heronian_2021}.

\begin{definition}\label{def: traversing path heronian}
    A \emph{traversing path} in a spherical Heronian frieze of order~$n$ is an ordered collection
    \[
    \pi=((i_1,j_1),\dots,(i_{2n-3},j_{2n-3}),l_1,\dots,l_{n-2})
    \]
    of~$3n-5$ indices in~$I_n$ that can be viewed as stepping stones on a shortest path from the lower to the upper boundary of the frieze (see Figure~\ref{fig: sph heron glide symm}).  We require that the first point is of the form~$(i_1,j_1)=(k,k+1)$ for some~$k\in\Z$ (a point almost on the bottom boundary); similarly~$(i_{2n-3},j_{2n-3})=(m,m+n-1)$ for some~$m\in\Z$ (just below the top boundary).
 In particular, all~$(i_k,j_k)$ are  interior nodes. The~$l_1,\dots,l_{n-2}$ are the dashed lines that intersect this shortest path. 
\end{definition}

\begin{remark}\label{rem: 3.9 analogue}
    If a spherical Heronian frieze~$\zfrieze(P)$ arises from a spherical polygon~$P$, then the path~$\pi$ corresponds to a triangulation of~$P$. To~see this, start at~$(i_1,j_1)$ with value~$(k,k+1)$ at which we have~$z_{(k,k+1)}=x_{\lr{k}\lr{k+1}}$. 
    
    Imagine two ants sitting at the boundary of polygon~$P$: ant Alice at vertex~$A_k$ and ant Bob at vertex~$A_{k+1}$. As we move up along~$\pi$ to~$(i_2,j_2)$, we get to~$(k,k+1+\frac{1}{2})$ (for example) with~$z_{(k,k+1+\frac{1}{2})}$ corresponding to~$\skap_{\lr{k}\lr{k+1}\lr{k+2}}$ and we include the associated triangle~$A_kA_{k+1}A_{k+2}$ in our triangulation.  Then we move to~$z_{(k,k+2)}$.  Bob moves to vertex~$A_{k+2}$ of~$P$.  
    
    Proceed similarly.  As we continue on the path~$\pi$, Alice moves to the vertex corresponding to the first coordinate, and Bob to the vertex corresponding to the second coordinate.  Note that since only one number changes at each step in an alternating fashion, the triangles form a zigzagging triangulation of~$P$ and the only triangles we include have one side along the boundary of~$P$.

    As a side note, the lines~$l_1,\dots,l_{n-2}$ are not needed to pass from a traversing path to a triangulation of~$P$.  They will be needed in the algebraic result (Corollary~\ref{cor: 3.10 analogue}) below.

    Since the traversing path gives  a triangulation of the polygon, by Proposition~\ref{prop: cor 2.9 analogue} we can recover all the measurements of the polygon, hence the entire frieze.

\end{remark}
\begin{figure}[htbp!]
    \centering
\begin{tikzpicture}
[scale=1.6, tnode/.style={blue},regnode/.style={black}]
\node[inner sep=1pt,label=below:{$A_1=(7,0,0)$},draw,circle,fill=black] (a1) at (0.2,-.4) {};
    \node[inner sep=1pt,label=below right:{$A_2=(2,3,6)$},draw,circle,fill=black] (a2) at (1.2,2.2) {};
    \node[inner sep=1pt,label=below right:{$A_3=(3,6,-2)$},draw,circle,fill=black] (a3) at (2.4,-.9) {};
    \node[inner sep=1pt,label=left:{$A_4=(6,-2,3)$},draw,circle,fill=black] (a4) at (-.6,.75) {};
    \node[inner sep=1pt,label=left:{$A_5=(-2,-3,6)$},draw,circle,fill=black] (a5) at (-1.3,2.5) {};
    \node[inner sep=1pt,label=above:{$A_6=(-3,2,6)$},draw,circle,fill=black] (a6) at (.6,2.6) {};
   
    \draw (a1) -- (a2) -- (a3) -- (a4) -- (a5) -- (a6) -- (a1);

\draw[dotted] (a2) -- node[tnode,above right, inner sep=2pt] {\footnotesize${x_{26}=26}$} (a6);

\draw[dotted] (a2) -- node[tnode,below left] {\footnotesize${x_{25}=52}$} (a5);

\draw[ dotted] (a2) -- node[tnode, left, inner sep=5pt] {\footnotesize${x_{42}=50}$} (a4);

\draw[dotted] (a3) -- node[tnode, above right ] {\footnotesize${x_{43}=98}$} (a4);

\draw (0.7,1.2) to [out=10,in=160] (2.2,1.2) node[right,blue] {$\skap_{612}=12$};

\draw (-0.2,2.5) to [out=100,in=350] (-1,3.2) node[left,blue] {$\skap_{562}=-\frac{312}{7}$};

\draw (-0.8,1.8) to [out=170,in=10] (-1.8,1.8) node[left,blue] {$\skap_{452}=-\frac{528}{7}$};

\draw (1.4,0.7) to [out=0,in=200] (2.5,0.5) node[right,blue] {$\skap_{423}=-82$};

    \end{tikzpicture}

    \caption{Frieze entries for the hexagon~$A_1A_2A_3A_4A_5A_6$ from Figure~\ref{fig: poly example}. These frieze entries lie on the traversing path shown in Figure~\ref{fig: sph heron glide symm}.}
    \label{fig: poly triangulated}
\end{figure}
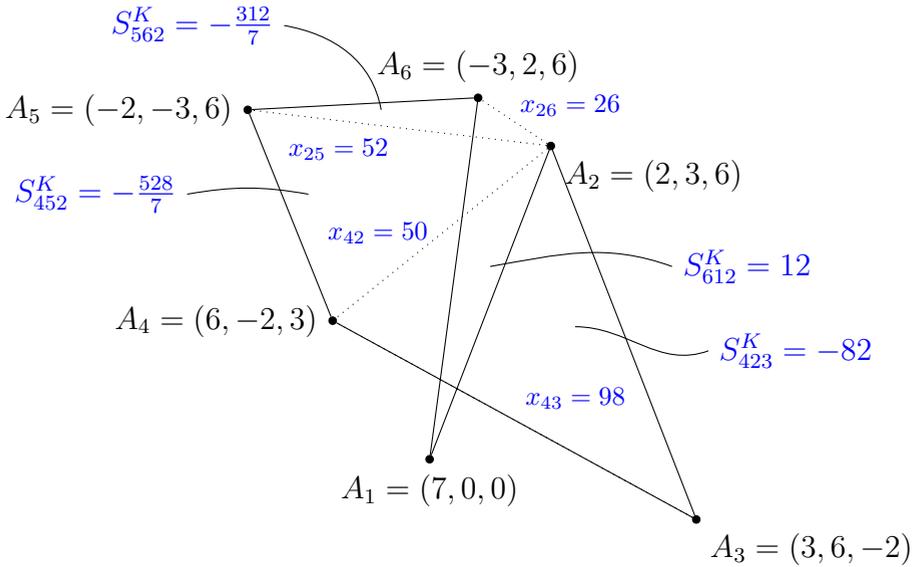

\begin{corollary}\label{cor: 3.10 analogue} Let~$(z_\alpha)_{\alpha\in I_n}$ be a spherical Heronian frieze of order~$n$ such that
\begin{equation}\label{eq: non-vanishing condition}
    z_{(i,j)}\not\in \left\{0, \mytfrac{4}{\kap}\right\} \text{ for any } (i,j)\in\Z^2 \text{ with } 2\le j-i \le n-2\, .
\end{equation}
  Then starting from a single traversing path~$\pi$, we can recover the entire frieze.  
\end{corollary}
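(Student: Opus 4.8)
The plan is to reconstruct $\zfrieze$ by propagating outward from $\pi$, completing one spherical Heronian diamond at a time. Recall (Definition~\ref{def: traversing path heronian}) that $\pi$ is a shortest staircase crossing the frieze strip from its lower to its upper boundary: it passes through $2n-3$ interior nodes --- alternately integer nodes $(i,j)$ and half-integer nodes --- and carries the $n-2$ dashed lines that cut across it. Thus $\pi$ separates the index set $I_n$ into two halves, and the aim is to show that the entries on $\pi$ determine all entries in each half.

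The engine of the propagation is Proposition~\ref{prop: analogue of 2.14}: in a spherical Heronian diamond $(a,b,c,d,e,f,p,q,r,s)$ the seven entries $(a,b,c,d,e,p,q)$ --- geometrically the three corners $a,c,e$ spanning one side of the diamond, the two edge-midpoints $p,q$ on that side, and both dashed lines $b,d$ --- determine the remaining three $(f,r,s)$ uniquely, provided the pivot corner $e=z_{(i,j)}$ avoids $\{0,\frac{4}{\kap}\}$. First I would show that, walking outward from $\pi$, at each stage there is a diamond $D$ just past the already-known region whose five corner/midpoint inputs are known (three consecutive path entries and the two intervening midpoints, or entries completed at an earlier stage) and whose two lines are known (they are among $l_1,\dots,l_{n-2}$, or already appeared as the $b$- or $d$-entry of a diamond completed earlier); then applying Proposition~\ref{prop: analogue of 2.14} produces the three new entries $f,r,s$ of $D$. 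The pivot $z_{(i,j)}$ for such a diamond is an interior node with $2\le j-i\le n-2$, so hypothesis~\eqref{eq: non-vanishing condition} guarantees $z_{(i,j)}\notin\{0,\frac{4}{\kap}\}$ and the proposition applies. Iterating fills one entire half of the frieze; the other half is filled symmetrically using Corollary~\ref{cor: 2.16 analogue} (the ``$f,r,s$ determine $e,p,q$'' direction, valid because the relevant pivot corner again lies at a node with $2\le j-i\le n-2$).

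The diamonds in the top and bottom layers of the strip (nodes with $j-i\in\{1,n-1\}$) need separate handling: there a corner and its two adjacent midpoints are forced to be $0$ by the boundary conditions~\eqref{eq: boundary condition} together with the proposition immediately preceding this corollary, so $D$ is a degenerate diamond and Proposition~\ref{prop: analogue of 2.14} is replaced by Corollary~\ref{cor: 2.19 analogue}, which requires no non-vanishing hypothesis at such a node. With the boundary layers handled this way, the propagation reaches every node, edge-midpoint, and line of the frieze, so all entries $z_\alpha$, $\alpha\in I_n$, are determined by the entries on $\pi$.

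I expect the main work to be the combinatorial bookkeeping inside ``there is a diamond $D$ just past the known region with all its inputs known''. One must fix the order in which the diamonds are completed so that at every step all five corner/midpoint inputs and both lines of the next diamond are already available --- this is precisely the pattern of the Conway--Coxeter correspondence between triangulations of a polygon and frieze patterns (cf.~\cite{conway_triangulated_1973}), which is also why $\pi$ must be a \emph{traversing} path (corresponding to a triangulation in which every triangle has a polygon side): an arbitrary zigzag would not let the recursion start. The combinatorics here is identical to the Euclidean case of~\cite{fomin_heronian_2021}; only the diamond-completion rule has changed. The one genuinely new point to verify is that every pivot used in the generic step sits at a node with $2\le j-i\le n-2$, so that~\eqref{eq: non-vanishing condition} --- which is calibrated to exactly this set of pivots --- does its job.
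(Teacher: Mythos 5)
Your proposal is correct and follows essentially the same route as the paper: propagate outward from $\pi$ one diamond at a time, using the diamond-completion results (Proposition~\ref{prop: analogue of 2.14}/Corollary~\ref{cor: 2.16 analogue} for interior diamonds, Corollary~\ref{cor: 2.19 analogue} for the degenerate diamonds forced by the boundary conditions), with hypothesis~\eqref{eq: non-vanishing condition} supplying exactly the non-vanishing needed at the pivot nodes and the fact that all diamonds of the given frieze are already spherical Heronian guaranteeing that the uniquely determined values agree with $\zfrieze$. The paper's proof is just a terser version of this, leaving the propagation order and boundary bookkeeping implicit.
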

\begin{proof}
    Starting with the entries adjacent to~$\pi$, apply Corollary~\ref{cor: 2.16 analogue} and Corollary~\ref{cor: 2.19 analogue} filling in all remaining values uniquely.  Recall that we need to know that all the diamonds in the frieze are spherical Heronian in order to apply these corollaries.
\end{proof}

While Corollary~\ref{cor: 3.10 analogue} is a purely algebraic result, it has a geometric underpinning:

\begin{corollary}\label{cor: 3.12 analogue} 
    Let~$\zfrieze$ be a spherical Heronian frieze of order~$n$ satisfying condition~\eqref{eq: non-vanishing condition}. Then there exists a unique~$n$-gon~$P$ such that~$\zfrieze=\zfrieze(P)$.
\end{corollary}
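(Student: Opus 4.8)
The plan is to bootstrap from the two results already established: Proposition~\ref{prop: cor 2.9 analogue}, which realizes Heron-compatible triangulation data by a spherical polygon (unique up to $\Aut(\bfS)$), and Corollary~\ref{cor: 3.10 analogue}, which propagates a spherical Heronian frieze outward from a single traversing path. The idea is to slice off a traversing path, reconstruct a polygon from the data sitting on it, build the Heronian frieze of that polygon, and then show it must be the frieze we started with because the two agree along the path.

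First I would fix a traversing path $\pi$ in $\zfrieze$ and let $\zpi$ be the tuple of entries of $\zfrieze$ indexed by $\pi$. By Remark~\ref{rem: 3.9 analogue}, $\pi$ encodes a triangulation $G$ of an abstract $n$-gon, and $\zpi$ furnishes a labeling $\xS_G$ of the sides, diagonals, and triangles of $G$: the $2n-3$ distance entries coming from the $n-1$ integer nodes of $\pi$ together with the lines $l_1,\dots,l_{n-2}$, and the $n-2$ triangle entries from the half-integer nodes. Since every full diamond of $\zfrieze$ is a spherical Heronian diamond, the Heron equation~\eqref{eq: heroneq} holds for each triangle of $G$. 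The $n-3$ diagonals of $G$ occupy interior nodes $(i,j)$ with $2\le j-i\le n-2$, so condition~\eqref{eq: non-vanishing condition} forces their labels into $\C-\{0,\tfrac{4}{\kap}\}$ --- exactly the hypothesis Proposition~\ref{prop: cor 2.9 analogue} needs. That proposition then produces a spherical $n$-gon $P$, unique up to $\Aut(\bfS)$, with $\xS_G(P)=\xS_G$.

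Next I would form the frieze $\zfrieze(P)$ of Definition~\ref{def: polygon to sph her frieze}; it is a spherical Heronian frieze of order $n$, and by the construction of $P$ its entries along $\pi$ coincide with $\zpi$. To finish I would show $\zfrieze=\zfrieze(P)$ by running the propagation underlying Corollary~\ref{cor: 3.10 analogue}: beginning with the entries adjacent to $\pi$ and moving outward in both directions, each new entry of $\zfrieze$ is produced from already-filled entries by Corollary~\ref{cor: 2.16 analogue} in the bulk --- where the pivot entry has $2\le j-i\le n-2$ and hence is nonzero and $\ne\tfrac{4}{\kap}$ by~\eqref{eq: non-vanishing condition} --- or by Corollary~\ref{cor: 2.19 analogue} near the top and bottom of the strip, where the neighboring diamonds degenerate (Lemmas~\ref{lem: degenerate 1}--\ref{lem: degenerate 2}) and no non-vanishing hypothesis is required. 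Running the same steps on $\zfrieze(P)$, whose diamonds are also spherical Heronian, and inducting on the order in which nodes get filled --- at each step the inputs have already been matched and the output is uniquely determined --- gives $\zfrieze=\zfrieze(P)$. Uniqueness of $P$ (up to $\Aut(\bfS)$) is then automatic: if $\zfrieze(P')=\zfrieze$ as well, then $\xS_G(P')=\zpi=\xS_G(P)$, so $P'$ and $P$ lie in one $\Aut(\bfS)$-orbit by the uniqueness clause of Proposition~\ref{prop: cor 2.9 analogue}.

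The step I expect to take the most care is the propagation argument of the last paragraph, specifically the claim that the process only ever pivots on entries guaranteed nonzero (and $\ne\tfrac{4}{\kap}$) by~\eqref{eq: non-vanishing condition}, or else falls squarely into the degenerate boundary configurations handled by Corollary~\ref{cor: 2.19 analogue}; in particular, one must check that no step secretly demands a polygon side length $x_{\lr{i}\lr{i+1}}$ to be nonzero. This amounts to verifying that the zigzag triangulation attached to $\pi$ has every diagonal in the admissible range $2\le j-i\le n-2$, and that the only degenerate diamonds one meets along the top and bottom rows are those of Lemmas~\ref{lem: degenerate 1} and~\ref{lem: degenerate 2} --- bookkeeping that is routine but is where the argument could go wrong; everything else is a direct assembly of the cited statements.
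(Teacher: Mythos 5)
Your proposal follows essentially the same route as the paper: select a traversing path, apply Proposition~\ref{prop: cor 2.9 analogue} to the data along it to obtain a polygon $P$ (unique up to $\Aut(\bfS)$) whose frieze agrees with $\zfrieze$ on the path, and then invoke the unique outward propagation of Corollary~\ref{cor: 3.10 analogue} to conclude $\zfrieze=\zfrieze(P)$, with uniqueness of $P$ coming from the uniqueness clause of Proposition~\ref{prop: cor 2.9 analogue}. The extra bookkeeping you flag (which pivots are covered by~\eqref{eq: non-vanishing condition} versus the degenerate boundary lemmas) is exactly what the paper delegates to Corollary~\ref{cor: 3.10 analogue}, so there is no substantive difference.
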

\begin{proof}
    Select a traversing path~$\pi$.  The data of~$\pi$ satisfies the requirements to apply Corollary~\ref{prop: cor 2.9 analogue} and construct a unique triangulated spherical polygon~$P$ whose spherical Heronian frieze~$\zfrieze(P)$ agrees with~$\zfrieze$ along~$\pi$.  By Corollary~\ref{cor: 3.10 analogue}, the rest of~$\zfrieze(P)$ frieze is determined uniquely by~$\pi$ and so must agree with~$\zfrieze$.
\end{proof}

\begin{proposition}\label{prop: analogue of 4.10 first part}
    Let $\pi$ be a traversing path.  Denote by~$\zfrieze_-$ a collection of complex numbers assigned to the indices in~$\pi$ which satisfy the appropriate spherical Heron equations. If the values in~$\zfrieze_-$ at the integer nodes of~$\pi$ lie in~${\C-\{0,\frac{4}{\kap}\}}$ then there exists a unique spherical Heronian frieze that agrees with~$\pi$.
\end{proposition}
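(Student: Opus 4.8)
The strategy mirrors the Euclidean case of \cite{fomin_heronian_2021}: produce one frieze extending $\zfrieze_-$ by routing through a spherical polygon, then force uniqueness by propagating outward from $\pi$. First I would unpack the combinatorics of $\pi$ as in Remark~\ref{rem: 3.9 analogue}: the $n-1$ integer nodes of $\pi$ record the squared lengths of the $n-3$ diagonals of the associated triangulated $n$-cycle $G$ together with the two boundary sides at the endpoints of $\pi$; the $n-2$ half-integer nodes record the $\skap$-labels of the $n-2$ triangles of $G$; and the $n-2$ dashed lines $l_1,\dots,l_{n-2}$ record the squared lengths of the remaining $n-2$ sides. The ``appropriate spherical Heron equations'' imposed on $\zfrieze_-$ are exactly the relations~\eqref{eq: heroneq} attached to the triangles of $G$, so $\zfrieze_-$ supplies a tuple $\xS_G$ of the shape appearing in Proposition~\ref{prop: cor 2.9 analogue}. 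The inductive construction in that proposition goes through under our hypothesis: it peels an ear at each step and re-glues it along a \emph{diagonal} of $G$ via Lemma~\ref{lem: 2.4analogue}, and every diagonal of $G$ is an interior integer node of $\pi$, where the value avoids $\{0,\frac4\kap\}$; the side-lengths (at the dashed lines and at the two endpoints of $\pi$) only ever serve as the seed edge in a single application of Lemma~\ref{lem: 2.3 analogue}, for which it suffices that one of the three edge-lengths be nonzero and distinct from $\frac4\kap$. We thus obtain a spherical $n$-gon $P$ with $\xS_G(P)=\xS_G$, and the frieze $\zfrieze(P)$ of Definition~\ref{def: polygon to sph her frieze}---whose diamonds are spherical Heronian by Proposition~\ref{prop: 3.1 analogue}---restricts on $\pi$ to $\zfrieze_-$. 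This settles existence.

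For uniqueness, let $\zfrieze$ be any spherical Heronian frieze agreeing with $\zfrieze_-$ on $\pi$; I would show $\zfrieze=\zfrieze(P)$ by the outward propagation used to prove Corollary~\ref{cor: 3.10 analogue}. Starting from $\pi$ one fills in the frieze one diamond at a time: near the top and bottom of the strip, where~\eqref{eq: boundary condition} forces runs of zeros, the diamonds abutting $\pi$ are completed by the degenerate rules of Lemmas~\ref{lem: degenerate 1}--\ref{lem: degenerate 2} (equivalently Corollary~\ref{cor: 2.19 analogue}); in the interior each new diamond is completed from seven of its ten entries by Corollary~\ref{cor: 2.16 analogue}, i.e.\ by the explicit formulas~\eqref{eq: eq for f}--\eqref{eq: eq for s}. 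Every step is forced, so $\zfrieze$ must agree with $\zfrieze(P)$ everywhere, provided only that each division performed is legal---that the entry playing the role of $e$ (or $f$) in each use of~\eqref{eq: eq for f}--\eqref{eq: eq for s} never lies in $\{0,\frac4\kap\}$. The Conway--Coxeter pattern that governs the propagation from a traversing path guarantees that the only entries ever used as such pivots are the interior integer-node values of $\pi$, which the hypothesis keeps away from $\{0,\frac4\kap\}$.

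The hard part is this last point: exhibiting a filling order for the diamonds of the strip, beginning from $\pi$, such that at every stage (i) the diamond to be completed already shares with the known region a corner--midpoint--corner edge, the two dashed lines through it, and the opposite midpoint---exactly the data needed to invoke Corollary~\ref{cor: 2.16 analogue} or Corollary~\ref{cor: 2.19 analogue}---and (ii) the pivot entry is one of the integer-node values of $\pi$. The combinatorial scheme is the one in \cite{fomin_heronian_2021}; the genuinely new content is only to check that the defining relations of a spherical Heronian diamond (Proposition~\ref{prop: analogue of 2.14} and Corollaries~\ref{cor: 2.16 analogue},~\ref{cor: 2.19 analogue}) still license each such move, which they do. The staircase shape of $\pi$ and the presence of two separate boundary conditions at the top and bottom of the strip make the bookkeeping somewhat intricate, but require no ideas beyond those of the Euclidean case.
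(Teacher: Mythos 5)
Your existence argument coincides with the paper's: as in Remark~\ref{rem: 3.9 analogue} you read off from $\pi$ a thin triangulation $G$ and a tuple $\xS_G$, build a spherical $n$-gon $P$ with $\xS_G(P)=\xS_G$ via Proposition~\ref{prop: cor 2.9 analogue}, and take $\zfrieze(P)$; your observation that only the shared (diagonal) edges, i.e.\ the integer nodes of $\pi$, need to avoid $\{0,\frac{4}{\kap}\}$ in that inductive construction is a reasonable sharpening of what the paper quotes.

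The gap is in the uniqueness half, at exactly the point you call the hard part. The claim that the only entries ever used as pivots are the interior integer-node values of $\pi$ is false. An entry at an interior node lying two or more diamonds to the east of $\pi$ can only be forced through the unique diamond in which it is the east corner, and the pivot of that step (the west corner, the $e$ in \eqref{eq: eq for f}--\eqref{eq: eq for s}) sits one column closer to $\pi$ and is itself a previously \emph{computed} entry, not an entry of $\zpi$; symmetrically for westward propagation with $f$ as pivot (Corollary~\ref{cor: 2.16 analogue}). So as the propagation marches away from $\pi$, essentially every interior integer node of the strip eventually serves as a pivot --- which is precisely why Corollary~\ref{cor: 3.10 analogue} assumes the non-vanishing condition \eqref{eq: non-vanishing condition} at \emph{all} interior nodes, and why the paper's proof of this proposition does not attempt your claim but simply invokes Corollary~\ref{cor: 3.10 analogue}, uniqueness being understood in the sufficiently generic setting (cf.\ the introduction). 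Nor can you appeal to the Laurent phenomenon (Proposition~\ref{prop: analogue of cor 4.10}), which does assert that the closed-form expressions have denominators built only from $\pi$-entries: it is proved later by a different induction (trimming and flips) whose starting point is this very proposition, and in any case it concerns the final rational expressions, not the legality of each individual propagation step. To repair your write-up, either carry the genericity hypothesis of Corollary~\ref{cor: 3.10 analogue} through the uniqueness argument, as the paper effectively does, or supply a genuinely new argument controlling the intermediate entries.
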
 

\begin{proof}
 Using the ideas in Remark~\ref{rem: 3.9 analogue}, construct a triangulated cycle~$G$ matching~$\pi$.  By Proposition~\ref{prop: cor 2.9 analogue} there exists a unique~$n$-gon~$P$ such that the measurements along~$\pi$ are~$\xS_G(P)$. Then construct the frieze~$\zfrieze(P)$.  It will agree with~$\pi$ and by Corollary~\ref{cor: 3.10 analogue} above, the frieze is unique.
\end{proof}

\begin{remark}
Proposition~\ref{prop: analogue of 4.10 first part} provides formulas for solving the original question in spherical distance geometry, namely the propagation formulas (equations~\eqref{eq: eq for f}-\eqref{eq: eq for s}) applied iteratively.  We will say more about these formulas in Section~\ref{sec: denominators}.
\end{remark}

The geometric connection in Corollary~\ref{cor: 3.12 analogue} and Remark~\ref{rem: glide symmetry} combine to give us this algebraic conclusion:

\begin{theorem}\label{thm: glide symm}
    Let~$\zfrieze$ be a spherical Heronian frieze satisfying condition~\eqref{eq: non-vanishing condition}. Then~$\zfrieze$ is periodic and possesses glide symmetry.
\end{theorem}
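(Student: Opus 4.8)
The plan is to reduce the statement entirely to the geometric realization of spherical Heronian friezes that has already been established. First I would invoke Corollary~\ref{cor: 3.12 analogue}: since $\zfrieze$ satisfies the non-vanishing condition~\eqref{eq: non-vanishing condition}, there exists a (unique) spherical $n$-gon $P=A_1\cdots A_n$ on $\bfS$ with $\zfrieze=\zfrieze(P)$. Hence it suffices to verify the two assertions---$n$-periodicity and glide symmetry---for an arbitrary frieze of the form $\zfrieze(P)$, which reduces everything to the combinatorics of the labeling map $\alpha\mapsto z_\alpha$ of Definition~\ref{def: polygon to sph her frieze}.

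For periodicity, I would read off from Definition~\ref{def: polygon to sph her frieze} that every entry $z_\alpha$ equals a measurement of $P$ (one of $x_{\lr{i}\lr{j}}$, $S^\kap_{\lr{i}\lr{i+1}\lr{j}}$, $S^\kap_{\lr{i}\lr{j}\lr{j+1}}$, or a dashed-line value) depending only on the residues modulo $n$ of the indices. Shifting all indices by $n$ leaves these residues unchanged, so $z_{(k,\ell)}=z_{(k+n,\ell+n)}$, and likewise for the half-integer nodes and for the lines $(i,\neline)$ and $(\seline,j)$; these are exactly the periodicity identities recorded in Remark~\ref{rem: glide symmetry}.

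For the glide symmetry, I would again use Definition~\ref{def: polygon to sph her frieze} together with the two elementary symmetries of the basic measurements: $x_{ij}=x_{ji}$ (Definition~\ref{def: squared distance}) and the invariance of $S^\kap$ under cyclic permutation of its three arguments (Definition~\ref{def: skappa def}). Concretely, $z_{(i,j)}=x_{\lr{i}\lr{j}}=x_{\lr{j}\lr{i}}=z_{(j,i+n)}$, and the same bookkeeping yields $z_{(i+\frac12,j)}=S^\kap_{\lr{i}\lr{i+1}\lr{j}}=S^\kap_{\lr{j}\lr{i}\lr{i+1}}=z_{(j,i+\frac12+n)}$ and $z_{(i,j+\frac12)}=S^\kap_{\lr{i}\lr{j}\lr{j+1}}=S^\kap_{\lr{j}\lr{j+1}\lr{i}}=z_{(j+\frac12,i+n)}$. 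These are precisely the reflection-plus-translation relations depicted in Figure~\ref{fig: sph heron glide symm}, so $\zfrieze=\zfrieze(P)$ possesses glide symmetry.

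I do not expect a genuine obstacle here: the entire difficulty has been front-loaded into Corollary~\ref{cor: 3.12 analogue}, whose proof rests on the uniqueness half of Proposition~\ref{prop: cor 2.9 analogue} together with the propagation result Corollary~\ref{cor: 3.10 analogue}, and it is exactly in that step that the non-vanishing hypothesis~\eqref{eq: non-vanishing condition} is used (to guarantee that the denominators in the propagation formulas~\eqref{eq: eq for f}--\eqref{eq: eq for s} never vanish). Once a realizing polygon $P$ is in hand, periodicity and glide symmetry follow formally from the definition of $\zfrieze(P)$, so the proof amounts to stringing together these two citations and spelling out the label computations above.
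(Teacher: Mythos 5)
Your proposal is correct and follows essentially the same route as the paper: the paper's proof also consists of invoking Corollary~\ref{cor: 3.12 analogue} to realize $\zfrieze$ as $\zfrieze(P)$ for a spherical polygon $P$, and then appealing to the measurement symmetries recorded in Remark~\ref{rem: glide symmetry} (which are exactly the identities $x_{\lr{i}\lr{j}}=x_{\lr{j}\lr{i}}$ and the cyclic invariance of $\skap$ that you spell out). Your explicit label bookkeeping is just an unpacking of that remark, so there is nothing further to add.
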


\newpage

\section{Rational expressions for spherical Heronian friezes}\label{sec: denominators}

To~propagate outwards from an initial data set in a frieze we apply the propagation formulas iteratively. We will next show that the resulting expressions for the entries of the frieze are structurally simpler than one might expect: they satisfy a form of Laurentness, with predictable denominators. 

Let~$\zpi$ be the initial data associated with a traversing path~$\pi$ in a spherical Heronian frieze~$\zfrieze$. 
    Define
    \begin{equation*}
        D(\pi)=\Set{
        x_{ij}\left(1-\mytfrac{\kap x_{ij}}{4}\right) | (i,j) \text{ an integer node of }\pi \text{ with } 2\le i-j\le n-2
        }.
    \end{equation*}

\begin{proposition}\label{prop: analogue of cor 4.10}
    If all elements of~$D(\pi)$ are nonzero, then each entry of~$\zfrieze$ can be written as a rational function in terms of~$\zpi$ with denominator a monomial in the elements of~$D(\pi)$.
\end{proposition}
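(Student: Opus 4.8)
The plan is to induct on the order $n$ of the frieze. For the base case $n=4$, the frieze is tiled by copies of a single spherical Heronian diamond: the data $\zpi$ consists of seven entries $a,b,c,d,e,p,q$ attached to one triangle pair, $D(\pi)$ has the single element $x_{e}(1-\tfrac{\kap x_{e}}{4})$ coming from the lone diagonal $e$, and the three remaining entries $f,r,s$ are given by the propagation formulas \eqref{eq: eq for f}--\eqref{eq: eq for s}, whose denominators $4e(1-\tfrac{\kap e}{4})$ and $2e(1-\tfrac{\kap e}{4})$ are scalar multiples of that one generator. So the base case is exactly Proposition~\ref{prop: analogue of 2.14}.

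For the inductive step I would first pass to the geometric picture. The entries of $\zfrieze$ are rational functions of $\zpi$ (they are produced from $\zpi$ by the propagation recurrence, cf.\ Corollary~\ref{cor: 3.10 analogue}), so it suffices to pin down the shape of their denominators on the Zariski-dense locus of friezes $\zfrieze=\zfrieze(P)$ arising from spherical polygons $P=A_1\cdots A_n$ (Corollary~\ref{cor: 3.12 analogue}). There $\pi$ corresponds to a zigzag triangulation $T$ of $P$ (Remark~\ref{rem: 3.9 analogue}), $\zpi$ is the set of measurements $x_{ij}$, $\skap_{ijk}$ attached to the edges and triangles of $T$, and $D(\pi)=\{x_{e}(1-\tfrac{\kap x_{e}}{4})\mid e\text{ a diagonal of }T\}$. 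Since $n\ge 5$, $T$ has an ear; write it $A_{k-1}A_kA_{k+1}$ with diagonal $\delta=[A_{k-1}A_{k+1}]\in T$. Deleting $A_k$ produces the $(n-1)$-gon $P'=A_1\cdots A_{k-1}A_{k+1}\cdots A_n$ with induced triangulation $T'=T\setminus\{A_{k-1}A_kA_{k+1}\}$, again zigzag (remove a leaf of the dual path), hence corresponding to a traversing path $\pi'$. Every edge and triangle of $T'$ is one of $T$, so the data along $\pi'$ is a subtuple of $\zpi$, and every diagonal of $T'$ is a diagonal of $T$ (only $\delta$ is lost, having become a side of $P'$), so $D(\pi')\subseteq D(\pi)$. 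By the inductive hypothesis every measurement of $P'$ --- every $x_{ij}$ with $i,j\ne k$, and every captured triangle measurement of $\zfrieze(P')$ --- is a rational function of $\zpi$ with denominator a monomial in $D(\pi)$.

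It remains to handle the entries of $\zfrieze(P)$ that involve $A_k$. For each $\ell$ not adjacent to $k$, apply the propagation formulas \eqref{eq: eq for f}--\eqref{eq: eq for s} to the quadrilateral $A_{k-1}A_kA_{k+1}A_\ell$ with \emph{pivot} the diagonal $\delta$: its sides $x_{k-1,k},x_{k,k+1}$ lie in $\zpi$, its sides $x_{k-1,\ell},x_{k+1,\ell}$ are measurements of $P'$, the two triangle values adjacent to $\delta$ are $\skap_{k-1,k,k+1}$ (a triangle of $T$, so in $\zpi$) and $\skap_{k-1,k+1,\ell}$ (a captured triangle of $P'$, since $\delta$ is a side of $P'$), and $x_\delta\in\zpi$; substituting these into \eqref{eq: eq for f}--\eqref{eq: eq for s} writes $x_{k,\ell}$, $\skap_{k-1,k,\ell}$, $\skap_{k,k+1,\ell}$ as rational functions whose denominators acquire, beyond the $D(\pi)$-monomials already carried by the substituted inputs, only the single new factor $x_\delta(1-\tfrac{\kap x_\delta}{4})\in D(\pi)$. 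This yields all $x_{k\ell}$ and all captured triangle measurements of the form $\skap_{k-1,k,\ell}$ or $\skap_{k,k+1,\ell}$. The only remaining captured triangles through $A_k$ are those $\skap_{i,i+1,k}$ with $\{i,i+1\}$ a side of $P$ disjoint from $\{k-1,k,k+1\}$; for these I instead invoke the \emph{linear} identity \eqref{eq: pq4} of Proposition~\ref{prop: p+q identities} for the four points $A_k,A_i,A_{i+1},A_{k-1}$, which, after rearranging, expresses $\skap_{i,i+1,k}$ as a polynomial in $\skap_{k-1,k,i}$, $\skap_{k-1,k,i+1}$, $\skap_{i,i+1,k-1}$, $x_{i,k-1}$, $x_{i+1,k-1}$, $x_{k,k-1}$ --- all already shown to be rational in $\zpi$ with $D(\pi)$-monomial denominators --- with no new denominator at all.

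The main obstacle is this last point: a priori the naive propagation of Corollary~\ref{cor: 3.10 analogue} divides by $z(1-\tfrac{\kap z}{4})$ for entries $z$ off the path (already visible after one ``flip''), so the frieze entries could have spurious poles along $\{z=0\}$ and $\{z=\tfrac4\kap\}$; the content of the proposition is that these apparent poles cancel, and the ear-removal recipe above makes this concrete by supplying, for every entry through $A_k$, a computation whose only new division is by the path diagonal $x_\delta$. The remaining work is bookkeeping: checking case by case that the inputs invoked above are genuinely $\zpi$-entries or $P'$-measurements, handling the small orders $n=4,5$ directly, and observing that the ``exceptional'' triangles which would degenerate the auxiliary quadrilateral (those meeting $\{k-1,k,k+1\}$) are, up to the cyclic symmetry of $\skap$, already of type $\skap_{k-1,k,\ell}$ or $\skap_{k,k+1,\ell}$ and hence covered, while Corollary~\ref{cor: 2.19 analogue} disposes of the fully degenerate boundary diamonds.
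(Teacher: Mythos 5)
Your argument is correct, but it follows a genuinely different route from the paper. The paper proves two stronger auxiliary statements (Propositions~\ref{prop: analogue of props 4.8} and~\ref{prop: analogue of prop 4.9}): for an \emph{arbitrary} triangulation $G$ trimmed with respect to a diagonal or a triangle, the corresponding measurement is rational in $\xS_G(P)$ with denominator a monomial in $\xx_{D(G)}(P)$. These are proved by induction on the number of vertices using diagonal flips together with the trimming operation (Lemma~\ref{lem: analogue of lemma 4.7}), and the delicate case is a fan-triangulated hexagon with target $S^\kap_{246}$, handled by composing four flip identities coming from~\eqref{eq:def1sph p+qanalogue}; the proposition then follows by combining these with Proposition~\ref{prop: analogue of 4.10 first part}. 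You instead induct directly on $n$ by removing an ear $A_{k-1}A_kA_{k+1}$ of the thin triangulation attached to $\pi$, and recover every measurement through the deleted vertex either by one application of the propagation formulas~\eqref{eq: eq for f}--\eqref{eq: eq for s} across the ear diagonal $\delta$ (introducing only the factor $x_\delta(1-\tfrac{\kap x_\delta}{4})\in D(\pi)$) or, for the remaining captured triangles $\skap_{i,i+1,k}$, by the linear relation~\eqref{eq: pq4} of Proposition~\ref{prop: p+q identities} applied to $A_k,A_i,A_{i+1},A_{k-1}$, which I checked does express the target in terms of quantities already controlled, with no new denominator. Your route is shorter and sidesteps the trimming/flip machinery and the hexagon case, at the cost of being tailored to the thin triangulations arising from traversing paths (which is all the proposition needs), whereas the paper's propositions give denominator control relative to arbitrary trimmed triangulations and so are more reusable. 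Two small points of hygiene: your reduction to the geometric setting should be phrased, as in the paper, via Proposition~\ref{prop: cor 2.9 analogue} and Proposition~\ref{prop: analogue of 4.10 first part} (construct the polygon realizing $\zpi$ and work with $\zfrieze(P)$) rather than an informal density statement, and in the base case the order-$4$ frieze is not literally tiled by one diamond --- rather, every entry of $\zfrieze(P)$ is one of the ten measurements of the quadrilateral (cyclic invariance of $\skap$ ensures no sign discrepancies), so Proposition~\ref{prop: analogue of 2.14} indeed suffices.
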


The rest of this section is devoted to the proof of Proposition~\ref{prop: analogue of cor 4.10}.

\begin{remark}
    Let~$\zfrieze(P)$ be a spherical Heronian frieze arising from a spherical polygon~$P$. 
    The integer nodes~$(i,j)$ of~$\pi$ with~$2\le i-j\le n-2$ index the squared distances of the diagonals of~$P$.
    An element~$x_{ij}\left(1-\frac{\kap x_{ij}}{4}\right)$ vanishes precisely when the vertices~$A_i$,~$A_j$ of~$P$ are either coincident or antipodal (cf.~Remark~\ref{rem: why conditions}).
\end{remark}

We will need additional terminology, some of which also appeared in~\cite{fomin_heronian_2021}. 

\begin{definition}
Let~$n\ge 4$. Number the vertices of an~$n$-cycle in order~$1,2,\dots$. Consider two distinct diagonals~$\{i,j\}$ and~$\{k,l\}$, with~$i<j$ and~$k<l$. We say that these diagonals \emph{cross} if~$i<k<j<l$ or~$k<i<l<j$.   We say that a diagonal~$\{i,j\}$ crosses a triangle~$(k,l,m)$ if it crosses at least one diagonal side of the triangle.
\end{definition}

\begin{definition}
Fix a triangulation~$G$ of an~$n$-cycle.  For an edge~$\{i,j\}$ or a triple~$(i,j,k)$, let~$\gamma(G,i,j)$ (resp.~$\gamma(G,i,j,k)$) denote the \emph{trimming} of~$G$ with respect to~$\{i,j\}$ (resp.~$(i,j,k)$), defined as the induced subgraph of~$G$ with vertex set
\[
\left\{i,j\right\}\bigcup \Set{l,k | \text{diagonal } \{l,k\} \text{ crosses edge } \{i,j\} \text{ (resp. triangle } (i,j,k))}.
\]
If~$G=\tau(G,i,j)$ (resp.,~$G=\tau(G,i,j,k)$) we say that~$G$ is \emph{trimmed} with respect to~$\{i,j\}$ (resp.~$(i,j,k)$).  See Figure~\ref{fig: trimming example}.
\end{definition}

\begin{remark}\label{rem: trimming}
     Trimming a triangulation always returns a subgraph that is itself a triangulation, typically with fewer edges and vertices than the original one. Loosely speaking, trimming snips off a few extraneous triangles. For example, the trimming of a triangulation~$G$ with respect to a diagonal or triangle that appears in~$G$ is just the diagonal or triangle itself. A  triangulated~$n$-gon trimmed with respect to~$\{i,j\}$ has no diagonals incident to~$i$ or~$j$.  As a tool, trimming is most impactful with respect to a pair of vertices or a triangle that is not a subgraph of the triangulation. 
    
   In particular, trimming a triangulation of an~$n$-cycle produces a triangulation of an~$(n-k)$-cycle ($k\ge 0$) by \emph{thin} triangles, each of which shares a side with the~${(n-k)}$-cycle. Performing such a trimming can be used strategically to remove distracting pieces of the polygon whose measurements are irrelevant for computation.

    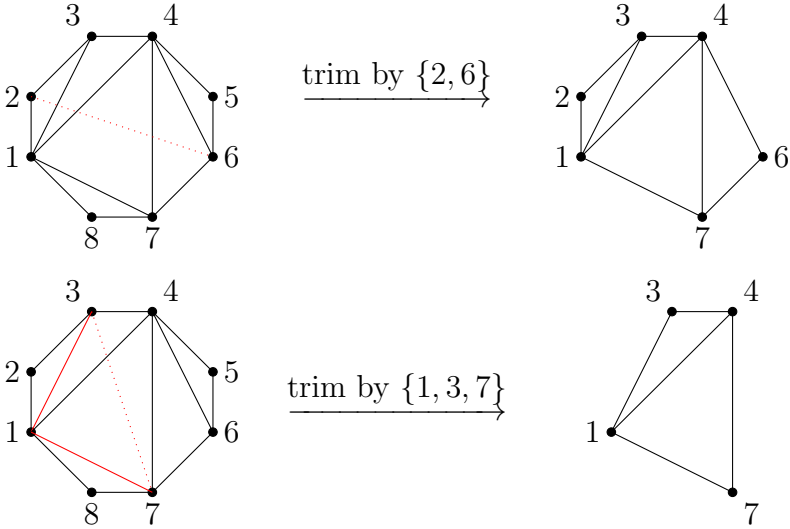
\begin{figure}[htbp!]
        \centering
        \begin{tabular}[c]{ccc}
         \begin{tikzpicture}[scale=0.8]
\coordinate[label=left:$1$] (1) at (-1.5,1);
\filldraw [black] (1) circle (2pt) ;
\coordinate[label=left:$2$] (2) at (-1.5,2);
\filldraw [black] (2) circle (2pt) ;
\coordinate [label=above left:$3$](3) at (-.5,3);
\filldraw [black] (3) circle (2pt) ;
\coordinate [label=above right:$4$](4) at (.5,3);
\filldraw [black] (4) circle (2pt) ;
\coordinate [label=right:$5$](5) at (1.5,2);
\filldraw [black] (5) circle (2pt) ;
\coordinate [label=right:$6$](6) at (1.5,1);
\filldraw [black] (6) circle (2pt) ;
\coordinate [label=below:$7$](7) at (.5,0);
\filldraw [black] (7) circle (2pt) ;
\coordinate [label=below:$8$](8) at (-.5,0);
\filldraw [black] (8) circle (2pt) ;

\draw (1) to (2) to (3) to (4) to (5) to (6) to (7) to (8) to (1);
\draw (1) to (3);
\draw (1) to (4);
\draw (4) to (7);
\draw (4) to (6);
\draw (1) to (7);
\draw[dotted, red] (2) to (6);
        \end{tikzpicture} & \begin{tikzpicture}
        \coordinate [label=below:{}] (nothing) at (0,0);
            \coordinate [label=below:$\underrightarrow{\text{trim by }\{2,6\}}$] (arrow) at (0,2.5);
        \end{tikzpicture} & \begin{tikzpicture}[scale=0.8]
\coordinate [label=left:$1$] (1) at (-1.5,1);
\filldraw [black] (1) circle (2pt) ;

\coordinate [label=left:$2$](2) at (-1.5,2);
\filldraw [black] (2) circle (2pt) ;
\coordinate [label=above left:$3$](3) at (-.5,3);
\filldraw [black] (3) circle (2pt) ;
\coordinate [label=above right:$4$](4) at (.5,3);
\filldraw [black] (4) circle (2pt) ;

\coordinate [label=right:$6$](6) at (1.5,1);
\filldraw [black] (6) circle (2pt) ;
\coordinate [label=below:$7$](7) at (.5,0);
\filldraw [black] (7) circle (2pt) ;

\draw (1) to (2) to (3) to (4) to (6) to (7) to (1);
\draw (1) to (3);
\draw (1) to (4);
\draw (4) to (7);
        \end{tikzpicture} \\
         \begin{tikzpicture}[scale=0.8]
\coordinate[label=left:$1$] (1) at (-1.5,1);
\filldraw [black] (1) circle (2pt) ;
\coordinate[label=left:$2$] (2) at (-1.5,2);
\filldraw [black] (2) circle (2pt) ;
\coordinate [label=above left:$3$](3) at (-.5,3);
\filldraw [black] (3) circle (2pt) ;
\coordinate [label=above right:$4$](4) at (.5,3);
\filldraw [black] (4) circle (2pt) ;
\coordinate [label=right:$5$](5) at (1.5,2);
\filldraw [black] (5) circle (2pt) ;
\coordinate [label=right:$6$](6) at (1.5,1);
\filldraw [black] (6) circle (2pt) ;
\coordinate [label=below:$7$](7) at (.5,0);
\filldraw [black] (7) circle (2pt) ;
\coordinate [label=below:$8$](8) at (-.5,0);
\filldraw [black] (8) circle (2pt) ;

\draw (1) to (2) to (3) to (4) to (5) to (6) to (7) to (8) to (1);
\draw[red] (1) to (3);
\draw (1) to (4);
\draw (4) to (7);
\draw (4) to (6);
\draw[red] (1) to (7);
\draw[dotted,red] (3) to (7);
        \end{tikzpicture} & \begin{tikzpicture}
        \coordinate [label=below:{}] (nothing) at (0,0);
            \coordinate [label=below:$\underrightarrow{\text{trim by }\{1,3,7\}}$] (arrow) at (0,2);
        \end{tikzpicture}  &\begin{tikzpicture}[scale=0.8]
\coordinate[label=left:$1$] (1) at (-1.5,1);
\filldraw [black] (1) circle (2pt) ;

\coordinate[label=above left:$3$] (3) at (-.5,3);
\filldraw [black] (3) circle (2pt) ;
\coordinate [label=above right:$4$](4) at (.5,3);
\filldraw [black] (4) circle (2pt) ;

\coordinate [label=below right:$7$](7) at (.5,0);
\filldraw [black] (7) circle (2pt) ;

\draw (1) to  (3) to (4)  to (7) to (1);
\draw (1) to (4);
        \end{tikzpicture}
  
        \end{tabular}

        \caption{The trimmings of triangulation~$G$ with respect to diagonal~$\{2,6\}$ and triangle~$\{1,3,7\}$ respectively.}
        \label{fig: trimming example}
    \end{figure}
\end{remark}

  \begin{definition}
        Let~$G$ be a triangulated~$n$-cycle. We call~$G$ a \emph{thin} triangulation if each of its triangles shares a side with the~$n$-cycle (we will refer to such triangles as \emph{thin}).   Notably, any traversing path in a spherical Heronian frieze corresponds to a triangulation by thin triangles.
    \end{definition}

    \begin{definition}
        Let~$\{i,j\}$ be a diagonal in a triangulation~$G$.  There is a unique diagonal~$\{k,l\}$  such that 
        \[
        G'=\mu_{ij}(G)=G\cup\{k,l\}\setminus\{i,j\}
        \] is still a triangulation. We refer to this operation as \emph{flipping}~$\{i,j\}$ to~$\{k,l\}$.
    \end{definition}

\begin{definition}
    For~$(P,G)$  a triangulated polygon, let~$D(G)$ denote the set of diagonals appearing in~$G$. Define 
    \begin{equation*}
        \xx_{D(G)}(P)=\left\{x_{ij}\left(1-\mytfrac{\kap x_{ij}}{4}\right) \ : \ \{i,j\}\in D(G)\right\}.
    \end{equation*}  
    Note that the elements of~$\xx_{D(G)}$ are the type of factors that appear in the denominators of the equations for propagation in the frieze (cf.~Proposition~\ref{prop: analogue of 2.14}).
\end{definition}

Towards a proof of Proposition~\ref{prop: analogue of cor 4.10}, we establish the following lemma. 
    \begin{lemma}\label{lem: analogue of lemma 4.7} Let~$G$ be a triangulated cycle and~$e\in D(G)$. Flip~$e$ to get edge~$f$ and~$G'=\mu_{e}(G)$.
    Pick~$\{i,j\}\not\in D(G)$ so that~$G$ is trimmed with respect to~$\{i,j\}$, but~$G'$ is not.  Trim~$G'$ with respect to~$\{i,j\}$ to get~$G''=\tau(G',i,j)$.
 Assume that the measurement~$x_{ij}$ can be written as a rational function in~$\xS_{G''}(P)$ whose denominator is a monomial in the elements of~$\xx_{D(G'')}(P).$  Then~$x_{ij}$ can be written as a rational function in~$\xS_G(P)$ whose denominator is a monomial in the elements of~$\xx_{D(G)}(P)$.  The same statement holds for~$(i,j,k)$ and~$\skapijk$ in place of~$\{i,j\}$ and~$x_{ij}$.
    \end{lemma}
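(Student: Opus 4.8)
The plan is to transport the rational expression for $x_{ij}$ from $G''$ back to $G'$ and then to $G$, controlling the shape of the denominator at each stage. The only nonformal ingredient is a combinatorial claim about the flipped edge $f$, and once that is in place the rest is bookkeeping with Proposition~\ref{prop: analogue of 2.14}.

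The claim I would prove first is that \emph{$f$ is not a diagonal of $G''$}: either $f$ is a side of the cycle underlying $G''$, or $f$ does not occur in $G''$ at all. Since $G$ is trimmed with respect to $\{i,j\}$ but $G'=\mu_{e}(G)$ is not, there is a vertex $v\notin\{i,j\}$ that is an endpoint of some diagonal of $G$ crossing $\{i,j\}$ but of no diagonal of $G'$ crossing $\{i,j\}$. As $G$ and $G'$ differ only by replacing $e$ with the vertex-disjoint edge $f$, this forces $e$ to be incident to $v$, to cross $\{i,j\}$, and to be the only diagonal of $G$ at $v$ that crosses $\{i,j\}$. Let $Q$ be the quadrilateral whose diagonals in $G$ and $G'$ are $e$ and $f$. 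I would then argue that the two sides of $Q$ incident to $v$ are sides of the $n$-cycle: otherwise a diagonal side of $Q$ incident to $v$ would cut off a nonempty sub-polygon disjoint from the chord $\{i,j\}$, and the interior vertices of that sub-polygon could not be endpoints of any diagonal crossing $\{i,j\}$ (edges of $G$ do not cross), contradicting that $G$ is trimmed. Consequently $v$ is the unique vertex of the $n$-cycle strictly between the two endpoints of $f$ on one boundary arc; moreover in $G'$ the vertex $v$ is incident to no diagonal, so the trimming $G''=\tau(G',i,j)$ deletes $v$, and the endpoints of $f$ become cyclically adjacent in $G''$. This proves the claim.

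Granting the claim, I would finish as follows. Since $G''$ is an induced subgraph of $G'$, its edges and triangles are edges and triangles of $G'$, so $\xS_{G''}(P)\subseteq\xS_{G'}(P)$; its diagonals are diagonals of $G'$, so together with the claim $D(G'')\subseteq D(G')\setminus\{f\}\subseteq D(G)$ and hence $\xx_{D(G'')}(P)\subseteq\xx_{D(G)}(P)$. Thus the hypothesis already presents $x_{ij}$ as a rational function of $\xS_{G'}(P)$ with denominator a monomial in $\xx_{D(G)}(P)$. Now $\xS_{G'}(P)$ differs from $\xS_G(P)$ only in that $x_e$ and the measurements of the two triangles of $G$ at $e$ are replaced by $x_f$ and the measurements of the two triangles of $G'$ at $f$. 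Applying Proposition~\ref{prop: analogue of 2.14} to the spherical Heronian diamond of $Q$ (which is genuine by Proposition~\ref{prop: 3.1 analogue}), each of $x_f$ and these two triangle measurements is a rational function of measurements lying in $\xS_G(P)$ with denominator a power of $x_e\bigl(1-\tfrac{\kap}{4}x_e\bigr)\in\xx_{D(G)}(P)$. By the claim, neither $x_f$ nor $1-\tfrac{\kap}{4}x_f$ appears in the denominator of the expression above, so these substitutions only change its numerator; after clearing denominators one obtains $x_{ij}$ as a rational function of $\xS_G(P)$ with denominator a monomial in $\xx_{D(G)}(P)$. The case of a triple $(i,j,k)$ and $\skapijk$ is identical, with $(i,j,k)$-trimmings replacing $\{i,j\}$-trimmings.

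The hard part will be the combinatorial claim $f\notin D(G'')$: everything else reduces to substituting the propagation formulas into a numerator, but pinning down how the trimming of $G'$ interacts with the flip of $e$ takes a careful, slightly fiddly argument about which edges of $Q$ lie on the boundary of the $n$-gon.
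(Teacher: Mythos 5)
Your proposal is correct, and its second half is exactly the paper's argument: write $x_{ij}$ as a rational function of $\xS_{G''}(P)$ with denominator a monomial in $\xx_{D(G'')}(P)$, then use the propagation formulas of Proposition~\ref{prop: analogue of 2.14}, applied to the Heronian diamond of the flip quadrilateral $Q$, to replace $x_f$ and the two triangle measurements adjacent to $f$ by rational functions of $\xS_G(P)$ whose denominators are powers of $x_e\left(1-\frac{\kap}{4}x_e\right)\in\xx_{D(G)}(P)$, so that only the numerator changes. Where you go beyond the paper is the combinatorial claim $f\notin D(G'')$ (equivalently $D(G'')\subseteq D(G)$): the paper asserts this containment without argument, deferring to the Euclidean method of Fomin--Setiabrata, while you actually prove it, and your proof is essentially sound. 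One step should be made explicit: when you say a diagonal side of $Q$ incident to $v$ ``would cut off a nonempty sub-polygon disjoint from the chord $\{i,j\}$,'' the disjointness is not automatic --- you must rule out $i$ or $j$ lying strictly inside the cut-off piece. This follows from the same witness property you already exploit: if, say, $i$ lay strictly between $v$ and the other endpoint of that side, then, since $e$ crosses $\{i,j\}$ and so $j$ lies in the opposite arc of $e$, that side of $Q$ would separate $i$ from $j$; being an edge of $G'$ incident to $v$ and crossing $\{i,j\}$, it would contradict the choice of $v$. With that line added, the cut-off sub-polygon lies in a single closed arc of $\{i,j\}$, so its interior vertices can carry no diagonal of $G$ crossing $\{i,j\}$, contradicting trimmedness; the rest of your argument (that $v$ is the only vertex between the endpoints of $f$ on one arc, hence $f$ is a boundary side of $G''$ or absent), and likewise the $(i,j,k)$ case, then goes through.
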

    \begin{proof}
        For both results we can apply the same proof method as~\cite{fomin_heronian_2021}.  Using the assumption, write~${x_{ij}=Q/S}$ with numerator~$Q$ a polynomial in~$\xS_{G''}(P)$ and denominator~$S$ a monomial in~$\xx_{D(G'')}(P)$. 
        
        We are already close to the goal, since~$D(G'')\subset D(G)$ and~$\xS_{G''}(P)$ only contains a few measurements not present in~$\xS_G(P)$.  In particular,~$\xS_{G''}(P)\setminus \xS_G(P)$ contains~$x_f$ and two measurements of the form~$\skap_{fgh}$, for two triangles of~$G$ with~$f$ as a side.  Use equations~\eqref{eq: eq for f}-\eqref{eq: eq for s} to write these three measurements as rational functions in~$\xS_G(P)$ each with denominator a scalar multiple of~$x_e\left(1-\frac{\kap x_{e}}{4}\right)\in \xx_{D(G)}(P)$.  
        
        Substitute the new values back into~$Q$ (as needed) to get~$Q'$.  The expression~$Q'$ is a rational function in~$\xS_G(P)$ with denominator a monomial in~$x_e\left(1-\frac{\kap x_e}{4}\right)$.  Substituting back into~$x_{ij}$ we have now written~$x_{ij}=Q'/S$, a rational function in~$\xS_G(P)$ whose denominator is a monomial in
        \[\left\{x_e\left(1-\mytfrac{\kap x_e}{4}\right)\right\}\cup \xx_{D(G)''}(P)=\xx_{D(G)}(P).
        \]
        The result for~$(i,j,k)$ and~$\skapijk$ follows similarly.
    \end{proof}

    \begin{proposition}\label{prop: analogue of props 4.8}  Let~$G$ be a triangulated~$n$-gon trimmed with respect to diagonal~$\{i,j\}$.  Then~$x_{ij}$ can be expressed as a rational function in~$\xS_G(P)$ whose denominator is a monomial in the elements of~$\xx_{D(G)}(P)$.
    \end{proposition}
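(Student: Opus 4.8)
The plan is to induct on $n$. The hypothesis is rigid: since $G$ coincides with its own trimming with respect to $\{i,j\}$, Remark~\ref{rem: trimming} and the definition of trimming show that $G$ is a thin triangulation, that $G$ has no diagonal incident to $i$ or to $j$, that $\{i,j\}$ is not an edge of $G$, and that the $n-3$ diagonals of $G$ are precisely the chords met by the segment from $i$ to $j$ (if some diagonal were not crossed, then two diagonals of $G$ would have to cross, or a vertex would be deleted by $\tau$); in particular $n\ge 4$ and $\xx_{D(G)}(P)$ has $n-3$ elements.

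\emph{Base case $n=4$.} Then $G$ has a single diagonal $e$, and $\{i,j\}$ is the other diagonal of the quadrilateral. Labelling the quadrilateral as in Example~\ref{ex: main tet example} with $e$ in the role of $e$ and $\{i,j\}$ in the role of $f$, equation~\eqref{eq: eq for f} of Proposition~\ref{prop: analogue of 2.14} writes $x_{ij}=f$ as a polynomial in $\xS_G(P)=\{a,b,c,d,e,p,q\}$ divided by $4e\left(1-\mytfrac{\kap e}{4}\right)$, i.e.\ as a rational function in $\xS_G(P)$ whose denominator is a monomial in $\xx_{D(G)}(P)=\left\{e\left(1-\mytfrac{\kap e}{4}\right)\right\}$.

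\emph{Inductive step $n\ge 5$.} Since $G$ has no diagonal at $i$, the vertex $i$ lies in a single triangle $(i^-,i,i^+)$ of $G$, where $i^\pm$ are the cycle-neighbours of $i$, and $e:=\{i^-,i^+\}$ is a diagonal of $G$. Flip $e$: it is the common edge of $(i^-,i,i^+)$ and a second triangle $(i^-,i^+,c)$, and flipping yields the diagonal $f=\{i,c\}$ and $G'=\mu_e(G)$. One checks $c\ne j$: if $c=j$ then $\{i^-,j\}$ and $\{i^+,j\}$ would be edges of $G$ incident to $j$, forcing them to be sides of the $n$-cycle and hence $n=4$. Thus $f\ne\{i,j\}$, and since $f$ and $\{i,j\}$ share the vertex $i$ they do not cross; so $G'$ is not trimmed with respect to $\{i,j\}$. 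Set $G''=\tau(G',i,j)$. Because crossing of chords does not depend on the triangulation, the diagonals of $G'$ that cross $\{i,j\}$ are exactly $D(G)\setminus\{e\}$, and since each such chord has $i$ or $j$ strictly between its endpoints along the cyclic vertex set of $G''$ it survives there as a diagonal; hence $D(G'')=D(G)\setminus\{e\}$, so $G''$ is a triangulated $(n-1)$-gon, trimmed with respect to $\{i,j\}$, with one fewer diagonal. The induction hypothesis applied to $G''$ gives $x_{ij}$ as a rational function in $\xS_{G''}(P)$ whose denominator is a monomial in $\xx_{D(G'')}(P)$. Finally, feeding $e$, $f$, $G$, $G'=\mu_e(G)$, $G''=\tau(G',i,j)$ into Lemma~\ref{lem: analogue of lemma 4.7}---all of whose hypotheses we have just checked---upgrades this to the required expression of $x_{ij}$ as a rational function in $\xS_G(P)$ with denominator a monomial in $\xx_{D(G)}(P)$.

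All of the algebra (rationality, and the exact shape of the denominator) is already packaged inside Lemma~\ref{lem: analogue of lemma 4.7} and the propagation formulas~\eqref{eq: eq for f}--\eqref{eq: eq for s}, so the genuine work here is purely combinatorial. The crux is the choice of flip: flipping the ``ear'' diagonal $e=\{i^-,i^+\}$ at $i$ is forced to create a diagonal $f$ through $i$, which cannot cross $\{i,j\}$, so passing to $G'$ and re-trimming both strictly shrinks the polygon and removes exactly one chord ($e$) from the collection of chords crossing $\{i,j\}$. The main obstacle is checking that this move preserves all the hypotheses of Lemma~\ref{lem: analogue of lemma 4.7}---most delicately that $G'$ is no longer trimmed with respect to $\{i,j\}$ and that $D(G'')=D(G)\setminus\{e\}$ exactly---so that the inductive hypothesis can be transported back to $G$.
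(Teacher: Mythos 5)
Your proposal is correct and follows essentially the same route as the paper: induction on $n$, with the base case $n=4$ handled by the spherical Bretschneider relation (your use of~\eqref{eq: eq for f} is just the rearranged form of~\eqref{eq: sphericalbret}), and the inductive step performed by flipping the ear diagonal $e=\{i-1,i+1\}$ at $i$, trimming the resulting triangulation with respect to $\{i,j\}$, and invoking Lemma~\ref{lem: analogue of lemma 4.7}. The only difference is that you spell out the combinatorial bookkeeping (that $c\ne j$, that $G'$ fails to be trimmed, and that $D(G'')=D(G)\setminus\{e\}$) which the paper leaves to Remark~\ref{rem: trimming} and the reader.
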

    \begin{proof}
        As in the analogous proof in~\cite{fomin_heronian_2021} we induct on~$n$.  When~${n=4}$, use our spherical analogue of Bretschneider's formula~\eqref{eq: sphericalbret}. Let~$n>4$. There are no diagonals in~$G$ incident to~$i$ (see Remark~\ref{rem: trimming}), so  edge~${e=\{i-1,i+1\}}$ (working modulo~$n$) must be a diagonal of~$G$. Flip the diagonal~$e$ to get a new triangulation~$G'$. Since~$G'$ contains a diagonal incident to~$i$, it is not trimmed with respect to~$\{i,j\}$.  Compute~$G''$, the trimming of~$G'$ with respect to~$e$. Since~$G''$ has fewer vertices than~$G$, apply the induction hypothesis. We get the conditions of Lemma~\ref{lem: analogue of lemma 4.7}, which then proves the claim.
    \end{proof}

    \begin{proposition}\label{prop: analogue of prop 4.9}
    Let~$G$ be a triangulated~$n$-gon that is trimmed with respect to a triple~$(i,j,k)$.  Then~$\skapijk$ can be expressed as a rational function in~$\xS_G(P)$ whose denominator is a monomial in the elements of~$\xx_{D(G)}(P)$.
        
    \end{proposition}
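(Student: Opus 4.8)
The plan is to establish Proposition~\ref{prop: analogue of prop 4.9} by imitating the proof of Proposition~\ref{prop: analogue of props 4.8}: I would induct on the number of vertices of $G$ and use the $(i,j,k)$--version of Lemma~\ref{lem: analogue of lemma 4.7}. First dispose of the case when $(i,j,k)$ is itself a triangle of $G$, in which case $\skapijk\in\xS_G(P)$ and there is nothing to prove. Otherwise, since having all three chords $\{i,j\},\{j,k\},\{i,k\}$ present in $G$ (as cycle edges or diagonals) would force $(i,j,k)$ to be a face of $G$, at least one side of the triple is a diagonal of the $n$-gon that is absent from $G$ and therefore crossed by a diagonal of $G$; in particular $G$ has at least four vertices.

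For the base case of four vertices, label the quadrilateral cyclically as $v_1v_2v_3v_4$ so that the unique diagonal $e$ of $G$ is $\{v_1,v_3\}$. Then the two triangles of $G$ are $(v_1,v_2,v_3)$ and $(v_1,v_3,v_4)$, which occupy the positions $p,q$ of the associated spherical Heronian diamond (Proposition~\ref{prop: 3.1 analogue}), while $\skapijk$, not being a face, occupies one of the positions $r,s$. Hence formula~\eqref{eq: eq for r} (or~\eqref{eq: eq for s}) from Proposition~\ref{prop: analogue of 2.14} expresses $\skapijk$ as a rational function of the entries $a,b,c,d,e,p,q$, all of which lie in $\xS_G(P)$, with denominator $2x_e\bigl(1-\tfrac{\kap x_e}{4}\bigr)$, and $x_e\bigl(1-\tfrac{\kap x_e}{4}\bigr)\in\xx_{D(G)}(P)$.

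For the inductive step I would first describe the shape of a triangulation trimmed with respect to a triple. The chords $\{i,j\},\{j,k\},\{i,k\}$ split the remaining vertices into three arcs; trimmedness forces every diagonal of $G$ to cross a genuine diagonal-side of $(i,j,k)$ and every vertex outside $\{i,j,k\}$ to be an endpoint of such a diagonal. From this description I expect to extract a diagonal $e=\{v,w\}\in D(G)$ such that $v\notin\{i,j,k\}$ and $e$ is the \emph{only} diagonal of $G$ at $v$ that crosses a diagonal-side of the triple. Flipping $e$ then produces $G'=\mu_e(G)$ in which $v$ is no longer covered by the trimming relative to $(i,j,k)$, so $G''=\tau(G',i,j,k)$ has strictly fewer vertices than $G$; the induction hypothesis applies to $G''$, and the triple form of Lemma~\ref{lem: analogue of lemma 4.7} converts the resulting expression for $\skapijk$ into one over $\xS_G(P)$ whose denominator is a monomial in $\xx_{D(G)}(P)$.

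The crux --- and the step I expect to be the main obstacle --- is the combinatorial claim that such a diagonal $e$ always exists when $G$ has more than three vertices; it plays the role here of the observation, used in the proof of Proposition~\ref{prop: analogue of props 4.8}, that a triangulation trimmed with respect to a diagonal $\{i,j\}$ has no diagonal incident to $i$, but it is more delicate because in the present situation diagonals of $G$ may well be incident to $i$, $j$, or $k$, and (unlike in the diagonal case) the triple may sit far from the ``ears'' of the triangulation. If this structural step proves awkward, an alternative is to pick a well-chosen fourth vertex $m$ --- for instance, the apex of the $G$-triangle bordering a diagonal-side of $(i,j,k)$ when that side lies in $G$, or an endpoint of a $G$-diagonal crossing it otherwise --- read off $\skapijk$ from the spherical Heronian diamond on $\{i,j,k,m\}$ exactly as in the base case, and handle the few measurements attached to $m$ that are not yet in $\xS_G(P)$ using Proposition~\ref{prop: analogue of props 4.8} and the induction hypothesis.
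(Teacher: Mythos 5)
Your overall architecture (induction on $n$, base case $n=4$ read off from \eqref{eq: eq for r}--\eqref{eq: eq for s}, inductive step via the triple form of Lemma~\ref{lem: analogue of lemma 4.7}) matches the paper's, but the step you yourself flag as the crux is a genuine gap, and it is not merely awkward: the structural claim is false. Take $n=6$, let $G$ be the ``pinwheel'' triangulation with diagonals $\{1,3\},\{3,5\},\{1,5\}$, and take the triple $(2,4,6)$. Then $G$ is trimmed with respect to $(2,4,6)$, yet every vertex $v\notin\{2,4,6\}$ is incident to exactly two diagonals of $G$ and both of them cross the triple, so no diagonal $e$ of the kind you postulate exists. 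Worse, the three possible flips replace $\{1,3\},\{3,5\},\{1,5\}$ by $\{2,5\},\{1,4\},\{3,6\}$ respectively, each of which again crosses a diagonal side of $(2,4,6)$; one checks that all three resulting triangulations are still trimmed with respect to $(2,4,6)$, so Lemma~\ref{lem: analogue of lemma 4.7} can never be invoked and your induction stalls. Your fallback breaks on the same configuration: to read $\skap_{246}$ off the Heronian diamond on $\{2,4,6,m\}$ with $m\in\{1,3,5\}$ you must use the triangulation of that quadrilateral by the chord $\{1,4\}$, $\{3,6\}$ or $\{2,5\}$, and the propagation formulas then place the factor $x\left(1-\frac{\kap}{4}x\right)$, with $x$ the squared length of that chord, in the denominator; none of these chords lies in $D(G)$, so this factor is not an element of $\xx_{D(G)}(P)$, and you give no argument that it cancels after substituting the rational expressions for the remaining measurements.

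The paper resolves exactly this case by a bespoke computation: starting from $G$ it performs the chain of flips in Figure~\ref{fig: hexagon}, records one instance of \eqref{eq:def1sph p+qanalogue} per flip (equations \eqref{eq: id 1}--\eqref{eq: id 4}), and combines them into an identity that is \emph{linear} in $S^\kap_{246}$ and introduces no new denominators; every other term in that identity is already of the desired form, either by Proposition~\ref{prop: analogue of props 4.8} or by induction applied to sub-quadrilaterals and sub-pentagons triangulated by diagonals of $G$. Outside this exceptional case the paper's inductive step is close in spirit to yours (a case analysis on whether diagonals of $G$ are incident to $i$, $j$, or $k$, producing a single flip after which the triangulation is no longer trimmed, then Lemma~\ref{lem: analogue of lemma 4.7}), and that case analysis shows the pinwheel hexagon is the only configuration with no good flip. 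So your proposal can be completed only by adding such a special argument (or a proof of the cancellation you would otherwise need) for the $n=6$, $(2,4,6)$ case; as written, the induction does not go through.
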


    \begin{proof}
        Again, we induct on~$n$, following the general outline of the proof of the analogous result in \cite{fomin_heronian_2021}. When~$n=4$, the result is due to equation~\eqref{eq: eq for r} or~\eqref{eq: eq for s} (both of whom return rational functions in~$\xS_G(P)$ with denominator a monomial in~$\xx_D(G)(P)$).

        Let~$n=5$.  All triangulations are the same up to cyclic renumbering of vertices, so without loss of generality assume triangulation~$G$ has diagonals~$\{1,3\}$ and~$\{1,4\}$.  These diagonals must cut across the triangle~$(i,j,k)$, so~$2$ and~$5$ must be two of the components of the triple.  Using Lemma~\ref{lem: analogue of lemma 4.7} with~$e=\{1,3\}$ and~$(i,j,k)=(1,2,5)$ (resp.,~$(2,4,5)$), we can see that~$\skap_{125}$ (resp.~$\skap_{245}$) can be written as a rational function in~$\xS_G(P)$ with denominator a monomial in~$\xx_{D(G)}(P)$.  The case for~$\skap_{235}$ is similar.

        Consider the case where~$n=6$ and~$G$ is the triangulation with diagonals~$\{1,3\}$, $\{3,5\}$, and~$\{1,5\}$ (see Figure~\ref{fig: hexagon}).

    \begin{figure}[htbp!]
            \centering
                    \begin{tabular}[c]{ccccccc}

\begin{tikzpicture}
[scale=0.4]
    \draw (-1,4) node[above left] (1){$1$} -- (1,4) node[above right](2) {$2$} -- (2,2) node[right](3) {$3$} -- (1,0) node[below right] (4){$4$} -- (-1,0) node[below left](5) {$5$} -- (-2,2) node[left] (6){$6$}-- (-1,4);
    \draw (-1,4)--(2,2)--(-1,0)--(-1,4);

    \circleat{-1}{4};
    \circleat{1}{4};
    \circleat{2}{2};
    \circleat{1}{0};
    \circleat{-1}{0};
    \circleat{-2}{2};
\end{tikzpicture} 
&
\begin{tikzpicture}
        \coordinate [label=below:{}] (nothing) at (0,0);
            \coordinate [label=below:$\rightarrow$] (arrow) at (0,1.2);
        \end{tikzpicture}
        &
\begin{tikzpicture}
[scale=0.4]
    \draw (-1,4) node[above left] (1){$1$} -- (1,4) node[above right](2) {$2$} -- (2,2) node[right](3) {$3$} -- (1,0) node[below right] (4){$4$} -- (-1,0) node[below left](5) {$5$} -- (-2,2) node[left] (6){$6$}-- (-1,4);
    \draw (-1,4)--(2,2)--(-1,0);
    \draw (2,2) -- (-2,2);

    \circleat{-1}{4};
    \circleat{1}{4};
    \circleat{2}{2};
    \circleat{1}{0};
    \circleat{-1}{0};
    \circleat{-2}{2};
\end{tikzpicture}
&
\begin{tikzpicture}
        \coordinate [label=below:{}] (nothing) at (0,0);
            \coordinate [label=below:$\rightarrow$] (arrow) at (0,1.2);
        \end{tikzpicture}
        &
\begin{tikzpicture}
[scale=0.4]
    \draw (-1,4) node[above left] (1){$1$} -- (1,4) node[above right](2) {$2$} -- (2,2) node[right](3) {$3$} -- (1,0) node[below right] (4){$4$} -- (-1,0) node[below left](5) {$5$} -- (-2,2) node[left] (6){$6$}-- (-1,4);
    \draw (1,4) -- (-2,2) -- (1,0);
    \draw (2,2) -- (-2,2);

    \circleat{-1}{4};
    \circleat{1}{4};
    \circleat{2}{2};
    \circleat{1}{0};
    \circleat{-1}{0};
    \circleat{-2}{2};
\end{tikzpicture} 
&
\begin{tikzpicture}
        \coordinate [label=below:{}] (nothing) at (0,0);
            \coordinate [label=below:$\rightarrow$] (arrow) at (0,1.2);
        \end{tikzpicture}
        &
\begin{tikzpicture}
[scale=0.4]
    \draw (-1,4) node[above left] (1){$1$} -- (1,4) node[above right](2) {$2$} -- (2,2) node[right](3) {$3$} -- (1,0) node[below right] (4){$4$} -- (-1,0) node[below left](5) {$5$} -- (-2,2) node[left] (6){$6$}-- (-1,4);
    \draw (1,4) -- (-2,2) -- (1,0);
    \draw (1,4)--(1,0);

    \circleat{-1}{4};
    \circleat{1}{4};
    \circleat{2}{2};
    \circleat{1}{0};
    \circleat{-1}{0};
    \circleat{-2}{2};
\end{tikzpicture}

\end{tabular}
\caption{Using flips to derive an identity for~$\skap_{246}$.}
            \label{fig: hexagon}
        \end{figure}
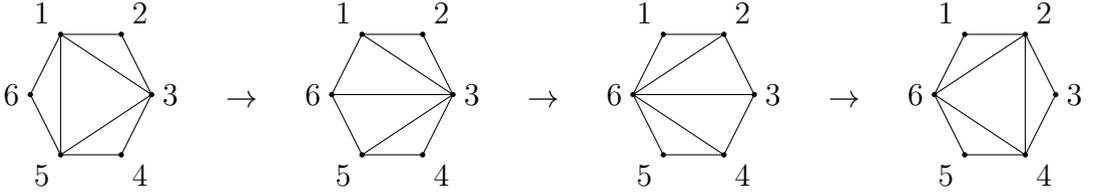

  \pagebreak[3]
  
         While~$G$ is not trimmed with respect to any of the  triangles
        \begin{equation*}
            (2,3,4), (4,5,6), (1,2,6), (3,4,6), (2,3,6), (1,3,6), (3,5,6),
        \end{equation*}
        each is inside of a sub-quadrilateral or sub-pentagon of~$G$ that is triangulated by diagonals of~$G$, and so by induction the claim holds. For example,~$\{1,2,3,4,5\}$ is triangulated by the diagonals of~$G$ and is trimmed with respect to~$(2,3,4)$.  Each triangle is shown below with the sub-polygon that it is trimmed with respect to:
        \begin{center}

        \begin{tabular}{c|c}
        triangle & trimmed with respect to\\
        \hline
            $(1,3,6)$, $(3,5,6)$ & $\{1,3,5,6\}$\\
            \hline
            $(2,3,4)$ & $\{1,2,3,4,5\}$ \\
            \hline
            $(4,5,6)$, $(3,4,6)$ & $\{1,3,4,5,6\}$\\
            \hline
            $(1,2,6)$, $(2,3,6)$ & $\{1,2,3,5,6\}$
        \end{tabular}
                    
        \end{center}
        Additionally, note that since the sub-quadrilateral~$\{1,3,5,6\}$ is trimmed with respect to~$\{3,6\}$ by Proposition~\ref{prop: analogue of props 4.8}, the side length~$x_{36}$ can also be written in the desired form.

        We have now handled all triangles that~$G$ is trimmed with respect to except the triangle~$(2,4,6)$.  In this case, starting with~$G$, we flip edges until we arrive at a triangulation that contains the triangle~$(2,4,6)$ (see the sequence of flips in Figure~\ref{fig: hexagon}). With each flip we use equation~\eqref{eq:def1sph p+qanalogue} to write a new identity. Flipping edge~$\{1,5\}$ we get
        \begin{equation}\label{eq: id 1}
            \skap_{135}+\skap_{156}=\skap_{136}+\skap_{356}+\mytfrac{x_{35}\skap_{156}+x^\kap_{56}\skap_{135}-x_{15}\skap_{356}}{2}\, .
        \end{equation}
        Flipping edge~$\{1,3\}$ we get
        \begin{equation}\label{eq: id 2}
            \skap_{123}+\skap_{136}=\skap_{126}+\skap_{236}+\mytfrac{x_{16}\skap_{123}+x_{12}\skap_{136}-x_{13}\skap_{126}}{2}\, .
        \end{equation}
        Flipping edge~$\{3,5\}$ we get
        \begin{equation}\label{eq: id 3}
            \skap_{356}+\skap_{345}=\skap_{456}+\skap_{346}+\mytfrac{x_{45}\skap_{356}+x_{56}\skap_{345}-x_{35}\skap_{456}}{2}\, .
        \end{equation}
        Finally, flipping edge~$\{3,6\}$ we get
        \begin{equation}\label{eq: id 4}
            \skap_{346}+\skap_{236}=\skap_{234}+\skap_{246}+\mytfrac{x_{23}\skap_{346}+x_{34}\skap_{236}-x_{35}\skap_{456}}{2}\, .
        \end{equation}
        Combining equations~\eqref{eq: id 1}-\eqref{eq: id 4} we achieve the identity
\begin{multline*}\label{eq: id final}
    \skap_{156}+\skap_{123}+\skap_{345}+\skap_{135}=\\
    \skap_{246}+\skap_{234}+\skap_{126}+\skap_{456}+\mytfrac{x_{35}(\skap_{156}-\skap_{456})+x_{56}(\skap_{135}+\skap_{345})+(x_{45}-x_{15})\skap_{356}}{2}\\
+\mytfrac{x_{16}\skap_{123}+x_{12}\skap_{136}-x_{13}\skap_{126}+x_{23}\skap_{346}+x_{34}\skap_{236}-x_{36}\skap_{234}}{2}\, .
\end{multline*}
        Since we have shown that every component of this identity (with the exception of~$\skap_{246}$) can be written as a rational function in~$\xS_G(P)$ with denominator a monomial in~$\xx_{D(G)}(P)$, this must be the case for~$\skap_{246}$ as well. (A much cleaner identity holds in the case~$\kap=0$.)

        The remaining inductive cases will be handled generally.  Suppose triangulation~$G$ has~$e\in D(G)$ incident to vertex~$i$.  Since~$G$ is trimmed with respect~$(i,j,k)$, we can assume~$e$ crosses~$\{j,k\}$.  Similarly, were~$G$ to have a diagonal incident to~$j$, it would have to cross~$\{i,k\}$.  But such a diagonal would also cross~$e$.  Therefore~$G$ has no diagonal incident to~$j$ and~$f=\{j-1,j+1\}\in D(G).$  The triangulation~$G'=\mu_f(G)$ is not trimmed with respect to~$(i,j,k)$ so by Lemma~\ref{lem: analogue of lemma 4.7} and the induction hypothesis,~$\skap_{ijk}$ can be written as a rational function in~$\xS_G(P)$ with denominator a monomial in~$\xx_{D(G)}(P)$.

        Consider the case where triangulation~$G$ has no diagonals incident to~$i$,~$j$, or~$k$.  Then~$D(G)$ must include diagonals~$\{i-1,i+1\}$,~$\{j-1,j+1\}$, and~$\{k-1,k+1\}$ (modulo~$n$).  Let~$G'$ be the triangulation achieved by flipping~$\{i-1,i+1\}$ to~$\{i,v\}$ for the appropriate~$v\in\{1,\dots,n\}$.  If~$\{i,v\}$ does not cross~$\{j,k\}$, then Lemma~\ref{lem: analogue of lemma 4.7} and the induction hypothesis apply.  
        
        If~$\{i,v\}$ crosses~$\{j,k\}$, then so do~$\{i-1,v\},\{i+1,v\}$ which are edges of the original triangulation.  First assume that
        \begin{equation}\label{eq: hex case} 
        \{\{i-1,v\},\{i+1,v\}\}\ne \{\{j-1,j+1\},\{k-1,k+1\}\}\, .
                \end{equation}
        Then flipping one of~$\{j-1,j+1\},\{k-1,k+1\}$ will result in an untrimmed triangulation (with respect to~$(i,j,k)$), trim it down and then apply Lemma~\ref{lem: analogue of lemma 4.7} and the induction hypothesis to the smaller polygon.  In the case that~\eqref{eq: hex case} holds, we must have~$n=6$, with the triangulation given by~$\{3,4,5\}$ and~$(i,j,k)=(2,4,6)$, which we handled above.
    \end{proof}

    \begin{proof}[Proof of Proposition~\ref{prop: analogue of cor 4.10}]
Proposition~\ref{prop: analogue of 4.10 first part} proves that~$\zpi$, the collection of numbers associated with a traversing path~$\pi$, can be uniquely extended to a spherical Heronian frieze~$\zfrieze$ via the construction of a triangulated polygon~$(P,G)$ associated to~$\pi$.  Propositions~4.8 and~4.9 above show that the elements of~$\zfrieze$ are all rational functions with denominators coming from~$D(\pi)$.  
    \end{proof}

\newpage

\section{Spherical Cayley-Menger friezes}\label{sec: cayley menger}

We now present our second solution to the original spherical distance geometry question. In this solution, our initial data set consists entirely of squared distances~$x_{ij}$. It involves the squared distances associated to a particular triangulation of a polygon, along with a few extra squared distances (the ``bracing'' edges). The measurements~$\skap$ are not used at all.

Proposition~\ref{prop: gencaymeng} implies a coplanarity test for~$n+1$ points in~$n$-dimensional Euclidean space:  if the points~$A_1,\dots,A_{n+1}\in\R^n$ lie on a hyperplane, then the Cayley-Menger determinant~$M_{n+1}^0(x_{12},\dots,x_{n,n+1})$ vanishes.  We will need a similar test for cosphericity, this time utilizing a \emph{spherical Cayley-Menger determinant}. The papers  \cite{audet_determinants_2011} and \cite{blumenthal_distribution_1943} give different developments of the spherical Cayley-Menger determinant. We most closely follow the development in \cite{tao_spherical_2019}, cf.~Theorem~\ref{thm: scm}.

Lemma~\ref{lem: cosine} below will allow us to express scalar products in terms of the squared distances.
We will use the following shorthand: for a point~$A\in\bfS$, we will write~$\overrightarrow{A}$ instead of~$\overrightarrow{OA}$. 
\begin{lemma}\label{lem: cosine}
    For~$A, B\in\bfS$, denote~$x_{A,B}=x(A,B)$. Then
\begin{equation}\label{eq: dot product}
        \langle \overrightarrow{A},\overrightarrow{B}\rangle =\mytfrac{1}{\kap}\left(1-\mytfrac{\kap}{2} x_{A,B}\right).
    \end{equation}
\end{lemma}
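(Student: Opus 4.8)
The plan is to expand the squared distance $x_{A,B}=\langle\overrightarrow{AB},\overrightarrow{AB}\rangle$ via bilinearity and then use the fact that both $A$ and $B$ lie on $\bfS$. Writing $\overrightarrow{AB}=\overrightarrow{B}-\overrightarrow{A}$ in the shorthand $\overrightarrow{A}=\overrightarrow{OA}$, bilinearity and symmetry of $\langle\,,\,\rangle$ give
\[
x_{A,B}=\langle\overrightarrow{B}-\overrightarrow{A},\overrightarrow{B}-\overrightarrow{A}\rangle=\langle\overrightarrow{A},\overrightarrow{A}\rangle-2\langle\overrightarrow{A},\overrightarrow{B}\rangle+\langle\overrightarrow{B},\overrightarrow{B}\rangle.
\]
Since $A,B\in\bfS$, the sphere of radius $R$ centered at $O$, we have $\langle\overrightarrow{A},\overrightarrow{A}\rangle=\langle\overrightarrow{B},\overrightarrow{B}\rangle=R^2$, so the display collapses to $x_{A,B}=2R^2-2\langle\overrightarrow{A},\overrightarrow{B}\rangle$, hence $\langle\overrightarrow{A},\overrightarrow{B}\rangle=R^2-\tfrac12 x_{A,B}$.

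It then remains to substitute $\kap=\tfrac{1}{R^2}$, i.e.\ $R^2=\tfrac{1}{\kap}$, which rewrites the last expression as $\langle\overrightarrow{A},\overrightarrow{B}\rangle=\tfrac{1}{\kap}-\tfrac12 x_{A,B}=\tfrac{1}{\kap}\bigl(1-\tfrac{\kap}{2}x_{A,B}\bigr)$, exactly the claimed formula \eqref{eq: dot product}.

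There is no real obstacle here: this is just the polarization identity combined with the defining equation of the sphere. The only points to keep straight are the shorthand $\overrightarrow{A}=\overrightarrow{OA}$ and the relation $\kap=1/R^2$. Note also that the argument uses only bilinearity and symmetry of $\langle\,,\,\rangle$, never positive-definiteness, so it remains valid verbatim in the complexified setting $V=V_\C$.
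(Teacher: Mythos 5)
Your proof is correct and is essentially identical to the paper's: both expand $x_{A,B}=\langle\overrightarrow{B}-\overrightarrow{A},\overrightarrow{B}-\overrightarrow{A}\rangle$ by bilinearity (the paper calls this the vector form of the law of cosines), use $\langle\overrightarrow{A},\overrightarrow{A}\rangle=\langle\overrightarrow{B},\overrightarrow{B}\rangle=R^2=\tfrac{1}{\kap}$, and rearrange. Your closing observation that only bilinearity and symmetry are needed, so the argument holds in $V_\C$, is a nice touch consistent with the paper's complexified setting.
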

\begin{proof}
    Using the vector form of the law of cosines, we have
    \begin{equation}\label{eq: lawofcosines}
        \langle \overrightarrow{B}-\overrightarrow{A},\overrightarrow{B}-\overrightarrow{A}\rangle =\langle\overrightarrow{B},\overrightarrow{B}\rangle+\langle\overrightarrow{A},\overrightarrow{A}\rangle-2\langle\overrightarrow{A},\overrightarrow{B}\rangle.
    \end{equation}
    Since~$A,B\in\bfS$, we have 
    \begin{align*}
        \langle\overrightarrow{A},\overrightarrow{A}\rangle=\langle\overrightarrow{B},\overrightarrow{B}\rangle&=R^2=\mytfrac{1}{\kap}\, ,\\
        \langle \overrightarrow{B}-\overrightarrow{A},\overrightarrow{B}-\overrightarrow{A}\rangle&=x_{A,B}\, .
    \end{align*}
    So equation~\eqref{eq: lawofcosines} becomes~$
        x_{A,B}=\mytfrac{2}{\kap}-2\langle \overrightarrow{A},\overrightarrow{B}\rangle,$
    which rearranges to~\eqref{eq: dot product}.
\end{proof}

\begin{theorem}[cf.~{\cite{audet_determinants_2011, blumenthal_distribution_1943, tao_spherical_2019}}]\label{thm: scm} 
Let~$m\ge 4$ and~$A_1,\dots, A_m\in\bfS\subset\C^3$. Denote~$x_{ij}=x(A_i,A_j)$.
Then the spherical Cayley-Menger determinant vanishes:
\begin{equation*}
\scmm(x_{12},\dots,x_{m-1,m})=\begin{bmatrix}
    \frac{\kap}{2} & 1  & \cdots & 1\\
    1 & 0   & \cdots & x_{1m} \\
    \vdots & \vdots  & \ddots & \vdots\\
    1 & x_{m1}  & \cdots & 0
    \end{bmatrix}=0\, .
\end{equation*}

\end{theorem}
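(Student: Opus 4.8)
The plan is to reduce the vanishing of $\scmm$ to the trivial fact that any $m\ge 4$ vectors in the $3$-dimensional space $V$ are linearly dependent. The bridge is the Gram matrix of the position vectors $\overrightarrow{A_1},\dots,\overrightarrow{A_m}$, where (as in Lemma~\ref{lem: cosine}) $\overrightarrow{A_i}$ abbreviates $\overrightarrow{OA_i}$; its entries are controlled by the $x_{ij}$. Concretely, by Lemma~\ref{lem: cosine} and the fact that $x_{ii}=0$,
\[
G_{ij}:=\langle\overrightarrow{A_i},\overrightarrow{A_j}\rangle=\mytfrac{1}{\kap}\left(1-\mytfrac{\kap}{2}x_{ij}\right)=\mytfrac{1}{\kap}-\mytfrac{x_{ij}}{2}\qquad(1\le i,j\le m),
\]
so $G=(G_{ij})_{i,j=1}^m$ is exactly the Gram matrix of the $\overrightarrow{A_i}$ with respect to the scalar product $\langle\,,\rangle$.

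Next I would relate $\det G$ to the $(m+1)\times(m+1)$ determinant defining $\scmm$ by elementary row operations. Multiplying each of the rows $2,\dots,m+1$ of that matrix by $-\tfrac12$ scales the determinant by $(-\tfrac12)^m$, turns the lower-right $m\times m$ block into $(-x_{ij}/2)$, and turns the first column below the $(1,1)$-entry into $(-\tfrac12,\dots,-\tfrac12)^T$ while leaving the first row $(\tfrac{\kap}{2},1,\dots,1)$ untouched. Adding $\tfrac{1}{\kap}$ times this first row to each of rows $2,\dots,m+1$ then makes the first column below the corner zero and replaces the lower-right block by $\bigl(\tfrac{1}{\kap}-\tfrac{x_{ij}}{2}\bigr)=G_{ij}$. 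Expanding the resulting matrix along its first column gives the value $\tfrac{\kap}{2}\det G$, so
\[
\scmm(x_{12},\dots,x_{m-1,m})=(-2)^m\cdot\mytfrac{\kap}{2}\cdot\det G.
\]

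Finally, let $M$ be the $3\times m$ matrix whose $i$-th column consists of the Cartesian coordinates of $\overrightarrow{A_i}$; since $\langle x,y\rangle=x_1y_1+x_2y_2+x_3y_3$, we have $G=M^{T}M$. As $m\ge 4>3$, the rank of $G$ is at most the rank of $M$, which is at most $3<m$, so $G$ is singular and $\det G=0$. Hence $\scmm=0$, as claimed.

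The only step needing care is the bookkeeping in the row-reduction—keeping track of which rows are rescaled and the precise scalar $(-2)^m\kap/2$ acquired—but this is entirely routine; the conceptual content is simply that the Gram matrix of too many vectors in $\C^3$ must be degenerate. I would also note for completeness that for a genuine sphere $\kap=1/R^2\neq 0$, so the division by $\kap$ in Lemma~\ref{lem: cosine} is legitimate (and in fact the nonvanishing of the constant $(-2)^m\kap/2$ is not even needed for the conclusion).
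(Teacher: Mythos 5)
Your argument is correct and is essentially the paper's own proof: both rest on Lemma~\ref{lem: cosine} to identify the entries $\tfrac1\kap-\tfrac{x_{ij}}{2}$ with the Gram matrix $M^TM$ of the position vectors, observe that this Gram matrix is singular because $m\ge 4>3$, and convert between $\det G$ and $\scmm$ by elementary row operations (you run the reduction from the bordered Cayley--Menger matrix down to $G$, while the paper builds the bordered matrix up from $G$, with the same nonzero constant $(-2)^m\kap/2$ relating the two). No gaps; the bookkeeping and the remark that $\kap\neq 0$ are exactly the points the paper also uses.
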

\begin{remark}\label{rem: explaining zero} The notation~$\scmm$ reflects that this is a computation associated to a collection of~$m$ points on a sphere with curvature~$\kap$. 
\end{remark}

\begin{remark}\label{rem: dim three cay meng} The spherical Cayley-Menger determinant~$\scmthree$ associated to a triple of points on~$\bfS$ is closely related to the spherical Heron formula (cf.~Proposition~\ref{prop: sph heron}):
\begin{multline*}
    -\scmthree (x_{12},x_{13},x_{23})  \\= -x_{12}^2-x_{13}^2-x_{23}^2+2x_{12}x_{13}+2x_{12}x_{23}+2x_{13}x_{23}-\kap x_{12}x_{13}x_{23}\\  =\hkap(x_{12},x_{13},x_{23}) =(\skap(A_1,A_2,A_3))^2\, .
\end{multline*}
\end{remark}

\begin{proof}[Proof of Theorem~\ref{thm: scm}]
Let~$M$ be the matrix with column vectors~$A_i$. Its  rank is at most~3. The product~$M^TM$ is the~$m\times m$ \emph{Gram matrix} (cf.~Remark~\ref{rem: gram det})
\begin{equation}\label{eq: gram mat}
M^TM= \begin{bmatrix}
    \langle A_1,A_1\rangle & \langle A_1,A_2\rangle & \cdots & \langle A_1, A_m\rangle\\
    \langle A_2, A_1\rangle & \langle A_2, A_2\rangle & \cdots & \langle A_2, A_m\rangle\\
    \vdots & \cdots & \ddots & \vdots\\
    \langle A_m, A_1\rangle & \langle A_m,A_2\rangle & \cdots & \langle A_m,A_m\rangle
\end{bmatrix}\, ,
\end{equation}
which has rank at most~3. Since~$m\ge 4$, the determinant of~$M^TM$ must vanish.  

We now use the condition that our points lie on a sphere with curvature~$\kap$. Applying the identity~\eqref{eq: dot product} to~\eqref{eq: gram mat}, we get:
\begingroup
\addtolength{\jot}{.5em}
\renewcommand*{\arraystretch}{1.25}
\begin{align*}\label{eq: transformed gram}
    0&=\det\begin{bmatrix}
        \frac{1}{\kap} & \frac{1}{\kap}\left(1-\frac{\kap}{2}x_{12}\right) & \cdots & \frac{1}{\kap}\left(1-\frac{\kap}{2}x_{1m}\right)\\
        \frac{1}{\kap}\left(1-\frac{\kap}{2}x_{12}\right) & \frac{1}{\kap} & \cdots & \frac{1}{\kap}\left(1-\frac{\kap}{2}x_{2m}\right)\\
        \vdots & \cdots & \ddots & \vdots\\
        \frac{1}{\kap}\left(1-\frac{\kap}{2}x_{1m}\right) & \frac{1}{\kap}\left(1-\frac{\kap}{2}x_{2m}\right) & \cdots & \frac{1}{\kap}
    \end{bmatrix}\, ,\\
&=
    \left(-\mytfrac{1}{\kap}\right)^m\det\begin{bmatrix}
        -1 & \frac{\kap}{2}x_{12}-1 & \cdots & \frac{\kap}{2}x_{1m}-1\\
        \frac{\kap}{2}x_{12}-1 & -1 & \cdots & \frac{\kap}{2}x_{2m}-1\\
        \vdots & \cdots & \ddots & \vdots\\
        \frac{\kap}{2}x_{1m}-1 & \frac{\kap}{2}x_{2m}-1 & \cdots & -1
    \end{bmatrix}\, ,\\
    &=\left(-\mytfrac{1}{2}\right)^m\left(\mytfrac{\kap}{2}\right)^{-(m+1)}\det\begin{bmatrix}
    \mytfrac{\kap}{2} & 1 & 1 & \cdots & 1\\
      0 &  -1 & \frac{\kap}{2}x_{12}-1 & \cdots & \frac{\kap}{2}x_{1m}-1\\
      0 &  \frac{\kap}{2}x_{12}-1 & -1 & \cdots & \frac{\kap}{2}x_{2m}-1\\
       \vdots & \vdots & \cdots & \ddots & \vdots\\
       0 & \frac{\kap}{2}x_{1m}-1  &\frac{\kap}{2}x_{2m}-1 & \cdots & -1
    \end{bmatrix}\, ,\\
    &=\left(-\mytfrac{1}{2}\right)^m\left(\mytfrac{\kap}{2}\right)^{-(m+1)}\det\begin{bmatrix}
    \frac{\kap}{2} & 1 & 1 & \cdots & 1\\
      \frac{\kap}{2} &  0 & \frac{\kap}{2}x_{12} & \cdots & \frac{\kap}{2}x_{1m}\\
      \frac{\kap}{2} &  \frac{\kap}{2}x_{12} & 0 & \cdots & \frac{\kap}{2}x_{2m}\\
       \vdots & \vdots & \cdots & \ddots & \vdots\\
       \frac{\kap}{2} & \frac{\kap}{2}x_{1m}  &\frac{\kap}{2}x_{2m} & \cdots & 0
    \end{bmatrix}\,  ,\\
    &=\left(-\mytfrac{1}{2}\right)^m\left(\mytfrac{2}{\kap}\right)\det\begin{bmatrix}
    \frac{\kap}{2} & 1 & 1 & \cdots & 1\\
      1 &  0 & x_{12} & \cdots & x_{1m}\\
      1 &  x_{12} & 0 & \cdots & x_{2m}\\
       \vdots & \vdots & \cdots & \ddots & \vdots\\
       1 & x_{1m}  &x_{2m} & \cdots & 0
    \end{bmatrix}\, ,
\end{align*}
\endgroup
which implies the claim.
\end{proof}

\begin{remark}\label{rem: sph heron implies scm is zero}
    Let~$\kap\in\C$. Let~$(a,b,c,d,e,f,p,q,r,s)\in\C^{10}$ be a spherical Heronian diamond (cf.~Definition~\ref{def:original definition}) such that~$e,f\not\in\left\{0,\frac{4}{\kap}\right\}$.
    There exists a unique quadrilateral~$A_1A_2A_3A_4$ on~$\bfS$ with the measurements~$(a,b,c,d,e,f,p,q,r,s)$ by Proposition~\ref{prop: cor 2.9 analogue}.
    Ergo, the spherical Cayley-Menger determinant~$
    \scmfour (a,b,c,d,e,f)$ should vanish.
    
    To~verify this directly,  write each of the seven equations~\eqref{eq:def1sphp}-\eqref{eq: def1sph erminuss} in the form $F(a,b,c,d,e,f,p,q,r,s)=0$ where~$F$ is a polynomial.
    Let~$I$ be the ideal generated by these polynomials.  One can check, with standard software, that
    \[
    \scmfour (a,b,c,d,e,f)e(e\kap-4)\]
    lies in the ideal~$I$.  Since~$e\not\in\{ 0,\frac{4}{\kap}\}$ we conclude that~$\scmfour (a,b,c,d,e,f)=0$.
\end{remark}

\begin{remark}
    In the limit~$\kap\to 0$ (moving towards Euclidean space), the entry in the top left corner of the spherical Cayley-Menger determinant goes to 0, recovering the Euclidean Cayley-Menger determinant.
\end{remark}

Some of the following definitions generalize those found in \cite[Section~5]{fomin_heronian_2021}.

\begin{definition}\label{def: sph cay meng diamond}
    A \emph{spherical Cayley-Menger diamond} is a 6-tuple~$(a,b,c,d,e,f)$ of complex numbers such that
    \[
    \scmfour (a,b,c,d,e,f)=0.\]
    We organize these numbers in a diamond shape as shown below.
    \begin{figure}[htbp!]
        \centering
        \begin{tikzpicture}
            [scale=0.7,every node/.style={rectangle,fill=white}]

        \begin{scope}[on background layer]
        
        \draw[dashed] (-2,4) node {$d$}
        --(2,0);
        \draw[dashed] (-2,0) node {$b$}
        --(2,4);
    \end{scope}
        \draw[very thick]
        (0,0) node {$c$}
        -- (-2,2) node {$e$}
        -- (0,4) node {$a$}
        -- (2,2) node {$f$}
        -- (0,0);
        \end{tikzpicture}
        \label{fig: sph cayley-menger diamond}
    \end{figure}
\end{definition}

We note that every spherical quadrilateral gives rise to a spherical Cayley-Menger diamond.

\begin{remark} 
    The definition of~$\scmfour (a,b,c,d,e,f)$ implies that we can swap rows and columns to see that if~$(a,b,c,d,e,f)$ is a spherical Cayley-Menger diamond, then the tuples~$(c,d,a,b,e,f)$, $(a,d,c,b,f,e)$, $(f,d,e,b,a,c)$, and so on, are spherical Cayley-Menger diamonds as well.
\end{remark}

\begin{definition} \label{def: sph cayley-menger frieze index set}
    
    For~$n\ge 4$, define the indexing set
    \begin{equation*}
        I_n^{SCM}=
        \left\{(i,j)\in\Z^2 \right\}_{0\le j-i\le n}\cup \left\{\left(i+\mytfrac{1}{2},\neline\right) \right\}_{i\in\Z}\cup\left\{\left(\seline,j+\mytfrac{1}{2}\right) \right\}_{j\in\Z}\, ,
    \end{equation*}
    a subset of the indexing set for a spherical Heronian frieze (cf.~Definition~\ref{def: indexing set}).
    As before, the coordinates that use symbols~$\neline$ or~$\seline$ represent entire lines. We continue to call elements with two numerical coordinates \emph{nodes}.  When~$1\le i-j\le n-1$, the node~$(i,j)$ is \emph{interior}. We visually represent the set~$I_n^{SCM}$ on the usual~$\R^2$ coordinate plane rotated~$\pi/4$ clockwise (see Figure~\ref{fig: cay-meng index set}).
    \end{definition}

    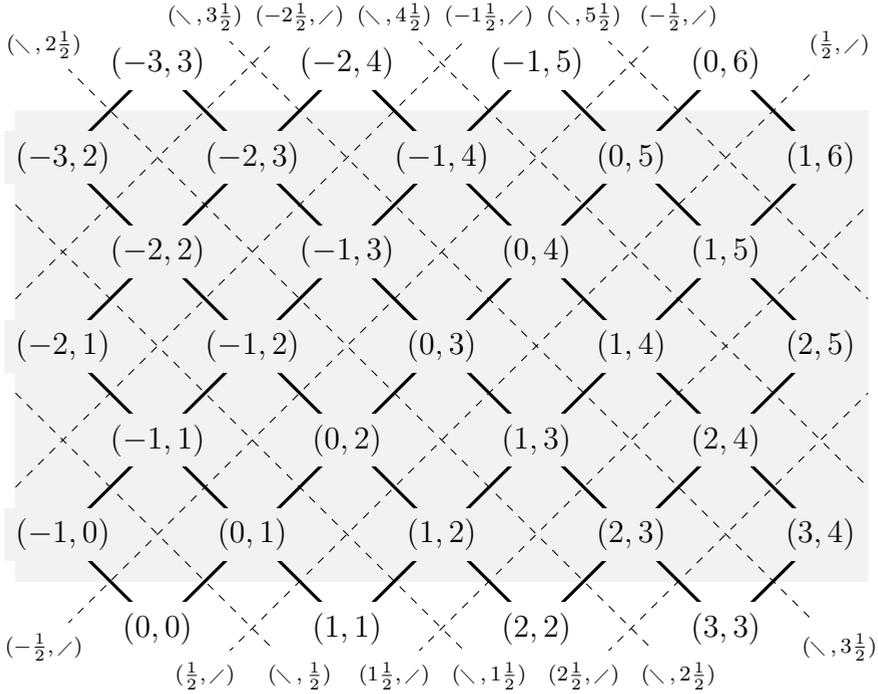
\begin{figure}[htbp!]
        \centering
        \begin{tikzpicture}
        [scale=2.5,gnode/.style={rectangle,fill=black!5,inner sep=1pt}, wnode/.style={rectangle,fill=white,inner sep=1pt}]

        \begin{scope}[on background layer] 
        
\filldraw [black!5] (-0.75,0.25) rectangle (3.75,2.75);

\draw[dashed] (-0.75,1.75) 
        --(0.75,3.25) node[wnode] {\tiny$(-2\frac{1}{2},\neline)$}; 

\draw[dashed] (-0.75,.75) 
        --(1.75,3.25) node[wnode] {\tiny$(-1\frac{1}{2},\neline)$};

        \draw[dashed] (-0.6,-0.1) node[wnode] {\tiny$(-\frac{1}{2},\neline)$}
        --(2.75,3.25) node[wnode] {\tiny$(-\frac{1}{2},\neline)$};

\draw[dashed] (0.25,-0.25) node[wnode] {\tiny$(\frac{1}{2},\neline)$}
        --(3.6,3.1) node[wnode] {\tiny$(\frac{1}{2},\neline)$};

\draw[dashed] (1.25,-0.25) node[wnode] {\tiny$(1\frac{1}{2},\neline)$}
        --(3.75,2.25) ;
        
\draw[dashed] (2.25,-0.25) node[wnode] {\tiny$(2\frac{1}{2},\neline)$}
        --(3.75,1.25) ;  

\draw[dashed] (-0.75,1.25)  --(0.75,-0.25) node[wnode] {\tiny$(\seline,\frac{1}{2})$};

\draw[dashed] (-0.75,2.25)  --(1.75,-0.25) node[wnode] {\tiny$(\seline,1\frac{1}{2})$};

\draw[dashed] (-0.6,3.1) node[wnode] {\tiny$(\seline,2\frac{1}{2})$} --(2.75,-0.25) node[wnode] {\tiny$(\seline,2\frac{1}{2})$};

\draw[dashed] (0.25,3.25) node[wnode] {\tiny$(\seline,3\frac{1}{2})$} --(3.6,-0.1) node[wnode] {\tiny$(\seline,3\frac{1}{2})$};

\draw[dashed] (1.25,3.25) node[wnode] {\tiny$(\seline,4\frac{1}{2})$} --(3.75,0.75);

\draw[dashed] (2.25,3.25) node[wnode] {\tiny$(\seline,5\frac{1}{2})$} --(3.75,1.75);

\draw[very thick](-0.5,0.5) --(0,0);

\draw[very thick](-0.5,1.5) -- (1,0);

\draw[very thick](-0.5,2.5) -- (2,0);

\draw[very thick](0,3) -- (3,0);

\draw[very thick](1,3) -- (3.5,0.5);

\draw[very thick](2,3) -- (3.5,1.5);

\draw[very thick](3,3) -- (3.5, 2.5);
        
    \end{scope}
\draw[very thick](-0.5,2.5) node[gnode] {$(-3,2)$} -- (0,3) node[wnode] {$(-3,3)$};
    
\draw[very thick](-0.5,1.5) node[gnode] {$(-2,1)$} -- (0,2) node[gnode] {$(-2,2)$} -- (0.5,2.5) node[gnode] {$(-2,3)$} -- (1,3) node[wnode] {$(-2,4)$};

\draw[very thick](-0.5,0.5) node[gnode] {$(-1,0)$} -- (0,1) node[gnode] {$(-1,1)$} -- (0.5,1.5) node[gnode] {$(-1,2)$} -- (1,2) node[gnode] {$(-1,3)$} -- (1.5,2.5) node[gnode] {$(-1,4)$} -- (2,3) node[wnode] {$(-1,5)$};

        \draw[very thick](0,0) node[wnode] {$(0,0)$} -- (0.5,0.5) node[gnode] {$(0,1)$} -- (1,1) node[gnode] {$(0,2)$} -- (1.5,1.5) node[gnode] {$(0,3)$} -- (2,2) node[gnode] {$(0,4)$} -- (2.5,2.5) node[gnode] {$(0,5)$} -- (3,3) node[wnode] {$(0,6)$};

        \draw[very thick](1,0) node[wnode] {$(1,1)$} -- (1.5,0.5) node[gnode] {$(1,2)$} -- (2,1) node[gnode] {$(1,3)$} -- (2.5,1.5) node[gnode] {$(1,4)$} -- (3,2) node[gnode] {$(1,5)$} -- (3.5,2.5) node[gnode] {$(1,6)$};
        
        \draw[very thick](2,0) node[wnode] {$(2,2)$} -- (2.5,0.5) node[gnode] {$(2,3)$} -- (3,1) node[gnode] {$(2,4)$} -- (3.5,1.5) node[gnode] {$(2,5)$};

        \draw[very thick](3,0) node[wnode] {$(3,3)$} -- (3.5,0.5) node[gnode] {$(3,4)$};

        \end{tikzpicture}
        \caption{A visualization of a portion of the index set~$I_6^{SCM}$. The set extends to the right and left. All nodes in the grey shaded region are interior.}
        \label{fig: cay-meng index set}
    \end{figure}
    
    \begin{definition}\label{def: sph cay meng frieze}
        
    For~$n\ge 4$, a \emph{spherical Cayley-Menger frieze of order}~$n$ is an array~$\zscm=(z_\alpha)_{\alpha\in I_n^{SCM}}$  of complex numbers indexed by~$I_n^{SCM}$ (cf.~Definition~\ref{def: sph cayley-menger frieze index set}) such that each diamond of numbers is a spherical Cayley-Menger diamond. Figure~\ref{fig: cayley-menger frieze example} shows an example of a spherical Cayley-Menger frieze.
    
    More concretely, for an interior node~$(i,j)\in\Z^2$, the tuple denoted
    \begin{equation*}
        \xdiamond_{i,j}(\zscm)=(z_{(i,j+1)},z_{(i+\frac{1}{2},\neline)},z_{(i+1,j)},z_{(\seline,j+\frac{1}{2})},z_{(i,j)},z_{(i+1,j+1)}).
    \end{equation*} forms the spherical Cayley-Menger diamond
    \begin{figure}[htbp!]
        \centering
        \begin{tikzpicture}
            [scale=0.6,every node/.style={rectangle,fill=white}]

        \begin{scope}[on background layer]
        
        \draw[dashed] (-2,4) node[left] {$(\seline,j+\frac{1}{2})$}
        --(2,0);
        \draw[dashed] (-2,0) node[left] {$(i+\frac{1}{2},\neline)$}
        --(2,4);
    \end{scope}
        \draw[very thick]
        (0,0) node {$(i+1,j)$}
        -- (-2,2) node {$(i,j)$}
        -- (0,4) node {$(i,j+1)$}
        -- (2,2) node {$(i+1,j+1)$}
        -- (0,0);
        \end{tikzpicture}
    \end{figure} 
    
    \noindent with~$
        \scmfour \left(\xdiamond_{i,j}(\zscm)\right)=0$. Along the boundary of the frieze, for~$i\in\Z$ we require that
    \begin{align}
        z_{(i,i)}&=z_{(i,i+n)}=0 \quad \text{(bottom and top rows)}\label{eq: boundary condition top and bottom row CM}\, ,\\
        z_{(i,i+1)}&=z_{(i+\frac{1}{2},\neline)}=z_{(\seline,i+\frac{1}{2})}\label{eq: boundary condition lines CM}\, ,\\
        z_{(i,i+n-1)}&=z_{(i-\frac{1}{2},\neline)}=z_{(\seline,i+n-\frac{1}{2})} \label{eq: boundary condition lines 2 CM}\, .
    \end{align}
    
\end{definition}

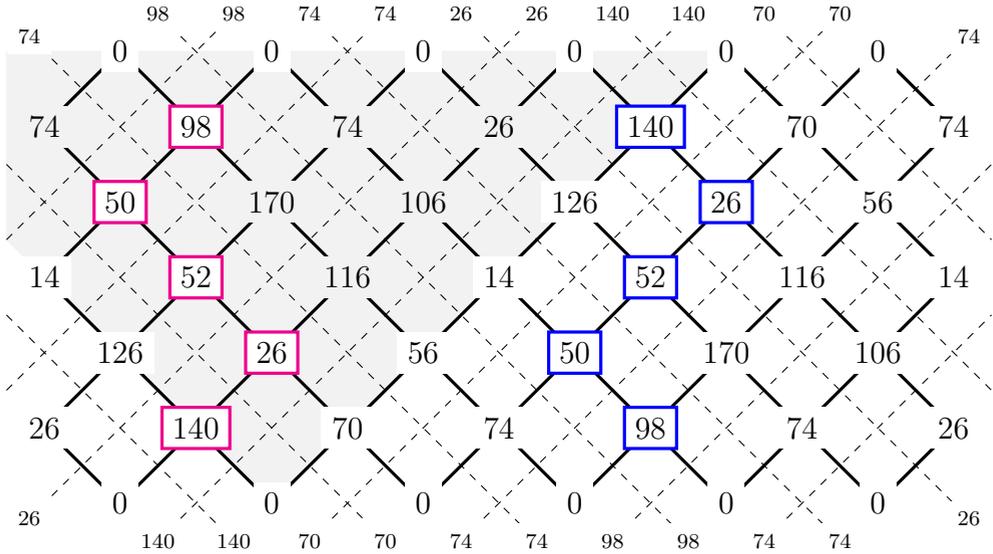
\begin{figure}[htbp!]
        \centering
        \begin{tikzpicture}
        [scale=2,greynode/.style={rectangle,fill=black!5,inner sep=2pt}, whitenode/.style={rectangle,fill=white, inner sep=2pt}, bordergreynode/.style={rectangle,draw=magenta,very thick,fill=white!5, inner sep=3pt}, borderwhitenode/.style={rectangle,draw=blue,fill=white,very thick,inner sep=3pt}]

        \begin{scope}[on background layer] 
\filldraw[black!5] (-0.75,2) -- (-0.75,1.75) -- (1,0) -- (4,3) -- (-0.75,3) -- (-0.75,2);
        \draw[very thick](-0.5,0.5) -- 
(0,0);

\draw[very thick](-0.5,1.5) -- (1,0);

\draw[very thick](-0.5,2.5) -- (2,0);

\draw[very thick](0,3) -- (3,0);

\draw[very thick](1,3) -- (4,0);

\draw[very thick](2,3) -- (5,0);

\draw[very thick](3,3) -- (5.5,0.5);

\draw[very thick](4,3)-- (5.5,1.5);

\draw[very thick](5,3) -- (5.5,2.5);

\begin{scope}
\draw[dashed] (-0.75,1.75) 
        --(0.75,3.25) node[whitenode] {\tiny$98$}; 

\draw[dashed] (-0.75,.75) 
        --(1.75,3.25) node[whitenode] {\tiny$74$};

        \draw[dashed] (-0.6,-0.1) node[whitenode] {\tiny$26$}
        --(2.75,3.25) node[whitenode] {\tiny$26$};

\draw[dashed] (0.25,-0.25)  node[whitenode] {\tiny$140$}
        -- (3.75 ,3.25) node[whitenode] {\tiny$140$};

\draw[dashed] (1.25,-0.25)  node[whitenode] {\tiny$70$}
        --(4.75,3.25) node[whitenode] {\tiny$70$};
        
\draw[dashed] (2.25,-0.25)  node[whitenode] {\tiny$74$}
        --(5.6,3.1) node[whitenode] {\tiny$74$};  

\draw[dashed] (3.25,-0.25) node[whitenode] {\tiny$98$} --(5.75,2.25);

\draw[dashed] (4.25,-0.25) node[whitenode] {\tiny$74$} --(5.75,1.25);

\draw[dashed] (-0.75,1.25)  --(0.75,-0.25) node[whitenode] {\tiny$140$};

\draw[dashed] (-0.75,2.25)  --(1.75,-0.25) node[whitenode] {\tiny$70$};

\draw[dashed] (-0.6,3.1)  node[whitenode] {\tiny$74$} --(2.75,-0.25) node[whitenode] {\tiny$74$};

\draw[dashed] (0.25,3.25) node[whitenode] {\tiny$98$} --(3.75,-0.25) node[whitenode] {\tiny$98$};

\draw[dashed] (1.25,3.25) node[whitenode] {\tiny$74$} --(4.75,-0.25) node[whitenode] {\tiny$74$};

\draw[dashed] (2.25,3.25) node[whitenode] {\tiny$26$} --(5.6,-0.1) node[whitenode] {\tiny$26$};

\draw[dashed] (3.25,3.25) node[whitenode] {\tiny$140$} -- (5.75,0.75);

\draw[dashed] (4.25,3.25) node[whitenode] {\tiny$70$} -- (5.75,1.75);

\end{scope}
    \end{scope}
\draw[very thick](-0.5,2.5) node[greynode] {$74$} -- (0,3) node[whitenode] {$0$};
    
\draw[very thick](-0.5,1.5) node[whitenode] {$14$} -- (0,2) node[bordergreynode] {$50$} -- (0.5,2.5) node[bordergreynode] {$98$} -- (1,3) node[whitenode] {$0$};

\draw[very thick](-0.5,0.5) node[whitenode] {$26$} -- (0,1) node[whitenode] {$126$} -- (0.5,1.5) node[bordergreynode] {$52$} -- (1,2) node[greynode] {$170$} -- (1.5,2.5) node[greynode] {$74$} -- (2,3) node[whitenode] {$0$};

        \draw[very thick](0,0) node[whitenode] {$0$} -- (0.5,0.5) node[bordergreynode] {$140$} -- (1,1) node[bordergreynode] {$26$} -- (1.5,1.5) node[greynode] {$116$} -- (2,2) node[greynode] {$106$} -- (2.5,2.5) node[greynode] {$26$} -- (3,3) node[whitenode] {$0$};

        \draw[very thick](1,0) node[whitenode] {$0$} -- (1.5,0.5) node[whitenode] {$70$} -- (2,1) node[whitenode] {$56$} -- (2.5,1.5) node[whitenode] {$14$} -- (3,2) node[whitenode] {$126$} -- (3.5,2.5) node[borderwhitenode] {$140$} -- (4,3) node[whitenode] {$0$};
        
        \draw[very thick](2,0) node[whitenode] {$0$} -- (2.5,0.5) node[whitenode] {$74$} -- (3,1) node[borderwhitenode] {$50$} -- (3.5,1.5) node[borderwhitenode] {$52$} -- (4,2) node[borderwhitenode] {$26$} -- (4.5,2.5) node[whitenode] {$70$} -- (5,3) node[whitenode] {$0$};

        \draw[very thick](3,0) node[whitenode] {$0$} -- (3.5,0.5) node[borderwhitenode] {$98$} -- (4,1) node[whitenode] {$170$} -- (4.5,1.5) node[whitenode] {$116$} -- (5,2) node[whitenode] {$56$} -- (5.5,2.5) node[whitenode] {$74$};

        \draw[very thick](4,0) node[whitenode] {$0$} -- (4.5,0.5) node[whitenode] {$74$} -- (5,1) node[whitenode] {$106$} -- (5.5,1.5) node[whitenode] {$14$};

        \draw[very thick](5,0) node[whitenode] {$0$} -- (5.5,0.5) node[whitenode] {$26$};

        \end{tikzpicture}
        \caption{A portion of a spherical Cayley-Menger frieze~$\zfrieze$ of order~$6$. This frieze corresponds to the hexagon on a sphere with radius~$7$ in Figure~\ref{fig: poly example} (the same polygon that gives rise to the spherical Heronian frieze in Figure~\ref{fig: sph heron frieze example}).
        To~visualize the glide symmetry, flip the gray shaded region upside down and paste it to the right of its original location. The blue and magenta entries form two separate traversing paths, their entries form identical sets. }
        \label{fig: cayley-menger frieze example}
    \end{figure}

\begin{definition}
    A \emph{traversing path} in a spherical Cayley-Menger frieze (cf.~Definition~\ref{def: traversing path heronian}) is an ordered collection
    \[
    \pi=((i_1,j_1),\dots,(i_{n-1},j_{n-1}),l_1,\dots,l_{n-2})
    \]
    of~$2n-3$ indices in~$I_n^{SCM}$ that consists of elements on a shortest path connecting the lower and upper boundaries of the frieze.  To~be more precise, we require that the first point is of the form~${(i_1,j_1)=(k,k+1)}$ for some~$k\in\Z$ (a point almost on the bottom boundary); similarly~${(i_{n-1},j_{n-1})=(m,m+n-1)}$ for some~$m\in\Z$ (just below the top boundary).  In particular, all~$(i_k,j_k)$ are interior nodes. The~$l_1,\dots,l_{n-2}$ are the dashed lines that intersect this shortest path.
\end{definition}

    \begin{remark} A spherical polygon~$P$ gives rise to a spherical Cayley-Menger frieze~$\zscm=\zscm(P)$.
    To~visualize the layout of the frieze, again start with two ants standing together at vertex~$k$ of the polygon. Correspondingly, the entry at vertex~$(k,k)$ in the bottom row is~$0$, indicating the squared distance between the ants. 
    
    As we move along a traversing path upwards in the frieze (now without entries at the half-integer points), each new node corresponds to one of the ants moving to an adjacent vertex of the polygon according to the index on the node.  The value at each node always shows the squared distance between the two ants.  The squared distance along the side of the polygon that an ant traverses to get to a new vertex is the label on the line~$(\seline, j+\frac{1}{2})$ or~$(i+\frac{1}{2},\neline)$ that perpendicularly intersects the traversing path on its way to the next node.
\end{remark}

There is an impediment to recovering the entire spherical Cayley-Menger frieze from solely the data associated to a traversing path.
When viewed as a polynomial in a single variable~$x_{ij}$, the spherical Cayley-Menger determinant has degree~$2$.
Hence, given~$a,b,c,d,e\in\C$, there are generically two values of~$f$, satisfying~$\scmfour (a,b,c,d,e,f)=0$. 

To~enable propagation, we are going to derive a \emph{coherence condition}, an additional relation amongst the entries of a spherical Cayley-Menger frieze that holds whenever a frieze arises from a spherical polygon.

\begin{definition}
We use notation for the partial derivatives of the spherical Cayley-Menger polynomial~$\scmfour (a,b,c,d,e,f)$ with respect to its~$6$ variables as shown in Figure~\ref{fig: notation for partials}.  The arrows in the notation reflect the location of the arguments in a spherical Cayley-Menger diamond. 
For example,
\begin{multline*}
\partial_\rightarrow\scmfour(a,b,c,d,e,f)=
    \frac{\partial \scmfour }{\partial f}(a,b,c,d,e,f)=\\
    2\left((a-d)(c-b)+e\left(a+b+c+d-e-2f-\mytfrac{\kap}{2} \left(ac-bd+ef\right)\right)\right)\, .
    \end{multline*}
The other five partial derivatives of~$\scmfour(a,b,c,d,e,f)$ can be obtained by evaluating the expression on the right hand side for various permutations of~$a,b,c,d,e,f$.  Note, for example, that a vertical reflection of the spherical Cayley-Menger diamond (swapping~$a\leftrightarrow c$ and~$b\leftrightarrow d$) leaves the expression unchanged.
\end{definition}
\vspace{-5pt}
\begin{figure}[htbp!]
    \centering
        \begin{tikzpicture}
            [scale=0.8,every node/.style={rectangle,fill=white, inner sep = 1pt}]
        \begin{scope}[on background layer]
        \draw[dashed] (-2,4) node[left=3pt] {$\partial_{\>\smallseline} \scmfour =\frac{\partial \scmfour }{\partial d}$}
        --(2,0);
        \draw[dashed] (-2,0) node[left=3pt] {$\partial_{\!\smallneline} \scmfour =\frac{\partial \scmfour }{\partial b}$}
        --(2,4);
    \end{scope}
        \draw[very thick]
        (0,0) node[below=-4pt] {$\partial_\downarrow \scmfour =\frac{\partial M}{\partial c}$}
        -- (-2,2) node[left=-10pt] {$\partial_\leftarrow \scmfour =\frac{\partial \scmfour }{\partial e}$}
        -- (0,4) node[above=-3pt] {$\partial_\uparrow \scmfour =\frac{\partial \scmfour }{\partial a}$}
        -- (2,2) node[right=-15pt] {$\partial_\rightarrow \scmfour =\frac{\partial M}{\partial f}$}
        -- (0,0);
        \end{tikzpicture}
    \caption{Notation for partial derivatives of~$\scmfour (a,b,c,d,e,f)$.}
    \label{fig: notation for partials}
\end{figure}
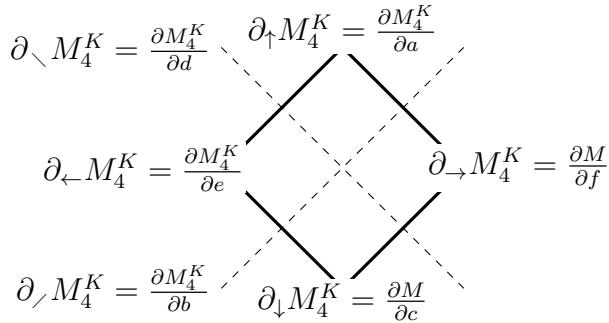
\vspace{-5pt}
\begin{lemma}\label{lem: partial H_kap}
For~$(a,b,c,d,e,f)$, a spherical Cayley-Menger diamond, we have:
\begin{align}
    (\partial_\rightarrow \scmfour (a,b,c,d,e,f))^2&=4\hkap(b,c,e)\hkap(a,d,e)\, ,\label{eq: partial f identity}\\
    (\partial_\uparrow \scmfour (a,b,c,d,e,f))^2&=4\hkap(b,c,e)\hkap(c,d,f)\, ,\nonumber\\
    (\partial_\downarrow \scmfour (a,b,c,d,e,f))^2&=4\hkap(a,b,f)\hkap(a,d,e)\, ,\nonumber\\
    (\partial_\leftarrow \scmfour (a,b,c,d,e,f))^2&=4\hkap(a,b,f)\hkap(c,d,f)\, ,\nonumber\\
    (\partial_{\>\smallseline} \scmfour (a,b,c,d,e,f))^2&=4\hkap(b,c,e)\hkap(a,b,f)\, ,\nonumber\\
    (\partial_{\!\smallneline} \scmfour (a,b,c,d,e,f))^2&=4\hkap(c,d,f)\hkap(a,d,e)\, .\nonumber
\end{align}
\end{lemma}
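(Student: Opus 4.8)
The plan is to exploit the fact that $\scmfour(a,b,c,d,e,f)$, being a $5\times 5$ determinant in which $f=x_{24}$ appears in only two (symmetric) matrix positions, is a polynomial of $f$-degree exactly~$2$. Write $\scmfour=\alpha f^2+\beta f+\gamma$ with $\alpha,\beta,\gamma$ polynomials in $a,b,c,d,e$ (and $\kap$). Reading off the coefficient of $f^2$ from the Leibniz expansion --- it is $-1$ times the $3\times 3$ minor on the rows and columns not containing an $f$, the sign coming from the transposition of those two indices --- gives $\alpha=-2e\bigl(1-\frac{\kap e}{4}\bigr)$, a nonzero polynomial. Since $\partial_\rightarrow\scmfour=2\alpha f+\beta$, a one-line manipulation yields the polynomial identity $(\partial_\rightarrow\scmfour)^2=4\alpha\,\scmfour+\bigl(\beta^2-4\alpha\gamma\bigr)$. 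Hence on the locus $\{\scmfour=0\}$, i.e.\ for spherical Cayley--Menger diamonds, $(\partial_\rightarrow\scmfour)^2$ equals the $f$-discriminant $\beta^2-4\alpha\gamma$, and the whole problem reduces to the \emph{polynomial} identity $\beta^2-4\alpha\gamma=4\hkap(b,c,e)\hkap(a,d,e)$.

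For this discriminant factorization I would argue geometrically, recycling the already-proved spherical Bretschneider formula, rather than grinding through the expansion (though that expansion, verified with a short Gröbner-basis computation as elsewhere in the paper, is an equally valid fallback). Fix generic $(a,b,c,d,e)\in\C^5$ and pick square roots $p,q$ with $p^2=\hkap(b,c,e)$ and $q^2=\hkap(a,d,e)$. Applying Proposition~\ref{prop: analogue of 2.14} to the $7$-tuple $(a,b,c,d,e,p,q)$ produces a spherical Heronian diamond with
\[
f_+=\frac{(p+q)^2+(a-b+c-d)^2-\kap e(a-b)(c-d)}{4e\left(1-\frac{\kap e}{4}\right)},
\]
and applying it again to $(a,b,c,d,e,p,-q)$ produces one whose $f$-entry $f_-$ is given by the same formula with $q$ replaced by $-q$. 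By Remark~\ref{rem: sph heron implies scm is zero}, both $f_+$ and $f_-$ are roots of $\scmfour(a,b,c,d,e,\,\cdot\,)$; for generic data they are the full set of roots of this degree-$2$ polynomial. Therefore $\beta^2-4\alpha\gamma=\alpha^2(f_+-f_-)^2$, and since $f_+-f_-=\dfrac{(p+q)^2-(p-q)^2}{4e\left(1-\frac{\kap e}{4}\right)}=\dfrac{pq}{e\left(1-\frac{\kap e}{4}\right)}$ while $\alpha=-2e\bigl(1-\frac{\kap e}{4}\bigr)$, we get $\beta^2-4\alpha\gamma=4p^2q^2=4\hkap(b,c,e)\hkap(a,d,e)$. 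This holds on a Zariski-dense subset of $\C^5$, hence identically; combined with the first paragraph it gives $(\partial_\rightarrow\scmfour)^2=4\hkap(b,c,e)\hkap(a,d,e)$ on $\{\scmfour=0\}$ --- equivalently $\partial_\rightarrow\scmfour=-2\,\skap_{123}\skap_{134}$ for a realizing quadrilateral.

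The remaining five identities follow by symmetry. Permuting the points $A_1,\dots,A_4$ conjugates the defining matrix of $\scmfour$ by a permutation matrix, leaving the determinant unchanged, and carries $\partial_\rightarrow$ into each of the other five partials. For example the transposition $(1\,2)$ sends $(a,b,c,d,e,f)\mapsto(f,b,e,d,c,a)$ and $\partial_\uparrow$ into $\partial_\rightarrow$, so the identity just proved becomes $(\partial_\uparrow\scmfour)^2=4\hkap(b,c,e)\hkap(c,d,f)$; the rest are analogous, matching the bookkeeping rule ``the two $\hkap$-factors correspond to the two triangles of the diamond not containing the differentiated edge.''

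The conceptual content is light once the ``$\scmfour$ is quadratic in each variable'' observation is in place: the factorization of the discriminant is handed to us for free by Proposition~\ref{prop: analogue of 2.14}. The main things needing care are purely administrative --- keeping the sign of $\alpha$ straight, and verifying that the set over which the geometric argument runs (where $e,f_\pm\notin\{0,\tfrac4\kap\}$ so that Proposition~\ref{prop: analogue of 2.14} and Remark~\ref{rem: sph heron implies scm is zero} apply, and the two roots are genuinely the two roots) is the complement of a proper subvariety and hence Zariski dense, so that a dense-set identity upgrades to a polynomial identity.
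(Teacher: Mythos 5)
Your proposal is correct, and it reaches the paper's proof through a partly different route. The paper's own argument is a one-step computation: it asserts the polynomial identity $(\partial_\rightarrow \scmfour)^2=-8e\left(1-\frac{\kap e}{4}\right)\scmfour+4\hkap(b,c,e)\hkap(a,d,e)$, verifiable by hand or by computer algebra, and then sets $\scmfour=0$; the remaining five identities are "derived in a similar way." Your first paragraph reconstructs exactly this identity (note $4\alpha=-8e\left(1-\frac{\kap e}{4}\right)$, so your decomposition is the paper's equation verbatim), but where the paper simply expands, you obtain the discriminant factorization $\beta^2-4\alpha\gamma=4\hkap(b,c,e)\hkap(a,d,e)$ conceptually, from Proposition~\ref{prop: analogue of 2.14} (the two propagated values $f_\pm$ for the sign choices $\pm q$) together with Remark~\ref{rem: sph heron implies scm is zero}, upgraded by Zariski density; and you dispatch the other five identities by the $S_4$-symmetry of the Cayley--Menger determinant rather than by repeating the computation. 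This buys a genuine explanation of \emph{why} the right-hand side factors into two Heron polynomials (the paper itself only offers this interpretation later, in the remark following Proposition~\ref{prop: deriv geom meaning}, and there it deduces the discriminant meaning \emph{from} the lemma rather than the other way around), at the cost of the genericity bookkeeping you flag: you need $e\not\in\{0,\frac4\kap\}$, $pq\ne0$ so that $f_+\ne f_-$ are the two roots, and applicability of Remark~\ref{rem: sph heron implies scm is zero} at $f_\pm$. That last point is the only place needing a touch more care than you give it: as stated the remark assumes $f\not\in\{0,\frac4\kap\}$, so either excise the locus where $f_\pm$ hits those values (it is contained in a proper hypersurface, as you say) or observe that the computational core of the remark --- $\scmfour\cdot e(\kap e-4)$ lying in the Heronian-diamond ideal --- needs only $e\not\in\{0,\frac4\kap\}$, which removes the issue entirely. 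No circularity arises, since Proposition~\ref{prop: analogue of 2.14} and Remark~\ref{rem: sph heron implies scm is zero} are established independently of this lemma.
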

\begin{proof}
    One can manually verify the following polynomial identity, which holds for any values of~$a,b,c,d,e,f\in\C$:
    \vspace{-5pt}
    \begin{multline}\label{eq: id for partial kap}
        (\partial_\rightarrow \scmfour (a,b,c,d,e,f))^2=\\-8e\left(1-\mytfrac{\kap e}{4}\right)\scmfour (a,b,c,d,e,f)+4\hkap(b,c,e)\hkap(a,d,e) \, .
    \end{multline}
    Since~$\scmfour (a,b,c,e,d,e,f)=0$, we get equation~\eqref{eq: partial f identity}.  The other equations can be derived in a similar way.
\end{proof}

When a spherical Cayley-Menger diamond arises from a quadrilateral on a sphere, Lemma~\ref{lem: partial H_kap} can be strengthened by assigning a geometric meaning to each evaluation of a partial derivative of~$\scmfour$.

\begin{proposition} \label{prop: deriv geom meaning} Consider a spherical quadrilateral~${P=A_1A_2A_3A_4}$.  Let
     \[
     \xx=(x_{14},x_{12},x_{23},x_{34},x_{13},x_{24})=(a,b,c,d,e,f)
     \]
     denote the corresponding spherical Cayley-Menger diamond.
     Then
     \begin{align}
         \partial_\leftarrow\scmfour(\xx)&=-2\skap_{124}\skap_{234} \, , & \partial_\rightarrow\scmfour(\xx)&=-2\skap_{123}\skap_{134} \, , \label{eq: deriv 1}\\
         \partial_{\!\smallneline}\scmfour(\xx)&=2\skap_{134}\skap_{234} \, , & \partial_{\>\smallseline}\scmfour(\xx)&=2\skap_{123}\skap_{124} \, , \label{eq: deriv 2}\\
         \partial_\uparrow\scmfour(\xx)&=2\skap_{123}\skap_{234} \, , & \partial_\downarrow\scmfour(\xx)&=2\skap_{124}\skap_{134} \label{eq: deriv 3}\, .
     \end{align}
\end{proposition}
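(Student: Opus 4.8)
The plan is to realize $\scmfour(\xx)$, up to an explicit scalar, as the determinant of the $4\times 4$ Gram matrix of the vectors $\vec{A_1},\dots,\vec{A_4}$ (as in Section~\ref{sec: cayley menger}, $\vec{A_i}$ abbreviates $\vec{OA_i}$), and then to recognize the partial derivatives of that determinant, via Jacobi's formula, as signed $3\times 3$ cofactors of the Gram matrix. Each such cofactor is a $3\times 3$ minor that factors through Lemma~\ref{lem: vector mult} into a product of two $3\times 3$ volume determinants, i.e.\ into a product of two $\skap$-measurements. Determining the signs amounts to tracking the cofactor sign $(-1)^{i+j}$, the factor $-\mytfrac12$ coming from $\partial G_{ij}/\partial x_{ij}$, and the overall scalar relating $\det G$ to $\scmfour$.

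In detail, let $G=\bigl(\langle\vec{A_i},\vec{A_j}\rangle\bigr)_{i,j=1}^{4}$, so that $G_{ij}=\mytfrac1\kap\bigl(1-\mytfrac\kap2 x_{ij}\bigr)$ for all $i,j$ by Lemma~\ref{lem: cosine}. Performing on the bordered $5\times 5$ matrix that defines $\scmfour$ exactly the row reduction used in the proof of Theorem~\ref{thm: scm} (replace each non-border row $i$ by $(\text{row }1)-\mytfrac\kap2(\text{row }i)$, then expand along the first column) yields the polynomial identity $\det G=\mytfrac1{8\kap}\scmfour(\xx)$. Now fix one of the six variables, say $f=x_{24}$; it occurs in $G$ only in the symmetric pair of entries $G_{24}=G_{42}$, with $\partial G_{24}/\partial x_{24}=-\mytfrac12$, so Jacobi's formula together with the symmetry of $G$ gives $\partial_{x_{24}}\det G=-C_{24}$, where $C_{24}=(-1)^{2+4}\det\widehat G_{24}$ is the $(2,4)$-cofactor and $\widehat G_{24}$ is $G$ with row $2$ and column $4$ deleted. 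That deleted matrix is $\widehat G_{24}=U^{\!\top}W$, where $U$ and $W$ have column vectors $(\vec{A_1},\vec{A_3},\vec{A_4})$ and $(\vec{A_1},\vec{A_2},\vec{A_3})$ respectively, so by Lemma~\ref{lem: vector mult} $\det\widehat G_{24}=\det U\cdot\det W$. Since $\det[\vec{A_i}\,\vec{A_j}\,\vec{A_k}]=6V(OA_iA_jA_k)=\mytfrac R2\skap_{ijk}$ by \eqref{eq:voltet1} and Definition~\ref{def: skappa def}, and $R^2=\mytfrac1\kap$, we get $\det\widehat G_{24}=\mytfrac1{4\kap}\skap_{123}\skap_{134}$; assembling the constants, $\partial_\rightarrow\scmfour(\xx)=8\kap\cdot\bigl(-\mytfrac1{4\kap}\skap_{123}\skap_{134}\bigr)=-2\skap_{123}\skap_{134}$, which is the right-hand identity in \eqref{eq: deriv 1}.

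The other five identities follow by the identical computation: differentiating $\det G$ in $x_{ij}$ gives $-(-1)^{i+j}\det\widehat G_{ij}$, the minor $\widehat G_{ij}$ factors by Lemma~\ref{lem: vector mult} into the product of the two volume determinants indexed by the retained rows and the retained columns (both read off in increasing order, hence producing $\skap$ with increasing subscripts), and rewriting each factor as $\mytfrac R2\skap$ produces precisely the entries listed in \eqref{eq: deriv 1}--\eqref{eq: deriv 3}, the $\pm$ sign being governed by $(-1)^{i+j}$ (so $e=x_{13}$ and $f=x_{24}$ give the two minus signs, the remaining four give plus signs). As a consistency check, squaring these relations and invoking Proposition~\ref{prop: sph heron} recovers Lemma~\ref{lem: partial H_kap}. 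I anticipate no conceptual obstacle; the only point needing real care is the sign-and-scalar bookkeeping so that the numerical coefficient comes out as $\pm2$ in every case, which is why I would carry out the $f=x_{24}$ case in full as above and then obtain the remaining five "by the same computation."
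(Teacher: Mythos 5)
Your proof is correct, and it reaches the result by a route that differs in mechanism from the paper's. The paper proves the $\partial_\rightarrow$ identity by observing that the explicitly expanded polynomial $\partial_\rightarrow\scmfour(\xx)$ coincides with equation~\eqref{eq: eq for pq bret}, which was obtained in the proof of Theorem~\ref{thm: sph Bret} by applying Lemma~\ref{lem: vector mult} to the $3\times3$ matrix of scalar products of $(\vec{OA_1},\vec{OA_2},\vec{OA_3})$ against $(\vec{OA_1},\vec{OA_3},\vec{OA_4})$; the remaining five identities are then asserted to follow ``similarly, by applying Lemma~\ref{lem: vector mult} to appropriate collections of vectors,'' i.e.\ by the same case-by-case polynomial matching. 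You instead package all six cases at once: the scalar identity $\det G=\frac{1}{8\kap}\scmfour(\xx)$ (which checks out against the paper's own chain of equalities in the proof of Theorem~\ref{thm: scm} with $m=4$, and is a polynomial identity in the $x_{ij}$ after the substitution of Lemma~\ref{lem: cosine}, so differentiating it is legitimate), Jacobi's formula to identify $\partial_{x_{ij}}\det G$ with $-C_{ij}$, and only then Lemma~\ref{lem: vector mult} to factor the $3\times3$ minor into $\frac{R}{2}\skap\cdot\frac{R}{2}\skap$. The underlying factorization is the same as the paper's (indeed the minor $\widehat G_{24}$ is exactly the matrix appearing in the Bretschneider proof), but your organization explains structurally why partial derivatives of $\scmfour$ arise at all---they are cofactors of the Gram matrix---which is precisely the kind of explanation the paper's subsequent remark says is absent from both its own proof and the cited literature; it also makes the signs uniform via $(-1)^{i+j}$ rather than requiring six separate expansions (and your sign bookkeeping is right: only $e=x_{13}$ and $f=x_{24}$ have $(-1)^{i+j}=+1$, hence the two minus signs). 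The only points to state carefully in a final write-up are that $\kap\neq 0$ (true, $\kap=1/R^2$) and that the identity $\det G=\frac{1}{8\kap}\scmfour(\xx)$ is used as a polynomial identity in the $x_{ij}$, not as a consequence of $\det G=0$.
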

     \begin{proof}  Denoting~$p=\skap_{123}$ and~$q=\skap_{134}$,  the relation~$\partial_\rightarrow\scmfour(\xx)=-2\skap_{123}\skap_{134}$ is the equation~\eqref{eq: eq for pq bret}.
     The other relations~\eqref{eq: deriv 1}-\eqref{eq: deriv 3} can be proved similarly, by applying Lemma~\ref{lem: vector mult} to appropriate collections of vectors.
\end{proof}

\begin{remark} Specializing~$\kap=0$ in the equations~\eqref{eq: deriv 1}-\eqref{eq: deriv 3} recovers their analogues obtained in \cite{fomin_heronian_2021}.  For example,~\eqref{eq: pq in euclidean case} is obtained by setting~$\kap=0$ in the relation~$\partial_\rightarrow\scmfour(\xx)=-2\skap_{123}\skap_{134}$. For proof, Fomin and Setiabrata cite~\cite{khimshiashvili_point_2017}, which references~\cite[40]{dziobek_ueber_1900} and~\cite{hampton_concave_2002}. The proof method in  \cite{khimshiashvili_point_2017} is to apply the partial derivative directly to the Cayley-Menger matrix, perform row and column operations, reinterpret as a cross product, and then use the Binet-Cauchy formula.

Neither our proof of Proposition~\ref{prop: deriv geom meaning}, nor the proof in \cite{khimshiashvili_point_2017}, give a geometric explanation for the equations~\eqref{eq: deriv 1}-\eqref{eq: deriv 3}, in particular for the appearance of partial derivatives of~$\scmfour(\xx)$. We next attempt to provide such an explanation.

The polynomial~$\scmfour(\xx)$ is quadratic in~$f$: we have $\scmfour(\xx)=Af^2+Bf+C$ for some~$A,B,C\in\C$.  Therefore 
\vspace{-5pt}
    \begin{equation*}
        \partial_\rightarrow\scmfour(\xx)=\mytfrac{\partial}{\partial f}\scmfour(\xx)=2Af+B\, .
    \end{equation*}
    Denote~$D=B^2-4AC$, the discriminant of~$\scmfour(\xx)$.  Rewriting
    \begin{equation*}
        \scmfour(\xx)= \mytfrac{(2Af+B)^2-(B^2-4AC)}{4A}\, ,
    \end{equation*}
    and recalling that~$\scmfour(\xx)=0$, we conclude that~$
\left(\mytfrac{\partial}{\partial f}\scmfour(\xx)\right)^2=D\, .$  By Proposition~\ref{prop: deriv geom meaning}, it follows that
\vspace{-5pt}
    \begin{equation*}
        D=\left(\mytfrac{\partial}{\partial f}\scmfour(\xx)\right)^2=4(S_{123}^\kap)^2 (S_{134}^\kap)^2\, .
    \end{equation*}
Thus~$D=0$ when either~$\skap_{123}=0$ or~$\skap_{134}=0$. 

This can be interpreted geometrically as follows. The discriminant~$D$ vanishes when~$\scmfour(\xx)=0$ has a a double root, i.e., when both solutions for~$f$ are the same. 
Fixing~$a,b,c,d,e$, we get two solutions for~$f$ related to each other by reflecting the triangle~$A_1A_2A_3$ across the line~$A_1A_3$ (or by reflecting~$A_1A_3A_4$ across~$A_1A_3$):

    \begin{figure}[htbp!]
        \centering
        \vspace{-5pt}
\begin{tikzpicture}[scale=0.3]
    \draw(0,0) node[left] {$A_4$} -- (-2,3) node[left] {$A_1$} -- (-1, 5) node[left] {$A_2$} -- (2,2) node[right] {$A_3$} -- (0,0);
    \draw(-2,3) -- (2,2);
    \draw[dotted] (0,0) -- (-1,5);
        \draw(-.4,3.5) to [out=0, in=180] (2,4) node[right] {option 1 for~$f$};
\end{tikzpicture}     
\hspace{1 cm}
\begin{tikzpicture}[scale=0.3]
    \draw (0,0) node[right] {$A_4$} -- (-2,3) node[left] {$A_1$} -- (-1.5, 1) node[left] {$A_2$} -- (2,2) node[right] {$A_3$} -- (0,0);
    \draw (-2,3) -- (2,2);
    \draw[dotted] (0,0) --  (-1.5,1);
    \draw (-1,0.5) to [out=200,in=-30] (-3,-.5) node[left] {option 2 for~$f$};
\end{tikzpicture}        
\vspace{-5pt}
        \label{fig: two solutions for f}
    \end{figure}
    
\noindent The two options for~$f$ coincide with each other if and only if either~$A_2$ or~$A_4$ lie on a great circle of~$\bfS$ passing through~$A_1$ and~$A_3$. This would make either~$A_1A_2A_3$ or~$A_1A_3A_4$ a ``degenerate'' triangle, with~$S_{123}^\kap$ or~$\skap_{134}$ vanishing:
    \begin{figure}[htbp!]
        \centering
        \vspace{-5pt}
\begin{tikzpicture}[scale=0.3]
    \draw (0,0) node[below] {$A_4$} -- (-2,3) node[left] {$A_1$} -- (-1, 2.8) node[above] {$A_2$} -- (2,2) node[right] {$A_3$} -- (0,0);
    \draw (-2,3) -- (2,2);
    \draw[dotted] (0,0) -- (-1,2.8);
        \draw (-.2,1.5) to [out=0, in=180] (2,0) node[right] {unique~$f$};
\end{tikzpicture}     
\hspace{1 cm}
\begin{tikzpicture}[scale=0.3]
    \draw (6,0) node[right] {$A_4$} -- (-2,2) node[left] {$A_1$} -- (-1,4) node[above] {$A_2$} -- (2,1) node[below] {$A_3$};
    
    \draw[dotted] (6,0) -- node[above right] {unique~$f$}  (-1,4);
    
\end{tikzpicture}    
        \label{fig: degenerate f unique}
          \vspace{-10pt}
    \end{figure}
\end{remark}

\begin{proposition} \label{prop: 5.9 analogue}   Consider four interlocking spherical Cayley-Menger diamonds as depicted in Figure~\ref{fig: four cayley menger diamonds}:
\begin{align*}
    \xx_1&=(x_{15},x_{12},x_{24},x_{45},x_{14},x_{25})\, ,\\
    \xx_2&=(x_{16},x_{12},x_{25},x_{56},x_{15},x_{26})\, ,\\
    \xx_3&=(x_{25},x_{23},x_{34},x_{45},x_{24},x_{35})\, ,\\
    \xx_4&=(x_{26},x_{23},x_{35},x_{56},x_{25},x_{36})\, .
\end{align*}
Then 
\begin{equation*}
    \left(\partial_\leftarrow \scmfour (\xx_1)\partial_\rightarrow \scmfour (\xx_4)\right)^2=\left(\partial_\uparrow \scmfour (\xx_2)\partial_\downarrow \scmfour (\xx_3)\right)^2.
\end{equation*}
If the four Cayley-Menger diamonds correspond to a hexagon~$P=A_1A_2\cdots A_6$, then 
\begin{equation}\label{eq: hexagon coherence condition}
    \partial_\leftarrow \scmfour (\xx_1)\partial_\rightarrow \scmfour (\xx_4)=\partial_\uparrow \scmfour (\xx_2)\partial_\downarrow \scmfour (\xx_3)\, .
\end{equation}
\end{proposition}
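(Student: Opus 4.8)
The plan is to establish the squared identity first as a purely algebraic consequence of Lemma~\ref{lem: partial H_kap}, and then to upgrade it to the sign-resolved coherence equation~\eqref{eq: hexagon coherence condition} in the hexagonal case by substituting the geometric interpretation of the partial derivatives supplied by Proposition~\ref{prop: deriv geom meaning}.

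For the first assertion, recall that each $\xx_m$ is a spherical Cayley-Menger diamond, so $\scmfour(\xx_m)=0$; hence Lemma~\ref{lem: partial H_kap} rewrites each of the four squared partial derivatives occurring in the claim as $4$ times a product of two values of $\hkap$. Reading the six entries of $\xx_1,\dots,\xx_4$ off the statement and using that $\hkap(a,b,c)$ is symmetric in its three arguments, one finds
\[
(\partial_\leftarrow\scmfour(\xx_1))^2=4\,\hkap(x_{12},x_{15},x_{25})\,\hkap(x_{24},x_{25},x_{45}),
\]
\[
(\partial_\rightarrow\scmfour(\xx_4))^2=4\,\hkap(x_{23},x_{25},x_{35})\,\hkap(x_{25},x_{26},x_{56}),
\]
\[
(\partial_\uparrow\scmfour(\xx_2))^2=4\,\hkap(x_{12},x_{15},x_{25})\,\hkap(x_{25},x_{26},x_{56}),
\]
\[
(\partial_\downarrow\scmfour(\xx_3))^2=4\,\hkap(x_{23},x_{25},x_{35})\,\hkap(x_{24},x_{25},x_{45}).
\]
Multiplying the first two lines and, separately, the last two lines yields in both cases the same product $16\,\hkap(x_{12},x_{15},x_{25})\hkap(x_{24},x_{25},x_{45})\hkap(x_{23},x_{25},x_{35})\hkap(x_{25},x_{26},x_{56})$, which is exactly the claimed equality of squares. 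Only the explicit entries of the four diamonds and the fact that they overlap consistently (the ``interlocking'' hypothesis) enter here; no geometry is needed.

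For equation~\eqref{eq: hexagon coherence condition} I would identify each $\xx_m$ as the spherical Cayley-Menger diamond of a sub-quadrilateral of $P$, namely $\xx_1\leftrightarrow A_1A_2A_4A_5$, $\xx_2\leftrightarrow A_1A_2A_5A_6$, $\xx_3\leftrightarrow A_2A_3A_4A_5$, and $\xx_4\leftrightarrow A_2A_3A_5A_6$. Applying Proposition~\ref{prop: deriv geom meaning} to each of these quadrilaterals, with the obvious relabeling of its four vertices, gives
\[
\partial_\leftarrow\scmfour(\xx_1)=-2\,\skap_{125}\skap_{245},\qquad
\partial_\rightarrow\scmfour(\xx_4)=-2\,\skap_{235}\skap_{256},
\]
\[
\partial_\uparrow\scmfour(\xx_2)=2\,\skap_{125}\skap_{256},\qquad
\partial_\downarrow\scmfour(\xx_3)=2\,\skap_{235}\skap_{245}.
\]
Both sides of~\eqref{eq: hexagon coherence condition} then reduce to $4\,\skap_{125}\skap_{235}\skap_{245}\skap_{256}$, and the proof is complete.

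I do not expect a genuine obstacle; the entire argument is bookkeeping. The one point demanding care is matching the six coordinates of each $\xx_m$ to the template $(a,b,c,d,e,f)$ of a diamond (equivalently, choosing the correct relabeling $A_iA_jA_kA_l$ of each sub-quadrilateral) and invoking the symmetry of $\hkap$ consistently, so that the four $\hkap$-factors on the two sides agree as multisets in the first part, and the two minus signs coming from Proposition~\ref{prop: deriv geom meaning} multiply to $+1$ on each side in the second part.
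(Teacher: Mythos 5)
Your proof is correct and follows essentially the same route as the paper: the squared identity via Lemma~\ref{lem: partial H_kap} applied to each diamond (yielding the same four products of $\hkap$-factors), and the sign-resolved identity~\eqref{eq: hexagon coherence condition} via Proposition~\ref{prop: deriv geom meaning} applied to the sub-quadrilaterals, with the two minus signs cancelling. Your explicit matching of each $\xx_m$ to its sub-quadrilateral of the hexagon is just the bookkeeping the paper leaves implicit.
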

\begin{figure}[htbp!]
    \centering
    \begin{tikzpicture}[scale=0.5,every node/.style={rectangle,fill=white,inner sep=3pt}]

        \begin{scope}[on background layer]
\draw[dashed] (-2,8) node {$x_{56}$}--(4,2);
\draw[dashed] (-4,6) node {$x_{45}$}--(2,0);
\draw[dashed] (-4,2) node {$x_{12}$}--(2,8);
\draw[dashed] (-2,0) node {$x_{23}$}--(4,6);
\draw[very thick] (-2,6) -- (0,4) -- (2,2);
    \end{scope}
    \draw[very thick] (-2,2) -- (0,4) node {$x_{25}$}
        -- (2,6);
        \draw[very thick]
        (0,0) node {$x_{34}$}
        -- (-2,2) node {$x_{24}$}
        -- (-4,4) node {$x_{14}$}
        -- (-2,6) node {$x_{15}$}
        -- (0,8) node {$x_{16}$}
        -- (2,6) node {$x_{26}$}
        -- (4,4) node {$x_{36}$}
        -- (2,2) node {$x_{35}$}
        -- (0,0);
        
        \draw[very thick] (-2,4) node[above] {$\xx_1$};
        \draw[very thick] (0,6) node[above] {$\xx_2$};
        \draw[very thick] (2,4) node[above] {$\xx_4$};
        \draw[very thick] (0,2) node[above] {$\xx_3$};
    \end{tikzpicture}
    \caption{Four interlocking Cayley-Menger diamonds.}
    \label{fig: four cayley menger diamonds}
\end{figure}
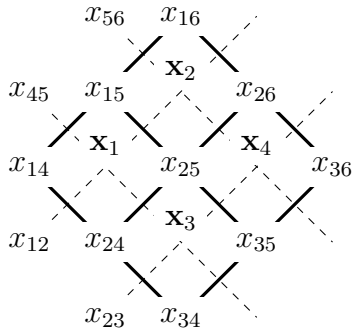

\begin{proof}
    We use the shorthand~$\hkap_{ijk}=\hkap(x_{ij},x_{ik},x_{jk})$. Using Lemma~\ref{lem: partial H_kap} with each diamond gives us
    \begin{align*}
        (\partial_\leftarrow\scmfour(\xx_1))^2&=4H^\kap_{125} H^\kap_{245}\, ,\\
        (\partial_\rightarrow\scmfour(\xx_4))^2&=4H^\kap_{235}H^\kap_{256}\, ,\\
        (\partial_\uparrow\scmfour(\xx_2))^2&=4H^\kap_{125}H^\kap_{256}\, ,\\
        (\partial_\downarrow\scmfour(\xx_3))^2&=4H^\kap_{235}H^\kap_{245}\, .
    \end{align*}
    So
    \begin{equation*}
        \left(\partial_\leftarrow \scmfour(\xx_1)\partial_\rightarrow\scmfour(\xx_4)\right)^2=16H^\kap_{125}H^\kap_{245}H^\kap_{256}H^\kap_{235}=\left(\partial_\uparrow\scmfour(\xx_2)\partial_\uparrow\scmfour(\xx_3)\right)^2.
    \end{equation*}
    If the diamonds correspond to a hexagon~$P=A_1A_2\cdots A_6$ then by Proposition~\ref{prop: deriv geom meaning} 
    \begin{align*}
        \partial_\leftarrow\scmfour(\xx_1)&=-2\skap_{125} \skap_{245}\, ,\\
        \partial_\rightarrow\scmfour(\xx_4)&=-2\skap_{235}\skap_{256}\, ,\\
        \partial_\uparrow\scmfour(\xx_2)&=2\skap_{125}\skap_{256}\, ,\\
        \partial_\downarrow\scmfour(\xx_3)&=2\skap_{235}\skap_{245}\, ,
    \end{align*}
    implying
    \begin{equation*}
        \partial_\leftarrow \scmfour(\xx_1)\partial_\rightarrow\scmfour(\xx_4)=4\skap_{125}\skap_{245}\skap_{256}\skap_{235}=\partial_\uparrow\scmfour(\xx_2)\partial_\downarrow\scmfour(\xx_3)\, .\qedhere
    \end{equation*}
\end{proof}

\begin{remark}
    By Proposition~\ref{prop: 5.9 analogue}, for any four interlocking Cayley-Menger diamonds in a frieze~$\zscm$, e.g. for~$(i,j)\in\Z^2$ with~$2\le j-i\le n-2$, we get
    \begin{equation*}
\left(\partial_\leftarrow \scmfour \left(\xdiamond_{i-1,j-1}(\zscm)\right)\partial_\rightarrow \scmfour \left(\xdiamond_{i,j}(\zscm)\right)\right)^2= \left(\partial_\uparrow \scmfour \left(\xdiamond_{i-1,j}(\zscm)\right)\partial_\downarrow \scmfour \left(\xdiamond_{i,j-1}(\zscm)\right)\right)^2,
    \end{equation*}
    and therefore
\begin{equation*}
        \partial_\leftarrow \scmfour \left(\xdiamond_{i-1,j-1}(\zscm)\right)\partial_\rightarrow \scmfour \left(\xdiamond_{i,j}(\zscm)\right)=\pm\partial_\uparrow \scmfour \left(\xdiamond_{i-1,j}(\zscm)\right)\partial_\downarrow \scmfour \left(\xdiamond_{i,j-1}(\zscm)\right).
    \end{equation*}

\end{remark}

\begin{definition}
    We call a spherical Cayley-Menger frieze \emph{coherent} if for all~$i,j\in\Z^2$ with~$2\le j-i\le n-2$ the following \emph{coherence condition} is satisfied:
\begin{equation}\label{eq: coherence condition}
        \partial_\leftarrow \scmfour \left(\xdiamond_{i-1,j-1}(\zscm)\right)\partial_\rightarrow \scmfour \left(\xdiamond_{i,j}(\zscm)\right)=\partial_\uparrow \scmfour \left(\xdiamond_{i-1,j}(\zscm)\right)\partial_\downarrow \scmfour \left(\xdiamond_{i,j-1}(\zscm)\right).
    \end{equation}

\end{definition}

\begin{theorem}\label{thm: 5.12 analogue}
    For any polygon~$P=A_1A_2\cdots A_n$ on~$\bfS$, the spherical Cayley-Menger frieze~$\zscm(P)$ is coherent.
\end{theorem}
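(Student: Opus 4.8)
The plan is to reduce the statement, node by node, to the hexagon coherence identity~\eqref{eq: hexagon coherence condition} of Proposition~\ref{prop: 5.9 analogue}. Fix an interior node $(i,j)$ of the frieze with $2\le j-i\le n-2$, and consider the six vertices of $P$
\[
B_1=A_{\lr{i-1}},\ B_2=A_{\lr{i}},\ B_3=A_{\lr{i+1}},\ B_4=A_{\lr{j-1}},\ B_5=A_{\lr{j}},\ B_6=A_{\lr{j+1}}.
\]
They form a spherical hexagon $Q=B_1B_2\cdots B_6$ on $\bfS$. Because $j-i\ge 2$, the integers $i-1<i<i+1\le j-1<j<j+1$ span a window of length $j-i+2\le n$, so modulo $n$ the only possible coincidences among the $B_k$ are $B_3=B_4$ (when $j-i=2$) and $B_6=B_1$ (when $j-i=n-2$); both are harmless, since a spherical polygon in the sense of Definition~\ref{def: sph poly} need not have distinct vertices.

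Next I would unwind the definition of the frieze $\zscm(P)$, where $z_{(a,b)}=x_{\lr a\lr b}$, $z_{(a+\frac{1}{2},\neline)}=x_{\lr a\lr{a+1}}$ and $z_{(\seline,b+\frac{1}{2})}=x_{\lr b\lr{b+1}}$, to identify the four Cayley--Menger diamonds interlocking at $(i,j)$. A direct check of the entries shows that, under the identification $A_k\leftrightarrow B_k$,
\[
\xdiamond_{i-1,j-1}(\zscm(P))=\xx_1,\quad
\xdiamond_{i-1,j}(\zscm(P))=\xx_2,\quad
\xdiamond_{i,j-1}(\zscm(P))=\xx_3,\quad
\xdiamond_{i,j}(\zscm(P))=\xx_4,
\]
with $\xx_1,\dots,\xx_4$ the interlocking diamonds of Proposition~\ref{prop: 5.9 analogue} attached to the hexagon $Q$. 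For instance, the $6$-tuple $\xdiamond_{i-1,j-1}(\zscm(P))$ equals
\[
\bigl(x_{\lr{i-1}\lr j},x_{\lr{i-1}\lr i},x_{\lr i\lr{j-1}},x_{\lr{j-1}\lr j},x_{\lr{i-1}\lr{j-1}},x_{\lr i\lr j}\bigr)
=\bigl(x_{B_1B_5},x_{B_1B_2},x_{B_2B_4},x_{B_4B_5},x_{B_1B_4},x_{B_2B_5}\bigr),
\]
which is exactly $\xx_1$ for $Q$; the three remaining matchings are checked the same way, using that $(i,j)$ is respectively the bottom, top, and left vertex of $\xdiamond_{i-1,j}$, $\xdiamond_{i,j-1}$, $\xdiamond_{i,j}$.

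With this dictionary in place, equation~\eqref{eq: hexagon coherence condition} of Proposition~\ref{prop: 5.9 analogue}, applied to $Q$, reads
\[
\partial_\leftarrow\scmfour\!\bigl(\xdiamond_{i-1,j-1}(\zscm(P))\bigr)\,\partial_\rightarrow\scmfour\!\bigl(\xdiamond_{i,j}(\zscm(P))\bigr)
=\partial_\uparrow\scmfour\!\bigl(\xdiamond_{i-1,j}(\zscm(P))\bigr)\,\partial_\downarrow\scmfour\!\bigl(\xdiamond_{i,j-1}(\zscm(P))\bigr),
\]
which is precisely the coherence condition~\eqref{eq: coherence condition} at $(i,j)$. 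Since $(i,j)$ was an arbitrary interior node with $2\le j-i\le n-2$, this shows that $\zscm(P)$ is coherent.

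The steps above are essentially bookkeeping, and the place where I expect the most care to be needed is matching the layout of the four frieze diamonds around $(i,j)$ with the configuration in Figure~\ref{fig: four cayley menger diamonds}. A minor point worth a sentence is the degenerate hexagons $Q$ forced by $j-i\in\{2,n-2\}$ (in particular whenever $n=4$): the proof of the hexagon identity~\eqref{eq: hexagon coherence condition} in Proposition~\ref{prop: 5.9 analogue} rests only on Lemma~\ref{lem: partial H_kap} and Proposition~\ref{prop: deriv geom meaning}, neither of which requires the six points to be distinct, so that identity---and hence the argument above---holds for such $Q$ without change. I do not foresee any genuine obstacle beyond this.
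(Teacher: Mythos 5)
Your proposal is correct and follows essentially the same route as the paper, which proves Theorem~\ref{thm: 5.12 analogue} as a direct corollary of the hexagon identity~\eqref{eq: hexagon coherence condition} in Proposition~\ref{prop: 5.9 analogue}; your version simply makes the index-matching dictionary and the degenerate-vertex cases explicit, and that bookkeeping checks out.
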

\begin{proof}
    This is a direct corollary of Proposition~\ref{prop: 5.9 analogue}, see identity~\eqref{eq: hexagon coherence condition}.
\end{proof}

\begin{lemma}\label{lem: is a sph cay meng diamond}
    Let
    \begin{align*}
        \xx_2&=(x_{16},x_{12},x_{25},x_{56},x_{15},x_{26})\, ,\\
        \xx_3&=
        (x_{25},x_{23},x_{34},x_{45},x_{24},x_{35})\, ,\\
        \xx_4&=(x_{26},x_{23},x_{35},x_{56},x_{25},x_{36})
            \end{align*}
            be three spherical Cayley-Menger diamonds
            arranged as in Figure~\ref{fig: four cayley menger diamonds}. Assume that 
            \begin{equation}\label{eq: stuff not zero}
                x_{25}
    \left(x_{25} - \mytfrac{4}{\kap}\right)H^\kap(x_{24},x_{25},x_{45})H^\kap(x_{12},x_{15},x_{25})\ne 0\, .
            \end{equation} Let~$x_{14}\in\C$ be such that for~$
         \xx_1=(x_{15},x_{12},x_{24},x_{45},x_{14},x_{25})$, the coherence condition holds:
            \begin{equation}\label{eq: coherence condition for x1}
                \partial_\leftarrow\scmfour(\xx_1)\partial_\rightarrow\scmfour(\xx_4)=\partial_\uparrow\scmfour(\xx_2)\partial_\downarrow\scmfour(\xx_3)\, .
            \end{equation}
            Then the tuple~$\xx_1$ is also a spherical Cayley-Menger diamond, i.e.~$\scmfour(\xx_1)=0$.
\end{lemma}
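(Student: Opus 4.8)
The plan is to solve for $\scmfour(\xx_1)$ inside a universal polynomial identity, thereby reducing the claim to an equality of two squares that is then forced by the (squared) coherence condition together with Lemma~\ref{lem: partial H_kap} applied to the three genuine diamonds $\xx_2,\xx_3,\xx_4$. First I would record the companion of the identity~\eqref{eq: id for partial kap} for the \emph{left} partial derivative, obtained from~\eqref{eq: id for partial kap} by the horizontal-reflection symmetry of a spherical Cayley-Menger diamond (the involution $b\leftrightarrow d$, $e\leftrightarrow f$, which fixes $\scmfour$): for all complex $a,\dots,f$,
\begin{equation*}
(\partial_\leftarrow\scmfour(a,b,c,d,e,f))^2=-8f\bigl(1-\tfrac{\kap f}{4}\bigr)\scmfour(a,b,c,d,e,f)+4\hkap(a,b,f)\hkap(c,d,f).
\end{equation*}
Specializing to $\xx_1=(x_{15},x_{12},x_{24},x_{45},x_{14},x_{25})$, so that $x_{25}$ plays the role of $f$, and using $-8f(1-\tfrac{\kap f}{4})=2\kap f(f-\tfrac{4}{\kap})$, this becomes
\begin{equation*}
(\partial_\leftarrow\scmfour(\xx_1))^2=2\kap\,x_{25}\bigl(x_{25}-\tfrac{4}{\kap}\bigr)\,\scmfour(\xx_1)+4\,\hkap(x_{12},x_{15},x_{25})\,\hkap(x_{24},x_{25},x_{45}).
\end{equation*}
Since $x_{25}(x_{25}-\tfrac{4}{\kap})\neq 0$ by~\eqref{eq: stuff not zero}, the coefficient of $\scmfour(\xx_1)$ is nonzero, so it will suffice to prove the single identity $(\partial_\leftarrow\scmfour(\xx_1))^2=4\,\hkap(x_{12},x_{15},x_{25})\,\hkap(x_{24},x_{25},x_{45})$.

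To establish that, I would square the coherence condition~\eqref{eq: coherence condition for x1} and substitute the values
\begin{equation*}
(\partial_\rightarrow\scmfour(\xx_4))^2=4\hkap_{235}\hkap_{256},\quad (\partial_\uparrow\scmfour(\xx_2))^2=4\hkap_{125}\hkap_{256},\quad (\partial_\downarrow\scmfour(\xx_3))^2=4\hkap_{235}\hkap_{245},
\end{equation*}
which hold by Lemma~\ref{lem: partial H_kap} since $\xx_2,\xx_3,\xx_4$ are spherical Cayley-Menger diamonds --- this is exactly the bookkeeping carried out inside the proof of Proposition~\ref{prop: 5.9 analogue}, where $\hkap_{ijk}=\hkap(x_{ij},x_{ik},x_{jk})$, so in particular $\hkap_{125}=\hkap(x_{12},x_{15},x_{25})$ and $\hkap_{245}=\hkap(x_{24},x_{25},x_{45})$. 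This yields
\begin{equation*}
(\partial_\leftarrow\scmfour(\xx_1))^2\cdot 4\hkap_{235}\hkap_{256}=16\,\hkap_{125}\hkap_{245}\hkap_{235}\hkap_{256}=4\hkap_{125}\hkap_{245}\cdot(\partial_\rightarrow\scmfour(\xx_4))^2 .
\end{equation*}
Combining with the previous display, all factors to the right of $\scmfour(\xx_1)$ cancel, leaving
\begin{equation*}
(\partial_\rightarrow\scmfour(\xx_4))^2\cdot x_{25}\bigl(x_{25}-\tfrac{4}{\kap}\bigr)\cdot\scmfour(\xx_1)=0
\end{equation*}
as a formal consequence of $\scmfour(\xx_2)=\scmfour(\xx_3)=\scmfour(\xx_4)=0$, of~\eqref{eq: coherence condition for x1}, and of the two universal identities above. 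Hence, whenever $\partial_\rightarrow\scmfour(\xx_4)\neq 0$, we get $\scmfour(\xx_1)=0$. Alternatively, and perhaps more cleanly, this whole deduction can be packaged as a single Gröbner-basis ideal-membership verification in the style of Remark~\ref{rem: sph heron implies scm is zero} and Propositions~\ref{prop: analogue of 2.14}--\ref{prop: analogue of 2.15}: one checks that $\scmfour(\xx_1)$ times a suitable monomial in the nonvanishing factors recorded in~\eqref{eq: stuff not zero} lies in the ideal generated by $\scmfour(\xx_2),\scmfour(\xx_3),\scmfour(\xx_4)$ and the coherence polynomial $\partial_\leftarrow\scmfour(\xx_1)\partial_\rightarrow\scmfour(\xx_4)-\partial_\uparrow\scmfour(\xx_2)\partial_\downarrow\scmfour(\xx_3)$.

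The computations above are routine; the one genuinely delicate point --- the main obstacle --- is the degenerate branch $\partial_\rightarrow\scmfour(\xx_4)=0$, i.e.\ $\hkap_{235}\hkap_{256}=0$, in which the squared coherence relation collapses to $0=0$ and the argument gives no information about $\scmfour(\xx_1)$. There I would argue separately, either by invoking the ambient genericity hypothesis under which this lemma is actually applied (forcing the Heron expressions $\hkap_{ijk}$ entering the four interlocking diamonds to be nonzero), or by pushing through the explicit ideal-membership computation and confirming that the factors in~\eqref{eq: stuff not zero} really do suffice to clear the resulting denominators. A secondary, purely mechanical hazard is getting the sign and constant in $-8x_{25}(1-\tfrac{\kap x_{25}}{4})=2\kap x_{25}(x_{25}-\tfrac{4}{\kap})$ exactly right when transporting~\eqref{eq: id for partial kap} from $\partial_\rightarrow$ to $\partial_\leftarrow$, and keeping track of which triangle $\hkap_{ijk}$ sits on which side of each of the four diamonds.
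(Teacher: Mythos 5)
Your argument is essentially the paper's own proof: it too squares the coherence relation \eqref{eq: coherence condition for x1}, evaluates $(\partial_\uparrow\scmfour(\xx_2))^2$, $(\partial_\downarrow\scmfour(\xx_3))^2$, $(\partial_\rightarrow\scmfour(\xx_4))^2$ via Lemma~\ref{lem: partial H_kap}, keeps the correction term for $\xx_1$ coming from \eqref{eq: id for partial kap} (in exactly the reflected form you derive), and cancels the common term $16\hkap(x_{12},x_{15},x_{25})\hkap(x_{24},x_{25},x_{45})\hkap(x_{23},x_{25},x_{35})\hkap(x_{25},x_{26},x_{56})$ to reach $8\kap x_{25}\left(x_{25}-\frac{4}{\kap}\right)\hkap(x_{23},x_{25},x_{35})\hkap(x_{25},x_{26},x_{56})\,\scmfour(\xx_1)=0$, which is the paper's final display.

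As for the branch you single out as the main obstacle: your worry is well placed, and the paper's proof does not dispel it. The conclusion needs $\hkap(x_{23},x_{25},x_{35})\hkap(x_{25},x_{26},x_{56})\ne0$, i.e.\ $\partial_\rightarrow\scmfour(\xx_4)\ne0$, whereas \eqref{eq: stuff not zero} as printed names the Heron polynomials attached to $\xx_1$, namely $\hkap(x_{12},x_{15},x_{25})$ and $\hkap(x_{24},x_{25},x_{45})$; the paper nonetheless cites \eqref{eq: stuff not zero} at this step, so the mismatch you noticed is a defect (presumably typographical) in the stated hypothesis rather than a missing idea you must supply. In the situations where the lemma is actually invoked the required nonvanishing is available: the second case of Proposition~\ref{prop: 5.15 analogue} assumes precisely $x_{25}\left(x_{25}-\frac{4}{\kap}\right)\hkap(x_{25},x_{26},x_{56})\hkap(x_{23},x_{25},x_{35})\ne0$, and condition \eqref{eq: condition triangles not zero} of Theorem~\ref{thm: 5.18 analogue} guarantees it during propagation --- which is exactly your first proposed fix, so no further Gr\"obner-type verification is needed.
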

\begin{proof}
    We use the shorthand~$\hkap_{ij}=\hkap(x_{ij},x_{ik},x_{jk})$. By Lemma~\ref{lem: partial H_kap}, we get
\begin{equation}\label{eq: eq 1}
    \left(\partial_\uparrow \scmfour (\xx_2)\partial_\downarrow \scmfour (\xx_3)\right)^2=16H^\kap_{245}H^\kap_{125}H^\kap_{256}H^\kap_{235}\, . 
    \end{equation}
From equation~\eqref{eq: id for partial kap}, we get    \begin{multline}\label{eq: eq 2}\left(\partial_\leftarrow \scmfour (\xx_1)\partial_\rightarrow \scmfour (\xx_4)\right)^2\\=\left(4H^\kap_{245}H^\kap_{125}-8x_{25}\scmfour (\xx_1)\left(1-\mytfrac{\kap x_{25}}{4}\right)\right)4H^\kap_{256}H^\kap_{235}\\
=16H^\kap_{245}H^\kap_{125}H^\kap_{256}H^\kap_{235}+8\kap x_{25}\left(x_{25}-\mytfrac{4}{\kap}\right)H^\kap_{256}H^\kap_{235}\scmfour (\xx_1)\, .
\end{multline}
Combining~\eqref{eq: coherence condition for x1},~\eqref{eq: eq 1}, and~\eqref{eq: eq 2}, we conclude that 
\begin{equation*}
    8\kap x_{25}\left(x_{25}-\mytfrac{4}{\kap}\right)H^\kap_{256}H^\kap_{235}\scmfour (\xx_1)=0\, .
\end{equation*}
By assumption~\eqref{eq: stuff not zero}, this implies that we must have~$\scmfour (\xx_1)=0$, so~$\xx_1$ is a spherical Cayley-Menger diamond.
\end{proof}

\begin{proposition} \label{prop: 5.15 analogue}
    Start with three interlocking spherical Cayley-Menger diamonds
    \begin{align*}
        \xx_1&=(x_{15},x_{12},x_{24},x_{45},x_{14},x_{25})\, ,\\
        \xx_2&=(x_{16},x_{12},x_{25},x_{56},x_{15},x_{26})\, ,\\
        \xx_3&=
        (x_{25},x_{23},x_{34},x_{45},x_{24},x_{35})
        \end{align*}
        arranged as in Figure~\ref{fig: four cayley menger diamonds}.
    If 
    \[x_{25}
    \left(x_{25} - \mytfrac{4}{\kap}\right)H^\kap(x_{24},x_{25},x_{45})H^\kap(x_{12},x_{15},x_{25})\ne 0\, ,
    \] then there exists a unique~$x_{36}\in\C$ such that the tuple~$
        \xx_4=(x_{26},x_{23},x_{35},x_{56},x_{25},x_{36})$
     together with~$\xx_1,\xx_2,\xx_3$ satisfy the coherence condition~\eqref{eq: coherence condition}.  As a consequence,~$\xx_4$ turns out to be a spherical Cayley-Menger diamond, i.e.~$\scmfour(\xx_4)=0$.

    \noindent Similarly, let~$\xx_2,\xx_3,\xx_4$ be three interlocking spherical Cayley-Menger diamonds.  If 
    \begin{equation*}
        x_{25}\left(x_{25}-\mytfrac{4}{\kap}\right)H^\kap(x_{25},x_{26},x_{56})H^\kap(x_{23},x_{25},x_{35})\ne 0\, ,
    \end{equation*}
    then there exists a unique~$x_{14}\in\C$ such that the tuples~
    $\xx_1,\xx_2,\xx_3,\xx_4$ satisfy the coherence condition. The tuple~$\xx_1$ is therefore a spherical Cayley-Menger diamond and satisfies~$\scmfour(\xx_1)=0$.
\end{proposition}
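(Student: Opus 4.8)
Both assertions are proved by the same two-step scheme, which mirrors the proof of Lemma~\ref{lem: is a sph cay meng diamond}: first we show that the coherence condition~\eqref{eq: coherence condition}, regarded as a single equation in the unknown squared distance ($x_{36}$ in the first case, $x_{14}$ in the second), is \emph{affine} with a nonvanishing leading coefficient, hence has exactly one solution; then we show that this solution forces $\scmfour$ of the new tuple to vanish, so that the new tuple is a spherical Cayley-Menger diamond. Throughout, write $\hkap_{ijk}=\hkap(x_{ij},x_{ik},x_{jk})$, and recall that $\hkap$ is symmetric in its arguments.

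\textbf{Step 1 for the first assertion.} In the coherence condition $\partial_\leftarrow \scmfour (\xx_1)\,\partial_\rightarrow \scmfour (\xx_4)=\partial_\uparrow \scmfour (\xx_2)\,\partial_\downarrow \scmfour (\xx_3)$, only the factor $\partial_\rightarrow \scmfour (\xx_4)$ involves $x_{36}$, since $x_{36}$ occurs in none of the tuples $\xx_1,\xx_2,\xx_3$. Because $\scmfour$ is quadratic in each of its variables, $\partial_\rightarrow \scmfour (\xx_4)=\partial\scmfour/\partial x_{36}$ is affine in $x_{36}$; a short computation (equivalently, comparing the coefficients of $x_{36}^2$ on the two sides of the identity~\eqref{eq: id for partial kap} applied to $\xx_4$) shows that its leading coefficient equals $-4x_{25}\bigl(1-\tfrac{\kap}{4}x_{25}\bigr)$, which is nonzero because the hypothesis forces $x_{25}\notin\{0,\tfrac{4}{\kap}\}$. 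Moreover $\partial_\leftarrow \scmfour (\xx_1)\ne 0$: since $\xx_1$ is a spherical Cayley-Menger diamond, Lemma~\ref{lem: partial H_kap} gives $(\partial_\leftarrow \scmfour (\xx_1))^2=4\,\hkap(x_{12},x_{15},x_{25})\,\hkap(x_{24},x_{25},x_{45})$, and both factors are nonzero by hypothesis. Hence the coherence condition reads $(\text{nonzero constant})\cdot x_{36}+(\text{constant})=(\text{constant})$ and determines $x_{36}$ uniquely.

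\textbf{Step 2 for the first assertion.} Square the coherence condition. Using Lemma~\ref{lem: partial H_kap} on the diamonds $\xx_1,\xx_2,\xx_3$, the right-hand side becomes $16\,\hkap_{125}\hkap_{245}\hkap_{235}\hkap_{256}$, while by the identity~\eqref{eq: id for partial kap} applied to $\xx_4$ one has $(\partial_\rightarrow\scmfour(\xx_4))^2=-8x_{25}\bigl(1-\tfrac{\kap}{4}x_{25}\bigr)\scmfour(\xx_4)+4\hkap_{235}\hkap_{256}$, so the left-hand side equals $16\,\hkap_{125}\hkap_{245}\hkap_{235}\hkap_{256}+8\kap\,x_{25}\bigl(x_{25}-\tfrac{4}{\kap}\bigr)\hkap_{125}\hkap_{245}\,\scmfour(\xx_4)$. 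Subtracting, we get $8\kap\,x_{25}\bigl(x_{25}-\tfrac{4}{\kap}\bigr)\hkap_{125}\hkap_{245}\,\scmfour(\xx_4)=0$; since the hypothesis makes $x_{25}\bigl(x_{25}-\tfrac{4}{\kap}\bigr)\hkap(x_{12},x_{15},x_{25})\hkap(x_{24},x_{25},x_{45})\ne 0$, we conclude $\scmfour(\xx_4)=0$, i.e.\ $\xx_4$ is a spherical Cayley-Menger diamond. (This is exactly the argument of Lemma~\ref{lem: is a sph cay meng diamond} with the roles of $\xx_1$ and $\xx_4$ interchanged, which is legitimate because the configuration of four interlocking diamonds and the coherence relation are invariant under the corresponding dihedral relabeling of the hexagon's vertices.)

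\textbf{The second assertion} is handled in the same way, now with $\xx_1$ the unknown diamond and $x_{14}$ the unknown. Only $\partial_\leftarrow\scmfour(\xx_1)=\partial\scmfour/\partial x_{14}$ depends on $x_{14}$, so the coherence condition is affine in $x_{14}$ with leading coefficient $-4x_{25}\bigl(1-\tfrac{\kap}{4}x_{25}\bigr)\,\partial_\rightarrow\scmfour(\xx_4)$, which is nonzero because $x_{25}\notin\{0,\tfrac{4}{\kap}\}$ and, by Lemma~\ref{lem: partial H_kap} applied to the diamond $\xx_4$, $(\partial_\rightarrow\scmfour(\xx_4))^2=4\,\hkap(x_{23},x_{25},x_{35})\hkap(x_{25},x_{26},x_{56})\ne 0$ by hypothesis; this gives a unique $x_{14}$, and then $\scmfour(\xx_1)=0$ follows from Lemma~\ref{lem: is a sph cay meng diamond} (or from the same squaring argument as in Step~2, which yields $8\kap\,x_{25}\bigl(x_{25}-\tfrac{4}{\kap}\bigr)\hkap_{235}\hkap_{256}\,\scmfour(\xx_1)=0$). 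The one place that requires genuine care — and the main thing to get right — is the bookkeeping in the dictionary between the six partial derivatives of $\scmfour$ and the entries of each of the four diamonds (which $\hkap$-triples occur, and with which sign of $1-\tfrac{\kap}{4}x_{ij}$): passing from $\xx_1$ to $\xx_4$ permutes these factors, and one must verify that the nonvanishing hypothesis stated in each half of the proposition is precisely the permuted one, so that the final division is justified.
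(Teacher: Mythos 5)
Your proposal is correct and follows essentially the same route as the paper: the coherence relation is affine in the unknown squared distance with leading coefficient $-4x_{25}\bigl(1-\tfrac{\kap}{4}x_{25}\bigr)$ times the opposite partial derivative, whose nonvanishing follows from Lemma~\ref{lem: partial H_kap} and the stated hypothesis, and then the squaring argument via identity~\eqref{eq: id for partial kap} forces $\scmfour$ of the completed tuple to vanish. The only (harmless, in fact slightly more careful) difference is that you re-derive the mirrored version of Lemma~\ref{lem: is a sph cay meng diamond} inline, which makes explicit that in each half of the proposition the Heron factors $\hkap_{125}\hkap_{245}$ resp.\ $\hkap_{235}\hkap_{256}$ appearing in the final division are exactly the ones assumed nonzero.
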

\begin{proof}
    Consider the second case, where we know the values for~$\xx_2,\xx_3,\xx_4$ and want to solve for~$x_{14}$ to complete~$\xx_1$. 
 We want the resulting collection of four spherical Cayley-Menger diamonds to satisfy the coherence condition~\eqref{eq: coherence condition}
 \begin{multline*}
     \mytfrac{\partial}{\partial x_{14}}\scmfour(x_{15},x_{12},x_{24},x_{45},x_{14},x_{25})\mytfrac{\partial}{\partial x_{36}}\scmfour(x_{26},x_{23},x_{35},x_{56},x_{25},x_{36})=\\
     \mytfrac{\partial}{\partial x_{16}}\scmfour(x_{16},x_{12},x_{25},x_{56},x_{15},x_{26})\mytfrac{\partial}{\partial x_{34}}\scmfour(x_{25},x_{23},x_{34},x_{45},x_{24},x_{35})\, .
 \end{multline*}
 This equation
 is linear in the variable~$x_{14}$.  The coefficient of~$x_{14}$ is
    \begin{equation*}
    \kap x_{25}\left(x_{25} - \mytfrac{4}{\kap}\right)\partial_\rightarrow \scmfour (\xx_4)\, .
    \end{equation*}
    By assumption and Lemma~\ref{lem: partial H_kap}, we have
    \begin{equation*}
    \left(\kap x_{25}\left(x_{25} - \mytfrac{4}{\kap}\right)\right)^2\left(\partial_\rightarrow \scmfour (\xx_4)\right)^2=\left(\kap x_{25}\left(x_{25} - \mytfrac{4}{\kap}\right)\right)^2\cdot 4H^\kap_{256}H^\kap_{235}\ne 0
    \end{equation*}
and so we get a unique solution for~$x_{14}$ such that the coherence condition holds for the four diamonds~$\xx_1,\xx_2,\xx_3,\xx_4$.  

In the first case we can similarly solve for~$x_{36}$.  In both cases, to see that the resulting tuple is a spherical Cayley-Menger diamond, apply Lemma~\ref{lem: is a sph cay meng diamond}.
\end{proof}

\begin{remark}
    If all the input data are real and the diamonds arise from a polygon~$A_1\cdots A_n$ on a sphere, the condition
    \[
    x_{25}
    \left(x_{25} - \mytfrac{4}{\kap}\right)H^\kap(x_{24},x_{25},x_{45})H^\kap(x_{12},x_{15},x_{25})\ne 0
    \]
    is equivalent to requiring that neither of the triples~$\{A_2, A_4, A_5\}$ or~$\{A_1, A_2,A_5\}$  lie on a great circle of~$\bfS$.
\end{remark}

We will now show how to recover the entries of a spherical Cayley-Menger frieze from a small initial data set.

\begin{definition}\label{def: thickening}
    Let~$n\ge 4$. Let~$
\pi=((i_1,j_1),\dots,(i_{n-1},j_{n-1}),l_1,\dots,l_{n-2})$
    be a traversing path in a spherical Cayley-Menger frieze of order~$n$.

Define a \emph{thickening} of~$\pi$, denoted~$\overline{\pi}$, to be the original path~$\pi$ along with each of the interior nodes in the frieze shifted by~$(1,1)$ (to the right) from~$\pi$.  Formally, 
\begin{equation*}
    \overline{\pi}=((i_1,j_1),\dots,(i_{n-1},j_{n-1}),(i_1+1,j_1+1),\dots,(i_{n-1}+1,j_{n-1}+1),l_1,\dots,l_{n-2}).
\end{equation*}
  A thickened path~$\overline{\pi}$ has~$3n-4$ indices, see Figure~\ref{fig: thick path}.

  \begin{figure}[htbp!]
        \centering
        \begin{tikzpicture}
        [scale=2,greynode/.style={rectangle,fill=black!5,inner sep=2pt}, whitenode/.style={rectangle,fill=white,inner sep=2pt}, bordergreynode/.style={rectangle,draw=magenta,very thick,fill=black!5,inner sep=3pt}, borderwhitenode/.style={rectangle,draw=magenta,fill=white,very thick,inner sep=3pt}]

        \begin{scope}[on background layer] 

        \draw[very thick] (-0.5,0.5) -- 
(0,0);

\draw[very thick] (-0.5,1.5) -- (1,0);

\draw[very thick] (-0.5,2.5) -- (2,0);

\draw[very thick] (0,3) -- (3,0);

\draw[very thick] (1,3) -- (4,0);

\draw[very thick] (2,3) -- (5,0);

\draw[very thick] (3,3) -- (5.5,0.5);

\draw[very thick] (4,3)-- (5.5,1.5);

\draw[very thick] (5,3) -- (5.5,2.5);

\draw[very thick][magenta, line width=3pt] (.5,0.5) -- (1,1) --(0.5,1.5) -- (0,2) -- (0.5,2.5);
 
\begin{scope}
\draw[dashed] (-0.75,1.75) 
        --(0.75,3.25) node[whitenode] {\tiny$98$}; 

\draw[dashed] (-0.75,.75) 
        --(1.75,3.25) node[whitenode] {\tiny$74$};

        \draw[dashed] (-0.6,-0.1) node[whitenode] {\tiny$26$}
        --(2.75,3.25) node[whitenode] {\tiny$26$};

\draw[dashed] (0.25,-0.25)  node[whitenode] {\tiny$140$}
        -- (3.75 ,3.25) node[whitenode] {\tiny$140$};

\draw[dashed] (1.25,-0.25)  node[whitenode] {\tiny$70$}
        --(4.75,3.25) node[whitenode] {\tiny$70$};
        
\draw[dashed] (2.25,-0.25)  node[whitenode] {\tiny$74$}
        --(5.6,3.1) node[whitenode] {\tiny$74$};  

\draw[dashed] (3.25,-0.25) node[whitenode] {\tiny$98$} --(5.75,2.25);

\draw[dashed] (4.25,-0.25) node[whitenode] {\tiny$74$} --(5.75,1.25);

\draw[dashed] (-0.75,1.25)  --(0.75,-0.25) node[whitenode] {\tiny$140$};

\draw[dashed] (-0.75,2.25)  --(1.75,-0.25) node[whitenode] {\tiny$70$};

\draw[dashed] (-0.6,3.1)  node[whitenode] {\tiny$74$} --(2.75,-0.25) node[whitenode] {\tiny$74$};

\draw[dashed] (0.25,3.25) node[whitenode] {\tiny$98$} --(3.75,-0.25) node[whitenode] {\tiny$98$};

\draw[dashed] (1.25,3.25) node[whitenode] {\tiny$74$} --(4.75,-0.25) node[whitenode] {\tiny$74$};

\draw[dashed] (2.25,3.25) node[whitenode] {\tiny$26$} --(5.6,-0.1) node[whitenode] {\tiny$26$};

\draw[dashed] (3.25,3.25) node[whitenode] {\tiny$140$} -- (5.75,0.75);

\draw[dashed] (4.25,3.25) node[whitenode] {\tiny$70$} -- (5.75,1.75);

\end{scope}
    \end{scope}
\draw[very thick](-0.5,2.5) node[whitenode] {$74$} -- (0,3) node[whitenode] {$0$};
    
\draw[very thick](-0.5,1.5) node[whitenode] {$14$} -- (0,2) node[borderwhitenode] {$50$} ;

\draw[very thick](0.5,2.5) node[borderwhitenode] {$98$} -- (1,3) node[whitenode] {$0$};

\draw[very thick](-0.5,0.5) node[whitenode] {$26$} -- (0,1) node[whitenode] {$126$} -- (0.5,1.5) node[borderwhitenode] {$52$} -- (1,2) node[borderwhitenode] {$170$} -- (1.5,2.5) node[borderwhitenode] {$74$} -- (2,3) node[whitenode] {$0$};

        \draw[very thick](0,0) node[whitenode] {$0$}--(0.5,0.5) node[borderwhitenode] {$140$};
        
        \draw[very thick](1,1) node[borderwhitenode] {$26$} -- (1.5,1.5) node[borderwhitenode] {$116$} -- (2,2) node[whitenode] {$106$} -- (2.5,2.5) node[whitenode] {$26$} -- (3,3) node[whitenode] {$0$};

        \draw[very thick](1,0) node[whitenode] {$0$} -- (1.5,0.5) node[borderwhitenode] {$70$} -- (2,1) node[borderwhitenode] {$56$} -- (2.5,1.5) node[whitenode] {$14$} -- (3,2) node[whitenode] {$126$} -- (3.5,2.5) node[whitenode] {$140$} -- (4,3) node[whitenode] {$0$};
        
        \draw[very thick](2,0) node[whitenode] {$0$} -- (2.5,0.5) node[whitenode] {$74$} -- (3,1) node[whitenode] {$50$} -- (3.5,1.5) node[whitenode] {$52$} -- (4,2) node[whitenode] {$26$} -- (4.5,2.5) node[whitenode] {$70$} -- (5,3) node[whitenode] {$0$};

        \draw[very thick](3,0) node[whitenode] {$0$} -- (3.5,0.5) node[whitenode] {$98$} -- (4,1) node[whitenode] {$170$} -- (4.5,1.5) node[whitenode] {$116$} -- (5,2) node[whitenode] {$56$} -- (5.5,2.5) node[whitenode] {$74$};

        \draw[very thick](4,0) node[whitenode] {$0$} -- (4.5,0.5) node[whitenode] {$74$} -- (5,1) node[whitenode] {$106$} -- (5.5,1.5) node[whitenode] {$14$};

        \draw[very thick](5,0) node[whitenode] {$0$} -- (5.5,0.5) node[whitenode] {$26$};

        \end{tikzpicture}
        \caption{The same Cayley-Menger frieze as Figure~\ref{fig: cayley-menger frieze example}.  The entries highlighted in magenta are the thickening of the traversing path on the left indicated by magenta lines.}
        \label{fig: thick path}
    \end{figure}
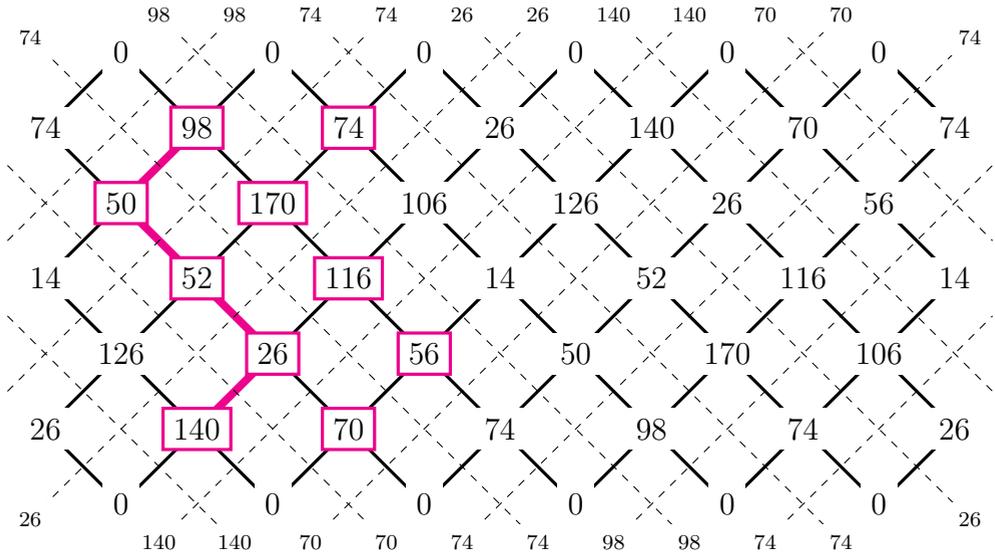

\end{definition}

\begin{theorem}\label{thm: 5.18 analogue} 
    Let~$\zscm=(z_\alpha)_{\alpha\in I_n^{SCM}}$ be a coherent spherical Cayley-Menger frieze of order~$n$ such that
    \begin{align}
      &  \quad z_{(i,j)}\not\in\left\{0,\mytfrac{4}{\kap}\right\} \text{ for any } (i,j)\in\Z^2 \text{ with } 2\le j-i\le n-2, \text{ and}\label{eq: condition interior not zero}\\
& \begin{cases} \hkap(z_{(i,j)},z_{(i+1,j)},z_{(i+\frac12,\smallneline)}) \neq 0\, , \\
\hkap(z_{(i,j-1)},z_{(i,j)},z_{(\smallseline,j-\frac12)}) \neq 0 \label{eq: condition triangles not zero}
\end{cases}
\text{for~$(i,j)\in \Z^2,\  2\le j-i\le n-1$\, .}
        \end{align}
        Then~$\zscm$ is uniquely determined by its entries in the thickening~$\overline{\pi}$ of any traversing path~$\pi$.
\end{theorem}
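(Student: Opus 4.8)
The plan is to recover $\zscm$ from the data recorded on $\overline{\pi}$ by an outward propagation driven entirely by Proposition~\ref{prop: 5.15 analogue}, in the same spirit as the corresponding argument in \cite{fomin_heronian_2021}. Since $\zscm$ is itself a coherent spherical Cayley-Menger frieze agreeing with the prescribed data on $\overline{\pi}$, it suffices to show that the entries off $\overline{\pi}$ are forced; as Proposition~\ref{prop: 5.15 analogue} outputs a \emph{unique} value at each step, this yields the uniqueness asserted in the theorem.

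First I would set up the combinatorics of $\overline{\pi}$. Rotating $I_n^{SCM}$ as usual, group the interior nodes into rows $R_d=\{(i,i+d):i\in\Z\}$ for $d=1,\dots,n-1$. The path $\pi$ meets each $R_d$ in a single node, and $\pi+(1,1)$ meets $R_d$ in the node one step to the right of it; hence $\overline{\pi}$ meets each row in two horizontally adjacent nodes, and these pairs assemble into a width-two staircase spanning the strip. Moreover, by the boundary relations \eqref{eq: boundary condition top and bottom row CM}--\eqref{eq: boundary condition lines 2 CM} the two bordering rows carry only zeros and every dashed line meeting $\overline{\pi}$ inherits its value from an adjacent interior node of $\overline{\pi}$, so the data on $\overline{\pi}$ already determines a connected band of \emph{complete} spherical Cayley-Menger diamonds.

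The propagation itself goes as follows. To extend into the half-strip to the right of $\overline{\pi}$, I would enumerate the interior nodes there so that, when a node $\nu'=(i+1,j+1)$ with $2\le j-i\le n-2$ is reached, the node $\nu=(i,j)$ together with the three interlocking diamonds $\xdiamond_{i-1,j-1}$, $\xdiamond_{i-1,j}$, $\xdiamond_{i,j-1}$ (the neighbours of $\nu$ to the left, above and below, cf.~Figure~\ref{fig: four cayley menger diamonds}) have all of their corners and dashed-line labels already known. Each of these three diamonds, being a diamond of $\zscm$, is a spherical Cayley-Menger diamond, so Proposition~\ref{prop: 5.15 analogue} (first case, solving for the rightmost corner) applies and forces a unique value for $z_{\nu'}$, with the resulting tuple $\xdiamond_{i,j}$ again a spherical Cayley-Menger diamond compatible with \eqref{eq: coherence condition}. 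The non-vanishing input required by Proposition~\ref{prop: 5.15 analogue} is that $z_{\nu}\!\left(z_{\nu}-\mytfrac{4}{\kap}\right)$ be nonzero and that $\hkap$ not vanish on the two triples of squared distances running along the sides of $\xdiamond_{i,j-1}$ and $\xdiamond_{i-1,j}$ through $\nu$; matching indices (and using the symmetry of $\hkap$ in its arguments) identifies these with the inequalities \eqref{eq: condition interior not zero} and \eqref{eq: condition triangles not zero} imposed on $\zscm$. The rows $d=1$ and $d=n-1$ are then filled from the boundary relations \eqref{eq: boundary condition lines CM}--\eqref{eq: boundary condition lines 2 CM} together with the degenerate form of the spherical Cayley-Menger relation in which one corner is forced to vanish (the spherical Cayley-Menger analogue of Lemmas~\ref{lem: degenerate 1}--\ref{lem: degenerate 2}), which expresses those entries in terms of dashed-line labels already determined. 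The half-strip to the left of $\overline{\pi}$ is handled identically, now invoking the second case of Proposition~\ref{prop: 5.15 analogue} (solving for the leftmost corner) with the analogous non-vanishing inputs read along the opposite pair of diamond sides.

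The main obstacle is the bookkeeping hidden in the previous paragraph: one must exhibit an explicit enumeration of the nodes outside $\overline{\pi}$ for which, at every step, all three required neighbouring diamonds already lie in the filled region --- it is exactly this requirement that forces the thickening to have width two, and it makes the propagation order delicate when $\pi$ alternates between its two types of step --- and one must verify with care that the $\hkap$-triples occurring in the hypothesis of Proposition~\ref{prop: 5.15 analogue} at each step are literally those controlled by \eqref{eq: condition triangles not zero}. Both points follow the template of the analogous proof in \cite{fomin_heronian_2021}; the degenerate near-boundary diamonds, which do not arise in the same form there, are what necessitate the small extra argument indicated above.
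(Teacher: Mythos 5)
Your proposal follows essentially the same route as the paper's proof: propagate outwards from the thickening of a traversing path using Proposition~\ref{prop: 5.15 analogue} (whose non-vanishing hypotheses are exactly what conditions~\eqref{eq: condition interior not zero}--\eqref{eq: condition triangles not zero} supply), with the uniqueness in that proposition forcing agreement with~$\zscm$, and with the boundary conditions~\eqref{eq: boundary condition top and bottom row CM}--\eqref{eq: boundary condition lines 2 CM} handling the extreme rows and the dashed lines. One small correction: no ``degenerate spherical Cayley--Menger diamond'' lemma is needed (and none would work --- the single relation $\scmfour=0$ with a vanishing corner does not force the corner identifications, unlike Lemmas~\ref{lem: degenerate 1}--\ref{lem: degenerate 2} in the Heronian setting, which use the Heron equations); the identifications of the rows $j-i=1$ and $j-i=n-1$ with adjacent line values are imposed as part of the definition of the frieze via~\eqref{eq: boundary condition lines CM}--\eqref{eq: boundary condition lines 2 CM}, and these alone tie those entries to dashed-line labels already determined as the propagation proceeds.
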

\begin{proof}
    Fix a traversing path~$\pi$ and its thickening~$\overline{\pi}$.  The boundary condition~\eqref{eq: boundary condition top and bottom row CM} ensures that all entries indexed by~$(i,i), \, (i,i+n)$ for~$i\in\Z$ (the top and bottom rows)  are zero.  
    
    For the interior nodes, we can propagate outwards from~$\overline{\pi}$ by applying Proposition~\ref{prop: 5.15 analogue} to ensure coherence of the resulting frieze.  The conditions~\eqref{eq: condition interior not zero}-\eqref{eq: condition triangles not zero} provide the required non-vanishing conditions.  Because Proposition~\ref{prop: 5.15 analogue} guarantees uniqueness of the resulting entries, they must agree with those in~$\zscm$. 
    
    The boundary conditions~\eqref{eq: boundary condition lines CM} and~\eqref{eq: boundary condition lines 2 CM} give us the lines intersecting just above and just below the thickened path~$\overline{\pi}$.  
\end{proof}

\begin{remark}  We can  uniquely recover a spherical Heronian frieze from a traversing path (cf.~Corollary~\ref{cor: 3.10 analogue}), and a spherical Cayley-Menger frieze from a thickening of a traversing path (cf.~Theorem~\ref{thm: 5.18 analogue}).  The conditions for doing so are slightly different.  While both propagation algorithms require the non-vanishing of the entries at the interior integer nodes (cf.~conditions~\eqref{eq: non-vanishing condition} and~\eqref{eq: condition interior not zero}), recovering a spherical Cayley-Menger frieze additionally requires the non-vanishing of Heron polynomials (cf.~condition~\eqref{eq: condition triangles not zero}).

\end{remark}

By Corollary~\ref{cor: 3.12 analogue},  any sufficiently generic spherical Heronian frieze arises from a spherical polygon.  The following result shows that the same is true for sufficiently generic coherent spherical Cayley-Menger friezes.

\begin{theorem}\label{thm: 5.20 analogue}
    Let~$\zscm=(z_\alpha)_{\alpha\in I_n^{SCM}}$ be a coherent spherical Cayley-Menger frieze of order~$n$ satisfying the conditions in Theorem~\ref{thm: 5.18 analogue}.  Then there exists an~$n$-gon~$P$ on~$\bfS$ such that~$\zscm=\zscm(P)$.  Consequently~$\zscm$ has glide symmetry.
\end{theorem}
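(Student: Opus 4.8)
We follow the strategy of Corollary~\ref{cor: 3.12 analogue}. Fix a traversing path~$\pi$ and its thickening~$\overline{\pi}$, and build a spherical $n$-gon~$P$ whose Cayley--Menger frieze~$\zscm(P)$ agrees with~$\zscm$ at every index of~$\overline{\pi}$; granting this, we are done. Indeed,~$\zscm(P)$ is coherent by Theorem~\ref{thm: 5.12 analogue}, so~$\zscm$ and~$\zscm(P)$ are two coherent spherical Cayley--Menger friezes of order~$n$ agreeing on~$\overline{\pi}$. Propagating outward from~$\overline{\pi}$ exactly as in the proof of Theorem~\ref{thm: 5.18 analogue}: at each step the relevant non-vanishing hypothesis of Proposition~\ref{prop: 5.15 analogue} holds because it is one of the standing conditions~\eqref{eq: condition interior not zero}--\eqref{eq: condition triangles not zero} imposed on~$\zscm$, and it holds for~$\zscm(P)$ too since (inductively) the two friezes agree on the entries feeding the step; the unique value provided by Proposition~\ref{prop: 5.15 analogue} is then shared by~$\zscm$ and~$\zscm(P)$. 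This forces~$\zscm(P)=\zscm$ at all interior nodes, and the boundary conditions~\eqref{eq: boundary condition top and bottom row CM}--\eqref{eq: boundary condition lines 2 CM} give the rest. Once~$\zscm=\zscm(P)$, glide symmetry follows from the symmetry~$x_{ij}=x_{ji}$ of squared distances, exactly as for spherical Heronian friezes (cf.~Remark~\ref{rem: glide symmetry}).

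To build~$P$, recall (Remark~\ref{rem: 3.9 analogue}, adapted to the Cayley--Menger setting) that the nodes and lines of~$\pi$ index the edges of a thin triangulation~$G$ of an~$n$-gon, while the shifted nodes of~$\overline{\pi}$ supply~$n-3$ further ``bracing'' diagonals. We induct on~$n$. For~$n=4$: realize the spherical triangle~$A_1A_2A_3$ from its three squared distances via Lemma~\ref{lem: 2.3 analogue} (choosing either sign of~$\skap$), then place~$A_4$ by the bracing mechanism of the next paragraph. For~$n>4$: the first triangle met along~$\pi$ is an ear of~$G$, say on consecutive vertices~$\{w-1,w,w+1\}$ with~$w$ of degree~$2$ in~$G$; deleting~$A_w$ turns~$G$ into a thin triangulation~$G'$ of an~$(n-1)$-gon and turns~$\zscm$ into a coherent spherical Cayley--Menger frieze of order~$n-1$ satisfying the conditions of Theorem~\ref{thm: 5.18 analogue}, to which~$\pi$ and~$\overline{\pi}$ restrict. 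By the inductive hypothesis this order-$(n-1)$ frieze equals~$\zscm(P')$ for an~$(n-1)$-gon~$P'$, which we realize on~$\bfS$; it remains to reinsert~$A_w$.

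The reinsertion step is the heart of the reflection disambiguation. The new vertex~$A_w$ is constrained by the prescribed squared distances~$x_{w-1,w}$ and~$x_{w,w+1}$ to its polygon neighbours~$A_{w-1},A_{w+1}$, which already lie on~$\bfS$; this determines~$A_w$ up to reflection across the plane~$OA_{w-1}A_{w+1}$, yielding two candidates~$A_w^{+},A_w^{-}$. To choose between them, use a bracing diagonal from~$\overline{\pi}$---of the form~$x_{w,w+2}$ (or~$x_{w-2,w}$)---joining~$A_w$ to a vertex already realized in~$P'$. The four vertices involved, say~$\{w-1,w,w+1,w+2\}$, are four consecutive vertices of the polygon and therefore index a genuine Cayley--Menger diamond of the frieze; since~$\zscm$ is a Cayley--Menger frieze, the six prescribed squared distances among them satisfy~$\scmfour=0$. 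Viewed as a quadratic polynomial in the slot occupied by the bracing diagonal, this relation has leading coefficient a nonzero scalar multiple of~$x_{w-1,w+1}\bigl(1-\mytfrac{\kap}{4}x_{w-1,w+1}\bigr)$ by condition~\eqref{eq: condition interior not zero}, and its discriminant is a nonzero scalar multiple of a product of two Heron polynomials~$\hkap$ that do not vanish by condition~\eqref{eq: condition triangles not zero} (cf.~\eqref{eq: id for partial kap} and Lemma~\ref{lem: partial H_kap}); hence it has two distinct roots. Both~$x(A_w^{+},\,\cdot\,)$ and~$x(A_w^{-},\,\cdot\,)$ are roots of this quadratic, because each candidate is cospherical with the already-realized vertices whose mutual distances match~$\zscm$; so the prescribed value of the bracing diagonal equals one of them. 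Choosing the candidate that realizes it produces~$A_w$, and the resulting~$P$ realizes every squared distance indexed by~$\overline{\pi}$.

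The main obstacle is the combinatorial bookkeeping underpinning the inductive step: one must verify that deleting an ear vertex really does restrict~$\zscm$ to a coherent order-$(n-1)$ Cayley--Menger frieze inheriting conditions~\eqref{eq: condition interior not zero}--\eqref{eq: condition triangles not zero}, that~$\pi$ and~$\overline{\pi}$ restrict to a traversing path and its thickening in the smaller frieze, and---most importantly---that~$\overline{\pi}$ always furnishes, for the ear being reinserted, a bracing diagonal joining it to an already-placed vertex and completing a four-consecutive-vertex Cayley--Menger diamond. One checks directly, e.g.\ for~$n=6$, that starting from the entry side of~$\overline{\pi}$ and working along~$\pi$ makes exactly this diagonal available; the general case is then an exercise in tracking how~$\overline{\pi}$ meets the staircase of~$\pi$. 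Everything else is routine.
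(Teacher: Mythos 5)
Your overall plan is legitimate and genuinely different from the paper's proof, which is a two-line deduction: lift $\zscm$ to a spherical Heronian frieze via Theorem~\ref{thm: big final thm}, invoke Corollary~\ref{cor: 3.12 analogue} to obtain a polygon $P$, and restrict back. Your reduction (once $P$ realizes the data on $\overline{\pi}$, coherence of $\zscm(P)$ from Theorem~\ref{thm: 5.12 analogue} plus the propagation argument of Theorem~\ref{thm: 5.18 analogue} forces $\zscm(P)=\zscm$, and glide symmetry follows) is sound, and so is the quadratic/discriminant analysis of the reinsertion step. The gap is in the induction itself: after deleting the ear vertex $A_w$, the array you call ``the restriction of $\zscm$'' is not known to be a spherical Cayley--Menger frieze of order $n-1$, much less a coherent one inheriting conditions~\eqref{eq: condition interior not zero}--\eqref{eq: condition triangles not zero}. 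The diamonds of the smaller frieze that use the new side $\{w-1,w+1\}$ correspond to quadruples $(w-1,w+1,j,j+1)$, which are not consecutive in the original $n$-gon and hence are \emph{not} diamonds of $\zscm$; the vanishing of $\scmfour$ on those $6$-tuples, the coherence relations involving them, and the nonvanishing of $\hkap$ for triangles on the new side $\{w-1,w+1\}$ are all additional statements that do not follow from the definition of an abstract coherent frieze. Deducing such ``non-consecutive'' Cayley--Menger relations from the consecutive ones plus coherence is essentially the content of the theorem you are proving, so dismissing it as ``combinatorial bookkeeping\dots an exercise'' defers the crux rather than settling it. (A smaller instance of the same problem: your appeal to the inductive hypothesis needs the restricted array to satisfy~\eqref{eq: condition triangles not zero} for triangles sharing the new side, which the hypotheses on $\zscm$ do not provide.)

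The argument can be repaired without the paper's Heronian machinery by never forming a smaller frieze: place the vertices one at a time in the order dictated by $\pi$. At the step $(i,j)\to(i,j+1)$ the new vertex $A_{j+1}$ is pinned to $A_i$ and $A_j$ by the node $z_{(i,j+1)}$ and the line $z_{(\seline,j+\frac{1}{2})}$, and the disambiguating brace is the shifted node $z_{(i+1,j+1)}$, which completes the \emph{genuine} frieze diamond $\xdiamond_{i,j}(\zscm)$; one checks inductively that the other five distances of that diamond were already enforced at earlier steps (in particular $z_{(i+1,j)}$ is either a node of $\pi$ or the brace of the preceding step), so your root/discriminant argument via~\eqref{eq: id for partial kap}, Lemma~\ref{lem: partial H_kap} and conditions~\eqref{eq: condition interior not zero}--\eqref{eq: condition triangles not zero} applies verbatim and selects the correct mirror image. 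With that restructuring your construction closes; as written, the inductive step has a genuine gap. It is also worth noting what the paper's route buys: Theorem~\ref{thm: big final thm} handles the sign ambiguities once and for all by lifting to $\skap$-data, after which the geometric realization is already available from Corollary~\ref{cor: 3.12 analogue}.
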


We will prove Theorem~\ref{thm: 5.20 analogue} at the end of Section~\ref{sec: frieze equiv}.

\newpage

\section{Spherical Cayley-Menger friezes vs. spherical Heronian friezes}\label{sec: frieze equiv}

In this section, we show when and how a spherical Heronian frieze gives rise to a spherical Cayley-Menger frieze and vice versa. This generalizes the results in~\cite[Section 6]{fomin_heronian_2021} to the case of arbitrary~$\kap$.  

We begin by establishing genericity conditions under which a spherical Heronian diamond restricts to a spherical Cayley-Menger diamond.

\begin{lemma}  \label{lem: 6.1 analogue}
Let~$\xS=(a,b,c,d,e,f,p,q,r,s)\in\C^{10}$ be a spherical Heronian diamond satisfying either
\begin{align}
    &a=q=r=0\, , \quad \text{or} \label{eq: condition for H to be CM 1}\\
    &c=p=s=0\, , \quad \text{or} \label{eq: condition for H to be CM 2}\\
    &\left(e\left(e-\mytfrac{4}{\kap}\right),f\left(f-\mytfrac{4}{\kap}\right)\right)\ne (0,0) \label{eq: condition for H to be CM 3}\, .
\end{align}
Then
\begin{alignat}{3}
\label{eq: partial-xe,xf}
-2rs &= \partial_\leftarrow \scmfour (a,b,c,d,e,f)\, , \qquad & -2pq &= \partial_\rightarrow \scmfour (a,b,c,d,e,f)\, ,\\
\label{eq: partial-xb,xd}
2qs &= \partial_{\!\smallneline} \scmfour (a,b,c,d,e,f)\, , \qquad &2pr &= \partial_{\>\smallseline} \scmfour (a,b,c,d,e,f)\, ,\\
\label{eq: partial-xa,xc}
2ps &= \partial_\uparrow \scmfour (a,b,c,d,e,f)\, , \qquad &2rq &= \partial_\downarrow \scmfour (a,b,c,d,e,f)\, .
\end{alignat}

\end{lemma}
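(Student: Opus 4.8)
The plan is to reduce every case to Proposition~\ref{prop: deriv geom meaning}: writing $\xx=(a,b,c,d,e,f)$ as there, the six identities \eqref{eq: partial-xe,xf}--\eqref{eq: partial-xa,xc} are precisely what that proposition asserts once the Heronian diamond $\xS$ is realized by an honest spherical quadrilateral $A_1A_2A_3A_4$ (with $(a,b,c,d,e,f)=(x_{14},x_{12},x_{23},x_{34},x_{13},x_{24})$ and $(p,q,r,s)=(\skap_{123},\skap_{134},\skap_{124},\skap_{234})$). So I would, under each of the hypotheses \eqref{eq: condition for H to be CM 1}--\eqref{eq: condition for H to be CM 3}, realize $\xS$ by such a quadrilateral and quote Proposition~\ref{prop: deriv geom meaning}, then handle by hand the thin residual locus where $\xS$ cannot be realized. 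As a running guide: once $\scmfour(\xx)=0$ is known, Lemma~\ref{lem: partial H_kap} and the Heron relations \eqref{eq:def1sphp}--\eqref{eq:def1sphs} already force $\bigl(\partial_\bullet\scmfour(\xx)\bigr)^2$ to equal $4$ times the square of the relevant product of two of $p,q,r,s$, so the only real content is fixing the six signs; the ideal generated by the defining relations of a Heronian diamond cannot do this unaided, being invariant under $(p,q,r,s)\mapsto(-p,-q,-r,-s)$, which is why one passes to the geometric picture.

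\textbf{Case \eqref{eq: condition for H to be CM 3}.} Here at least one of $e,f$ lies outside $\{0,\tfrac{4}{\kap}\}$. If $e\notin\{0,\tfrac{4}{\kap}\}$, use \eqref{eq:def1sphp} and Lemma~\ref{lem: 2.3 analogue} to build a spherical triangle $A_1A_2A_3$ with $x_{12}=b$, $x_{23}=c$, $x_{13}=e$, $\skap_{123}=p$, then use \eqref{eq:def1sphq}, $x_{13}=e\notin\{0,\tfrac{4}{\kap}\}$ and Lemma~\ref{lem: 2.4analogue} to attach $A_4$ with $x_{14}=a$, $x_{34}=d$, $\skap_{134}=q$; the diamond of $A_1A_2A_3A_4$ shares the seven entries $a,b,c,d,e,p,q$ with $\xS$, hence equals $\xS$ by the uniqueness clause of Proposition~\ref{prop: analogue of 2.14}. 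If instead $e\in\{0,\tfrac{4}{\kap}\}$, then $f\notin\{0,\tfrac{4}{\kap}\}$, and one runs the mirror construction through the other triangulation: build $A_1A_2A_4$ from $a,b,f,r$ (Lemma~\ref{lem: 2.3 analogue}, via \eqref{eq:def1sphr}) and attach $A_3$ with $x_{23}=c$, $x_{34}=d$, $\skap_{234}=s$ (Lemma~\ref{lem: 2.4analogue}, via \eqref{eq:def1sphs} and $x_{24}=f\notin\{0,\tfrac{4}{\kap}\}$); the resulting diamond shares $a,b,c,d,f,r,s$ with $\xS$, hence equals $\xS$ by Corollary~\ref{cor: 2.16 analogue}. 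Either way $\xS$ comes from four points of $\bfS$, so $\scmfour(\xx)=0$ by Theorem~\ref{thm: scm} and \eqref{eq: partial-xe,xf}--\eqref{eq: partial-xa,xc} follow from Proposition~\ref{prop: deriv geom meaning}.

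\textbf{Cases \eqref{eq: condition for H to be CM 1}, \eqref{eq: condition for H to be CM 2}.} By Lemma~\ref{lem: degenerate 1}, \eqref{eq: condition for H to be CM 1} forces $d=e$, $f=b$, $s=p$ and $p^2=\hkap(b,c,e)$. After substituting $a=q=r=0$ and these, two rows of the matrix defining $\scmfour(0,b,c,e,e,b)$ become equal, so $\scmfour(0,b,c,e,e,b)$ vanishes identically (in $b,c,e$ for fixed $\kap$); Lemma~\ref{lem: partial H_kap} then applies, and for five of the six identities both sides vanish, since $q=r=0$ kills the right-hand side while $\hkap(0,b,b)=\hkap(0,e,e)=0$ kills a factor on the left. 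The remaining identity reads $\partial_\uparrow\scmfour(0,b,c,e,e,b)=2ps=2\hkap(b,c,e)$; Lemma~\ref{lem: partial H_kap} gives the left side as $\pm2\hkap(b,c,e)$, and the sign is $+$ --- a polynomial identity in $b,c,e$ one verifies directly, or deduces from its already holding on the Zariski-dense set of $(b,c,e)$ for which Lemma~\ref{lem: 2.3 analogue} applies (so that $\xS$ is realizable and Proposition~\ref{prop: deriv geom meaning} yields $\partial_\uparrow\scmfour=+2ps$). Case \eqref{eq: condition for H to be CM 2} is symmetric, via Lemma~\ref{lem: degenerate 2}, with $\partial_\downarrow\scmfour=2rq$ the single sign at issue.

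\textbf{The main obstacle} is not depth but bookkeeping: the $e\leftrightarrow f$ asymmetry of the defining relations of a Heronian diamond forces the realization to be run from whichever diagonal is nondegenerate, with the applicable uniqueness statement (Proposition~\ref{prop: analogue of 2.14} versus Corollary~\ref{cor: 2.16 analogue}) changing accordingly, and it takes a little care to isolate the non-realizable configurations and to pin the residual sign. Much of this casework can alternatively be replaced by a mechanical computation: encoding \eqref{eq:def1sphp}--\eqref{eq: def1sph erminuss} as generators of an ideal $I$, one checks with a Gr\"obner basis that $2pq+\partial_\rightarrow\scmfour(\xx)\in I$ --- which fixes one sign outright, straight from the spherical Bretschneider relation \eqref{eq: def1sph bretanalogue} and the Heron relations --- and that $e(\kap e-4)\bigl(2ps-\partial_\uparrow\scmfour(\xx)\bigr)\in I$, together with the analogous membership statements for the other four partials, the extra factor $e(\kap e-4)$ absorbing the exceptional locus permitted by the hypotheses.
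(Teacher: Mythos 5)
Your proposal is correct and follows essentially the same route as the paper: conditions \eqref{eq: condition for H to be CM 1}--\eqref{eq: condition for H to be CM 2} are handled via Lemmas~\ref{lem: degenerate 1}--\ref{lem: degenerate 2} plus a direct check of the one nonvanishing partial, and condition \eqref{eq: condition for H to be CM 3} by realizing the diamond as an actual spherical quadrilateral (the paper through Propositions~\ref{prop: cor 2.9 analogue} and~\ref{prop: analogue of 2.14}, you through Lemmas~\ref{lem: 2.3 analogue}--\ref{lem: 2.4analogue} together with the uniqueness statements) and then quoting Proposition~\ref{prop: deriv geom meaning}. The extra details you supply --- the equal-rows vanishing of $\scmfour$ and the use of Lemma~\ref{lem: partial H_kap} in the degenerate cases, and the explicit treatment of the subcase $e\in\{0,\tfrac{4}{\kap}\}$, $f\notin\{0,\tfrac{4}{\kap}\}$ via Corollary~\ref{cor: 2.16 analogue} --- are sound refinements of steps the paper dispatches as ``similar.''
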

\begin{proof}
    First assume condition~\eqref{eq: condition for H to be CM 1}. By Lemma~\ref{lem: degenerate 1}, we see that
    \begin{align*}
        d&=e\, ,\\
        f&=b\, ,\\
ps=p^2&=H^\kap(b,c,e)\, .
        \end{align*}
        Substituting these values into~$\partial_\uparrow \scmfour (a,b,c,d,e,f)$, we can now manually verify equation~\eqref{eq: partial-xa,xc}.  The other equations and the case of condition~\eqref{eq: condition for H to be CM 2} can be similarly checked by hand.

    Consider the case where condition~\eqref{eq: condition for H to be CM 3} holds.  Assume without loss of generality that~$e\not\in\left\{0,\frac{4}{\kap}\right\}$ (the other case is similar).  Triangulate a four-cycle~$G$ by connecting the first and third vertices, and assign the elements of~$\xS$ to the edges in~$G$ in accordance with Example~\ref{ex: main tet example} (you will not use~$f,r,s$).  By Proposition~\ref{prop: cor 2.9 analogue} there is a spherical~$4$-gon~$P$ on~$\bfS$ with~$\xS=\xS_G(P)$. By Proposition~\ref{prop: analogue of 2.14} the remaining measurements for~$P$ will be~$f,r,s$.  Then by Proposition~\ref{prop: deriv geom meaning}, we get equations~\eqref{eq: partial-xe,xf}-\eqref{eq: partial-xa,xc} by recalling that~$\skap_{123}=p$,~$\skap_{134}=q$, etc.
\end{proof}

\begin{proposition}\label{prop: 6.2 analogue}
    Let~$(a,b,c,d,e,f,p,q,r,s)\in\C^{10}$ be a spherical Heronian diamond satisfying one of the conditions~\eqref{eq: condition for H to be CM 1}-\eqref{eq: condition for H to be CM 3}.  Then~$(a,b,c,d,e,f)$ is a spherical Cayley-Menger diamond.

\end{proposition}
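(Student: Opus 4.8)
The plan is to split on the three alternative hypotheses. In each of them Lemma~\ref{lem: 6.1 analogue} applies, so in particular $\partial_\rightarrow\scmfour(a,b,c,d,e,f)=-2pq$ and $\partial_\leftarrow\scmfour(a,b,c,d,e,f)=-2rs$. I would first square the former and use the Heron relations $p^2=\hkap(b,c,e)$, $q^2=\hkap(a,d,e)$ of~\eqref{eq:def1sphp}--\eqref{eq:def1sphq}: then the left-hand side of identity~\eqref{eq: id for partial kap} of Lemma~\ref{lem: partial H_kap} equals $4\hkap(b,c,e)\hkap(a,d,e)$, which cancels against the last term on the right, leaving $8e(1-\frac{\kap e}{4})\,\scmfour(a,b,c,d,e,f)=0$. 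The corresponding identity for $\partial_\leftarrow$ (obtained as in the proof of Lemma~\ref{lem: partial H_kap}), together with $\partial_\leftarrow\scmfour=-2rs$ and $r^2=\hkap(a,b,f)$, $s^2=\hkap(c,d,f)$, gives $8f(1-\frac{\kap f}{4})\,\scmfour(a,b,c,d,e,f)=0$ in the same way. Under hypothesis~\eqref{eq: condition for H to be CM 3} this finishes the argument: since $\bigl(e(e-\frac{4}{\kap}),\,f(f-\frac{4}{\kap})\bigr)\neq(0,0)$, at least one of the factors $e(1-\frac{\kap e}{4})$, $f(1-\frac{\kap f}{4})$ is nonzero, so $\scmfour(a,b,c,d,e,f)=0$.

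Hypotheses~\eqref{eq: condition for H to be CM 1} and~\eqref{eq: condition for H to be CM 2} need a separate treatment, since there $e$ and $f$ may both lie in $\{0,\frac{4}{\kap}\}$ and the conclusion just obtained becomes vacuous. For~\eqref{eq: condition for H to be CM 1}, where $a=q=r=0$, I would invoke Lemma~\ref{lem: degenerate 1} to conclude $d=e$ and $f=b$, and then substitute $a=0$, $d=e$, $f=b$ into the $5\times 5$ spherical Cayley--Menger matrix
\[
\begin{bmatrix}
\frac{\kap}{2} & 1 & 1 & 1 & 1\\
1 & 0 & b & e & a\\
1 & b & 0 & c & f\\
1 & e & c & 0 & d\\
1 & a & f & d & 0
\end{bmatrix}
\]
(with $a=x_{14}$, $b=x_{12}$, $c=x_{23}$, $d=x_{34}$, $e=x_{13}$, $f=x_{24}$, as in Example~\ref{ex: main tet example}): the second and fifth rows then coincide, so $\scmfour(a,b,c,d,e,f)=0$. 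For~\eqref{eq: condition for H to be CM 2}, where $c=p=s=0$, Lemma~\ref{lem: degenerate 2} gives $b=e$ and $f=d$, after which the third and fourth rows of the matrix agree. In geometric terms these are the statements that a spherical quadrilateral with a repeated vertex is trivially cospherical.

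I expect the only genuinely nonroutine part to be the bookkeeping of the previous paragraph: noticing that the derivative argument degenerates precisely on hypotheses~\eqref{eq: condition for H to be CM 1}--\eqref{eq: condition for H to be CM 2}, extracting the forced equalities from Lemmas~\ref{lem: degenerate 1}--\ref{lem: degenerate 2}, and verifying that each specialized matrix has two equal rows. A secondary point worth spelling out is the $\partial_\leftarrow$-analogue of identity~\eqref{eq: id for partial kap} used in the first paragraph, which follows from the row-and-column symmetries of the spherical Cayley--Menger determinant. Everything else reduces to a direct appeal to Lemma~\ref{lem: 6.1 analogue}, Lemma~\ref{lem: partial H_kap}, and the Heron relations~\eqref{eq:def1sphp}--\eqref{eq:def1sphs}, or to a one-line determinant evaluation.
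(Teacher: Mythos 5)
Your proposal is correct, and it splits the work exactly as the paper does: conditions~\eqref{eq: condition for H to be CM 1}--\eqref{eq: condition for H to be CM 2} via Lemmas~\ref{lem: degenerate 1}--\ref{lem: degenerate 2}, and condition~\eqref{eq: condition for H to be CM 3} separately. For the degenerate cases your argument coincides with the paper's (the paper says ``manually check that $\scmfour(a,b,c,d,e,f)=0$''; your repeated-rows observation, with $a=0$, $d=e$, $f=b$ making rows $2$ and $5$ equal, resp.\ $c=0$, $b=e$, $f=d$ making rows $3$ and $4$ equal, is precisely that check made explicit, and your matrix uses the correct diamond-ordering of the arguments). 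For condition~\eqref{eq: condition for H to be CM 3} your route genuinely differs: the paper appeals to Remark~\ref{rem: sph heron implies scm is zero}, i.e.\ either the geometric realization of the diamond as a spherical quadrilateral (Proposition~\ref{prop: cor 2.9 analogue}) or a Gr\"obner-basis verification that $\scmfour(a,b,c,d,e,f)\,e(\kap e-4)$ lies in the ideal generated by the seven Heronian relations; you instead combine Lemma~\ref{lem: 6.1 analogue} with the unconditional polynomial identity~\eqref{eq: id for partial kap} (and its $\partial_\leftarrow$ mirror, which does follow from the invariance $\scmfour(c,b,a,d,f,e)=\scmfour(a,b,c,d,e,f)$) to get $8e\bigl(1-\tfrac{\kap e}{4}\bigr)\scmfour=0$ and $8f\bigl(1-\tfrac{\kap f}{4}\bigr)\scmfour=0$, and then invoke~\eqref{eq: condition for H to be CM 3}. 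This buys two things: no additional computer-algebra verification beyond what is already established, and a treatment that transparently covers the sub-case where $e\in\{0,\tfrac{4}{\kap}\}$ but $f\notin\{0,\tfrac{4}{\kap}\}$ (the Remark as literally stated assumes both $e$ and $f$ avoid the bad set and records only the $e$-version of the ideal membership). The cost is that you lean on Lemma~\ref{lem: 6.1 analogue}, whose own proof of case~\eqref{eq: condition for H to be CM 3} goes through the geometric realization, so the underlying content is similar at one remove; you are also right to use the identity~\eqref{eq: id for partial kap} itself (valid for all arguments) rather than the statement of Lemma~\ref{lem: partial H_kap}, which presupposes a Cayley--Menger diamond and would be circular here.
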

\begin{proof}
    Remark~\ref{rem: sph heron implies scm is zero} addresses the case of condition~\eqref{eq: condition for H to be CM 3}.  In the case of condition~\eqref{eq: condition for H to be CM 1}, we have~$d=e$ and~$f=b$ (see Lemma~\ref{lem: degenerate 1}) and we can manually check that~$\scmfour(a,b,c,d,e,f)=0$.  The case of condition~\eqref{eq: condition for H to be CM 2} is similar.
\end{proof}

When we restrict a spherical Heronian frieze to form a spherical Cayley-Menger frieze, it automatically satisfies the coherence condition:

\begin{proposition} \label{prop: 6.3 analogue}
Let~$\xx_1,\xx_2,\xx_3,\xx_4$ be four interlocking spherical Heronian diamonds arranged as shown in Figure~\ref{fig: four Heronian diamonds}. Suppose that each~$\xx_i$ satisfies one of the  conditions~\eqref{eq: condition for H to be CM 1}-\eqref{eq: condition for H to be CM 3}. 
    Then the corresponding interlocking spherical Cayley-Menger diamonds (cf.~Proposition~\ref{prop: 6.2 analogue}) satisfy the coherence condition~\eqref{eq: coherence condition}.
\end{proposition}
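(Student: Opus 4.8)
The plan is to rewrite each of the four partial derivatives of $\scmfour$ occurring in the coherence condition~\eqref{eq: coherence condition} as a product of two of the triangle-type entries of the corresponding Heronian diamond, using Lemma~\ref{lem: 6.1 analogue}, and then to observe that the interlocking arrangement of the four diamonds identifies enough of these entries that the two sides of~\eqref{eq: coherence condition} become literally equal. No genuine computation beyond Lemma~\ref{lem: 6.1 analogue} is needed; the whole proof is a bookkeeping argument about which entry sits at which shared node.

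First I would note that, by Proposition~\ref{prop: 6.2 analogue}, each of $\xx_1,\xx_2,\xx_3,\xx_4$ restricts to a spherical Cayley-Menger diamond, so that~\eqref{eq: coherence condition} is meaningful; write $\xx_i=(a_i,b_i,c_i,d_i,e_i,f_i,p_i,q_i,r_i,s_i)$. Since each $\xx_i$ satisfies one of~\eqref{eq: condition for H to be CM 1}--\eqref{eq: condition for H to be CM 3} by hypothesis, Lemma~\ref{lem: 6.1 analogue} applies to all four of them. Selecting from~\eqref{eq: partial-xe,xf}--\eqref{eq: partial-xa,xc} the four formulas we need yields
\begin{align*}
\partial_\leftarrow\scmfour(\xx_1)&=-2r_1s_1, & \partial_\rightarrow\scmfour(\xx_4)&=-2p_4q_4,\\
\partial_\uparrow\scmfour(\xx_2)&=2p_2s_2, & \partial_\downarrow\scmfour(\xx_3)&=2r_3q_3,
\end{align*}
so that the left-hand side of~\eqref{eq: coherence condition} equals $4r_1s_1p_4q_4$ and the right-hand side equals $4p_2s_2r_3q_3$.

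Next I would read off from the interlocking picture (Figure~\ref{fig: four Heronian diamonds}) the identifications forced by the shared half-integer nodes. With $\xx_1,\xx_2,\xx_3,\xx_4$ sitting to the West, North, South, and East of a common central node, the edge that $\xx_1$ and $\xx_2$ share is the upper-right edge of $\xx_1$ and the lower-left edge of $\xx_2$, whence $r_1=p_2$; likewise $s_1=q_3$ (the edge shared by $\xx_1$ and $\xx_3$), $s_2=q_4$ (shared by $\xx_2$ and $\xx_4$), and $r_3=p_4$ (shared by $\xx_3$ and $\xx_4$). In geometric terms, if the four diamonds come from a hexagon $A_1\cdots A_6$ these are the equalities $r_1=p_2=\skap_{125}$, $s_1=q_3=\skap_{245}$, $s_2=q_4=\skap_{256}$, $r_3=p_4=\skap_{235}$.

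Substituting $p_2=r_1$, $s_2=q_4$, $r_3=p_4$, $q_3=s_1$ into the right-hand side of~\eqref{eq: coherence condition} turns it into $4r_1q_4p_4s_1=4r_1s_1p_4q_4$, which is exactly the left-hand side; this proves~\eqref{eq: coherence condition}. The only place where care is needed is this last bookkeeping step: matching each of $p,q,r,s$ of a diamond to the correct shared node, and noting that the two minus signs coming from~\eqref{eq: partial-xe,xf} cancel against each other. (When all four $\xx_i$ satisfy~\eqref{eq: condition for H to be CM 3} one could instead appeal to Proposition~\ref{prop: 5.9 analogue} directly; the argument above has the advantage of covering the degenerate cases~\eqref{eq: condition for H to be CM 1}--\eqref{eq: condition for H to be CM 2} in the same stroke.)
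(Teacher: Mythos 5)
Your proposal is correct and is essentially the paper's own proof: the paper likewise applies Lemma~\ref{lem: 6.1 analogue} to each of $\xx_1,\dots,\xx_4$ and observes that both sides of~\eqref{eq: coherence condition} equal the product of the four shared (starred) midpoint entries, which is precisely your identification $r_1=p_2$, $s_1=q_3$, $s_2=q_4$, $r_3=p_4$ with the two minus signs cancelling. Your write-up just makes the bookkeeping explicit.
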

    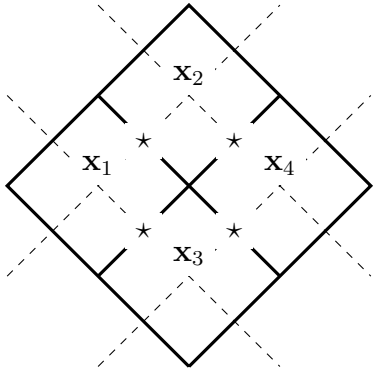
\begin{figure}[htbp!]
    \centering
    \begin{tikzpicture}[scale=0.6,every node/.style={rectangle,fill=white,inner sep=2pt}]

        \begin{scope}[on background layer]
\draw[dashed] (-2,8) --(4,2);
\draw[dashed] (-4,6) --(2,0);
\draw[dashed] (-4,2) --(2,8);
\draw[dashed] (-2,0) --(4,6);
\draw[very thick] (-2,6) -- node {$\star$} (0,4) -- node {$\star$} (2,2);
    \end{scope}
    \draw[very thick] (-2,2) -- node {$\star$} (0,4) 
        -- node {$\star$} (2,6);
        \draw[very thick]
        (0,0) 
        -- (-2,2) 
        -- (-4,4) 
        -- (-2,6) 
        -- (0,8) 
        -- (2,6) 
        -- (4,4) 
        -- (2,2) 
        -- (0,0);
        
        \draw (-2,4) node[above] {$\xx_1$};
        \draw (0,6) node[above] {$\xx_2$};
        \draw (2,4) node[above] {$\xx_4$};
        \draw (0,2) node[above] {$\xx_3$};

    \end{tikzpicture}
    \caption{Four interlocking spherical Heronian diamonds as in Proposition~\ref{prop: 6.3 analogue}. The starred entries appear in the coherence condition on the corresponding spherical Cayley-Menger diamonds.}
    \label{fig: four Heronian diamonds}
\end{figure}
\begin{proof}
    Applying Lemma~\ref{lem: 6.1 analogue} to~$\xx_1,\xx_2,\xx_3,\xx_4$, we see that each side of the coherence condition is the product of the starred entries in Figure~\ref{fig: four Heronian diamonds}.
\end{proof}

We next discuss the other direction: constructing a spherical Heronian frieze from a spherical Cayley-Menger frieze. 

\begin{proposition}\label{prop: 6.4 analogue}
Let~$\xx=(a,b,c,d,e,f)$ be a spherical Cayley-Menger diamond such that~$\left(e\left(e-\frac{4}{\kap}\right),f\left(f-\frac{4}{\kap}\right)\right)\ne (0,0)$ and 
\begin{equation}\label{eq: condition H_kap not zero}
    \hkap(b,c,e)\hkap(a,d,e)\hkap(a,b,f)\hkap(c,d,f)\ne 0\, .
\end{equation}
    Then there exist exactly two spherical Heronian diamonds restricting to~$\xx$. These two diamonds differ from each other by the simultaneous sign change of~$p,q,r,s$.
\end{proposition}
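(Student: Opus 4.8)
The plan is to parametrize the spherical Heronian diamonds restricting to $\xx=(a,b,c,d,e,f)$ by their two ``middle'' entries $p,q$, and then count. By the symmetry of both notions under relabeling the underlying quadrilateral (cf.\ Proposition~\ref{prop: analogue of 2.15} and the row/column symmetry of $\scmfour$), we may assume $e\notin\{0,\tfrac4\kap\}$; the remaining case $e\in\{0,\tfrac4\kap\}$ (which by hypothesis forces $f\notin\{0,\tfrac4\kap\}$) is handled by the same argument with the roles of $e$ and $f$ exchanged. Write $H_1=\hkap(b,c,e)$ and $H_2=\hkap(a,d,e)$; by~\eqref{eq: condition H_kap not zero} we have $H_1H_2\neq0$. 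Any spherical Heronian diamond $(a,b,c,d,e,f,p,q,r,s)$ restricting to $\xx$ satisfies $p^2=H_1$ and $q^2=H_2$ by~\eqref{eq:def1sphp}--\eqref{eq:def1sphq}, so $p$ and $q$ each have exactly two possible values; and conversely, for any such pair $(p,q)$, Proposition~\ref{prop: analogue of 2.14} produces a \emph{unique} spherical Heronian diamond with these first seven entries, whose remaining entries $f_{(p,q)},r_{(p,q)},s_{(p,q)}$ are given by~\eqref{eq: eq for f}--\eqref{eq: eq for s}. Hence the spherical Heronian diamonds restricting to $\xx$ correspond bijectively to the pairs $(p,q)$ with $p^2=H_1$, $q^2=H_2$, and $f_{(p,q)}=f$.

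Next I would pin down which pairs work. By~\eqref{eq: eq for f},
\[
f_{(p,q)}=\frac{(p+q)^2+(a-b+c-d)^2-\kap e(a-b)(c-d)}{4e\left(1-\frac{\kap e}{4}\right)},
\]
which is an affine function of $(p+q)^2=H_1+H_2+2pq$ with nonzero slope (here $e\notin\{0,\tfrac4\kap\}$ is used), hence an injective function of the product $pq$. Fixing square roots $p_0,q_0$ of $H_1,H_2$, the quantity $pq$ ranges over $\{p_0q_0,-p_0q_0\}$ as $(p,q)$ ranges over the four admissible pairs, and these two values are distinct since $p_0q_0\neq0$; thus $f_{(p,q)}$ takes exactly two distinct values, say $f_{+}$ (when $pq=p_0q_0$) and $f_{-}$ (when $pq=-p_0q_0$). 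The two corresponding diamonds (with $(p,q)=(p_0,q_0)$ and $(p,q)=(p_0,-q_0)$) satisfy condition~\eqref{eq: condition for H to be CM 3} (since $e\notin\{0,\tfrac4\kap\}$), so by Proposition~\ref{prop: 6.2 analogue} both $\scmfour(a,b,c,d,e,f_{+})=0$ and $\scmfour(a,b,c,d,e,f_{-})=0$. On the other hand, $\scmfour(a,b,c,d,e,\cdot)$ is a genuine quadratic in its last argument: comparing the $f^{2}$-coefficients on the two sides of the identity~\eqref{eq: id for partial kap} shows its leading coefficient equals $-2e\left(1-\frac{\kap e}{4}\right)\neq0$. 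A quadratic with the two distinct roots $f_{+},f_{-}$ has no further roots, so the hypothesis $\scmfour(\xx)=0$ forces $f\in\{f_{+},f_{-}\}$.

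Consequently exactly one of the two product values $\pm p_0q_0$ is compatible with the given $f$, and that value is attained by exactly two of the four pairs — either $\{(p_0,q_0),(-p_0,-q_0)\}$ or $\{(p_0,-q_0),(-p_0,q_0)\}$ — each being the negative of the other; they are genuinely different since $p_0\neq0$. The two resulting spherical Heronian diamonds share the same $f$ (it depends only on $(p+q)^2$), and since the expressions~\eqref{eq: eq for r}--\eqref{eq: eq for s} for $r$ and $s$ are homogeneous linear in $(p,q)$, replacing $(p,q)$ by $(-p,-q)$ replaces $(r,s)$ by $(-r,-s)$. Hence there are exactly two spherical Heronian diamonds restricting to $\xx$, differing by the simultaneous sign change of $p,q,r,s$. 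Transporting this back through the relabeling used in the initial reduction (which permutes $p,q,r,s$ among themselves and so preserves a global sign change) yields the proposition.

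All the real content is imported: Proposition~\ref{prop: analogue of 2.14} (unique propagation), Proposition~\ref{prop: 6.2 analogue} (Heronian $\Rightarrow$ Cayley--Menger), and the polynomial identity~\eqref{eq: id for partial kap}; what remains is elementary sign bookkeeping, so I do not anticipate a serious obstacle. The one point requiring care is the initial reduction to $e\notin\{0,\tfrac4\kap\}$ and, relatedly, the claim that the global-sign statement survives it; a clean alternative that sidesteps Proposition~\ref{prop: 6.2 analogue} is to verify directly the polynomial identity $-2pq=\partial_\rightarrow\scmfour(a,b,c,d,e,f)$ forced by the Bretschneider relation~\eqref{eq: def1sph bretanalogue}, and combine it with~\eqref{eq: id for partial kap} and $\scmfour(\xx)=0$ to conclude $(pq)^2=H_1H_2$ in one stroke.
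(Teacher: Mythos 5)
Your route is genuinely different from the paper's. The paper proves uniqueness by applying Lemma~\ref{lem: 6.1 analogue} to an arbitrary Heronian diamond restricting to~$\xx$ (condition~\eqref{eq: condition for H to be CM 3} is exactly the hypothesis of the proposition), so that all pairwise products of $p,q,r,s$ equal partial derivatives of $\scmfour$ at~$\xx$, which are nonzero by Lemma~\ref{lem: partial H_kap} and~\eqref{eq: condition H_kap not zero}; existence is then obtained from the explicit Ansatz $q=-\partial_\rightarrow\scmfour(\xx)/(2p)$, etc., verified against the seven defining relations. You instead count solutions via the propagation map of Proposition~\ref{prop: analogue of 2.14} together with root-counting for the quadratic $\scmfour(a,b,c,d,e,\cdot)$, using Proposition~\ref{prop: 6.2 analogue} to see that both candidate values $f_\pm$ are roots. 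In the regime $e\notin\{0,\tfrac{4}{\kap}\}$ this argument is correct, with one small repair: comparing $f^2$-coefficients in~\eqref{eq: id for partial kap} only shows the leading coefficient is $0$ or $-2e\left(1-\tfrac{\kap e}{4}\right)$. Either compute it directly from the $5\times 5$ determinant (the complementary $3\times 3$ minor gives exactly $-2e\left(1-\tfrac{\kap e}{4}\right)$), or exclude the degenerate alternative by noting that it would force $\partial_\rightarrow\scmfour$ to vanish identically in $f$, whence~\eqref{eq: id for partial kap} and $\scmfour(\xx)=0$ would give $\hkap(b,c,e)\hkap(a,d,e)=0$, contradicting~\eqref{eq: condition H_kap not zero}.

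The genuine gap is the initial reduction, which you yourself flagged. The hypothesis allows $e\in\{0,\tfrac{4}{\kap}\}$ with $f\notin\{0,\tfrac{4}{\kap}\}$, and your transport between diamonds over $\xx$ and over $(a,d,c,b,f,e)$ uses the horizontal flip of Proposition~\ref{prop: analogue of 2.15}(2), which the paper establishes only under $e\notin\{0,\tfrac{4}{\kap}\}$ --- precisely the condition that fails in the case being reduced. The flip in the opposite direction (applied to diamonds over the flipped tuple, whose middle entry is $f$) is legitimate and yields the \emph{existence} of two diamonds over $\xx$ differing by a global sign, but it gives no upper bound: nothing in your argument shows that every Heronian diamond restricting to $\xx$ arises this way, and the mirrored version of Proposition~\ref{prop: analogue of 2.14} you would need (unique $e,p,q$ from $a,b,c,d,f,r,s$) has the same problem in its uniqueness half, since its derivation again flips a diamond whose $e$-entry may be bad (cf.\ the proof of Corollary~\ref{cor: 2.16 analogue}). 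The cleanest patch is the paper's own uniqueness step: Lemma~\ref{lem: 6.1 analogue} applies to any Heronian diamond over $\xx$ under condition~\eqref{eq: condition for H to be CM 3} alone, is symmetric in $e$ and $f$, and pins down $(p,q,r,s)$ up to a simultaneous sign change with no case split; combining that with your existence count closes the gap.
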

\begin{proof}
    Since~$\xx$ is a spherical Cayley-Menger diamond, by Lemma~\ref{lem: partial H_kap} and condition~\eqref{eq: condition H_kap not zero}, the partial derivatives in Lemma~\ref{lem: 6.1 analogue} do not vanish.  This implies that for the tuple~$(a,b,c,d,e,f,p,q,r,s)$ to be a spherical Heronian diamond, we must have~${pqrs\ne 0}$.  The pairwise products of~$p,q,r,s$ must be given by~\eqref{eq: partial-xe,xf}-\eqref{eq: partial-xa,xc}.  This means that these numbers are uniquely determined by~$\xx$ up to a simultaneous change of sign.

    Consider the following candidate values for~$p,q,r,s$. 
 Set~$p^2=H(b,c,e)$ and select a sign for~$p$ (the other choice will cause all signs to change simultaneously, giving the other possible solution).  Then, inspired by equations~\eqref{eq: partial-xe,xf}-\eqref{eq: partial-xa,xc}, let
 \begin{equation*} \label{eq: values for q r s}
     q=-\mytfrac{\partial_\rightarrow \scmfour (x)}{2p}\, , \quad r=\mytfrac{\partial_{\>\smallseline} \scmfour (x)}{2p}\, , \quad s=\mytfrac{\partial_\uparrow \scmfour (x)}{2p}\, .
 \end{equation*}
  We need to check that the collection~$(a,b,c,d,e,f,p,q,r,s)$ satisfies equations~\eqref{eq:def1sphp}-\eqref{eq: def1sph erminuss} and therefore is a spherical Heronian diamond.

  The Heron relations~\eqref{eq:def1sphp}-\eqref{eq:def1sphs} are a consequence of our choice of~$p$ together with Lemma~\ref{lem: partial H_kap}.  Since we know that~${p\ne0}$ (as we've assumed~$H(b,c,e)\ne 0$), equation~\eqref{eq:def1sph p+qanalogue} is equivalent to
  \begin{multline*}
      2\hkap(b,c,e)-\partial_\rightarrow\scmfour (\xx)=\\\partial_{\>\smallseline} \scmfour (\xx)+\partial_\uparrow \scmfour (\xx)+\kap \left(a\hkap(b,c,e)-b\partial_\rightarrow \scmfour (\xx)-e\partial_{\>\smallseline} \scmfour (\xx)\right)
  \end{multline*}
which can be checked manually.  Equation~\eqref{eq: def1sph bretanalogue} is equivalent to
\begin{multline*}
    4ef\left(1-\mytfrac{\kap e}{4}\right)=\\ \hkap(b,c,e)-\partial_\rightarrow \scmfour (\xx)+\hkap(a,d,e)+(a-b+c-d)-\kap e(a-b)(c-d)\, ,
\end{multline*}
which can be verified by direct computation.
Similarly, equation~\eqref{eq: def1sph erminuss} is equivalent to
\begin{equation*}
    e\left(\partial_{\>\smallseline} \scmfour (\xx) -\partial_\uparrow \scmfour (\xx)\right)=2\hkap(b,c,e)(a-d)-\partial_\rightarrow \scmfour (\xx)(b-c)\, ,
\end{equation*}
which can also be checked manually. (Contrary to the remark in the proof of \cite[Proposition 6.4]{fomin_heronian_2021}, this verification does not rely on the condition~$M(\textbf{x})=0$.)\qedhere\end{proof}

The following result fully clarifies the relationship between spherical Heronian friezes and spherical Cayley-Menger friezes 

\begin{theorem} \label{thm: big final thm}
Let~$\zfrieze=(z_\alpha)_{\alpha\in I_n}$ be a spherical Heronian frieze such that 
\begin{equation}\label{eq: interior not zero}
    z_{(i,j)}\not\in \left\{0,\mytfrac{4}{\kap}\right\} \text{ for any } (i,j)\in \Z^2 \text{ such that } 2\le j-i\le n-2
\end{equation}
(same as~\eqref{eq: non-vanishing condition} or~\eqref{eq: condition interior not zero}). Then the restriction of~$\zfrieze$ to~$I_n^{SCM}$ is a coherent spherical Cayley-Menger frieze.

Conversely, let~$\zscm=(z_\alpha)_{\alpha\in I_n^{SCM}}$ be a coherent spherical Cayley-Menger frieze satisfying~\eqref{eq: interior not zero}.  Suppose that for every~$(i,j)\in\Z^2$ with~$2\le j-i\le n-1$, we have 
\begin{equation}\label{eq: areas not zero either}
    \hkap\left(z_{(i,j)},z_{(i+1,j)},z_{(i+\frac{1}{2},\neline)}\right)\ne 0, \quad \hkap\left(z_{(i,j-1)},z_{(i,j)},z_{(\seline,j-\frac{1}{2})}\right)\ne 0
\end{equation}
(same as~\eqref{eq: condition triangles not zero}).
Then there exists a spherical Heronian frieze~$\zfrieze=(z_\alpha)_{\alpha\in I_n}$ that extends~$\zscm$.  This extension is unique up to a global change of sign of the entries indexed by~$I_n\setminus I_n^{SCM}$.
    
\end{theorem}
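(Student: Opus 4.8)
The plan is to route this through the geometric dictionary. Assume $\zfrieze$ satisfies \eqref{eq: interior not zero}, which is exactly hypothesis \eqref{eq: non-vanishing condition} of Corollary~\ref{cor: 3.12 analogue}; hence there is a unique spherical $n$-gon $P$ with $\zfrieze=\zfrieze(P)$. Comparing Definition~\ref{def: polygon to sph her frieze} with the construction of $\zscm(P)$, the restriction of $\zfrieze(P)$ to $I_n^{SCM}$ records precisely the squared distances $x_{\langle i\rangle\langle j\rangle}$ at the integer nodes and the squared side lengths along the dashed lines, so it coincides with $\zscm(P)$. By Theorem~\ref{thm: scm} this is a spherical Cayley-Menger frieze, and by Theorem~\ref{thm: 5.12 analogue} it is coherent. (One could instead argue algebraically: each interior diamond of $\zfrieze$ with $2\le j-i\le n-2$ meets condition~\eqref{eq: condition for H to be CM 3} of Proposition~\ref{prop: 6.2 analogue}, the boundary-row diamonds are of the degenerate shapes of Lemmas~\ref{lem: degenerate 1}--\ref{lem: degenerate 2} and hence fall under \eqref{eq: condition for H to be CM 1}--\eqref{eq: condition for H to be CM 2}, and coherence then follows from Proposition~\ref{prop: 6.3 analogue}.)

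\textbf{Converse, existence.} Now let $\zscm$ be coherent and satisfy \eqref{eq: interior not zero}--\eqref{eq: areas not zero either}. The idea is to fill in the missing half-integer entries (the indices of $I_n\setminus I_n^{SCM}$) one diamond at a time. For an interior node $(i,j)$ with $2\le j-i\le n-2$, the diamond $\xdiamond_{i,j}(\zscm)$ is a spherical Cayley-Menger diamond whose left entry $e=z_{(i,j)}$ avoids $\{0,\tfrac{4}{\kap}\}$ by \eqref{eq: interior not zero}, and a matching-up of arguments identifies the four Heron polynomials in the hypothesis \eqref{eq: condition H_kap not zero} of Proposition~\ref{prop: 6.4 analogue} with quantities required nonzero in \eqref{eq: areas not zero either}; thus Proposition~\ref{prop: 6.4 analogue} gives exactly two Heronian extensions of this diamond, differing by the simultaneous sign change of $p,q,r,s$. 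Near the top and bottom rows, where $j-i\in\{1,n-1\}$ and $e$ may lie in $\{0,\tfrac{4}{\kap}\}$, the boundary relations \eqref{eq: boundary condition top and bottom row CM}--\eqref{eq: boundary condition lines 2 CM} force the diamond into the degenerate shape of Lemma~\ref{lem: degenerate 1} or~\ref{lem: degenerate 2}, which is again determined up to one sign and whose half-integer entries on the extreme rows are $0$, as \eqref{eq: boundary condition} demands. Fixing a Heronian extension on one diamond, we propagate outward: each new diamond shares a half-integer node with an already-filled neighbour, and since that common value is nonzero (by \eqref{eq: areas not zero either}) it singles out one of the two extensions.

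\textbf{Converse, existence --- the main point.} The crux, which I expect to be the main obstacle, is that this propagation closes up consistently around each block of four interlocking diamonds (Figure~\ref{fig: four Heronian diamonds}); this is the converse of Proposition~\ref{prop: 6.3 analogue} and is where coherence of $\zscm$ is used. The mechanism: by Lemma~\ref{lem: 6.1 analogue}, for each of the four diamonds the product of its two half-integer entries adjacent to the shared central node is a prescribed partial derivative of $\scmfour$; forming the product of these four quantities and applying the coherence identity \eqref{eq: coherence condition} together with Lemma~\ref{lem: partial H_kap} forces the product of the four ``linking signs'' around the block to be $+1$. Since the dual graph of interior diamonds in the strip is connected and its cycle space is generated by exactly these elementary four-diamond configurations, the sign choice extends to a globally defined array $\zfrieze$; by construction every full diamond of $\zfrieze$ is a Heronian diamond and the boundary conditions hold, so $\zfrieze$ is a spherical Heronian frieze restricting to $\zscm$. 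The difficulty here is upgrading the \emph{quadratic} coherence identity to a statement about \emph{signs}, which forces careful tracking of the signs in Lemmas~\ref{lem: 6.1 analogue} and~\ref{lem: partial H_kap} and matching of the Heron-polynomial arguments across the four diamonds; everything else is an appeal to an earlier result or routine boundary bookkeeping.

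\textbf{Converse, uniqueness.} If $\zfrieze$ and $\zfrieze'$ both extend $\zscm$, then on each diamond they restrict to the same Cayley-Menger diamond, so by Proposition~\ref{prop: 6.4 analogue} (and Lemmas~\ref{lem: degenerate 1}--\ref{lem: degenerate 2} on the boundary rows) their half-integer entries there agree up to a simultaneous sign. Repeating the shared-nonzero-node argument shows this sign is the same for every diamond in the connected dual graph, so $\zfrieze'$ is obtained from $\zfrieze$ precisely by a global sign change of the entries indexed by $I_n\setminus I_n^{SCM}$.
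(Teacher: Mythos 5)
Your forward direction is essentially the paper's (restrict via Proposition~\ref{prop: 6.2 analogue} or the geometric dictionary, get coherence from Corollary~\ref{cor: 3.12 analogue} plus Theorem~\ref{thm: 5.12 analogue}), but your converse takes a genuinely different route. The paper fixes a traversing path, thickens it, uses Proposition~\ref{prop: 6.4 analogue} only along the thickened path to build the half-integer entries there, and then propagates outward with the Heronian recurrence (Corollary~\ref{cor: 2.16 analogue}); the role of coherence is to guarantee, via Proposition~\ref{prop: 6.3 analogue} and the uniqueness in Proposition~\ref{prop: 5.15 analogue}, that the integer entries produced by this propagation reproduce the given~$\zscm$. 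You instead never re-derive the integer entries: you extend every diamond separately (Proposition~\ref{prop: 6.4 analogue} in the interior, Lemmas~\ref{lem: degenerate 1}--\ref{lem: degenerate 2} in rows $j-i\in\{1,n-1\}$, where the failure of Proposition~\ref{prop: 6.4 analogue} is due to vanishing Heron polynomials forced by the boundary relations, not to $e\in\{0,\frac{4}{\kap}\}$ as you say --- a harmless imprecision) and reduce gluing to a sign-consistency problem on the dual grid of diamonds, with coherence entering exactly as the vanishing of the obstruction around each elementary four-diamond block; this is the converse of Proposition~\ref{prop: 6.3 analogue}, which the paper never needs. Your crux does check out: writing $A=r_1s_1$, $B=p_4q_4$, $C=p_2s_2$, $D=q_3r_3$ for the pairwise products prescribed by Lemma~\ref{lem: 6.1 analogue} at the four nodes around the center, the coherence relation~\eqref{eq: coherence condition} gives $AB=CD$, and since entries of the two extensions at a shared node have equal squares (the same Heron polynomial, nonzero by~\eqref{eq: areas not zero either}), the product of the four relative signs is $A^2/(p_2q_3)^2=1$; note also that under~\eqref{eq: interior not zero}--\eqref{eq: areas not zero either} every half-integer node shared by two diamonds is nonzero, so the constraint graph is the full grid strip, whose cycle space is indeed generated by these squares, and connectivity yields exactly the two global extensions. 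What each approach buys: the paper's argument recycles the propagation machinery already in place and avoids explicit sign bookkeeping, while yours is path-independent, dispenses with Corollary~\ref{cor: 2.16 analogue} and Proposition~\ref{prop: 5.15 analogue}, and makes transparent that coherence is precisely the cocycle condition needed to lift a Cayley-Menger frieze to a Heronian one --- at the cost of the sign computation above and the cycle-space bookkeeping, which your write-up flags but would need to carry out explicitly.
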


\begin{proof}
    First assume that~$\zfrieze=(z_\alpha)_{\alpha\in I_n}$ is a spherical Heronian frieze satisfying~\eqref{eq: interior not zero}.  By Proposition~\ref{prop: 6.2 analogue}, each spherical Heronian diamond in the frieze can be restricted to a spherical Cayley-Menger diamond (disregard the values indexed by~$\left(i,j+\frac{1}{2})\right)$ and~$\left(i+\frac{1}{2},j\right)$) to get a collection~$\zscm$.  By definition~$\zscm$ is a spherical Cayley-Menger frieze.  By Corollary~\ref{cor: 3.12 analogue}, the spherical Heronian frieze~$\zfrieze$, hence the spherical Cayley-Menger frieze~$\zscm$, corresponds to a polygon~$P$.  It follows by Theorem~\ref{thm: 5.12 analogue} that the spherical Cayley-Menger frieze~$\zscm$ is coherent. 

    Now let~$\zscm=(z_\alpha)_{\alpha\in I_n^{SCM}}$ be a coherent spherical Cayley-Menger frieze satisfying~\eqref{eq: interior not zero}-\eqref{eq: areas not zero either}.  Let~$\pi$ be a traversing path in~$I_n^{SCM}$,  with~$\overline{\pi}$  its thickening, see Definition~\ref{def: thickening}.  Examining~$I_n$, denote by~$\widetilde{\pi}$ the two adjacent traversing paths that together restrict to~$\overline{\pi}$ in~$I_n^{SCM}$. Note that~$\widetilde{\pi}$ as a set is strictly larger than~$\overline{\pi}$, as it contains values corresponding to midpoint indices~$(i,j+\frac{1}{2}),(i+\frac{1}{2},j)$ that do not appear in~$I_n^{SCM}$.

    Let~$\xdiamond_{i,j}(\zscm)$ be a spherical Cayley-Menger diamond in~$\overline{\pi}$.  Proposition~\ref{prop: 6.4 analogue} gives two spherical Heronian diamonds that restrict to~$\xdiamond(\zscm)$ and differ by a simultaneous change of sign of the midpoint indices that appear in~$\widetilde{\pi}\setminus\overline{\pi}$.  Select one to create a first spherical Heronian diamond~$\xx$. As we travel along~$\overline{\pi}$ the diamonds overlap, so our choice of signs 
 for the midpoint values in~$\xx$ determines all subsequent sign choices.  We get exactly two sets of interlocking spherical Heronian diamonds~$\widetilde{\zfrieze_1}$ and~$\widetilde{\zfrieze_2}$ that agree with~$\zscm$ on~$\overline{\pi}$, and differ by a simultaneous sign change of the midpoint entries in~$\widetilde{\pi}\setminus \overline{\pi}$.

We have two sets of entries corresponding to~$\widetilde{\pi}$. We want to extend these arrays to spherical Heronian friezes that restrict to~$\zscm$.  Let~$\xx_1,\xx_2,\xx_3$ be three spherical Heronian diamonds indexed by~$\widetilde{\pi}$ that share a single node~$z_{(i,j)}$.  By Corollary~\ref{cor: 2.16 analogue}, we construct~$\xx_4$, a fourth spherical Heronian diamond containing~$z_{(i,j)}$. The conditions~\eqref{eq: interior not zero}-\eqref{eq: areas not zero either}, together with the boundary conditions on spherical Cayley-Menger friezes~\eqref{eq: boundary condition top and bottom row CM}-\eqref{eq: boundary condition lines 2 CM} mean that all four spherical Heronian diamonds meet conditions~\eqref{eq: condition for H to be CM 1}-\eqref{eq: condition for H to be CM 3}, so by Proposition~\ref{prop: 6.3 analogue} the four corresponding spherical Cayley-Menger diamonds satisfy the coherence condition~\eqref{eq: coherence condition}.  We also satisfy the conditions of Proposition~\ref{prop: 5.15 analogue}; the fact that the new entry of~$\xx_4$ is unique means that it agrees with the corresponding entry in~$\zscm$.

Propagate away from~$\widetilde{\pi}$ in this way to construct unique spherical Heronian friezes~$\zfrieze_1$ and~$\zfrieze_2$ that extend~$\widetilde{\zfrieze_1}$ and~$\widetilde{\zfrieze_2}$ respectively and restrict to~$\zscm$.  To~find values near the boundary, we propagate using rules~\eqref{eq: bdry} and~\eqref{eq: bdry 4}-\eqref{eq: bdry 6} which agree with the corresponding rules for spherical Cayley-Menger friezes~\eqref{eq: boundary condition top and bottom row CM}-\eqref{eq: boundary condition lines 2 CM}. 

The final claim is that the two spherical Heronian friezes differ by a simultaneous change of sign of the entries indexed by~$I_n\setminus I_n^{SCM}$. Above, we selected an initial spherical Cayley-Menger diamond~$\xdiamond_{i,j}(\zscm)$ which generated spherical Heronian friezes~$\zfrieze_1$ and~$\zfrieze_2$. Suppose we start with a different spherical Cayley-Menger diamond and generate two spherical Heronian friezes~$\zfrieze_3$ and~$\zfrieze_4$.  By Proposition~\ref{prop: 6.4 analogue} again, the first spherical Cayley-Menger diamond~$\xdiamond_{i,j}(\zscm)$ only extends to two spherical Heronian diamonds, the frieze~$\zfrieze_3$ must have one and~$\zfrieze_4$ the other.  Therefore~$\zfrieze_3$ and~$\zfrieze_4$ must be the same as~$\zfrieze_1$ and~$\zfrieze_2$. \qedhere

    \end{proof}

\begin{proof}[Proof of Theorem~\ref{thm: 5.20 analogue}]
By Theorem~\ref{thm: big final thm}, the spherical Cayley-Menger frieze~$\zscm$ can be extended to a spherical Heronian frieze, which by Corollary~\ref{cor: 3.12 analogue} arises from a polygon~$P$ on~$\bfS$.  Thus~$\zscm$ must also arise from~$P$.
\end{proof}

 \newpage

\begin{flushleft}
\bibliographystyle{myamsalpha}
\bibliography{references}
\RaggedRight
\end{flushleft}

\end{document}